\theoremstyle{plain}
\newtheorem{prop}{Proposition}[section]
\newtheorem{lemma}[prop]{Lemma}
\newtheorem{thm}[prop]{Theorem}
\newtheorem{cor}[prop]{Corollary}
\theoremstyle{definition}
\newtheorem{defi}[prop]{Definition}
\newtheorem*{ack}{Acknowledgements}
\theoremstyle{remark}
\newtheorem{remark}[prop]{Remark}
\let\temp\phi
\let\phi\varphi
\let\varphi\temp
\newcommand*\dif{\mathop{}\!\mathrm{d}}
\DeclareMathOperator{\cqq}{\coloneqq}
\DeclareMathOperator{\wto}{\rightharpoonup}
\newcommand{\vertiii}[1]{{\left\vert\kern-0.25ex\left\vert\kern-0.25ex\left\vert #1 
    \right\vert\kern-0.25ex\right\vert\kern-0.25ex\right\vert}}
\DeclareMathOperator{\embeds}{\hookrightarrow}
\newcommand\N{\mathbb{N}}
\newcommand\R{\mathbb{R}}
\newcommand\C{\mathbb{C}}
\newcommand{\mc}[1]{\mathcal #1}
\newcommand{\mb}[1]{\mathbb #1}
\DeclareMathOperator{\id}{Id}
\newcommand\Vector[1]{\setstackEOL{,}\parenVectorstack{#1}}
\newcommand{\parpar}[2]{\partial_{#1} #2}
\newcommand{\eps}{\varepsilon}
\newcommand{\limn}{\lim _{n\to \infty}}
\DeclareMathOperator{\im}{Im}
\DeclareMathOperator{\Div}{div}
\DeclareMathOperator{\loc}{loc}
\def\dashint{\let\mathchoice\oldmathchoice\,\ThisStyle{\ensurestackMath{%
            \stackinset{c}{.2\LMpt}{c}{.5\LMpt}{\SavedStyle-}{%
            \SavedStyle\phantom{\int}}}%
        \setbox0=\hbox{$\SavedStyle\int\,$}\kern-\wd0}\int%
        \let\mathchoice\newmathchoice}
	\newcommand{\mres}{\mathbin{\vrule height 1.6ex depth 0pt width
0.13ex\vrule height 0.13ex depth 0pt width 1.3ex}}
\DeclareMathOperator{\diam}{diam}
\DeclareMathOperator{\genus}{genus}
\DeclareMathOperator{\vol}{vol}
\DeclareMathOperator{\iso}{\mc{I}}
\DeclareMathOperator{\cat}{cat}
\DeclareMathOperator{\Capp}{cap}
\DeclareMathOperator\dom{dom}
\numberwithin{equation}{section}
\author{\textsc{Christian Scharrer} \and \textsc{Alexander West}}
\date{March 21, 2024}
\title{On the minimization of the Willmore energy under a constraint on total mean curvature and area}
\begin{document}
\maketitle
\begin{abstract}
    Motivated by a model for lipid bilayer cell membranes, we study the minimization of the Willmore functional in the class of oriented closed surfaces with prescribed total mean curvature, prescribed area, and prescribed genus. Adapting methods previously developed by  Keller-Mondino-Rivière, Bauer-Kuwert, and Ndiaye-Schätzle, we prove existence of smooth minimizers for a large class of constraints. Moreover, we analyze the asymptotic behaviour of the energy profile close to the unit sphere and consider the total mean curvature of axisymmetric surfaces.
\end{abstract}
\section{Introduction}\label{sec:Introduction}
In this paper, we will focus on surfaces immersed into $\R^3$. For a given immersion, we will investigate its Willmore energy, which can be described as a quantity measuring the amount of curvature present in the surface. Let us first give a definition. Given an immersion $f:\Sigma \to \R^3$ where $\Sigma$ is a 2-dimensional manifold, the \emph{Willmore energy $\mc W$ of $f$} is defined as
\begin{equation}
\mc W(f) \cqq \frac{1}{4} \int _{\Sigma} H^2 \dif\vol_{g_f}.\label{eq:Willmore energy}
\end{equation}
Here, $H= \kappa_1 + \kappa_2 $ is the sum of the principal curvatures $\kappa_1,\kappa_2$ and $\vol_{g_f}$ denotes the volume form induced by $g_f = f^* \langle \cdot, \cdot \rangle$, the pull back metric of the Euclidean metric $\langle \cdot, \cdot \rangle$ along $f$. The definition $H=\frac{\kappa_1+\kappa_2}{2}$ is also very common in literature. However, in this paper, we will stick to the first definition.

The Willmore energy was already considered by \textsc{Poisson} \cite{Poisson} in 1814 and \textsc{Germain} \cite{Germain} in 1821. In their work, they studied the elastic energy of thin elastic plates under vibrations. In the 1920s, it was studied by \textsc{Blaschke} and \textsc{Thomsen} \cite{BlaschkeThomsen} in the context of conformal differential geometry. They established the crucial fact that for closed surfaces, the Willmore energy is invariant under conformal transformations of the ambient space, i.e., 
\[\mc W(\Psi \circ f) = \mc W(f),\]
where $\Psi$ is a conformal diffeomorphism of an open subset in $\R^3$ containing the image of $f$. This includes translations and dilations, rotations and sphere inversions whose centers do not lie on the surface. Exploiting especially the latter invariance will be indispensable in this paper. 

In 1965, \textsc{Willmore} \cite{Willmore} studied the energy in \eqref{eq:Willmore energy}, which now bears his name. He discovered that for immersions $f:\mb S^2\to \R^3$, the Willmore energy is bounded from below by $4\pi$, with equality if and only if $f$ immerses a round sphere. This can be seen by the following argument: It holds that $H^2 = (\kappa_1 + \kappa_2)^2 \geq 4\kappa_1 \kappa_2 = 4K$, where $K=\kappa_1\kappa_2$ denotes the Gauss curvature. Thus
\begin{equation}
\frac{1}{4} \int _{\mb S^2}H^2 \dif\vol_{g_f}  \geq \int _{\mb S^2} K \dif\vol_{g_f} = 4\pi .\label{intro:Willmore inequality}
\end{equation}
The last equality follows from the Gauss-Bonnet Theorem. Equality holds if and only if $\kappa_1 = \kappa_2$ everywhere, i.e., every point of $f(\mb S^2)$ is umbilic, which implies that $f(\mb S^2)$ is a round sphere. 

 The inequality $\mc W(f)\geq 4\pi$ holds true for immersions $f:\Sigma \to \R^3$ for any closed surface $\Sigma$ where the equality case remains as before. Willmore also calculated the energy of rotationally symmetric tori with minor radius $r>0$ and major radius $R>r$, i.e., the set given by
\[\left (\sqrt{x^2+y^2}-R\right )^2 + z^2 = r^2.\]
He obtained that, in this special class, the Willmore energy was bounded by $2\pi^2$ from below, with equality if and only if $R=\sqrt{2}r$. This torus is known as the \emph{Clifford torus}. He conjectured that for any immersion $f:\mb T^2\to \R^3$, the Willmore energy is bounded from below by $2\pi^2$, and that the Clifford torus is a minimizer of the Willmore energy. This inequality became known as the \emph{Willmore conjecture} and was finally proved in 2012 by \textsc{Marques} and \textsc{Neves} \cite{MarquesNeves}.  

In 1982, \textsc{Li} and \textsc{Yau} \cite{LiYau} proved the following inequality. If $f:\Sigma \to \R^3$ is an immersion and $p\in \R^3$ such that $f^{-1}(p) = \{a_1,\ldots, a_k\}$ for distinct points $a_1, \ldots, a_k \in \Sigma$, then
\begin{equation}
\mc W(f)\geq 4\pi k.\label{intro:LiYau}
\end{equation}
In particular, if $\mc W(f)<8\pi$, then $f$ is automatically injective and thus an embedding. 

The Willmore energy naturally shows up in a lot of applications due to its simplicity. In cellular biology, the elastic energy of lipid bilayers, which are polar cell membranes made of two layers of lipid molecules, is closely related to the Willmore energy. If we model the cell membrane as a surface given by an immersion $f:\Sigma\to \R^3$, \textsc{Helfrich} \cite{Helfrich} proposed that the elastic energy could be modeled by the following expression:
\[ \int _{\Sigma} \frac{1}{2} k_c (H-c_0)^2 + \overline{k_c} K \dif\vol_{g_f}.\] 
Here, $k_c$ and $\overline{k_c}$ are curvature elastic moduli and $c_0$ is the spontaneous curvature. By the Gauss-Bonnet Theorem, the integral over $K$ is a topological constant; more precisely
\[\int _\Sigma K \dif \vol_{g_f} = 2\pi \chi(\Sigma ) = 4\pi(1-g), \]
where $\chi(\Sigma)$ is the Euler characteristic of $\Sigma$ and $g$ is its genus. Thus, for our mathematical model we set
\begin{equation}
\mc H^{c_0}(f) \cqq \int _\Sigma \left (\frac{H}{2}-c_0\right )^2 \dif \vol_{g_f}.\label{intro:CanhamHelfrichenergy}
\end{equation}
This will be called the \emph{Helfrich energy}. If $c_0=0$, then we recover the Willmore energy.

\subsection{The unconstrained problem} \label{subsec:IntroductionUnconstrainedProblem}
Let us first consider the minimization of the Willmore energy in the class of smooth surfaces of genus $g$. Let $\Sigma$ be any smooth, closed, oriented and connected genus $g$ surface\footnote{By the classification theorem for surfaces, all such surfaces are diffeomorphic to each other.}. We define 
\[\mc S_g \cqq \{f:\Sigma \to \R^3,\; \text{$f$ is a smooth immersion}\}.\]
Then, the minimal Willmore energy among $\mc S_g$ is defined by
\begin{equation}
\beta_g \cqq \inf _{f\in \mc S_g} \mc W(f).\label{intro:betag}
\end{equation}
It is natural for such a minimization problem to ask whether the infimum in \eqref{intro:betag} is attained. A pioneering result in that regard was provided by \textsc{Simon} \cite{Simon}, as he showed that under the condition 
\begin{equation}
\beta_g < \min\{8\pi, \omega_g\},\label{intro:betag<8piomegag}
\end{equation}
the infimum in \eqref{intro:betag} is attained. Here,
\begin{equation}
\omega_g   \cqq \min \left \{4\pi + \sum _{i=1}^p (\beta _{g_i} - 4\pi) , \;  g= g_1 + \ldots + g_p, \; 1\leq g_i < g\right \},\label{intro:omegag}
\end{equation}
where $\omega_1 \cqq \infty$. By the estimate $\beta_1\leq 2\pi^2 < 8\pi$ proved by Willmore, Simon's result already shows that a minimizer in the genus one case exists.  

It remained to verify \eqref{intro:betag<8piomegag} for $g>1$. The $8\pi$ bound was proved by \textsc{Kusner} \cite{Kusner1989} who estimated the Willmore energy of the stereographic projection of certain minimal surfaces in $\mb S^3$ found by \textsc{Lawson} \cite{Lawson1970}. To show that 
\begin{equation}
\beta_g < \omega_g,\label{intro:betag<omegag}
\end{equation}
the following approach was used. Suppose that for two embeddings $f_1:\Sigma_1\to \R^3$, $f_2:\Sigma_2\to \R^3$, none of which immerse round spheres, one can construct an embedding $f:(\Sigma_1 \# \Sigma_2) \to \R^3$ where $(\Sigma_1 \# \Sigma_2)$ denotes the connected sum of $\Sigma_1$ and $\Sigma_2$, such that 
\begin{equation}
\mc W(f) < \mc W(f_1) + \mc W(f_2) - 4\pi. \label{intro:BauerKuwertEstimate}
\end{equation}
Notice that $\genus(\Sigma_1 \# \Sigma_2) = \genus(\Sigma_1) + \genus(\Sigma_2)$. Then, \eqref{intro:betag<omegag} follows by inductively applying \eqref{intro:BauerKuwertEstimate} to minimizers of the Willmore energy for genus $g_i<g$. To prove \eqref{intro:BauerKuwertEstimate}, \textsc{Kusner} \cite{Kusner} suggested inverting $f_1$ at a nonumbilic point\footnote{Nonumbilic means that the two principal curvatures $\kappa_1,\kappa_2$ differ.} in the image of $f_1$, which always exists for a genus of at least 1, resulting in a noncompact surface of Willmore energy $\mc W(f_1) - 4\pi$. After that, the inverted surface is glued to a blown-up version of the $f_2$, again at another nonumbilic point. If one does the gluing process using the graph of a biharmonic function, it is possible to save energy in the gluing process, proving \eqref{intro:BauerKuwertEstimate}. This construction was done by \textsc{Bauer} and \textsc{Kuwert} \cite{BauerKuwert}. Later, \textsc{Marques} and \textsc{Neves} \cite{MarquesNeves} showed that $\beta_g > \beta_1 = 2\pi^2 > 6\pi$ for $g>1$, which also implies $\omega_g > 8\pi$ for $g\geq 1$\footnote{Notice however that the result in \cite{BauerKuwert} is more general, since their result holds for any codimension, not just for surfaces in $\R^3$. We omit the dependence on the dimension of the ambient space since we only work in $\R^3$.}.

\subsection{The isoperimetrically constrained problem}
\label{subsec:IntroductionIsoConstrainedProblem}
We have seen that the main term in the Helfrich energy \eqref{intro:CanhamHelfrichenergy} used to study the bending energy of cell membranes comes from the Willmore energy. Thus, in the following, we will restrict ourselves to the case $c_0=0$. 

According to \textsc{Helfrich} \cite{Helfrich}, the area and volume of a vesicle should be treated as fixed and therefore, the shape of the cell membrane should minimize the Willmore energy under a constraint on the area and the enclosed volume. This problem was first considered for spherical immersions by \textsc{Canham} \cite{Canham1970} in 1970 in an attempt to explain the biconcave shape of red blood cells.
%

For an embedding $f:\Sigma \to \R^3$, we define the area and volume by
\begin{equation}
\mc A(f) \cqq \int _\Sigma 1 \dif \vol_{g_f}, \quad \mc V(f) \cqq  -\frac{1}{3} \int _\Sigma \langle f, \vec n \rangle \dif \vol_{g_f}, \label{intro:AreaandVolume}
\end{equation}
where $\vec n$ denotes the inward-pointing unit normal vector field to $f$. By the divergence theorem, $\mc V(f)$ coincides with the volume of the bounded component of $\R^3 \setminus \im (f)$. The problem is also relevant for surfaces of higher genus, see \cite{Michaletgenusone,Michaletgenustwo}. Thus, fixing a genus $g$ and a given area $A_0$ and a volume $V_0$, we ask to find an immersion $f\in \mc S_g$ minimizing the Willmore energy under the constraint on area and enclosed volume, i.e., find $f_0\in \mc S_g$ such that
\[\mc A(f_0) = A_0,\quad \mc V(f_0) = V_0, \quad \mc W(f_0) = \inf _{\substack{ f\in \mc S_g,\\ \mc A(f) = A_0,\\  \mc V(f) = V_0 }} \mc W(f).\]
Because the Willmore energy is scaling-invariant, fixing both area and volume is equivalent to fixing the \emph{isoperimetric ratio}
\[\iso(f) \cqq \frac{\mc A(f)}{\mc V(f) ^{\frac{2}{3}}}.\]
Notice that $\iso(f)$ is scaling-invariant and that by the isoperimetric inequality, it follows that
\begin{equation}
\iso(f) \geq \iso(\mb S^2) = \sqrt[3]{36\pi}\label{isoperimetric inequality}
\end{equation} 
with equality if and only if $f$ parametrizes a round sphere. We define for $R > \sqrt[3]{36\pi}$ and $g\in \N_0$
\[\beta^{\iso}_g(R) \cqq \inf _{\substack{ f\in \mc S_g,\\ \iso(f) = R}} \mc W(f).\]
\textsc{Schygulla} \cite{Schygulla} showed that for $g=0$ and $R \in [\sqrt[3]{36\pi},\infty)$, there exists a minimizer $f^{\mc I,0}_R \in \mc S_0$ of the Willmore energy under the isoperimetric constraint, i.e., $\beta_0^\iso(R ) = \mc W(f^{\mathcal I,0}_R)$ and $\iso(f^{\mathcal I,0}_R) = R$. Furthermore, he showed that the map $R \mapsto \beta_0(R)$ is continuous.  

In 2014, \textsc{Keller}, \textsc{Mondino}, and \textsc{Rivière} \cite{KMR} proved an existence result for arbitrary genus. They showed that for any $R \in (\sqrt[3]{36\pi},\infty)$ and $g\in \N_0$ such that 
\begin{align}
\beta_g^{\iso}(R) &<  8\pi,\label{intro:KMR8pi}\\
\beta_g^{\iso}(R) &< \omega_g,\label{intro:KMRomegag}\\
\beta_g^{\iso}(R) &<  \beta_g + \beta_0^{\iso}(R) - 4\pi\label{intro:KMRBauerKuwert},
\end{align}
there exists an embedding $f_R^{\iso, g}$ which minimizes the Willmore energy under the isoperimetric constraint $\iso(f_R^{\iso, g}) = R$. Thanks to the result by \textsc{Marques} and \textsc{Neves} \cite{MarquesNeves}, we have $\omega_g > 8\pi$ and so \eqref{intro:KMR8pi} implies \eqref{intro:KMRomegag}. 

The inequality \eqref{intro:KMRBauerKuwert} was proved to be true for all $R \in (\sqrt[3]{36\pi},\infty)$. Indeed, similarly to the result by \textsc{Bauer} and \textsc{Kuwert} \cite{BauerKuwert}, \textsc{Mondino} and \textsc{Scharrer} \cite{MondinoScharrerInequality} showed that for two embedded surfaces $f_1:\Sigma_1\to \R^3$, $ f_2:\Sigma_2\to \R^3$, none of which immerse round spheres, one can construct an embedding $f:(\Sigma_1 \# \Sigma_2) \to \R^3$ such that 
\begin{equation}
\mc W(f) < \mc W(f_1) + \mc W(f_2) - 4\pi \quad \text{and} \quad \iso(f)= \iso(f_2).
\end{equation}
Then, if $f_1$ is a minimizer of the unconstrained problem so that $\mc W(f_1 ) = \beta_g$ and $f_2 = f_R^{\iso, 0}$ is a minimizer of the isoperimetrically constrained problem for $g=0$ such that $\iso(f_R^{\iso, 0}) = R$, $\mc W(f_R^{\iso, 0} ) = \beta^\iso _0(R)$, \eqref{intro:KMRBauerKuwert} follows.  

Finally, the inequality \eqref{intro:KMR8pi} for $R \in (\sqrt[3]{36\pi},\infty)$ was proved by \textsc{Schygulla} \cite{Schygulla} for $g=0$, by \textsc{Scharrer} \cite{ScharrerDelaunay} for $g=1$ and by \textsc{Kusner} and \textsc{McGrath} \cite{KusnerMcGrath} for $g>1$. In this paper, we will also prove \eqref{intro:KMR8pi} in Proposition \ref{prop:8piconstruction} using a method from \textsc{Ndiaye} and \textsc{Schätzle} \cite{Ndiaye}, see Remark \ref{rem:iso constrained 8pi}.
\subsection{The total mean curvature constrained problem}
In this work, we will focus our attention on a different minimization problem which we will introduce now. As a motivation, we consider again the Helfrich energy. 
Suppose we are given an immersion $f$ and are interested in finding $c_0$ such that the Helfrich energy $\mc H^{c_0}(f)$ is minimized among $c_0\in \R$. As $\mc H^{c_0}(f)$ is a second-degree polynomial in $c_0$, this results in 
\[c_0 = \frac{\int _{\mb S^2} H\dif \vol_{g_f}}{2\mc A (f)},\quad \inf _{c_0 } \mc H^{c_0}(f) = \mc W(f) - \frac{\left (\int _{\mb S^2} H \dif \vol _{g_f}\right )^2}{4\mc A (f)}.\]
Thus, the minimal Helfrich energy is the difference of the Willmore energy and the ratio between total mean curvature and area.  We define the \emph{total mean curvature ratio} $\mc T$ as
\begin{equation}
\mc T(f) \cqq \frac{\int _{\mb S^2} H\dif \vol_{g_f}}{\sqrt{\mc A (f)}}. \label{definition T}
\end{equation}
Then, we can rewrite the infimum as
\begin{equation}
\inf _{c_0} \mc H^{c_0}(f) = \mc W(f) - \frac{1}{4}\mc T(f)^2. \label{W and T relation with inf H}
\end{equation}
Notice that $\mc T$ is scaling-invariant. In view of \eqref{W and T relation with inf H} and the previously discussed isoperimetrically constrained minimization of the Willmore energy, it is natural to ask the following question:
\begin{center}
\emph{What are the minimizers of the Willmore energy with prescribed total mean curvature ratio $\mc T$?}
\end{center}

This question is equivalent to fixing both area and total mean curvature as both $\mc W$ and $\mc T$ are scaling-invariant. Hence, under these constraints, minimizing the Willmore energy is equivalent to minimizing the Helfrich energy. We define for $g\in \N_0$ and $R \in \R$ 
\[\beta_g (R ) \cqq \inf _{\substack{f\in \mc S_g\\ \mc T(f) = R}} \mc W(f).\]
The first observation is that by \eqref{W and T relation with inf H}, it holds for $f:\Sigma \to \R^3$
\begin{equation}
\frac{1}{4} \mc T(f)^2 \leq \mc W(f), \label{CS for W-T relation}
\end{equation}
implying $\beta_g(R) \geq \frac{R^2}{4}$. In particular, in the region $R\in \R\setminus (-\sqrt{32 \pi}, \sqrt{32\pi})$, all immersions $f$ satisfying $\mc T(f) = R$ will also have a Willmore energy of at least $8\pi$. 

Let us state the main result of this paper. We will prove that the total mean curvature constrained problem admits smooth minimizers in $\mc S_g$ for a certain set of constraints and arbitrary genus. The result is actually slightly better as we do not minimize over $\mc S_g$, but the larger class of Lipschitz immersions $\mc E_\Sigma$, defined in Section \ref{subsec:WeakImmersions}.
\begin{thm}[See Theorem \ref{thm:KMR}, Corollary \ref{cor:Monotonicity}, Corollary \ref{cor:Continuity at sphere}] \label{intro:thm:KMR}
Suppose that $\Sigma$ is a closed, orientable and connected surface of genus $g$ and let $\mc E_\Sigma$ be the class of Lipschitz immersions defined in Section \ref{subsec:WeakImmersions}. Consider the set
\begin{equation}
I \cqq (0,\sqrt{2} \mc T(\mb S^2))\setminus \{\mc T(\mb S^2)\} = (0, \sqrt{32\pi})\setminus \{4\sqrt{\pi}\}.\label{intro:I}
\end{equation}
Then, for any $R\in I$ there exists a smooth embedding $f^g_R$ of $\Sigma$ into $\R^3$, with $\mc T(f^g_R) = R$ and 
\[\mc W(f_R^g) = \inf _{\substack{f \in \mc E_\Sigma \\ \mc T(f) = R}} \mc W(f)<8\pi,\]
i.e., $f_R^g$ minimizes the Willmore energy among all Lipschitz immersions of $\Sigma$ with second fundamental form bounded in $L^2$ and fixed total mean curvature constraint equal to $R$ and the minimizers $f^g_R$ are smooth embedded surfaces of genus $g$. Moreover, $\beta_g \vert _{I \cup \{\mc T(\mb S^2)\}}$ is continuous and $\beta_g(\mc T(\mb S^2)) = \beta_g$. Finally, the map $R\mapsto \beta_g(R)$ is monotonically increasing in the region $[\mc T(\mb S^2),\infty)$ and monotonically decreasing in the region $(-\infty, \mc T(\mb S^2)]$.
\end{thm}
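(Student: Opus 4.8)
\emph{Plan.} The strategy is to establish the existence statement by the direct method in the class $\mc E_\Sigma$ of Lipschitz immersions with $L^2$-bounded second fundamental form -- following the scheme of Simon and of Keller--Mondino--Rivière, Bauer--Kuwert and Ndiaye--Schätzle for the isoperimetric constraint -- and then to deduce the continuity and monotonicity of $\beta_g$ from it. Fix $R\in I$ and a minimizing sequence $f_k\in\mc E_\Sigma$ with $\mc T(f_k)=R$, $\mc W(f_k)\to\beta_g(R)$. Since $\mc W$ and $\mc T$ are scaling invariant, I rescale so that $\mc A(f_k)=1$; then $\int_\Sigma H_{f_k}\dif\vol_{g_{f_k}}=R$ and, by Gauss--Bonnet, $\int_\Sigma|A_{f_k}|^2\dif\vol_{g_{f_k}}=4\mc W(f_k)-4\pi\chi(\Sigma)$ is bounded. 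The compactness theory for such weak immersions (Rivière; Kuwert--Schätzle) then yields, after a subsequence, translations and finitely many bubbling rescalings, a limiting weak immersion, possibly branched and with domain a connected sum of lower-genus pieces, to which $\mc W$ is lower semicontinuous and across whose components $\mc W$, $\mc A$ and $\int_\Sigma H\dif\vol$ distribute. Everything then rests on excluding loss of compactness.

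\emph{Excluding degeneration.} The first ingredient is the bound $\beta_g(R)<8\pi$ for all $R\in I$, proved by explicit competitors (Proposition~\ref{prop:8piconstruction}, via the Ndiaye--Schätzle gluing method): near $R=\mc T(\mb S^2)$ one perturbs the round sphere, near $R=\sqrt{32\pi}$ one joins two nearly equal round spheres by a thin neck, and for small $R$ one uses strongly invaginated spheres. The endpoint $\sqrt{32\pi}=\sqrt2\,\mc T(\mb S^2)$ is exactly the supremum of $\mc T$ over disjoint unions of two round spheres, whose Willmore energy is precisely $8\pi$; this is why these competitors, and hence $\beta_g(R)$, stay strictly below $8\pi$ on all of $I$. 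With $\beta_g(R)<8\pi$, a branch point of the limit would carry density $\geq2$ and hence, by the monotonicity formula and the Li--Yau inequality~\eqref{intro:LiYau}, Willmore energy $\geq8\pi$; a round-sphere bubble attaches at a point of density $\geq2$ and so is likewise excluded; and a genuine splitting of the genus would cost $\geq\omega_g>8\pi$ by the theorem of Marques--Neves. The one remaining configuration -- the genus concentrating on one component while a genus-zero piece carries the constraint -- is ruled out by the strict inequality $\beta_g(R)<\beta_g+\beta_0(R)-4\pi$, the exact analogue of condition~\eqref{intro:KMRBauerKuwert}, which I would prove by the Bauer--Kuwert/Mondino--Scharrer gluing (invert a genus-$g$ Willmore minimizer at a nonumbilic point and graft it onto a minimizer of the genus-zero $\mc T$-constrained problem along a biharmonic graph); this forces the genus-zero case to be treated first, as the base of an induction on the genus. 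With no loss of compactness, the limit is a connected, unbranched weak immersion $f^g_R$ of genus $g$ with $\mc T(f^g_R)=R$ and $\mc W(f^g_R)=\beta_g(R)$; it satisfies a fourth-order Euler--Lagrange equation with a Lagrange multiplier for the $\mc T$-constraint, so the regularity theory for weak Willmore-type immersions makes it smooth, and $\mc W(f^g_R)<8\pi$ together with~\eqref{intro:LiYau} makes it an embedding.

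\emph{Continuity and the value at $\mc T(\mb S^2)$.} Upper semicontinuity of $\beta_g$ on $I$ follows by perturbing a minimizer $f^g_{R_0}$ to realise any nearby constraint value at energy cost $o(1)$; lower semicontinuity follows by rerunning the compactness argument on near-minimizers along $R_k\to R_0$, the exclusion of degeneration being uniform because $\beta_g<8\pi$ on a neighbourhood of $R_0$. At $R_0=\mc T(\mb S^2)$ one has $\beta_g(\mc T(\mb S^2))\geq\beta_g$ trivially; for the reverse inequality one starts from a genus-$g$ Willmore minimizer and uses a cheap connected-sum modification with a round sphere of variable radius, whose effect is to move $\mc T$ continuously onto $\mc T(\mb S^2)$ while changing $\mc W$ by $o(1)$, so that $\beta_g(\mc T(\mb S^2))=\beta_g$ and $\beta_g$ extends continuously there.

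\emph{Monotonicity, and the main obstacle.} For $\mc T(\mb S^2)\leq R<R'$ I would bound $\beta_g(R)\leq\beta_g(R')$ by taking a near-minimizer $f$ for $R'$ -- an actual minimizer when $R'\in I$ -- and forming its connected sum with a round sphere of radius tuned so that the resulting total mean curvature ratio equals $R$; this is possible because attaching a round sphere moves $\mc T$ continuously from $R'$ down towards $\mc T(\mb S^2)\leq R$, hence through $R$, and because $\mc T(f)=R'\neq\mc T(\mb S^2)$ forces $f$ off the round sphere, so that a Bauer--Kuwert-type gluing is available at energy cost $o(1)$; letting the cost vanish gives $\beta_g(R)\leq\beta_g(R')$, and the symmetric construction on $(-\infty,\mc T(\mb S^2)]$ gives the decreasing part. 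I expect the genuine obstacle throughout to be the concentration-compactness analysis of the minimizing sequence -- specifically, preventing area and total mean curvature from leaking into round-sphere bubbles, and, near $R=\mc T(\mb S^2)$, controlling the energy profile sharply enough to verify $\beta_g(R)<\beta_g+\beta_0(R)-4\pi$ -- since this is where the exact shape of the window $I$ is forced and where the isoperimetric machinery of Keller--Mondino--Rivière must genuinely be re-engineered for the total mean curvature constraint.
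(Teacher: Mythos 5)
Your existence argument follows the paper's route (direct method in $\mc E_\Sigma$ following Keller--Mondino--Rivi\`ere, degeneration excluded by the three strict bounds $\beta_g(R)<8\pi$, $<\omega_g$, $<\beta_g+\beta_0(R)-4\pi$, then regularity from the constrained Euler--Lagrange equation), and your competitors for the $8\pi$ bound are in the spirit of Proposition \ref{prop:8piconstruction}. The genuine gap is in the monotonicity of $\beta_g$ and in the identity $\beta_g(\mc T(\mb S^2))=\beta_g$.

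For monotonicity you propose to pass from constraint value $R'$ to $R$ by forming a connected sum with a round sphere whose radius is ``tuned'' so that $\mc T$ hits $R$, while claiming the gluing costs only $o(1)$ in Willmore energy. These two demands are incompatible. A Bauer--Kuwert-type gluing costs $o(1)$ precisely because the grafted piece is infinitesimally small relative to the host, and then its effect on the total mean curvature ratio is also infinitesimal: Lemma \ref{Difference of H-integrals} gives $\mc T(f)-\mc T(f_2)=\mc O(\alpha\beta)$. To move $\mc T$ by a macroscopic amount the attached sphere must have radius comparable to the host surface, and then the cheapest available junction (the catenoidal neck of Proposition \ref{prop:8piconstruction}) adds essentially the sphere's full $4\pi$, saving only $4\pi(1-\tanh t)\to 0$. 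The device the paper actually uses --- and which your proposal never invokes --- is conformal invariance: $\mc W(I_a\circ f)=\mc W(f)$ exactly, while $\mc T(I_a\circ f)$ sweeps continuously through the whole open interval between $\mc T(f)$ and $\mc T(\mb S^2)$ as the inversion center $a$ travels from infinity to the surface (Lemmas \ref{lem: Möbius blow down} and \ref{lem: Möbius blowup}); monotonicity then follows from the intermediate value theorem at zero energy cost, and the same trick closes the interpolation step you leave implicit in the $8\pi$ bound. Your argument for $\beta_g(\mc T(\mb S^2))=\beta_g$ suffers from the same defect and in addition misses the real subtlety at that point: the inversion trick only reaches values near $\mc T(\mb S^2)$, never $\mc T(\mb S^2)$ itself, and by Minkowski's inequality every convex --- in particular every sufficiently $C^2$-close-to-round --- competitor has $\mc T\ge\mc T(\mb S^2)$. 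The paper therefore needs the Chodosh--Eichmair--Koerber graphs with $\mc T<\mc T(\mb S^2)$ and $\mc W\to 4\pi$ to manufacture, by an intermediate value argument, a non-round nearly-round sphere with $\mc T$ exactly equal to $\mc T(\mb S^2)$, to which Theorem \ref{thm:connectedsum} is then applied; a cheap connected sum with a variable-radius round sphere does not produce such a competitor.
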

In large parts, we follow the same ideas used to prove existence of solutions for the isoperimetrically constrained problem. In Chapter \ref{sec:Preliminaries}, we will introduce notation from differential geometry. In Chapter \ref{sec:VariationOfTandW}, we will study variations of $\mc T$ and its behavior under sphere inversions which will be needed throughout the remaining chapters. In Chapter \ref{sec:ExistenceOfMinimizers}, we will show that the method of \textsc{Keller}, \textsc{Mondino}, and \textsc{Rivière} \cite{KMR} can be adapted to minimize the Willmore energy under fixed $\mc T$ instead of $\iso$. This will be done under the additional assumptions that for $R \in I$
\begin{align}
\beta_g(R) &<  8\pi\label{intro2:KMR8pi}\\
\beta_g(R) &< \omega_g\label{intro2:KMRomegag}\\
\beta_g(R) &<  \beta_g + \beta_0(R) - 4\pi\label{intro2:KMRBauerKuwert}.
\end{align}
Compare this to \eqref{intro:KMR8pi}, \eqref{intro:KMRomegag} and \eqref{intro:KMRBauerKuwert}. As before, \eqref{intro2:KMRomegag} is implied by \eqref{intro2:KMR8pi}. In Chapter \ref{sec:BauerKuwert}, we will prove \eqref{intro2:KMRBauerKuwert} by proving the following Theorem:
\begin{thm}[See Theorem \ref{thm:connectedsum}] \label{intro:thm:connectedsum}
Suppose that $\Sigma_1$, $ \Sigma_2$ are two closed, oriented surfaces, ${f_1:\Sigma_1 \to \R^3}$ is a smooth embedding, ${f_2:\Sigma_2 \to \R^3}$ is a smooth immersion and neither $f_1$ nor $f_2$ parametrize a round sphere and $\mc T(f_2) \neq \mc T(\mb S^2)$. Denote with $\Sigma$ the connected sum $\Sigma_1 \# \Sigma_2$. Then, there exists a smooth immersion $f:\Sigma \to \R^3$ such that
\[\mc T(f) = \mc T(f_2)\]
and 
\[\mc W(f) < \mc W(f_1) + \mc W(f_2) - 4\pi.\]
\end{thm}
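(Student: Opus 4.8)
The plan is to transcribe the Bauer--Kuwert connected-sum construction \cite{BauerKuwert} --- in the form used by Mondino--Scharrer \cite{MondinoScharrerInequality} for the isoperimetric constraint --- and to combine it with the behaviour of $\mc T$ under sphere inversions that is developed in Chapter~\ref{sec:VariationOfTandW}. Since $f_1$ does not parametrize a round sphere, it has a non-umbilic point $p_1$; composing with the inversion $\iota$ centred at $f_1(p_1)$ produces a complete non-compact immersion $\hat f_1$ of $\Sigma_1\setminus\{p_1\}$ with a single end asymptotic to a plane and with $\mc W(\hat f_1)=\mc W(f_1)-4\pi$ (a standard consequence of the conformal invariance of $\mc W$ for closed surfaces; the energy lost at the end is exactly $4\pi$ since $f_1(p_1)$ has a single preimage). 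Non-umbilicity of $p_1$ guarantees that the leading non-flat coefficient of the planar end does not vanish, which is precisely what makes the gluing step below gain energy strictly. One also records the finite number $\int_{\hat f_1}H\dif\vol$ and notes that reversing the orientation of $f_1$ --- equivalently, choosing the side of $f_2$ into which the handle will bulge --- reverses its sign.

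Next, fix a suitable point $p_2\in\Sigma_2$, write $f_2$ near $p_2$ as a graph over its tangent plane, and carry out the Bauer--Kuwert gluing at a scale $\lambda\to 0$: blow up $f_2$, truncate the planar end of $\hat f_1$ at a radius tending to infinity, and interpolate across the resulting annulus by the graph of a biharmonic function chosen so as to annihilate the leading boundary contributions. This produces, for each small $\lambda$, a smooth immersion $F_\lambda$ of $\Sigma=\Sigma_1\#\Sigma_2$ with $\mc W(F_\lambda)\le \mc W(f_1)+\mc W(f_2)-4\pi-\delta_\lambda$ for some $\delta_\lambda>0$; this inequality is by now routine and only needs to be copied over. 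The genuinely new point is to follow the constraint through the construction: the area and the total mean curvature of the glued core, of the transition annulus and of the excised disc are all of lower order relative to those of the (rescaled) $f_2$, and the biharmonic interpolation keeps the boundary terms arising in $\int_\Sigma H\dif\vol$ negligible, so that $\mc A(F_\lambda)=\mc A(f_2)+o(1)$ and $\int_\Sigma H\dif\vol=\int_{\Sigma_2}H\dif\vol_{g_{f_2}}+o(1)$, hence $\mc T(F_\lambda)\to\mc T(f_2)$. Moreover, to first order $\mc T(F_\lambda)-\mc T(f_2)$ is a multiple of $\int_{\hat f_1}H\dif\vol$ divided by $\int_{\Sigma_2}H\dif\vol_{g_{f_2}}\neq 0$, so by the orientation/side choice of the previous paragraph one can, in the generic case, arrange $\mc T(F_\lambda)>\mc T(f_2)$ for all small $\lambda$.

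It remains to correct the constraint \emph{exactly} without spending Willmore energy, and this is where Chapter~\ref{sec:VariationOfTandW} and the hypothesis $\mc T(f_2)\neq\mc T(\mb S^2)$ enter. Composing $F_\lambda$ with the inversion $\iota_y$ centred at a point $y\notin\im(F_\lambda)$ leaves $\mc W$ unchanged, while $y\mapsto\mc T(\iota_y\circ F_\lambda)$ (with the image oriented in the natural way) is continuous on the unbounded, connected component of $\R^3\setminus\im(F_\lambda)$, converges to $\mc T(F_\lambda)$ as $|y|\to\infty$, and converges to $0$ as $y$ approaches $\im(F_\lambda)$ --- the inverted surface then develops a planar end, so $\int H\dif\vol$ stays bounded while the area diverges. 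Since $\mc T(f_2)\in(0,\sqrt{32\pi})$ is strictly positive and, in the generic case, strictly below $\mc T(F_\lambda)$, the intermediate value theorem yields $y$ with $\mc T(\iota_y\circ F_\lambda)=\mc T(f_2)$, and $f\cqq\iota_y\circ F_\lambda$ is the desired immersion.

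I expect the main obstacle to be the remaining degenerate case, in which the first-order error in $\mc T(F_\lambda)-\mc T(f_2)$ vanishes or has the wrong sign: there one must instead show that the $\mc T$-range of the conformal orbit of $F_\lambda$ exceeds $\mc T(f_2)$ by an amount bounded away from zero uniformly in $\lambda$, which should follow from the quantitative analysis in Chapter~\ref{sec:VariationOfTandW} of the variation of $\mc T$ under inversions --- the conformal orbit has constant $\mc T$ only for round spheres, and $\mc T(f_2)\neq\mc T(\mb S^2)$ keeps $\mc T(F_\lambda)$ (for $\lambda$ small) uniformly away from that obstruction. The Bauer--Kuwert energy estimate itself and the bookkeeping of $\mc A$ and $\int H\dif\vol$ across the glued regions should be comparatively routine; an alternative to the inversion correction would be a two-scale version of the gluing, decoupling the size of the $\hat f_1$-bulb from the transition scale and solving $\mc T(F_{\lambda,\mu})=\mc T(f_2)$ for $\mu$ via the implicit function theorem, mirroring \cite{MondinoScharrerInequality} more closely.
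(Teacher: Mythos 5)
Your skeleton is the paper's (Bauer--Kuwert gluing, first-order expansion of $\mc T(F_\lambda)-\mc T(f_2)$, a sign adjustment by flipping $f_1$, exact correction by sphere inversions, and a separate degenerate case), but one of your two key steps rests on a false limit. As the inversion centre $y$ approaches $\im(F_\lambda)$, the quantity $\mc T(\iota_y\circ F_\lambda)$ tends to $\mc T(\mb S^2)=4\sqrt{\pi}$, not to $0$: the centre stays off the surface, so the image remains compact and the piece of surface near the centre is blown up to a large, almost round sphere, with $\int H\dif\vol\sim 4\pi/t$ and $\sqrt{\mc A}\sim\sqrt{\pi}/t$ where $t$ is the distance to the surface (Lemma \ref{lem: Möbius blowup}); your picture of a planar end with bounded $\int H\dif\vol$ would only apply if the centre lay on the surface. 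Consequently the intermediate value argument reaches only the values strictly between $\mc T(\mb S^2)$ and $\mc T(F_\lambda)$, so you must arrange $\mc T(F_\lambda)$ to lie on the far side of $\mc T(f_2)$ from $\mc T(\mb S^2)$; the direction of the perturbation is dictated by $\sgn\bigl(\mc T(f_2)-\mc T(\mb S^2)\bigr)$, not by positivity of $\mc T(f_2)$. As written your argument fails whenever $\mc T(f_2)<\mc T(\mb S^2)$ (including $\mc T(f_2)\le 0$, which the theorem permits), and this is precisely where the hypothesis $\mc T(f_2)\ne\mc T(\mb S^2)$ is consumed. (A smaller slip: the first-order change of $\mc T$ is $\alpha\beta\,\mc A(f_2)^{-1/2}\int H_{f_1^\circ}\dif\vol$; the denominator is $\sqrt{\mc A(f_2)}$, not $\int_{\Sigma_2}H\dif\vol$, which may vanish.)

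The degenerate case is a genuine gap as well. If $\int_{U_\alpha}H_{f_1^\circ}\dif\vol\to 0$, then $\mc T(F_\lambda)-\mc T(f_2)$ is $o(\alpha^2)$ with uncontrolled sign, and your fallback --- that the $\mc T$-range of the conformal orbit of $F_\lambda$ exceeds $\mc T(f_2)$ by an amount bounded away from zero --- is unsupported: Lemmas \ref{lem: Möbius blow down} and \ref{lem: Möbius blowup} only guarantee that the orbit covers the open interval between $\mc T(F_\lambda)$ and $\mc T(\mb S^2)$, and if $\mc T(F_\lambda)$ lands slightly on the wrong side of $\mc T(f_2)$ this interval misses $\mc T(f_2)$ entirely; nothing forces the orbit to overshoot its endpoint. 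The paper's Case 3 closes this by perturbing $f_2$ with a compactly supported normal field away from the gluing point, using the first variation of $\mc T$ (Lemma \ref{lem:First variation of T}, available since $f_2$ is not a round sphere): this moves $\mc T$ at a linear rate $C_1 s$ at a Willmore cost of only $C_2|s|$, and with $s=\mc O(\alpha^2)$ the cost is absorbed by the strictly negative $-\eps\pi\alpha^2$ term in the Bauer--Kuwert energy estimate. Some such local variation (or a quantitative substitute like your two-scale/implicit-function variant, which you would have to actually carry out) is indispensable here; the inversion orbit alone cannot do it.
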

In Chapter \ref{sec:8piconstruction}, we will construct embedded surfaces of arbitrary genus with Willmore energy strictly below $8\pi$ such that their total mean curvature ratio converges either to $0$ or to $\sqrt{2} \mc T(\mb S^2) = \sqrt{32\pi}$. More precisely, we show
\begin{prop}[See Proposition \ref{prop:8piconstruction}] \label{intro:prop:8piconstruction}
There exist smooth genus $g$ surfaces $\Sigma^{1,g}_n$, $ \Sigma^{2,g}_n$ such that $\mc W(\Sigma^{1,g}_n)< 8\pi$, $ \mc W(\Sigma^{2,g}_n) < 8\pi$ and
\[\mc T(\Sigma^{1,g}_n) \to 0, \quad \mc T(\Sigma^{2,g}_n) \to\sqrt{2} \mc T(\mb S^2) \quad \text{as $n\to \infty$}.\]
\end{prop}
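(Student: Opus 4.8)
The plan is to exhibit, for each genus $g$ and each of the two target values $0$ and $\sqrt{2}\,\mc T(\mb S^2)$, an explicit one-parameter family of smooth genus $g$ surfaces whose Willmore energy stays strictly below $8\pi$ while the total mean curvature ratio $\mc T$ degenerates in the desired direction. The starting point is the existence, for every $g$, of a smooth closed genus $g$ surface $\Sigma_*^g$ with $\mc W(\Sigma_*^g)<8\pi$: for $g=0$ take a round sphere ($\mc W=4\pi$), for $g=1$ a Clifford-type torus ($\mc W=2\pi^2<8\pi$), and for $g\geq 2$ the Lawson–Kusner surfaces (stereographic projections of Lawson's minimal surfaces in $\mb S^3$), whose Willmore energy is known to be $<8\pi$; these are precisely the surfaces underlying Kusner's bound \eqref{intro:betag<8piomegag} in the unconstrained problem. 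The idea is then to deform $\Sigma_*^g$ without raising $\mc W$ too much, using the conformal invariance of $\mc W$ as the main lever.

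For the first family $\Sigma^{1,g}_n$ with $\mc T\to 0$, the natural construction is to invert $\Sigma_*^g$ at a point $p$ lying \emph{on} the surface — or, more controllably, to apply a sequence of Möbius transformations (sphere inversions with centers converging to a point of the surface) that in the limit pinch off a thin, nearly-flat end. Since $\mc W$ is invariant under Möbius transformations as long as the inversion center stays off the surface, each $\Sigma^{1,g}_n$ has exactly $\mc W(\Sigma_*^g)<8\pi$; one must only check that $\mc T$ can be driven to zero along this family. Here the computations of Chapter \ref{sec:VariationOfTandW} on the behaviour of $\int H\dif\vol$ and $\mc A$ under sphere inversions do the work: inverting concentrates more and more area near the inversion point while the signed total mean curvature contribution there is controlled, so that the ratio $\int H\dif\vol/\sqrt{\mc A}\to 0$. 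An alternative, possibly cleaner, realization is to take a large round sphere and attach a small genus-$g$ handle-configuration of fixed shape but shrinking scale: as the handle shrinks, $\mc W$ of the glued surface tends to $\mc W(\text{sphere})+(\mc W(\Sigma_*^g)-4\pi)<8\pi$ by a Bauer–Kuwert-type connected-sum estimate, while $\int H\dif\vol$ stays bounded and $\mc A\to\infty$ (dominated by the large sphere), forcing $\mc T\to 0$.

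For the second family $\Sigma^{2,g}_n$ with $\mc T\to\sqrt{2}\,\mc T(\mb S^2)=\sqrt{32\pi}$, recall from \eqref{CS for W-T relation} that $\tfrac14\mc T(f)^2\leq\mc W(f)$, so approaching $\mc T=\sqrt{32\pi}$ forces $\mc W\to 8\pi$ from below — we are squeezing against the constraint \eqref{intro2:KMR8pi}, and equality in the Cauchy–Schwarz step \eqref{CS for W-T relation} holds precisely for (multiply-covered) spheres. The construction should therefore be a perturbation of a \emph{doubly covered sphere}: take two nearly-round spheres of almost equal radius joined by a tiny catenoidal neck (a "dumbbell"/connected sum of two spheres), which has $\mc W$ close to $8\pi$ and genus $0$; then, to get genus $g$, additionally graft on the shrinking genus-$g$ handle-configuration as above. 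As the neck and the handle both shrink, $\mc W\to 8\pi^-$ (staying strictly below, which is the delicate point), $\int H\dif\vol\to 2\int_{\mb S^2}H\dif\vol = 2\mc T(\mb S^2)\sqrt{\mc A(\mb S^2)}$ and $\mc A\to 2\mc A(\mb S^2)$, so $\mc T\to 2\mc T(\mb S^2)/\sqrt{2}=\sqrt{2}\,\mc T(\mb S^2)$ as claimed. Equivalently one can invert $\Sigma_*^g$ at a point \emph{very close to but off} the surface, which produces a surface with one large and one small nearly-spherical lobe; tuning the distance makes the two lobes comparable and yields the doubly-covered-sphere limit.

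The main obstacle is the energy bookkeeping in the gluing/pinching steps: one must verify that the connected-sum and neck-shrinking constructions lose, rather than gain, a full $4\pi$ at each junction (the Bauer–Kuwert / Mondino–Scharrer gluing via graphs of biharmonic functions, already invoked for \eqref{intro2:KMRBauerKuwert} in Theorem \ref{intro:thm:connectedsum}), so that the total stays strictly under $8\pi$ \emph{uniformly in $n$} — strict inequality is essential, since the limiting configurations (a sphere with a point removed, or a doubly covered sphere) sit exactly at $4\pi$ and $8\pi$. Controlling $\mc T$ along these degenerations is comparatively routine given the sphere-inversion formulas of Chapter \ref{sec:VariationOfTandW}: $\int H\dif\vol$ is continuous under the gluings up to terms that vanish with the neck size, and $\mc A$ either blows up (first family) or converges to $2\mc A(\mb S^2)$ (second family), both of which are easy to track.
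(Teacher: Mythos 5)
Both of your constructions contain genuine gaps, and in both cases the failure is quantitative rather than a matter of bookkeeping.

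For the family with $\mc T\to 0$, neither of your two proposed mechanisms produces total mean curvature ratio tending to zero. Inverting at centers converging to a point of the surface drives $\mc T$ to $\mc T(\mb S^2)=4\sqrt{\pi}$, not to $0$: the blown-up region is asymptotically a huge round sphere, for which $\int H\dif\vol\sim 4\pi/t$ and $\sqrt{\mc A}\sim\sqrt{\pi}/t$, so the ratio tends to $4\sqrt{\pi}$ (this is exactly Lemma \ref{lem: Möbius blowup}); combined with Lemma \ref{lem: Möbius blow down}, Möbius transformations of a fixed surface only sweep the range between $\mc T(f)$ and $\mc T(\mb S^2)$ and give you no access to $0$. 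Your alternative — a large sphere with a small grafted handle — fails because $\mc T$ is scale-invariant: normalizing the sphere to radius $1$, the handle shrinks, $\mc A\to 4\pi$, $\int H\dif\vol\to 8\pi$, and again $\mc T\to 4\sqrt{\pi}$. Your claim that ``$\int H\dif\vol$ stays bounded while $\mc A\to\infty$'' confuses the two scalings: for a sphere of radius $\rho$ both $\int H\dif\vol=8\pi\rho$ and $\sqrt{\mc A}=2\sqrt{\pi}\rho$ grow linearly. To reach $\mc T\to 0$ one needs \emph{cancellation} in $\int H\dif\vol$, and the paper obtains it by taking two \emph{nested} spheres of radii $\sigma(t)$ and $1/\sigma(t)$ with $\sigma(t)\to 1$, joined by catenoidal necks: the inner normals of the two spheres point in opposite directions, so their total mean curvatures cancel while the area tends to $8\pi$.

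For the family with $\mc T\to\sqrt{2}\,\mc T(\mb S^2)$, the limiting two-lobe configuration is right, but your energy bookkeeping cannot be repaired for $g\geq 1$. The dumbbell of two spheres already has $\mc W=8\pi-\eps$; grafting a genus-$g$ handle configuration by a Bauer--Kuwert connected sum gives at best $\mc W<(8\pi-\eps)+\beta_g-4\pi$, and since $\beta_g\geq 2\pi^2>4\pi$ for $g\geq 1$ this exceeds $8\pi$ no matter how the gluing is performed. The resolution in the paper is that the genus is \emph{not} produced by connected sum with a pre-existing genus-$g$ surface: it is produced by inserting $g+1$ catenoidal necks between the two (nested, then Möbius-transformed) spheres, and each neck strictly \emph{lowers} the energy, replacing two spherical caps of total Willmore energy $4\pi(1-\tanh t)$ by a piece of catenoid with zero Willmore energy, so that $\mc W=8\pi-4\pi(g+1)(1-\tanh t)<8\pi$ uniformly. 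Both families in the paper are then the same $C^{1,1}$ surface viewed before and after one explicit sphere inversion $T$, followed by mollification.
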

Composing these surfaces with a suitable Möbius transformation, see Lemma \ref{lem: Möbius blow down} and Lemma \ref{lem: Möbius blowup}, will prove \eqref{intro2:KMR8pi}.  

\textsc{Minkowski} \cite{Minkowski} showed that for surfaces $\Sigma$, which are the boundary of convex sets in $\R^3$, it holds that $\mc T(\Sigma)\geq \mc T(\mb S^2)$. In particular, the behavior of $\beta_0$ and consequently $\beta_g$ at $\mc T(\mb S^2)$ is non-trivial. In Chapter \ref{sec:CloseToSphere}, we show that $\beta_g$ is continuous at $\mc T(\mb S^2)$ using certain surfaces constructed in \cite{ChodoshEichmairKoerber}. We also prove the following asymptotic lower bound for $\beta_0$ around $\mc T(\mb S^2)$.
\begin{thm}[See Theorem \ref{thm:asymptotic estimate at T(S2)}] \label{intro:thm:asymptotic estimate at T(S2)}
Suppose that $\tilde{f_n}:\mb S^2\to \R^3$ are immersions such that $\mc T(\tilde{f_n})\to \mc T(\mb S^2)$. Then as $n\to \infty$
\[\mc T(\tilde{f_n})-\mc T(\mb S^2) \geq  - o(1) (\mc W(\tilde{f_n}) - 4\pi).\]
\end{thm}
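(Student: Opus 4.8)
The plan is to obtain the asymptotic lower bound by combining three ingredients: the rigidity/stability estimate for the Willmore functional near the round sphere (De Lellis–Müller type), a careful expansion of the total mean curvature ratio $\mc T$ around $\mb S^2$, and a reduction to nearly-round competitors via the conformal invariance of $\mc W$. First I would dispose of the trivial case: if $\mc W(\tilde f_n) - 4\pi$ does not tend to zero (along a subsequence it stays bounded below by a positive constant), then since $\mc T(\tilde f_n) - \mc T(\mb S^2) \to 0$ by hypothesis, the claimed inequality $\mc T(\tilde f_n) - \mc T(\mb S^2) \geq -o(1)(\mc W(\tilde f_n)-4\pi)$ holds automatically because the right-hand side is $-o(1)$ times a bounded quantity, hence still $o(1)$, while the left-hand side is exactly $o(1)$. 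So the only interesting regime is $\mc W(\tilde f_n) \to 4\pi$, i.e. the immersions are Willmore-energy-close to the round sphere.

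In that regime I would invoke the quantitative rigidity result: there exist $W^{2,2}$-conformal parametrizations and Möbius transformations $\Phi_n$ of $\R^3 \cup \{\infty\}$ such that, after rescaling and translating, $\Phi_n \circ \tilde f_n$ converges to the standard $\mb S^2$ in $W^{2,2}$, with the deviation controlled by the Willmore deficit; concretely one writes the limit surface as a normal graph $x + u_n \nu$ over $\mb S^2$ with $\|u_n\|_{W^{2,2}(\mb S^2)}^2 \lesssim \mc W(\tilde f_n) - 4\pi =: \delta_n \to 0$. Since $\mc W$ is Möbius-invariant but $\mc T$ is only scaling-invariant, I must then track how $\mc T$ transforms under the inversion part of $\Phi_n$; here I would use the formulas for the behaviour of $\int H \dif\vol$ and $\mc A$ under sphere inversions developed in Chapter \ref{sec:VariationOfTandW} of the paper, and argue that the optimizing $\Phi_n$ (chosen to make the parametrization converge) can be taken with bounded inversion parameters, so that $\mc T$ is comparably preserved up to an error that is itself $o(1)$ relative to $\sqrt{\delta_n}$ — or, alternatively, set up the whole argument directly for the graph representative and absorb the Möbius correction into the final $o(1)$.

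The heart of the matter is then the second-order expansion of $\mc T$ at the round sphere. Writing $f_u = \mathrm{id}_{\mb S^2} + u\nu$, one expands $\int_{\mb S^2} H \dif\vol_{g_{f_u}}$ and $\mc A(f_u)$ to second order in $u$. Decomposing $u = \sum_{\ell \geq 0} u_\ell$ into spherical harmonics, the first-order terms cancel the constraint directions, and the key point is the sign of the quadratic form: the $\ell=0$ and $\ell=1$ modes are exactly the scaling and translation directions under which both $\mc W$ and $\mc T$ are critical (indeed $\mc W$ is invariant and $\mc T$ is scaling-invariant, translation-invariant), so they contribute nothing; on the orthogonal complement $\bigoplus_{\ell \geq 2}$ one shows that the second variation of $\mc T$ is bounded below in absolute value by a constant times $\|u\|_{W^{2,2}}^2$ with a sign that makes $\mc T(f_u) - \mc T(\mb S^2)$ dominated by $-C\|u\|_{W^{2,2}}^2$ — matching Minkowski's inequality which says convex perturbations increase $\mc T$ but generic $W^{2,2}$ perturbations can only decrease it to second order. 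Combined with $\|u_n\|_{W^{2,2}}^2 \lesssim \delta_n$, this yields $\mc T(\tilde f_n) - \mc T(\mb S^2) \geq -C\|u_n\|_{W^{2,2}}^2 + o(\|u_n\|^2_{W^{2,2}}) \geq -o(1)\,\delta_n$ once one checks $C\|u_n\|^2/\delta_n$ can be taken $\to 0$; the cleanest way to get the genuine $o(1)$ (rather than $O(1)$) prefactor is to use that the Willmore deficit controls the full second fundamental form trace-free part, $\int |\mathring{A}|^2 \lesssim \delta_n$, while $\mc T - \mc T(\mb S^2)$ to leading order involves only $\int |\mathring A|^2$ terms that are further suppressed, so the ratio vanishes.

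The main obstacle I anticipate is precisely matching the sharp constant so that the prefactor is $o(1)$ and not merely a fixed constant: one needs the quadratic form controlling $\mc T - \mc T(\mb S^2)$ to vanish to higher order in the same norm that the Willmore deficit controls, which requires using the stronger fact that $\delta_n$ controls $\int_{\mb S^2}|\mathring A_n|^2$ and that this in turn controls the higher spherical-harmonic modes of $u_n$ with a gain, together with the Gauss–Bonnet identity $\int H^2 = \int |\mathring A|^2 + 2\int |A|^2$-type bookkeeping — in other words, a careful accounting of which geometric quantity sits at which order. A secondary technical point is justifying the passage from the abstract $W^{2,2}$-immersion $\tilde f_n$ to a normal graph over $\mb S^2$ uniformly, which is standard (De Lellis–Müller, Mondino–Rivière) but must be cited precisely.
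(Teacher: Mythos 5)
Your overall architecture differs fundamentally from the paper's, and the step on which your argument hinges has a genuine gap. Your plan is: reduce to a normal graph $u_n$ over $\mb S^2$ with $\|u_n\|_{W^{2,2}}^2\lesssim \delta_n:=\mc W(\tilde f_n)-4\pi$, expand $\mc T$ to second order, and hope the quadratic form is small enough relative to $\delta_n$ to produce an $o(1)$ prefactor. As you yourself note, a second-variation bound of the form $\mc T(f_{u})-\mc T(\mb S^2)\geq -C\|u\|_{W^{2,2}}^2$ only yields $-O(1)(\mc W-4\pi)$, not $-o(1)(\mc W-4\pi)$, and your proposed rescue is incorrect: you claim that ``$\mc T-\mc T(\mb S^2)$ to leading order involves only $\int|\mb I^0|^2$ terms that are further suppressed,'' but by Gauss--Bonnet one has exactly $\int_{\mb S^2}|\mb I^0|^2\dif\vol_g=2(\mc W-4\pi)$ for spherical immersions, so this quantity is \emph{not} suppressed relative to the Willmore deficit --- it is comparable to it. To make your route work you would need to show that the second variation of $\mc T$ at the round sphere is actually non-negative on the modes $\ell\geq 2$ (so that the negative part is genuinely higher order), which you do not establish; without that, the prefactor stays $O(1)$. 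There is also an unresolved reduction issue: De Lellis--M\"uller requires $\mc W\to 4\pi$ and produces the graph structure only after a M\"obius normalization, and you leave open how to control the effect of the inversion part on $\mc T$.

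The paper's proof sidesteps all of this. It first replaces $\tilde f_n$ by the constrained minimizers $f^0_{R_n}$ (which only strengthens the desired inequality), argues by contradiction assuming $\mc T-\mc T(\mb S^2)\leq -c_0(\mc W-4\pi)$, and uses the monotonicity of $\beta_0$ to extract parameters $R_n$ at which $\beta_0$ is differentiable with $|\beta_0'(R_n)|\leq 1/c_0$, i.e.\ minimizers satisfying the Euler--Lagrange equation \eqref{Euler Lagrange Equation} with \emph{uniformly bounded Lagrange multipliers}. De Lellis--M\"uller then gives $\|\mb I_{f_n}-g\|_{L^2}\to 0$, and the bounded multipliers allow an elliptic bootstrap (local $W^{2,2}$ estimates for $\mb I_{f_n}$ from \cite{Rupp2023volume} and \cite{GradientFlowWillmoreFunctional}) upgrading this to $\|\mb I_{f_n}-g\|_{L^\infty}\to 0$. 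Hence $\mb I_{f_n}$ is positive definite, $f_n$ bounds a convex body, and Minkowski's inequality forces $\mc T(f_n)\geq\mc T(\mb S^2)$, contradicting $R_n<\mc T(\mb S^2)$. No second variation of $\mc T$ is ever computed; the $o(1)$ emerges from the contradiction structure rather than from an expansion. If you want to salvage your approach, the missing ingredient is precisely a proof that the second variation of $\mc T$ at $\mb S^2$ is non-negative on $\bigoplus_{\ell\geq 2}$ (consistent with, but not implied by, Minkowski's inequality), together with uniform control of the cubic remainder in a norm controlled by $\delta_n$.
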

Notice that consequently, the function $\beta_0$ cannot be differentiable at $\mc T(\mb S^2)$, which can also be seen directly from \eqref{CS for W-T relation}.

Finally, in Chapter \ref{sec:OtherResults}, we will answer a question posed in \cite{Dalphin2016} regarding the total mean curvature ratio for rotationally symmetric surfaces. More precisely, we will show that there exists an axisymmetric embedded surface whose total mean curvature is negative. Furthermore, we will show that under an additional assumption, the total mean curvature is non-negative, see Theorem \ref{rot sym surface}. Finally, we will prove a lower bound of $6\pi$ for the Willmore energy of axisymmetric surfaces with a negative total mean curvature, see Corollary \ref{cor:Willmore 6pi}.
\subsection{Challenges and open questions}
Let us point out some key differences between the isoperimetrically constrained problem and the total mean curvature constrained problem. The most striking difference is the isoperimetric inequality \eqref{isoperimetric inequality}. Such an inequality does not exist for $\mc T$, as we will see in Lemma \ref{lem:im T is R}. To be precise, there exist embedded surfaces with arbitrary total mean curvature ratio. In particular, the minimization problem for the isoperimetric ratio is only considered for $R\in (\sqrt[3]{36\pi},\infty)$, while in this paper, the constraint of $\mc T$ is considered for $R\in \R$. Additionally, the constraint $R = \mc T(\mb S^2)$ poses a problem for $g\geq 1$ and must be excluded in the existence of minimizers via the direct method in the calculus of variations. The reason for this is the following: When deducing the existence of minimizers of the total mean curvature constrained problem, we use a minimizing sequence $(f_n)_n$ of embeddings from a genus $g$ surface into $\R^3$ and want to use weak $W^{2,2}$-compactness to find a weak limit, see Section \ref{subsubsec:Case1}. To ensure the compactness, we need to exclude a bubbling phenomenon, i.e., a small portion of the surface is blown up to a surface which looks like a minimizer of the genus $0$ problem with the same total mean curvature ratio $R$, to which a shrinking portion of the surface carrying the genus is attached, see Section \ref{subsubsec:Case2}. Then, we can show that the Willmore energy of this surface is bounded from below by $\beta_g+\beta_0(R) - 4\pi$. In the case $R\neq \mc T(\mb S^2)$, this contradicts \eqref{intro2:KMRBauerKuwert}. However, for $R=\mc T(\mb S^2)$, \eqref{intro2:KMRBauerKuwert} turns into the false inequality $\beta_g(R) <\beta_g$. This prevents us from excluding the bubbling phenomenon and establishing compactness. 

Notice however, that since $\beta_g(\mc T(\mb S^2)) = \beta_g$ by Corollary \ref{cor:Continuity at sphere}, the existence of minimizers of $\beta_g(\mc T(\mb S^2))$ depends on whether there exist minimizers of $\beta_g$ with a total mean curvature ratio equal to $\mc T(\mb S^2)$. In the case $g=1$, it is known from \cite{MarquesNeves} that all minimizers are conformal images of the Clifford torus and so, in order to analyze the existence of minimizers for $R=\mathcal T(\mathbb S^2)$, the total mean curvature ratio of these conformal images needs to be calculated.

We show in Proposition \ref{prop:8piconstruction} that 
\begin{align}
\limsup_{R\to 0^+} \beta_g(R) &\leq 8\pi, \label{intro:beta at left end}\\
\limsup_{R\to \sqrt{2}\mc T(\mb S^2)^-} \beta_g(R) &\leq 8\pi. \label{intro:beta at right end}
\end{align}
While \eqref{CS for W-T relation} shows that equality holds in \eqref{intro:beta at right end}, it is not known whether equality also holds for \eqref{intro:beta at left end}. The first lower bound aiming towards \eqref{intro:beta at left end} is Corollary \ref{cor:Willmore 6pi} in which we show a $6\pi$ lower bound in the class of axisymmetric surfaces. Notice that for the isoperimetrically constrained problem, the analogous inequality $\lim _{R\to \infty} \beta_0^\iso	(R) = 8\pi$ was proved by \textsc{Schygulla} \cite[Theorem 1]{Schygulla}, using the fact that if we fix the area, the volume has to approach 0. Then any limit surface has a volume of 0 and must therefore have a density of at least 2 everywhere. Such an argument is however not applicable here.

While the isoperimetrically constrained problem admits smooth minimizers for all $R\in (\sqrt[3]{36\pi},\infty)$ as $\beta_g ^{\iso}(R) <8\pi$ always holds, the same cannot be said about the total mean curvature constrained problem. Consider $R = \sqrt{2}\mc T(\mb S^2)$. Then \eqref{CS for W-T relation} shows that $\beta_g(R)\geq 8\pi$ and that equality holds if and only if $H$ is constant which is impossible as \cite{Alexandrov} would imply that the surface is a round sphere. By the same argument as in \cite[Example 1.2]{RuppScharrer}\footnote{We use Lemma \ref{lem:First variation of T} instead of \cite[Lemma 2.1]{MondinoScharrerInequality}}, we can show that $\beta_0(\sqrt{2} \mc T(\mb S^2)) = 8\pi$. Thus, no smooth minimizer exists for $R=\sqrt{2}\mathcal T(\mathbb S^2)$.

Another issue that becomes apparent when trying to prove Theorem \ref{thm:connectedsum} is that $\mc T$ is a second-degree differential expression, while the isoperimetric ratio is only of first order. This worsens the asymptotic behavior of the total mean curvature of the connected sum constructed in Chapter \ref{sec:BauerKuwert}. We resolve this by gluing the inverted surface from either the inside or the outside, depending on the sign of the total mean curvature of the inverted surface.

\begin{ack}
This paper was submitted as the second authors' master thesis at the University of Bonn. The authors want to thank Stefan Müller for his interest in the work and for reviewing the thesis.
\end{ack}
\section{Preliminaries} \label{sec:Preliminaries}
\subsection{Definitions from differential geometry}
 \label{subsec:DefinitionsFromDifferentialGeometry}
In this paper, $C$ will always stand for an arbitrary constant which may change from line to line. We denote by $\dom f$ and $\im f$ the domain and image of a function $f$.

Let us first recall some definitions from differential geometry. Suppose that $(\Sigma, \mc A)$ is a 2-dimensional smooth manifold and $f: \Sigma \to \R^3$ an immersion. Consider a local chart $x\in \mc A$. Given an open subset $U\subset \Sigma$ with $U\subset \dom(x)$, we define for $\phi\in C^1(U,\R^n)$ and a point $p\in U$ the \emph{differential of $\phi$ at $p$ in direction $i$} by
\[\parpar{x^i}{\phi}(p)\cqq (D_i(\phi\circ x^{-1})\circ x)(p).\]
We also write $\parpar{i}{\phi}$ instead of $\parpar{x^i}{\phi}$.
We denote by $g$ or $g_f$ the \emph{first fundamental form}, i.e., the pullback of the canonical, flat metric $\langle \cdot, \cdot \rangle$ on $\R^3$ by $f$. In a local chart $x$, this corresponds to the $2\times 2$ matrix
\[g_{ij}(p) = \langle \parpar{x^i}{f}(p),\parpar{x^j}{f}(p)\rangle.\]
We usually suppress the dependency on the chart. The coefficients of the inverse matrix of $g$ at $p$ are denoted by $g^{ij}(p)$. We define the \emph{gradient} of a function $\phi\in C^1(\Sigma, \R)$ in a local chart $x$ by $\nabla \phi \cqq g ^{ij} \parpar{x^i}{\phi} \parpar{x^j}{f}$. Here we use the Einstein convention, i.e., whenever an index appears twice in the same term, we sum over all possible values for this index. The \emph{divergence} of a function $X\in C^1(\Sigma, \R^3)$ is defined by $\Div X \cqq g^{ij}\langle\parpar{x^i}{X}, \parpar{x^j}{f}\rangle$.

We suppose now that $\Sigma$ is an oriented surface and $f:\Sigma \to \R^3$ is again an immersion. We define the \emph{Gauss map} $\vec n$ in any local, oriented chart $x$ by
\begin{equation}
\vec n\cqq \frac{\parpar{x^1}{f}\times \parpar{x^2}{f}}{|\parpar{x^1}{f}\times \parpar{x^2}{f}|}.\label{normal}
\end{equation}
The map $\vec n$ takes values in $\mb S^2$. If $f$ is an embedding, then the orientation is chosen such that $\vec n$ represents the inner normal vector field. We define the \emph{second fundamental form of $f$} in an oriented chart $x$ by the $2\times 2$ matrix
\begin{equation}
\mb I_{ij}\cqq \langle \partial_i \partial_j f, \vec n\rangle.\label{second fundamental form}
\end{equation}
Also notice that due to the fact that $\langle\partial_j f, \vec n\rangle=0$, the product rule implies
\begin{equation}
\mb I_{ij} = - \langle \partial _i f, \partial_j \vec n\rangle .  \label{second fundamental form less regularity}
\end{equation}
The \emph{mean curvature vector} $\vec H$ and the \emph{scalar mean curvature} $H$ are defined by
\begin{equation}
\begin{split}
\vec H&\cqq g^{ij} \mb I_{ij}\vec n=-g^{ij} \langle \partial _i f, \partial_j \vec n\rangle \vec n,\\
 \quad H &\cqq \langle\vec H,\vec n\rangle.
\end{split} \label{H}
\end{equation}
Furthermore, the \emph{Gauss curvature} is defined as 
\begin{equation}
K \cqq \det((g^{ik} \mb I_{kj})_{ij}).\label{K}
\end{equation}
The canonical \emph{volume form} corresponding to the first fundamental form $g$ is the 2-form $\vol_g$, which in a local chart is defined as
\[\vol_g \cqq \sqrt{\det (g_{ij})_{ij}} \dif x^1 \land \dif x^2.\]
If the volume form is induced by an immersion $f$, we also use the notation $\vol_f$. The \emph{Willmore energy} $\mc W(f)$ of the immersion $f$ is defined as
\begin{equation}
\mc W(f) \cqq \frac{1}{4} \int _{\Sigma} H^2 \dif \vol_g.\label{Willmore energy}
\end{equation}
When $\Sigma$ denotes the immersed surface itself, we may also write $\mc W(\Sigma)$. The \emph{area} of an immersion is defined by $\mc A (f) \cqq \int_\Sigma 1 \dif \vol_g$ and the \emph{total mean curvature ratio} is defined for embedded surfaces  as
\begin{equation}
\mc T(f) \cqq \frac{\int_\Sigma H \dif \vol_g }{\sqrt{\mc A (f)}}.\label{mean curvature ratio}
\end{equation}
Finally, we also define the \emph{volume} of an embedded surface as
\begin{equation}
\mc V(f) \cqq -\frac{1}{3} \int \langle f, \vec n\rangle \dif \vol_g.\label{volume}
\end{equation}
For non-embedded immersions $f$, we set $\mc T(f) = \mc V(f) = \infty$, as there is no canonical choice of an inner normal vector field. For an embedded surface $\Sigma \subset \R^3$, we also use the notation  $\mc T(\Sigma)$, $\mc V(\Sigma)$.
\subsection{The constrained minimization problem}\label{subsec:IntroductionToConstrainedProblem}
Let us now introduce the main minimization problem we want to investigate in this work. Consider a fixed oriented closed and connected surface $\Sigma$ of genus $g$. We label the class of smooth embeddings from $\Sigma$ into $\R^3$ by $\mc S_g$, i.e.,
\[\mc S_g\cqq \{f \in C^{\infty}(\Sigma, \R^3), \; \text{$f$ is an immersion}\}. \]
Fix $R \in \R$. We define 
\begin{equation}
\beta_g (R) \cqq \inf _{\substack{f\in \mc S_g\\ \mc T(f) = R}} \mc W(f).\label{beta g tau for smooth surfaces}
\end{equation}
We also set 
\begin{equation}
\beta_g \cqq \inf _{f\in \mc S_g} \mc W(f),\label{beta g for smooth surfaces}
\end{equation}
the infimum of the Willmore energy among all smooth embeddings from $\Sigma$ into $\R^3$. This is independent of the choice of the genus $g$ surface $\Sigma$ by the classification of closed surfaces. It was proved by \textsc{Simon} \cite{Simon} that this problem admits smooth minimizers under certain constraints, see Section \ref{subsec:IntroductionUnconstrainedProblem}.  

The reason why we work with embeddings instead of immersions is to ensure that there is a natural choice of an inner normal field. Notice however that if $\beta_g(R)<8\pi$, this minimization problem is the same as minimizing over smooth \emph{immersed} surfaces by the Li-Yau inequality \eqref{intro:LiYau}, which implies that for any non-embedded immersion $f$, we have
\[\mc W(f)\geq 8\pi.\]
First, we need to show that the minimization problem is well-posed, i.e., that for any $R \in \R$ and $g\in \N_0$, there exists an embedding in $\mc S_g$ whose total mean curvature ratio equals $R$. This is done by the following Lemma, which is a slight adaption of the argument from \cite[Theorem 1.2]{Dalphin2016}.
\begin{lemma} \label{lem:im T is R}
For every $g\in \N_0$ and $R\in \R$, there exists a smooth embedded genus $g$ surface $\Sigma\subset \R^3$ with $\mc T(\Sigma) = R$.
\end{lemma}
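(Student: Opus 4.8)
The plan is to construct, for each genus $g$ and each target value $R \in \R$, an explicit one-parameter family of smooth embedded genus $g$ surfaces whose total mean curvature ratio $\mc T$ sweeps out all of $\R$, then invoke continuity and the intermediate value theorem. First I would fix a reference smooth embedded genus $g$ surface $\Sigma_0 \subset \R^3$ (for $g=0$ a round sphere, for $g \geq 1$ e.g. a suitably embedded handlebody). The key geometric input is that one can modify $\Sigma_0$ so as to drive $\int H \dif\vol$ to $-\infty$ while keeping the area controlled: the idea of \cite[Theorem 1.2]{Dalphin2016} is that attaching long thin ``tentacles'' or inward-pushing dimples (catenoid-like necks) contributes arbitrarily negative total mean curvature — a tube of radius $\rho$ and length $\ell$ bounding an inward protrusion has mean curvature roughly $-1/\rho$ over area of order $\rho\ell$, so $\int H \dif\vol \sim -\ell$, which can be made as negative as we wish while $\mc A$ grows only polynomially, hence $\mc T \to -\infty$. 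Conversely, taking a long outward cylindrical tentacle capped off gives $\int H \dif\vol \to +\infty$ faster than $\sqrt{\mc A}$, so $\mc T \to +\infty$ (or simply note $\mc T(\Sigma_0) > 0$ can be made large by convex-type modifications, cf. Minkowski's inequality $\mc T(\Sigma) \geq \mc T(\mb S^2)$ for convex bodies).

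The second step is to interpolate continuously. I would set up a family $\Sigma_t$, $t \in [0,1]$ (or $t$ ranging over a larger interval), depending smoothly on $t$, that starts at a ``very negative $\mc T$'' surface and ends at a ``very positive $\mc T$'' surface, each member a smooth embedded genus $g$ surface. For instance, fix the handle structure carrying the genus in a small fixed region and let the rest of the surface be deformed by smoothly growing or retracting a single protrusion whose signed length is the parameter $t$; alternatively, use a convex combination of defining functions or a smooth isotopy. Since the map $f \mapsto \mc T(f) = \int_\Sigma H \dif\vol_{g_f} / \sqrt{\mc A(f)}$ is continuous with respect to smooth ($C^2$) dependence of the immersion — $H$, $\vec n$, and $\vol_{g_f}$ all depend continuously on the $2$-jet of $f$, and $\mc A(f) > 0$ stays bounded away from $0$ — the composite $t \mapsto \mc T(\Sigma_t)$ is continuous on $[0,1]$. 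Its endpoints straddle any prescribed $R$ (after possibly extending the parameter range so that the endpoint values exceed $|R|$ in the appropriate directions), so by the intermediate value theorem there is $t_*$ with $\mc T(\Sigma_{t_*}) = R$; that $\Sigma_{t_*}$ is the desired surface.

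The main obstacle I expect is the geometric construction of the extreme members of the family — in particular verifying rigorously that one can make $\mc T$ arbitrarily negative while keeping the surface \emph{embedded} and of the correct genus, since the protrusion/neck must be glued smoothly onto $\Sigma_0$ without self-intersection and without altering the topology. This requires a careful choice of gluing profile (a smooth cutoff between the neck and the ambient surface) and an area estimate showing $\mc A(\Sigma_t)$ grows slowly enough relative to $|\int H \dif\vol|$; both are essentially the content of \cite{Dalphin2016}, which the statement explicitly says we are adapting, so I would largely cite that construction and indicate the (routine) modifications needed to accommodate arbitrary genus, namely performing the neck surgery away from a fixed region that carries the $g$ handles. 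The rest — continuity of $\mc T$ along the family and the IVT conclusion — is standard.
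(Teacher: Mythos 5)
Your proposal is correct in its overall logic and matches the paper's strategy at the structural level (produce embedded genus $g$ surfaces with $\mc T$ unbounded in both directions, glue the modification into a fixed region away from the handles, and conclude by continuity and the intermediate value theorem), but the geometric mechanism is genuinely different. The paper does not use long tentacles: it fixes a compactly supported bump $u$ on a flat patch of a reference genus $g$ surface and replaces the patch by the graph of $u_{n,t}(x)=\frac{t}{n}u(nx)$. The self-similar scaling is the whole point: the graph over the unit square consists of $n^2$ copies at scale $1/n$, and since total mean curvature scales like length while area scales like length squared, $\int H$ gets multiplied by $n$ while the area is unchanged (for $t=\pm1$) or decreased. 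Thus $|\int H|$ is made arbitrarily large with \emph{uniformly bounded area}, both signs are obtained for free from $t\mapsto -t$, the IVT parameter is the amplitude $t\in[-1,1]$, and embeddedness is automatic for large $n$ because the amplitude $t/n$ tends to $0$, so the perturbed patch stays in a thin slab around the original flat patch. Your tentacle construction buys a more vivid geometric picture and the same asymptotics ($\int H\sim\pm\ell$ with area $\sim\rho\ell$ kept bounded), but it pays for it elsewhere.

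The place where your route is materially more delicate than you acknowledge is the $\mc T\to-\infty$ end. An \emph{inward} tube of length $\ell\to\infty$ must be accommodated inside the bounded region enclosed by the surface, so for large $\ell$ it has to be coiled, and verifying embeddedness (no self-intersection of the coiled thin tube, correct genus, smooth gluing at the neck) is exactly the nontrivial content; it is not supplied by "a careful choice of gluing profile" alone. You also need two distinct constructions for the two signs plus an isotopy joining them through the unperturbed surface, whereas the paper's single two-parameter family $(n,t)$ handles everything at once. None of this is a fatal gap --- the tentacle construction can be carried out --- but if you want the two-line embeddedness argument, the small-amplitude high-frequency oscillation is the cleaner choice, and it is what the paper's adaptation of \cite[Theorem 1.2]{Dalphin2016} actually consists of.
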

\begin{proof}
Let us first fix $u\in C_c^\infty ((0,1)^2,\R)$ whose graph $\{(a,u(a)), \; a\in (0,1)^2\}$ has nonzero total mean curvature. We periodically extend $u$ to $\R^2$ and define $\Gamma_u:\R^2 \to \R^3$, $\Gamma_u(x)\cqq (x, u(x))$. For $n\in \N$ and $t\in [-1,1]$, we define $u_{n,t}:(0,1)^2\to \R$, $u_{n,t}(x) \cqq \frac{t}{n}u(nx)$. Then $\Gamma_{u_{n,1}}|_{(0,1)^2}$ consists of $n^2$ rescaled copies of $\Gamma_u|_{(0,1)^2}$, each contributing $1/n$ times the total mean curvature, so that
\begin{equation}
\int _{(0,1)^2} H_{\Gamma_{u_{n,1}} }\dif \vol_{\Gamma_{u_{n,1}}} = n \int _{(0,1)^2} H_{\Gamma_u} \dif \vol_{\Gamma_{u}}\neq 0.\label{periodically extend bump}
\end{equation}
Notice that 
\begin{equation}
\int _{(0,1)^2} H_{\Gamma_{u_{n,1}}} \dif \vol_{\Gamma_{u_{n,1}}} =- \int _{(0,1)^2} H_{\Gamma_{u_{n,-1}}} \dif \vol_{\Gamma_{u_{n,-1}}}.\label{inverted bump}
\end{equation}
Furthermore, for $t\in [-1,1]$
\begin{equation}
\mc A(\Gamma_{u_{n,t}})\leq \mc A(\Gamma_{u_{n,1}}) = \mc A(\Gamma_{u}) = \mc A (\Gamma_{u_{n,-1}})\label{area remains the same}
\end{equation}
and $\int _{(0,1)^2} H_{\Gamma_{u_{n,t}}} \dif \vol_{\Gamma_{u_{n,t}}}$ varies continuously in $t$. 
Consider now a smooth genus $g$ surface $\Sigma\subset \R^3$ containing a flat patch $D$, where we may assume that $D = (0,1)^2 \times \{0\}$. Now, we simply glue the graph of $\Gamma_{u_{n,t}}|_{(0,1)^2}$ to $\Sigma$ instead of $D$ to obtain a surface $\Sigma_{n,t}$. Because the total mean curvature and area of $\Sigma \setminus D$ are fixed, we obtain by \eqref{periodically extend bump}, \eqref{inverted bump}, \eqref{area remains the same} and the intermediate value theorem that
\[\bigcup _{n\in \N} \bigcup _{t\in [-1,1]} \mc T(\Sigma_{n,t}) = \R. \]
Notice that for $n$ sufficiently large, the immersion is already an embedding.
\end{proof}
\section{Monotonicity of \texorpdfstring{$\beta_g$}{} and variations of \texorpdfstring{$\mc T$}{}}\label{sec:VariationOfTandW}

\subsection{Monotonicity of \texorpdfstring{$\beta_g$}{}}\label{subsec:Monotonicity} In this section, we want to establish a monotonicity property for the map $R\mapsto \beta_g (R)$ given a fixed $g\in \N_0$. Crucial in our procedure will be the invariance of the Willmore functional under composition of conformal maps in the ambient space $\R^3$. Let us start by defining the notion of a conformal map.
\begin{defi}\label{Conformality}
Suppose that $(M,g)$ and $(N,k)$ are two Riemannian manifolds. A smooth map $f:M\to N$ is called \emph{conformal} if it holds that
\[ e^{2\lambda}g = f^ * k,\]
where $f ^* k $ denotes the pull back metric of $k$ under $f$ and $\lambda$ is a smooth map from $\Sigma$ to $\R$. In other words for all $p\in \Sigma$, it holds 
\[e^{2\lambda}	g(v,w) =  k(df(v),df(w)) \quad \forall v,w\in T_p\Sigma.\]
$\lambda$ is called the \emph{conformal factor of $f$}.
\end{defi}
\begin{prop}[\cite{Chen1974}] \label{prop: Willmore invariance} 
Suppose that $\Sigma$ is a closed surface, $f:\Sigma \to \R^n$ is a smooth immersion. Let $U\subset \R^n$ be an open subset containing $f(\Sigma)$ and $\psi:U\to \R^n$ be a conformal map. Then, it holds that
\[\mc W(f) = \mc W(\psi \circ f).\]
\end{prop}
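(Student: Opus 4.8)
The plan is to reduce, via Liouville's theorem, to a single sphere inversion, and then to show that the inversion changes the Willmore integrand only by an exact divergence, which integrates to zero on the closed surface $\Sigma$.

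Since a closed surface cannot be immersed in $\R^2$, necessarily $n\ge 3$, so Liouville's theorem applies: any conformal map $\psi$ defined on a connected open set containing $f(\Sigma)$ is the restriction of a Möbius transformation, and every Möbius transformation is either a similarity or of the form $\psi=S\circ T_a\circ\iota\circ T_{-a}$, where $T_a(x)\cqq x+a$, $S$ is a similarity, and $\iota(x)\cqq x/|x|^2$ is the inversion in the unit sphere; moreover, since $\psi$ is smooth on $f(\Sigma)$ it does not send any point of $f(\Sigma)$ to $\infty$, so $a\notin f(\Sigma)$. As $\mc W$ is manifestly invariant under translations and rotations, and under a homothety $x\mapsto\lambda x$ one has $H\mapsto\lambda^{-1}H$ while $\vol_g\mapsto\lambda^2\vol_g$ (here $\dim\Sigma=2$), so that $H^2\vol_g$ — hence $\mc W$ — is unchanged, it suffices to prove $\mc W(\iota\circ f)=\mc W(f)$ for an immersion $f$ with $0\notin f(\Sigma)$ (if $\Sigma$ is disconnected, argue on each component).

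Set $\tilde f\cqq\iota\circ f=f/|f|^2$. Since $D\iota_x$ equals $|x|^{-2}$ times an orthogonal map, the induced metrics satisfy $g_{\tilde f}=|f|^{-4}g_f$ and hence $\vol_{\tilde f}=|f|^{-4}\vol_f$. The substantive step is the transformation of the mean curvature: differentiating $\tilde f$ twice and projecting onto the normal of $\tilde f$ — equivalently, applying the transformation rule for the second fundamental form under the conformal change $\iota^*\langle\cdot,\cdot\rangle=|x|^{-4}\langle\cdot,\cdot\rangle$ of the ambient metric — one obtains, up to the sign of the normal (irrelevant here since only $H^2$ enters),
\[\tilde H=|f|^2H+4\langle f,\vec n\rangle.\]
(In codimension $\ge 2$ the same argument applies with $H$ replaced by the mean curvature vector $\vec H$, with $\langle f,\vec n\rangle\vec n$ replaced by the normal part $f^{\perp}$ of the position vector, and with $H^2$ replaced by $|\vec H|^2$ throughout.) Combining this with the area formula,
\[\frac14\tilde H^2\,\vol_{\tilde f}=\left(\frac14H^2+\frac{2H\langle f,\vec n\rangle}{|f|^2}+\frac{4\langle f,\vec n\rangle^2}{|f|^4}\right)\vol_f,\]
so it remains to show that the last two terms integrate to zero over $\Sigma$. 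To this end I would introduce the tangential vector field $X\cqq f^{T}/|f|^2$, where $f^{T}$ is the tangential component of the position vector, and compute, using $\nabla_\Sigma|f|^2=2f^{T}$, the identity $\Div_\Sigma f^{T}=2+\langle\vec H,f\rangle$ (itself a consequence of $\Div_\Sigma f=2$ and $\Div_\Sigma f^{\perp}=-\langle\vec H,f\rangle$), and $\langle\vec H,f\rangle=H\langle f,\vec n\rangle$, that
\[\Div_\Sigma X=\frac{H\langle f,\vec n\rangle}{|f|^2}+\frac{2\langle f,\vec n\rangle^2}{|f|^4}.\]
Hence the two extra terms equal $2\Div_\Sigma X$, and since $\Sigma$ is closed, $\int_\Sigma\Div_\Sigma X\,\vol_f=0$ by the divergence theorem; therefore $\mc W(\tilde f)=\mc W(f)$, completing the reduction.

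The main obstacle is the mean-curvature transformation law $\tilde H=|f|^2H+4\langle f,\vec n\rangle$: it is the only non-routine step and requires carefully differentiating $f/|f|^2$ twice (or stating and correctly normalising the conformal change-of-metric formula for the second fundamental form), paying attention to how the unit normal gets rescaled. An alternative that bypasses part of this is to use the pointwise identity $\tfrac14H^2=\tfrac12|\mathring{\mathbb I}|^2+K$, with $\mathring{\mathbb I}$ the trace-free second fundamental form and $K$ the Gauss curvature, to note that $|\mathring{\mathbb I}|^2\vol_g$ is pointwise invariant under conformal changes of the ambient metric, and to absorb the remaining term via Gauss--Bonnet, $\int_\Sigma K\,\vol_g=2\pi\chi(\Sigma)$, which is a topological constant; but proving the conformal invariance of $|\mathring{\mathbb I}|^2\vol_g$ is a computation of comparable length.
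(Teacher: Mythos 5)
The paper offers no proof of this proposition: it is quoted as a known result and attributed to \cite{Chen1974}, so there is nothing in the text to compare your argument against line by line. Your proposal is a correct, essentially self-contained proof along the classical route. The Liouville reduction to similarities plus one inversion is sound (and you correctly observe that $a\notin f(\Sigma)$ and that $n\ge 3$ is automatic); the scaling of $H$ and $\vol_g$ under homotheties is right with the convention $H=\kappa_1+\kappa_2$; and the key cancellation checks out: with $X=f^{T}/|f|^{2}$ one has $\nabla_\Sigma|f|^{-2}=-2|f|^{-4}f^{T}$, $\Div_\Sigma f^{T}=2+\langle\vec H,f\rangle$ and $|f^{T}|^{2}=|f|^{2}-\langle f,\vec n\rangle^{2}$, which give exactly $\Div_\Sigma X=\tfrac{H\langle f,\vec n\rangle}{|f|^{2}}+\tfrac{2\langle f,\vec n\rangle^{2}}{|f|^{4}}$, so the two extra terms in $\tfrac14\tilde H^{2}\dif\vol_{\tilde f}$ are $2\Div_\Sigma X\,\dif\vol_f$ and integrate to zero on the closed $\Sigma$; the same bookkeeping with $|\vec H|^{2}$ and $f^{\perp}$ works in higher codimension. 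The one step you assert rather than derive is the transformation law $\tilde H=|f|^{2}H+4\langle f,\vec n\rangle$; it is correct with the orientation convention the paper itself uses (it is precisely the exact form of \eqref{explicit H} in Lemma \ref{lem: Möbius blow down}, where the $4\langle f-a,\vec n\rangle$ term is absorbed into the $\mc O(|a|)$), but a complete writeup must pin down the sign of the transformed normal, since an inconsistent choice would flip the cross term and destroy the cancellation — you flag this correctly as the delicate point. Your alternative via $\tfrac14H^{2}=\tfrac12|\mb I^{0}|^{2}_g+K$, the pointwise conformal invariance of $|\mb I^{0}|^{2}_g\dif\vol_g$, and Gauss--Bonnet is the Blaschke--Thomsen argument and is equally legitimate; it trades the divergence identity for the (comparable) computation that the trace-free density is a pointwise conformal invariant.
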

We will apply Proposition \ref{prop: Willmore invariance} to the sphere inversion $I_{x_0}: \R^3\setminus \{x_0\} \to \R^3$ centered at a point $x_0\in \R^3$, defined by 
\begin{equation}
I_{x_0}(x) \cqq \frac{x-x_0}{|x-x_0|^2}.\label{sphere inversion}
\end{equation}
It is easy to check that these are conformal maps. We will exploit the fact that sphere inversions in general do not fix the total mean curvature ratio. In fact, using a blow-down procedure in Lemma \ref{lem: Möbius blow down} and a blow-up procedure in Lemma \ref{lem: Möbius blowup}, we will show that for a given immersion $f$	
\[\bigcup _{x_0 \in \R^3 \setminus \im f} \mc T(I_{x_0} \circ f) \supset (\min\{\mc T(f), \mc T(\mb S^2)\}, \max\{\mc T(f), \mc T(\mb S^2)\}).\]
From here, the monotonicity of $\beta_g$ is an easy consequence, see Corollary \ref{cor:Monotonicity}. 

The following lemma is an adaption of \cite[Lemma 3.7]{ScharrerPhD} and we will follow the proof closely.
\begin{lemma} \label{lem: Möbius blow down}
Suppose that $f:\Sigma \to \R^3$ is a smooth embedding. For $I_a$ defined as in \eqref{sphere inversion}, it holds that
\[\lim _{|a|\to \infty} \mc T(I_a \circ f  ) = \mc T(f ).\]
\end{lemma}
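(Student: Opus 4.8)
The plan is to compute the asymptotic expansion of $\mc T(I_a \circ f)$ as $|a| \to \infty$ and show all the correction terms vanish in the limit. Write $a = |a| e$ for a unit vector $e$, and set $\lambda = |a|$. The key observation is that for $\lambda$ large, the inversion $I_a$ restricted to a neighbourhood of $f(\Sigma)$ is, up to a dilation by $\lambda^{-2}$ and a translation, close to the identity; more precisely, $\lambda^2 I_a(x) = \lambda^2 \frac{x - a}{|x-a|^2}$ and one expands $|x - a|^2 = \lambda^2 - 2\lambda\langle x, e\rangle + |x|^2$, so that $\lambda^2 I_a(x) = (x - \lambda e)\bigl(1 + O(\lambda^{-1})\bigr)$ uniformly for $x$ in the (compact) image of $f$. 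Thus $\phi_\lambda \cqq \lambda^2 I_a$ converges in $C^k_{loc}$ to the translation $x \mapsto x - \lambda e$ in the sense that $\phi_\lambda(x) + \lambda e \to x$; more usefully, the rescaled-and-recentered maps $\tilde\phi_\lambda(x) \cqq \lambda^2 I_a(x) + \lambda e$ satisfy $\tilde\phi_\lambda \to \id$ in $C^k$ on $\im f$ for every $k$.

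Since $\mc T$ is scale- and translation-invariant, $\mc T(I_a \circ f) = \mc T(\tilde\phi_\lambda \circ f)$, and now everything reduces to continuity of $\mc T$ under $C^2$-perturbations of the immersion: as $\lambda \to \infty$, $\tilde\phi_\lambda \circ f \to f$ in $C^2(\Sigma, \R^3)$, hence the induced metrics $g_{\tilde\phi_\lambda \circ f} \to g_f$ in $C^1$, the unit normals $\vec n_{\tilde\phi_\lambda\circ f} \to \vec n_f$ in $C^1$ (in particular the orientation is preserved for $\lambda$ large since $\tilde\phi_\lambda$ is orientation-preserving once $|a|$ exceeds some threshold), the second fundamental forms converge in $C^0$, and therefore $H_{\tilde\phi_\lambda\circ f}\dif\vol_{g} \to H_f \dif\vol_{g_f}$ and $\dif\vol_g \to \dif\vol_{g_f}$ uniformly. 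Integrating over the compact $\Sigma$ gives $\int_\Sigma H\,\dif\vol \to \int_\Sigma H_f\,\dif\vol_{g_f}$ and $\mc A(\tilde\phi_\lambda\circ f) \to \mc A(f) > 0$, whence $\mc T(I_a\circ f) = \mc T(\tilde\phi_\lambda\circ f) \to \mc T(f)$.

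The main technical point — and the step I would write out most carefully — is the uniform $C^k$ estimate $\tilde\phi_\lambda \to \id$ on $\im f$, i.e., controlling the derivatives of $x \mapsto \lambda^2\frac{x-a}{|x-a|^2} + \lambda e$ up to order two with errors $O(\lambda^{-1})$ uniformly on the compact set $f(\Sigma)$; this is a direct but slightly tedious Taylor expansion of the denominator $|x-a|^{-2}$. Everything after that is soft: $\mc T$ depends continuously on $(g_f, \vec n, \mb I)$ through the explicit formulas \eqref{mean curvature ratio}, \eqref{H}, \eqref{second fundamental form}, and these all depend continuously (in the relevant norms) on $f \in C^2$. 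One should also note at the outset that for $|a|$ large, $a \notin \im f$ (since $\im f$ is bounded) so $I_a \circ f$ is well-defined, and that $I_a$ is a conformal diffeomorphism onto its image which, composed with the embedding $f$, is again an embedding, so $\mc T(I_a\circ f)$ makes sense. I would follow the cited argument of \cite[Lemma 3.7]{ScharrerPhD} for the organization of these estimates.
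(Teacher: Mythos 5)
Your overall strategy (recenter and rescale the inverted immersion, identify a $C^2$-limit, and conclude by continuity of $\mc T$; the paper instead computes $H_a$ and $\dif\vol_{g^a}$ directly and expands the quotient) is viable, but the key claim on which you hang everything is false. Writing $a=\lambda e$, one has $|x-\lambda e|^{-2}=\lambda^{-2}\left(1+2\langle x,e\rangle/\lambda+\mc O(\lambda^{-2})\right)$, so
\[
\lambda^2 I_a(x)=(x-\lambda e)\left(1+\tfrac{2\langle x,e\rangle}{\lambda}+\mc O(\lambda^{-2})\right)=x-\lambda e-2\langle x,e\rangle e+\mc O(\lambda^{-1}),
\]
because the relative error $\mc O(\lambda^{-1})$ hits the term $-\lambda e$ of size $\lambda$ and produces the $\mc O(1)$ contribution $-2\langle x,e\rangle e$. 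Hence $\tilde\phi_\lambda=\lambda^2 I_a+\lambda e$ converges in $C^k$ on $\im f$ not to $\id$ but to the reflection $R_e=\id-2\,e\otimes e$. Consistently with this, your assertion that $\tilde\phi_\lambda$ is orientation-preserving for large $|a|$ is wrong: $\det DI_a(x)=-|x-a|^{-6}<0$ everywhere (see \eqref{DI a x}), and composing with a dilation and a translation does not change this, so $\tilde\phi_\lambda$ is orientation-reversing for every $\lambda$.

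This is not merely cosmetic, because $\mc T$ is sensitive to it: for embeddings the scalar mean curvature in \eqref{mean curvature ratio} is computed with the \emph{inward} normal, and $H$ changes sign when the normal is flipped. If you take the true limit $R_e\circ f$ but keep the chart-defined Gauss map \eqref{normal}, you obtain $-R_e\vec n$ (since $R_e v\times R_e w=-R_e(v\times w)$), i.e.\ the \emph{outward} normal of the reflected surface, and the naive limit of the integrand is $-H\,\dif\vol_{g_f}$ — the argument would then ``prove'' $\mc T(I_a\circ f)\to-\mc T(f)$. The fix is exactly the sign bookkeeping the paper does (``$-\vec n_a=\dots$''): you must either track that the inward normal of $\tilde\phi_\lambda\circ f$ is $-$ the chart normal, or identify the limit as $R_e\circ f$ and invoke the invariance of $\mc T$ under ambient isometries including reflections (which holds precisely because the inward-normal convention compensates the orientation reversal). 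With that correction your soft continuity argument does go through and is a genuinely different, and arguably cleaner, route than the paper's term-by-term expansion of $H_a$; but as written, the central technical step you flag as the one to ``write out most carefully'' is the one that fails.
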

\begin{proof}
Denote $f^a = I_a \circ f$, $ f^a _i = \partial_i f^a$. Then
\begin{equation}
DI_a(x) = \frac{1}{|x-a|^2} \left (\id - 2 \frac{(x-a) \otimes (x-a)}{|x-a|^2}\right )\label{DI a x}
\end{equation}
and
\[f^a_i = \frac{1}{|f-a|^2} \left (\partial_i f - 2(f-a) \frac{\langle f-a , \partial _i f\rangle }{|f-a|^2}\right ).\]
Furthermore, denoting by $g^a $ the first fundamental form of $f^a$, we have
\begin{equation}
g_{ij}^a = \frac{1}{|f-a|^4} g_{ij},\quad (g ^a)^{ij} = |f-a|^4 g^{ij}.\label{metric tensor g of a}
\end{equation}
In particular 
\begin{equation}
\dif\vol_{g^a} = \frac{1}{|f-a|^4} \dif\vol_g. \label{vol of I a}
\end{equation} As $f$ is bounded, it holds 
\begin{equation}
\frac{1}{|f-a|^p} = \frac{1}{|a|^p} + \mc O\left (\frac{1}{|a|^{p+1}}\right ) \quad \text{as} \quad |a|\to \infty\label{f-a asymptotic}
\end{equation}
 for $p\geq 1$ by \cite[(3.4)]{ScharrerPhD}. We calculate $\partial_i \partial_j f^a$.
\begingroup
\allowdisplaybreaks
\begin{align}
\partial_i \partial_j f^a &= 
\partial_i \left ( \frac{1}{|f-a|^2} \left ( \partial_j f - 2 \frac{(f-a)}{|f-a|^2} \langle f-a, \partial_j f\rangle \right )\right )
\notag\\
&=\frac{1}{|f-a|^2} \Bigg [\partial_i \partial_j f - 2(f-a) \frac{\langle f-a,  \partial_i \partial_j f\rangle }{|f-a|^2}\notag\\
&\quad  - \frac{2}{|f-a|^2} \Bigg ( \langle f-a, \partial_i f\rangle  \partial_j f + \langle f-a, \partial_j f\rangle  \partial_i f\Bigg )\notag \\
   &\quad + 8 \frac{f-a}{|f-a|^4} \langle f-a, \partial_i f\rangle\langle f-a, \partial_j f\rangle - 2 \frac{f-a}{|f-a|^2} \langle \partial_i f, \partial_j f\rangle \Bigg ]
\notag\\ 
&=DI_a(f) (\partial_i \partial_j f) + \mc O\left (\frac{1}{|f-a|^3}\right  ) \label{explicit d_id_j f for later}
\\
&=DI_a(f)( \partial_i \partial_j f) + \mc O\left (\frac{1}{|a|^3}\right )\label{explicit d_id_j f}
\end{align}
\endgroup
as $|a|\to \infty$, where the last equality follows by \eqref{f-a asymptotic}. The normal vector of the immersion $f^a$ respectively $f$ is given by
\[-\vec n_a =\frac{f_1 ^a \times f_2 ^a}{|f_1 ^a \times f_2^a|}, \quad \vec n = \frac{f_1 \times f_2}{|f_1 \times f_2|}.\]
Here we assume $\vec n$ is an inward-pointing normal vector field. The minus sign for $\vec n_a$ is to ensure that $\vec n_a$ is still inward-pointing as $I_a$ changes orientation. Then, by \cite[Lemma 3.7]{ScharrerPhD}
\[\langle f_1^a \times f^a _2,f-a\rangle =\frac{1}{|f-a|^4} \langle f_1\times f_2,f-a\rangle \quad \text{and}\quad |f_1 ^a\times f_2 ^a| = \frac{|f_1 \times f_2|}{|f-a|^4}.\]
Thus,
\[ \langle \vec n_a ,f-a\rangle  =  -\frac{|f-a|^4}{|f_1 \times f_2|}\langle f_1 ^a \times f_2 ^a,f-a\rangle  =- \langle \vec n , f-a\rangle. \]
By \eqref{DI a x}, it follows that
\[\langle DI_a (f)( \vec n),f-a\rangle  = -\frac{1}{|f-a|^2} \langle f-a,  \vec n\rangle  = \left \langle \frac{1}{|f-a|^2} \vec n_a ,f-a\right \rangle.\]
Since $I_a$ is conformal, we have $\langle DI_a (f) (\vec n), f_i ^a \rangle = 0 =\left \langle \frac{1}{|f-a|^2} \vec n_a , f_i ^a\right \rangle  $, $i=1,2$. If it holds $\langle f-a,\vec n\rangle \neq 0$, then $\{f_{1}^a, f_2 ^a, f-a\}$ form a basis of $\R^3$ and thus 
\begin{equation}
DI_a (f) (\vec n) = \frac{1}{|f-a|^2} \vec n_a.\label{Normal vector}
\end{equation}
If $\langle f-a , \vec n \rangle = 0$, \eqref{Normal vector} follows by a simple approximation of $a$ by a sequence $(a_n)_n$ such that $\langle f-a_n, \vec n \rangle \neq 0$. \eqref{metric tensor g of a}, \eqref{explicit d_id_j f} and \eqref{Normal vector} now imply
\begin{align}
H_a &= \langle (g^a) ^{ij}\partial_i \partial_j f^a  ,  \vec n_a\rangle  \notag\\
&= |f-a|^6 g ^{ij}\left \langle DI_a(f) ( \partial_i \partial_j f) + \mc O\left (\frac{1}{|a|^3}\right ) , DI_a (f)(\vec n)\right \rangle   \notag\\
&=|f-a|^2 g ^{ij}\langle  \partial_i \partial_j f   , \vec n\rangle  + \mc O\left (|a|\right )\notag\\
&= |f-a|^2 H + \mc O \left( |a|\right )\label{explicit H}
\end{align}
as $|a|\to \infty$.
Integrating \eqref{explicit H} and using \eqref{vol of I a} yields
\begin{align*}
 \int _{\Sigma}H_a \dif \vol_{g^a} &= \int _{\Sigma} \frac{1}{|f-a|^2} H + \mc O\left ( \frac{1}{|a|^3}\right ) \dif \vol_{g} \\
&= \frac{1}{|a|^2} \int _{\Sigma} H \dif \vol_{g} + \mc O\left ( \frac{1}{|a|^3}\right )\quad \text{ as $|a|\to \infty$}.
\end{align*}
The asymptotic behavior of the area was calculated in \cite[Lemma 3.7]{ScharrerPhD}, such that
\[\sqrt{\mc A  (f^a)} = \frac{1}{|a|^2}\sqrt{ \mc A  (f)} + \mc O\left ( \frac{1}{|a|^3}\right )\quad \text{ as $|a|\to \infty$}.\]
It follows that
\[\mc T(f^a) = \frac{\int _{\Sigma}H_a \dif \vol_{g^a}}{\sqrt{\mc A  (f_a)}} = \frac{\frac{1}{|a|^2} \int _{\Sigma} H \dif \vol_{g} + \mc O\left ( \frac{1}{|a|^3}\right )}{\frac{1}{|a|^2}\sqrt{ \mc A  (f)} + \mc O\left ( \frac{1}{|a|^3}\right )} = \frac{ \int _{\Sigma} H \dif \vol_{g} + \mc O\left ( \frac{1}{|a|}\right )}{\sqrt{ \mc A  (f)} + \mc O\left ( \frac{1}{|a|}\right )} \to \mc T(f) \]
as $|a|\to \infty$.
\end{proof}
After having seen the behavior of $\mc T$ under a blow-down when $a$ diverges to infinity, we want to investigate the behavior of the total mean curvature ratio as the center of the sphere inversion approaches the surface itself. Intuitively, the region close to the center is blown up by the inversion which maps planes to spheres. As the immersion $f$ locally looks like a plane, one should expect that the blown-up region resembles a sphere. Thus it is reasonable that the resulting surface has a mean curvature constraint close to $\mc T(\mb S^2)$. This is exactly what the following Lemma states. This lemma is an adaption of \cite[Lemma 3.8]{ScharrerPhD}.
\begin{lemma} \label{lem: Möbius blowup}
Suppose that $f:\Sigma \to \R^3$ is a smooth embedding. Let $p\in  \im f$ and $\vec n$ be defined as in \eqref{normal}. Set ${\gamma:[0,\infty) \to \R^3, \gamma(t) = p - t \vec n(p)}$. Then, 
\[\lim _{t\to 0} \mc T(I_{\gamma(t)}\circ f) = \mc T(\mb S^2)=4\sqrt{\pi}.\]
\end{lemma}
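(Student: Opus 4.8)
The plan is to run the same inversion-and-asymptotics computation as in Lemma \ref{lem: Möbius blow down}, but now with the center $a = \gamma(t)$ tending to the point $p\in\im f$ from the normal direction, so that the inverted surface $f^{\gamma(t)} = I_{\gamma(t)}\circ f$ develops a growing near-spherical region at the image of $p$ whose mean curvature contribution and area both blow up like $t^{-1}$ (in a suitably rescaled picture), dominating the rest of the surface and forcing the ratio $\mc T$ to its spherical value. Concretely, I would fix normal coordinates for $f$ around $p$, rescale by $t$, and observe that $I_{\gamma(t)}\circ f$, after the appropriate dilation, converges (locally smoothly, away from the preimage of the center) to the image under inversion of a flat plane through the origin, which is a sphere of a fixed radius; so the dominant part of both $\int H\,\dif\vol$ and $\mc A$ comes from an almost-round sphere of shrinking (or fixed, after rescaling) radius, and their quotient tends to $\mc T(\mb S^2)$.

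The key steps, in order, are: (1) split $\Sigma$ into a small geodesic disk $\Sigma\cap B_\rho(p)$ around $p$ and its complement, and track how $I_{\gamma(t)}$ scales lengths: on the complement, $|f-\gamma(t)|$ stays bounded below, so \eqref{vol of I a}-type estimates show $\int_{\Sigma\setminus B_\rho} H_{\gamma(t)}\,\dif\vol$ and $\mc A(f^{\gamma(t)}|_{\Sigma\setminus B_\rho})$ are $\mc O(1)$ as $t\to 0$; (2) on the disk, use that $f$ is, up to second order, the graph of a quadratic over its tangent plane at $p$, and compute that under $I_{\gamma(t)}$ this piece maps to a region that, after rescaling by $t$, is $C^2$-close to a fixed round sphere minus a small cap — here one uses that inversion sends the tangent plane at $p$ (which passes at signed distance $t$ from $\gamma(t)$) to a sphere of radius $\tfrac1{2t}$, and the quadratic correction is lower-order; (3) conclude that $\int_{\Sigma\cap B_\rho} H_{\gamma(t)}\,\dif\vol_{g^{\gamma(t)}} = \tfrac1{?}\cdot(\text{const})\cdot t^{-1}(1+o(1))$ and likewise $\mc A(f^{\gamma(t)}|_{\Sigma\cap B_\rho}) = (\text{const})\cdot t^{-2}(1+o(1))$ with the constants being exactly those of a round sphere, and then also let $\rho\to 0$ appropriately; (4) take the quotient: the disk contribution dominates both numerator and (square-rooted) denominator, and the ratio of the disk contributions is precisely $\int_{\mb S^2}H\,\dif\vol/\sqrt{\mc A(\mb S^2)} = \mc T(\mb S^2) = 4\sqrt\pi$ by conformal/scaling invariance of $\mc T$ under the rescaling and the fact that the limiting cap exhausts the full sphere as $\rho\to0$.

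The main obstacle I expect is making step (2)–(3) rigorous: one must control $H_{\gamma(t)}$ and $\vol_{g^{\gamma(t)}}$ on the small disk \emph{uniformly} as $t\to 0$, which means carefully organizing a joint limit in $t$ and the disk radius $\rho$ — the naive pointwise asymptotics of the kind $H_a = |f-a|^2 H + \mc O(|a|)$ used in Lemma \ref{lem: Möbius blow down} degenerate here because $|f-\gamma(t)|\to 0$ on part of the surface, so the error terms are no longer uniformly subordinate. The clean way around this is to exploit the exact conformal covariance: the \emph{integrand} $H^2\,\dif\vol$ is inversion-invariant (that is Proposition \ref{prop: Willmore invariance} localized), and more usefully one can compute $\int H\,\dif\vol$ and $\mc A$ for the inverted surface via the pointwise transformation laws \eqref{vol of I a}, \eqref{Normal vector}, \eqref{explicit d_id_j f for later} \emph{with explicit remainder} $\mc O(|f-a|^{-3})$, then integrate the $|f-a|^{-p}$ weights against the surface measure near $p$ using the blow-up coordinates $x\mapsto x/t$; the remainder integrates to a genuinely lower order of $t$ because the measure of the region where $|f-\gamma(t)|\sim t$ is itself $\mc O(t^2)$. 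One should also invoke that $\gamma(t)\notin \im f$ for small $t>0$ (since $\gamma$ leaves $\im f$ transversally along the normal), so that $I_{\gamma(t)}\circ f$ is a well-defined embedding and $\mc T$ of it makes sense.
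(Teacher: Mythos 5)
Your plan follows essentially the same route as the paper's proof: pass to a graph representation $f(z)=(z,u(z))$ at $p$ with $u=\mc O(|z|^2)$, observe that $I_{\gamma(t)}$ sends the tangent plane to a sphere of radius $\tfrac1{2t}$ (so $H\approx 4t$, $\mc A\approx \pi/t^2$ on the blown-up region), and show that the annulus and the far part of the surface contribute lower order to $\int H\,\dif\vol$ and to $\mc A$, whence the ratio tends to $4\sqrt\pi$. The one place where your proposed error control would not close as literally stated is the inner region: the crude remainders of Lemma \ref{lem: Möbius blow down}, namely $\partial_i\partial_j f^a = DI_a(f)(\partial_i\partial_j f)+\mc O(|f-a|^{-3})$, give, after contraction with $(g^{\gamma(t)})^{ij}=|f-\gamma(t)|^4 g^{ij}$, an error in $H_{\gamma(t)}$ of size $\mc O(|f-\gamma(t)|)=\mc O(t)$ near $p$ --- the \emph{same} order as the main term $4t$ --- so "integrating the $|f-a|^{-p}$ weights" against the $\mc O(t^2)$-measure region cannot by itself separate the error from the leading contribution. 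The paper's fix is the intermediate scale: work on $D_{t^\alpha}$ with $\alpha\in(1/2,1)$, where $u=\mc O(t^{2\alpha})=o(t)$ and $\nabla u=\mc O(t^\alpha)=o(1)$, redo the expansion of $(g^{\gamma(t)})^{ij}\partial_i\partial_j f^t$ from scratch to get the refined statement $H_{\gamma(t)}=4t+\mc O(t^{2\alpha})$ (error \emph{relatively} small since $2\alpha>1$), and then on the annulus $D_R\setminus D_{t^\alpha}$ use the crude remainders, which integrate to $\mc O(t^{-\alpha})=o(t^{-1})$ precisely because the annulus starts at radius $t^\alpha\gg t$. Your "joint limit in $t$ and $\rho$" gestures at this, and a variant with inner radius $Kt$, $K\to\infty$, would also work, but the choice of the window $(1/2,1)$ for $\alpha$ --- lower bound to make the tangent-plane approximation beat $t$, upper bound to make the annulus remainder beat $t^{-1}$ --- is the substantive step your sketch leaves unspecified.
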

\begin{proof}
After a translation and a rotation, we may assume without loss of generality that $p=0$ and that $f$ has a local graph representation
\[f:D_R \cqq \{z\in \R^2,\; |z|<R\} \to \R^3, \quad f(z) = (z,u(z)),\quad \vec n = \frac{1}{\sqrt{|\nabla u|^2+1}} \Vector{\nabla u, -1}.\]
Here, $u:D_R\to \R$ is smooth and $u(0)=0$, $ Du(0)=0$, and thus $|u(z)|\leq C|z|^2$ for some $C\in\R$. Then, $\gamma(t) = (0,0,t)$. Let $\alpha \in (1/2,1)$ and set $f^t = I_{\gamma(t)}\circ f$. We have
\[\partial_1 f = (1,0,\partial_1 u),\quad \partial_2 f = (0,1,\partial_2 u).\]
We consider $z\in D_{t^\alpha}$ as $t\to 0$. Then, 
\begin{align}
|z| \in \mc O(t^\alpha),\quad  u(z) \in \mc O(t^{2\alpha})\quad \text{and} \quad \parpar{i}{u} \in \mc O(t^{\alpha}). \label{z and u(z) growth}
\end{align}
 We first calculate $\partial_i \partial_j (f^t)$, using \eqref{DI a x}:
\begin{align*}
 \partial_i \partial_j (f^t)(z)&=\frac{1}{|z|^2 + (u(z)-t)^2} \Bigg [\Vector{0,\partial_i \partial_j u}  - 2 \Vector{z,u(z)-t} \frac{(u(z)-t) \partial_i \partial_j u}{|z|^2 + (u(z)-t)^2}\\
&\quad  - \frac{2}{|z|^2 + (u(z)-t)^2} \Bigg ( (z_i + \partial_i u (u(z)-t)) \partial_j f  + (z_j + \partial_j u (u(z)-t)) \partial_i f\Bigg ) \\
   &\quad + 8 \frac{\Vector{z,u(z)-t}}{(|z|^2 + (u(z)-t)^2)^2} (z_i + \partial_i u (u(z)-t))(z_j + \partial_j u (u(z)-t)) \\
&\quad   - 2 \frac{\Vector{z,u(z)-t}}{|z|^2 + (u(z)-t)^2} \langle \partial_i f , \partial_j f\rangle \Bigg ].
\end{align*}
We denote by $g^{\gamma(t)}$ the first fundamental form. Multiplying the previous equation with $(g^{\gamma(t)})^{ij} = (|z|^2+(u(z)-t)^2)^2 g^{ij}$, see \eqref{metric tensor g of a}, results in
\begin{align*}
(g^{\gamma(t)})^{ij} \partial_i \partial_j (f^t)(z)&= \Bigg [(|z|^2+(u(z)-t)^2)\Vector{0,\partial_i \partial_j u}  - 2 \Vector{z,u(z)-t} (u(z)-t) \partial_i \partial_j u\\
&\quad  - 2 \Bigg ( (z_i + \partial_i u (u(z)-t)) \partial_jf  + (z_j + \partial_j u (u(z)-t)) \partial_if\Bigg ) \\
   &\quad + 8 \Vector{z,u(z)-t} \frac{(z_i + \partial_i u (u(z)-t))(z_j + \partial_j u (u(z)-t))}{|z|^2+(u(z)-t)^2} \\
&\quad   - 2 \Vector{z,u(z)-t} \langle \partial_if, \partial_jf\rangle  \Bigg ]g^{ij}
\\
&=-4 \Vector{z,0} + 4 \Vector{z,u(z)-t} \frac{|z|^2-t^2}{|z|^2+t^2}+ \mc O(t^{2\alpha}).
\end{align*}
The last equality follows by \eqref{z and u(z) growth} and careful consideration of each term. Now, the inner normal vector $\vec n_{\gamma(t)}$ of $f^t$ at $I_{\gamma(t)}(f(p))$ is due to \eqref{Normal vector} given by
\begin{align*}
\vec n_{\gamma(t)} = \vec n - 2\Vector{z,u(z)-t} \frac{ \left \langle \Vector{z,u(z)-t}, \Vector{\nabla u,-1}\right \rangle }{\sqrt{|\nabla u|^2+1} (|z|^2+(u(z)-t)^2)}.
\end{align*}
It follows that 
\begin{align*}
H_{\gamma(t)}  &=-4 \langle z, \nabla u\rangle  + 8 \frac{|z|^2}{ |z|^2 +(u(z)-t)^2}(\langle z, \nabla u\rangle  - (u(z)-t)) \\
&\quad + 4\frac{|z|^2-t^2}{|z|^2+t^2}\Bigg ( \left \langle \Vector{z,u(z)-t} , \vec n\right \rangle - 2\left \langle \Vector{z,u(z)-t}, \vec n\right \rangle \Bigg) + \mc O(t^{2\alpha})\\
&=4t + \mc O(t^{2\alpha}).
\end{align*}
From \cite[(3.10)]{ScharrerPhD}, we obtain the asymptotic behavior 
\begin{equation}
\int _{D_{t^\alpha}} H_{\gamma(t)} \dif \vol_{g^{\gamma(t)}} =  (1+ o(1))\frac{\pi}{t^2} (4t+\mc O(t^{2\alpha})) = \frac{1}{t}(4\pi +o(1)).\label{integral H on disk}
\end{equation}
Furthermore, in the region $D_R\setminus D_{t^\alpha}$, we obtain similarly to \eqref{explicit d_id_j f for later} and \eqref{explicit H} that 
\[H_{\gamma(t)} = |f-\gamma(t)|^2 H + \mc O(|f-\gamma(t)|) \quad \text{as $t\to 0$.}\]
Since $Du$ and $H$ are bounded in $D_R$ and $z^2 + (u(z)-t)^2 = |f-\gamma(t)|^2 \geq z^2$, this implies together with \cite[(3.7)]{ScharrerPhD} that
\begin{equation}
\begin{split}
&\quad \,\int _{D_R \setminus D_{t^{\alpha}}} H_{\gamma(t)} \dif \vol _{g^{\gamma(t)}}\\
&= \left |\int _{D_R\setminus D_{t^\alpha}} \frac{\sqrt{1+|\nabla u|^2}}{(z^2+(u(z) -t)^2)^2} \bigg(|f-\gamma(t)|^2 H + \mc O(|f-\gamma(t)|)\bigg) \dif z\right | \\
&\leq C \int _{t^\alpha}^R \frac{1}{x} + \frac{1}{x^2} \dif x \\
&\leq C + \mc O(t^{-\alpha}).
\end{split}
\label{integral H on annulus}
\end{equation}
Finally, on the remaining surface $I_{\gamma(t)}(f(\Sigma)\setminus f(D_R))$, both area and $H$ are uniformly bounded in $t$. Thus, \eqref{integral H on disk} and \eqref{integral H on annulus} imply
\[\int _{\Sigma} H_{\gamma(t)} \dif \vol _{g^{\gamma(t)}} = \frac{1}{t} ( 4\pi + o(1)). \]
Using also \cite[(3.10)]{ScharrerPhD} to estimate the area of $f^t$, we finally obtain
\[\frac{\int _{\Sigma} H_{\gamma(t)} \dif \vol_{g^{\gamma(t)}} }{\sqrt{\mc A  (f^t)}} = \frac{ \frac{1}{t} (4\pi+o(1))}{\frac{1}{t}(\sqrt{\pi} + o(1))} \to 4\sqrt{\pi} =\mc T(\mb S^2)\quad \text{as $t\to 0$.}\qedhere\]

\end{proof}
Having established these two lemmas, the following is now an easy consequence:
\begin{cor}[Monotonicity of $\beta_g$] \label{cor:Monotonicity}
The map $R\mapsto \beta_g(R)$ is monotonically increasing in the region $(\mc T(\mb S^2),\infty)$ and monotonically decreasing in the region $(-\infty, \mc T(\mb S^2))$.
\end{cor}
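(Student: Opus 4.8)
The plan is to first establish the inclusion announced just before Lemma~\ref{lem: Möbius blow down}, namely that for any smooth embedding $f\colon\Sigma\to\R^3$ one has
\[
\bigcup_{x_0\in\R^3\setminus\im f}\mc T(I_{x_0}\circ f)\ \supset\ \bigl(\min\{\mc T(f),\mc T(\mb S^2)\},\ \max\{\mc T(f),\mc T(\mb S^2)\}\bigr),
\]
and then to read off the monotonicity of $\beta_g$ by composing near-minimizers with suitable sphere inversions.

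For the inclusion, let $U$ denote the unbounded connected component of $\R^3\setminus\im f$; by the Jordan--Brouwer separation theorem $U$ is open and connected, hence path-connected. I would consider the map $\Phi\colon U\to\R$, $\Phi(a)\cqq\mc T(I_a\circ f)$. Since $I_a$ and its derivatives depend continuously on $a$, locally uniformly on a neighbourhood of $\im f$, the quantities $H_a$, $\dif\vol_{g^a}$ and $\mc A(I_a\circ f)>0$ depend continuously on $a\in U$ (using the explicit formulas \eqref{metric tensor g of a}, \eqref{vol of I a}, \eqref{Normal vector} from the proof of Lemma~\ref{lem: Möbius blow down}), so $\Phi$ is continuous. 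Lemma~\ref{lem: Möbius blow down} gives $\Phi(a)\to\mc T(f)$ as $|a|\to\infty$ within $U$. On the other hand, fixing $p\in\im f$, the ray $\gamma(t)=p-t\vec n(p)$ lies in $U$ for all sufficiently small $t>0$ (it moves off $\im f$ in the \emph{outward} direction, since $\vec n$ is the inner normal), and Lemma~\ref{lem: Möbius blowup} gives $\Phi(\gamma(t))\to\mc T(\mb S^2)$ as $t\to0^+$. Hence $\Phi$ takes values arbitrarily close to both $\mc T(f)$ and $\mc T(\mb S^2)$ on $U$; joining some $\gamma(t_0)$ to a suitably far-away point of $U$ by a path inside $U$ and applying the intermediate value theorem along that path shows that $\Phi$ attains every value strictly between $\mc T(f)$ and $\mc T(\mb S^2)$, which is the claimed inclusion.

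To deduce the corollary in the increasing case, fix $\mc T(\mb S^2)<R_1<R_2$ and $\eps>0$. By Lemma~\ref{lem:im T is R} the class $\{f\in\mc S_g:\mc T(f)=R_2\}$ is nonempty, so $\beta_g(R_2)<\infty$ and we may pick $f\in\mc S_g$ with $\mc T(f)=R_2$ and $\mc W(f)\le\beta_g(R_2)+\eps$. Since $R_1\in(\mc T(\mb S^2),R_2)=(\mc T(\mb S^2),\mc T(f))$, the inclusion yields $a^\ast\in\R^3\setminus\im f$ with $\mc T(I_{a^\ast}\circ f)=R_1$; as $I_{a^\ast}$ is a diffeomorphism near $\im f$, we have $I_{a^\ast}\circ f\in\mc S_g$, and by the conformal invariance of the Willmore energy (Proposition~\ref{prop: Willmore invariance}), $\beta_g(R_1)\le\mc W(I_{a^\ast}\circ f)=\mc W(f)\le\beta_g(R_2)+\eps$. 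Letting $\eps\to0$ gives $\beta_g(R_1)\le\beta_g(R_2)$. The decreasing case on $(-\infty,\mc T(\mb S^2))$ is entirely analogous: for $R_1<R_2<\mc T(\mb S^2)$ one starts from a near-minimizer $f$ with $\mc T(f)=R_1$, uses $R_2\in(\mc T(f),\mc T(\mb S^2))$ together with the inclusion to produce $g\in\mc S_g$ with $\mc T(g)=R_2$ and $\mc W(g)=\mc W(f)$, and concludes $\beta_g(R_2)\le\beta_g(R_1)$.

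I expect the only points needing care---rather than genuine difficulty---to be the topological input (that the unbounded component $U$ of $\R^3\setminus\im f$ is connected, that the outward normal ray enters $U$, and that $\Phi$ is continuous on $U$); the rest is a formal consequence of the two blow-up/blow-down lemmas, the intermediate value theorem, and conformal invariance of $\mc W$. Since the nontrivial asymptotics were already extracted in Lemmas~\ref{lem: Möbius blow down} and~\ref{lem: Möbius blowup}, the corollary really is ``an easy consequence'' as claimed.
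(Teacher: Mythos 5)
Your proposal is correct and follows essentially the same route as the paper: compose a (near-)minimizer with sphere inversions $I_a$, use Lemma \ref{lem: Möbius blow down} and Lemma \ref{lem: Möbius blowup} as the two endpoint limits, apply the intermediate value theorem along a path in the complement of the surface, and invoke conformal invariance of $\mc W$. The only cosmetic difference is that you package the IVT step as an inclusion statement over the unbounded component of $\R^3\setminus\im f$, whereas the paper works with one explicit path $\gamma$ from a point near the surface out to infinity; the underlying argument is identical.
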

\begin{proof}
We consider $R_1,R_2\in \R$ such that $\mc T(\mb S^2)<R_1 <R_2$. Consider $f:\Sigma \to \R^3$ an embedding of a genus $g$ surface with $\mc T(f) = R_2$. Choose a continuous path $\gamma:[0,\infty)\to \R^3$ with $\gamma(0) = p \in \im f$, $\gamma\vert _{(0,\infty)}\cap \im f = \emptyset$, $\gamma(t) = p- t \vec n(p)$ for $t<\eps$ for some sufficiently small $\eps > 0$ such that $|\gamma(t)|\to \infty$ as $t\to \infty$. Because $\mc T(I_{\gamma(t)}\circ f) $ depends continuously on $t$ for $t>0$, using Lemma \ref{lem: Möbius blow down} and Lemma \ref{lem: Möbius blowup} and the intermediate value theorem shows the existence of some $t_0>0$ such that $\mc T(I_{\gamma(t_0)}\circ f) = R_1$. Thus, $\beta_g(R_1)\leq \mc W(f)$ and by taking the infimum over all immersions $f$ with $\mc T(f) = R_2$, we obtain $\beta_g(R_1)\leq \beta_g(R_2)$. The proof for the other case is similar.
\end{proof}
\subsection{Variations of \texorpdfstring{$\mc T$}{}}\label{subsec:VariationsOfT}
So far, we have considered global variations of our embeddings by using sphere inversions to preserve the Willmore energy. Now we want to consider local variations, i.e., variations $f_t$ of $f$ supported in a neighborhood of some point $p\in \Sigma$ such that 
\[\dv{}{t}\bigg\vert _{t=0} \mc T(f_t) \neq 0.\]
We begin by stating the following proposition which will be useful when calculating the first variation of $\mc T$.
\begin{prop}\label{prop: Alexandrov-corollary}
Suppose $f:\Sigma \to \R^3$ is a smoothly immersed, closed and connected surface. Then
\begin{equation}
4K(p) \mc A  (f) = H(p) \int_\Sigma H \dif \vol_g \quad \forall p\in \Sigma\label{H proportional to K}
\end{equation}
holds if and only if $f$ immerses a round sphere. 
\end{prop}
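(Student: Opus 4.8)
The plan is to prove the two implications separately. The forward one is a direct computation: if $f$ parametrises a round sphere of radius $r$, then $f$ is automatically a diffeomorphism onto it (an immersion of a closed connected surface onto a round sphere is a proper local diffeomorphism onto a simply connected target, hence a diffeomorphism), so $H\equiv 2/r$, $K\equiv 1/r^2$, $\mathcal A(f)=4\pi r^2$, $\int_\Sigma H\dif\vol_g=8\pi r$, and both sides of \eqref{H proportional to K} equal $16\pi$.

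For the converse, set $m\cqq\int_\Sigma H\dif\vol_g$ and assume $4K(p)\mathcal A(f)=H(p)m$ for all $p$. First I would record, at a point $p_0$ maximising $|f|$, that the Hessian of $|f|^2$ is negative semidefinite there; since at such a point $\vec n(p_0)=\pm f(p_0)/|f(p_0)|$, choosing the orientation so that $\vec n(p_0)=-f(p_0)/|f(p_0)|$ makes $\mathbb I(p_0)$ positive definite, hence $K(p_0)>0$ and $H(p_0)>0$. This excludes $m=0$ (which would give $K\equiv 0$), and the hypothesis evaluated at $p_0$ then shows $m>0$, so that $c\cqq 4\mathcal A(f)/m>0$ and $H=cK$ everywhere. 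The elementary inequality $H^2\ge 4K$ now reads $H(H-4/c)\ge0$, so $\Sigma$ is the union of the two \emph{disjoint} closed sets $\{H\le0\}$ and $\{H\ge4/c\}$; by connectedness, together with $p_0\in\{H\ge4/c\}$, the latter must be all of $\Sigma$, whence $H\ge4/c$ and so $K=H/c>0$ everywhere on $\Sigma$. (This already excludes every genus but $0$; equivalently, integrating the hypothesis and using Gauss--Bonnet gives $4\mathcal A(f)\int_\Sigma K\dif\vol_g=m^2>0$.)

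With $K>0$ everywhere, the Gauss map $\nu\cqq-\vec n\colon\Sigma\to\mathbb S^2$ (outward unit normal) is a proper local diffeomorphism of a compact connected surface onto $\mathbb S^2$, hence a covering map, hence --- $\mathbb S^2$ being simply connected --- a diffeomorphism. I would then pass to the reparametrisation $x\cqq f\circ\nu^{-1}\colon\mathbb S^2\to\mathbb R^3$ and its support function $h(\xi)\cqq\langle x(\xi),\xi\rangle$, and invoke the classical support-function identities: $x=h\,\xi+\nabla_{\mathbb S^2}h$, and $1/\kappa_1,1/\kappa_2$ are the eigenvalues of $\nabla^2_{\mathbb S^2}h+h\,\Id$, so that $1/\kappa_1+1/\kappa_2=\Delta_{\mathbb S^2}h+2h$. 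Dividing $H=cK$ by $K$ turns the constraint into $1/\kappa_1+1/\kappa_2=c$, i.e.\ the linear equation $\Delta_{\mathbb S^2}h+2h=c$. Since $\ker(\Delta_{\mathbb S^2}+2)$ is exactly the span of the linear functions $\xi\mapsto\langle v,\xi\rangle$, $v\in\mathbb R^3$ (the first nonzero eigenspace of $-\Delta_{\mathbb S^2}$), every solution is $h(\xi)=c/2+\langle v,\xi\rangle$; substituting back into $x=h\,\xi+\nabla_{\mathbb S^2}h$ and using $\nabla_{\mathbb S^2}\langle v,\cdot\rangle=v-\langle v,\xi\rangle\xi$ yields $x(\xi)=\tfrac c2\,\xi+v$, so $f(\Sigma)$ is the round sphere of radius $c/2$ about $v$.

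The main obstacle is really the last paragraph: correctly assembling the classical support-function calculus (the reconstruction formula and the identification of the principal radii with the eigenvalues of $\nabla^2_{\mathbb S^2}h+h\,\Id$) and the spectral fact that $\ker(\Delta_{\mathbb S^2}+2)$ consists precisely of the linear functions --- these are standard but carry the weight of the argument. The remaining ingredients are elementary, apart from two classical facts I would simply cite: a closed immersed surface always has a point of positive Gauss curvature, and a proper local diffeomorphism onto a simply connected manifold is a diffeomorphism.
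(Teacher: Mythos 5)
Your proof is correct, but it takes a genuinely different route from the paper's after the common opening moves. Both arguments begin by securing a point of positive Gauss curvature (you via the maximum of $|f|$, the paper by citing $\int_\Sigma \max\{K,0\}\dif\vol_g\geq 4\pi$) and both use the same disconnection-by-connectedness trick (your $\{H\le 0\}\sqcup\{H\ge 4/c\}$ versus the paper's $\{K\le 0\}\sqcup\{K\ge 1\}$). The divergence is in how the global information is extracted and how umbilicity is forced. The paper first normalizes $\mc A(f)=4\pi$ and applies Gauss--Bonnet to the integrated hypothesis, which pins down $g=0$ and $\int_\Sigma H\dif\vol_g=\pm 8\pi$, hence $2K=H$; the pointwise identity $(2\kappa_1\kappa_2-1)^2=(\kappa_1-\kappa_2)^2+1\ge 1$ together with $\int_\Sigma K\dif\vol_g=\mc A(f)$ then forces $K\equiv 1$ and equality, i.e.\ umbilicity, in three lines and with no machinery beyond Gauss--Bonnet. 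You instead keep $c=4\mc A(f)/m$ general, deduce $K>0$ everywhere, and then invoke Hadamard-type global convexity (Gauss map a diffeomorphism), the support-function reconstruction, and the spectral fact that $\ker(\Delta_{\mathbb S^2}+2)$ consists of the linear functions, to solve $1/\kappa_1+1/\kappa_2=c$ explicitly. Your route is heavier --- it essentially re-proves the classical rigidity of closed convex surfaces with constant harmonic mean curvature --- but it buys an explicit identification of the centre $v$ and radius $c/2$, whereas the paper's argument is shorter, self-contained, and never needs the Gauss map to be invertible. One small point to make explicit if you write this up: the hypothesis \eqref{H proportional to K} is invariant under reversing the orientation (both $H$ and $\int_\Sigma H\dif\vol_g$ change sign), so your normalization $\vec n(p_0)=-f(p_0)/|f(p_0)|$ is indeed without loss of generality.
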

\begin{proof}
If $f$ immerses a round sphere of radius $r$, then $K=\frac{1}{r^2}$ and $H = \frac{2}{r}$, so that the converse implication holds. We prove the first implication.

As both sides of \eqref{H proportional to K} are scaling-invariant, we may assume that $\mc A (f) = 4\pi$. If $\int _{\Sigma} H \dif \vol_g=0$, then $K(p) = 0$ everywhere, which is a contradiction to the fact that $\int_\Sigma \max \{K,0\} \dif \vol_g \geq 4\pi$, see \cite[Lemma 7.2.1]{Willmore1993}. From the Gauss-Bonnet theorem, we know that
\[4\pi(1-g) = \int_\Sigma K \dif \vol_g = \frac{\left (\int_\Sigma H\dif \vol_g\right ) ^2}{16\pi }.\]
Since the right side is positive, we must have $g=0$. 
It follows up to a possible sign change that $\int_\Sigma H \dif \vol _g= 8\pi$ and so $2K = H$. Denoting with $\kappa_1, \kappa_2$ the principal curvatures, this means $2\kappa_1 \kappa_2 = \kappa_1 + \kappa_2$. Hence
\begin{equation}
(2\kappa_1\kappa_2 -1)^2 = (\kappa_1 + \kappa_2 -1)^2 = \kappa_1^2 + \kappa_2^2 + 1 + 2\kappa_1 \kappa_2 - 2\kappa_1 - 2\kappa_2 = (\kappa_1 - \kappa_2)^2 + 1 \geq 1 \label{kappa inequality}
\end{equation}
for all $p\in \Sigma$. Since there exist points $p$ such that $\kappa_1(p)\kappa_2(p)>0$ and since $\Sigma$ is connected, it follows that $\kappa_1(p) \kappa_2(p) \geq 1$ for all $p\in \Sigma$. 
By the assumption that $\mc A (f) = 4\pi = \int_\Sigma K\dif \vol_g$, we see that $K(p) = 1$ for all $p\in \Sigma$ so equality must hold in \eqref{kappa inequality}. Hence every point is umbilic and $f$ must immerse a round sphere.
\end{proof}
We are now ready to prove the following lemma. This lemma is a variation of \cite[Lemma 3.10]{ScharrerPhD}.
\begin{lemma}[First variation of $\mc T$]\label{lem:First variation of T}
Suppose $f:\Sigma \to \R^3$ is a smoothly immersed, closed and connected surface such that $f$ does not immerse a round sphere. Suppose $q\in \Sigma$ is arbitrary. Then, there exists $p \in \Sigma$, $p\neq q,$ with the following property.\newline
For each neighborhood $U$ of $p$, there exists a smooth normal vector field $\xi : \Sigma \to \R^3$ compactly supported in $U$ such that for $f_t \cqq f + t\xi$ with $t\in \R$, the function $t\mapsto \mc T(f_t)$ is differentiable at $t=0$, and
\[\dv{}{t}\bigg\vert _{t=0} \mc T(f_t) \neq 0.\]
Moreover,
\[\mc W(f_t) = \mc W(f) + \mc O(t) \quad \text{as $t\to 0.$}\]

\end{lemma}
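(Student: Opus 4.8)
The plan is to reduce the statement to a first-variation computation and an application of Proposition \ref{prop: Alexandrov-corollary}. First I would recall the standard first-variation formulas for a normal perturbation $f_t = f + t\xi$ with $\xi = \psi \vec n$ for a scalar $\psi \in C^\infty_c(U)$. Writing $A := \int_\Sigma H\dif\vol_g$ and $\mc A := \mc A(f)$, one has the classical identities
\[\dv{}{t}\bigg\vert_{t=0}\!\int_\Sigma H\dif\vol_g = \int_\Sigma \psi\,(|A_f|^2 - H^2)\dif\vol_g + \text{(zeroth order in } \psi) ,\]
where $|A_f|^2 = \kappa_1^2+\kappa_2^2$; more usefully, combining the variation of $H$ and of $\dif\vol_g$ one gets $\dv{}{t}\vert_{t=0}\int_\Sigma H\dif\vol_g = -\int_\Sigma \psi\,(2K)\dif\vol_g$ (using $H^2 - |A_f|^2 = 2K$ and the variation $\partial_t(H\dif\vol_g) = (-\Delta\psi - \psi(|A_f|^2 - H^2) - \psi H^2)\dif\vol_g$ wait — cleanly: $\partial_t \dif\vol_g = -\psi H\dif\vol_g$, $\partial_t H = \Delta\psi + \psi|A_f|^2$, so $\partial_t(H\dif\vol_g) = (\Delta\psi + \psi|A_f|^2 - \psi H^2)\dif\vol_g = (\Delta\psi - 2\psi K)\dif\vol_g$, and the Laplacian term integrates to zero on the closed surface $\Sigma$, or on $U$ since $\psi$ is compactly supported there). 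Likewise $\dv{}{t}\vert_{t=0}\mc A(f_t) = -\int_\Sigma \psi H\dif\vol_g$. Therefore
\[\dv{}{t}\bigg\vert_{t=0}\mc T(f_t) = \frac{1}{\sqrt{\mc A}}\Big(-2\int_\Sigma\psi K\dif\vol_g\Big) - \frac{A}{2\mc A^{3/2}}\Big(-\int_\Sigma\psi H\dif\vol_g\Big) = -\frac{1}{2\mc A^{3/2}}\int_\Sigma \psi\,\big(4K\mc A - HA\big)\dif\vol_g .\]

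Next I would use Proposition \ref{prop: Alexandrov-corollary}: since $f$ does not immerse a round sphere, the continuous function $G := 4K\mc A - H\!\int_\Sigma H\dif\vol_g$ is not identically zero on $\Sigma$, so there is a point $p\in\Sigma$ with $G(p)\neq 0$. Because $q$ is a single point, by continuity of $G$ we may pick such a $p$ with $p\neq q$ (if the only zero-set-complement point forced were $q$ itself, shrink to a nearby point where $G$ is still nonzero, which exists since $\{G\neq 0\}$ is open and nonempty — an open nonempty set is not a single point). Fix this $p$. Given any neighborhood $U$ of $p$, choose $\psi\in C^\infty_c(U)$ with $\psi\geq 0$, $\psi(p)>0$, supported in a small enough ball around $p$ that $G$ has constant sign there; then $\int_\Sigma\psi G\dif\vol_g\neq 0$, hence $\dv{}{t}\vert_{t=0}\mc T(f_t)\neq 0$. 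Set $\xi := \psi\vec n$; this is a smooth normal field compactly supported in $U$, and $t\mapsto\mc T(f_t)$ is differentiable at $0$ because $f_t$ remains an immersion for small $t$ and all the integrands depend smoothly on $t$ (and $\mc A(f_t)>0$).

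Finally, for the Willmore bound: $\mc W(f_t) = \tfrac14\int_\Sigma H_{f_t}^2\dif\vol_{f_t}$, and both $H_{f_t}$ and $\dif\vol_{f_t}$ are smooth functions of $t$ near $t=0$ on the compact set $\supp\psi$ (and constant in $t$ outside it), so $\mc W(f_t)$ is a smooth function of $t$, hence $\mc W(f_t) = \mc W(f) + \mc O(t)$ as $t\to 0$.

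The main obstacle, and the only place requiring genuine care, is the selection of the point $p$ with $p\neq q$: one must be sure that $\{G\neq 0\}$ is not reduced to the single point $q$. This is immediate because $G$ is continuous and $\{G\neq 0\}$ is open; if it were nonempty it contains a whole ball, so it cannot equal $\{q\}$, and if it were empty Proposition \ref{prop: Alexandrov-corollary} would force $f$ to immerse a round sphere, contradicting the hypothesis. A secondary technical point is justifying differentiability of $\mc T(f_t)$ at $t=0$, which follows from smoothness of $g_{f_t}$, $\vec n_{f_t}$, $H_{f_t}$ in $t$ (uniformly on the compact support) together with $\mc A(f_t)\to\mc A(f)>0$; the sign-of-$G$-constant choice of $\supp\psi$ is what converts "the derivative is $-\tfrac{1}{2\mc A^{3/2}}\int\psi G$" into "the derivative is nonzero."
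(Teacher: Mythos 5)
Your proposal is correct and follows essentially the same route as the paper: the same first-variation formulas for $H$ and $\dif\vol_g$, the same resulting expression $\partial_t|_{t=0}\mc T(f_t)=\tfrac{1}{2\mc A^{3/2}}\int_\Sigma\xi\,(H\!\int_\Sigma H\dif\vol_g-4K\mc A)\dif\vol_g$, an appeal to Proposition \ref{prop: Alexandrov-corollary} to find a point where $4K\mc A\neq H\!\int_\Sigma H\dif\vol_g$ away from $q$, and a compactly supported bump to make the integral nonzero. Your explicit sign-definite choice of $\psi$ on a neighborhood where the integrand has constant sign is just a concrete instance of the paper's appeal to the fundamental lemma of the calculus of variations, and the openness argument for avoiding $q$ is a harmless elaboration of what the paper asserts directly.
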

\begin{proof}
First, we calculate the variation of $\mc T$. Let $\mb I$ be the second fundamental form of $f$ as defined in \eqref{second fundamental form}. We denote $g^t$ the metric tensor of $f_t$. Then, by \cite[Lemma 2.3]{Rupp2023volume}, we know that
\begin{align}
\partial_t (\dif\vol_{g^t}) &= -H \xi \dif\vol_{g},\label{variation volume form}\\ 
\partial_t H &= \Delta \xi + |\mb I|^2 \xi. \label{variation H}
\end{align}
Thus, we have
\[\partial_t \bigg \vert _{t=0} \int _\Sigma H_t \dif \vol_{g^t}  = \int_\Sigma \Delta \xi  + |\mb I|^2 \xi \dif \vol_g  -\int_\Sigma H^2 \xi \dif \vol_g.\]
By the divergence theorem, it holds that $\int_\Sigma \Delta \xi \dif \vol_g=0$. As $|\mb I|^2 = H^2-2K$, we obtain
\[\partial_t \bigg \vert _{t=0} \int _\Sigma H_t \dif \vol_{g^t}  = -2\int_\Sigma K \xi \dif \vol_g .\]
On the other hand,
\[\partial_t \bigg \vert _{t=0}\mc A(f_t) = -\int _\Sigma H\xi \dif \vol_g.\]
Thus, the derivative of $\mc T(f_t)$ is by the quotient rule
\begin{align}
\partial_t \bigg \vert _{t=0} \mc T(f_t) &= \frac{-2\int_\Sigma K\xi \dif \vol_g \sqrt{\mc A (f)} + \int_\Sigma H \dif \vol_g \frac{\int _\Sigma H \xi \dif  \vol_g }{2\sqrt{\mc A (f)}}}{\mc A(f)}\notag\\
&= \frac{\int_\Sigma \left (-4K\mc A (f)+   \int_\Sigma H \dif \vol_g  H\right )\xi \dif \vol_g  }{2\mc A (f)^{3/2}} \label{T variation}\\
&= \langle \nabla \mc T(f), \xi\rangle _{L^2(\dif \vol_g)} \label{T gradient},
\end{align}
where $\nabla \mc T(f) \cqq \frac{\mc T(f) H}{2\mc A(f)} - \frac{2K}{\sqrt{\mc A(f)}}$ is the $L^2(\dif \vol_g)$-gradient of $\mc T$. 
By Proposition \ref{prop: Alexandrov-corollary}, we find a point $p\neq q \in \Sigma$ such that
\begin{equation}
4K(p)  \mc A (f) \neq H(p) \int _\Sigma H\dif \vol_g . \label{K-H relation}
\end{equation}
By continuity, there exists a neighborhood $V$ of $p$ such that \eqref{K-H relation} holds in $V$. For an open neighborhood $U$ of $p$, define $\xi: \Sigma \to \R$ smooth and compactly supported in $U\cap V$ such that  the right side of \eqref{T variation} is non-zero. This exists by the fundamental theorem of variational calculus.\newline
Finally, using the first variation formula for the Willmore energy \cite[Lemma 2.4.]{Rupp2023volume}, we see that $t\mapsto \mc W(f_t)$ is differentiable in $t=0$, which implies the conclusion.
 \end{proof}
 \begin{cor}\label{cor:usc}
 The map $R \mapsto \beta_0(R)$ is upper semicontinuous in $\R\setminus \{ \mc T(\mb S^2)\}$ and the maps $R \mapsto \beta_g(R)$ for $g\geq 1$ are upper semicontinuous on $\R$.
 \end{cor}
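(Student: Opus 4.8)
The plan is to prove upper semicontinuity of $\beta_g$ away from the sphere value $\mc T(\mb S^2)$ by the standard perturbation argument: given $R_0$ and a sequence $R_n \to R_0$, take a near-optimal competitor $f$ with $\mc T(f) = R_0$ and $\mc W(f) \le \beta_g(R_0) + \eps$, then perturb $f$ slightly to produce competitors with $\mc T = R_n$ whose Willmore energy is $\mc W(f) + \mc O(R_n - R_0)$. Passing $n \to \infty$ and then $\eps \to 0$ gives $\limsup_n \beta_g(R_n) \le \beta_g(R_0)$. The whole point of Lemma \ref{lem:First variation of T} is precisely to supply such a perturbation with nonvanishing derivative of $\mc T$, which lets the implicit function theorem adjust the constraint.

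In more detail, first I would fix $R_0$ (in $\R\setminus\{\mc T(\mb S^2)\}$ if $g = 0$, in all of $\R$ if $g \ge 1$) and $\eps > 0$, and choose $f \in \mc S_g$ with $\mc T(f) = R_0$ and $\mc W(f) < \beta_g(R_0) + \eps$. The key case distinction: if $f$ does not immerse a round sphere, Lemma \ref{lem:First variation of T} gives a normal field $\xi$ with $\dv{}{t}\big|_{t=0}\mc T(f+t\xi) \ne 0$ and $\mc W(f+t\xi) = \mc W(f) + \mc O(t)$; since $t \mapsto \mc T(f+t\xi)$ is $C^1$ near $0$ with nonzero derivative, for $n$ large there is $t_n = \mc O(R_n - R_0) \to 0$ with $\mc T(f + t_n \xi) = R_n$, and $f + t_n\xi \in \mc S_g$ is still an embedding (embeddings are open in the appropriate topology, and $t_n \to 0$). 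Hence $\beta_g(R_n) \le \mc W(f + t_n\xi) = \mc W(f) + \mc O(R_n-R_0) < \beta_g(R_0) + \eps + o(1)$, and letting $n \to \infty$ then $\eps \to 0$ finishes this case. The remaining possibility is that every near-optimal $f$ does immerse a round sphere; but a round sphere has $\mc T = \mc T(\mb S^2)$, so this forces $R_0 = \mc T(\mb S^2)$, which is exactly the excluded value for $g = 0$. For $g \ge 1$ a round sphere is not in $\mc S_g$ at all (wrong genus), so this degenerate case never occurs and $\beta_g$ is upper semicontinuous on all of $\R$.

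The main obstacle — really the only subtlety — is making sure the perturbed map $f + t_n \xi$ is still an admissible competitor: it must remain a smooth \emph{embedding} of the genus-$g$ surface $\Sigma$. Since $f$ is a smooth embedding of a compact manifold and $\xi$ is a fixed smooth compactly supported field, $f + t\xi$ is an immersion for $|t|$ small and injective for $|t|$ small (injectivity is an open condition on compact domains in $C^1$), so this holds for all $n$ large; no genuine difficulty, just a remark to include. One should also note that the argument is insensitive to whether we work with $\mc S_g$ or the larger class $\mc E_\Sigma$, since smooth embeddings are dense competitors and the same perturbation works. I would present the case $g = 0$ and $g \ge 1$ together, only splitting at the point where one invokes "$f$ immerses a round sphere $\Rightarrow R_0 = \mc T(\mb S^2)$."
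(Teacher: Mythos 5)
Your proposal is correct and follows essentially the same route as the paper: both take a near-optimal competitor $f$ with $\mc T(f)=R_0$, invoke Lemma \ref{lem:First variation of T} to adjust the constraint to $R_n$ at the cost of $\mc O(t)$ in Willmore energy, and then pass to the limit; the exclusion of $\mc T(\mb S^2)$ for $g=0$ is handled exactly as in the paper, via the observation that the lemma fails only for round spheres. Your additional remarks on why the perturbed map remains an admissible competitor are a harmless elaboration of what the paper leaves implicit.
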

 \begin{proof}
We will prove that \begin{equation}
\limsup _{n\to \infty} \beta_g(R_n)\leq \beta_g(R) \label{usc}
\end{equation}
for any $R \in \R$ and $g\in \N_0$ and any converging sequence $R_n \to R$ if $R \neq \mc T(\mb S^2)$ or $g\neq 0$. Let $f$ be an immersion with $\mc T(f) = R$. By Lemma \ref{lem:First variation of T}, we find immersions $\psi_{n,f}$ such that $\mc T(\psi_{n,f})= R_n$ and $\mc W(\psi_{n,f}) \to \mc W(f)$. This implies \eqref{usc} as for any $f \in \mc S_g$ with $\mc T(f) =R$
\[\limsup_{n\to \infty} \inf _{\substack{\psi \in \mc S_g \\\mc T(\psi) = R_n}} \mc W(\psi) \leq \limsup_{n\to \infty} \mc W(\psi_{n,f}) = \mc W(f)\]
and by taking the infimum over all $f$, we obtain \[\limsup _{n\to \infty} \beta_g(R_n) \leq \beta_g(R).\qedhere\]
 \end{proof}
\section{Existence of minimizers of the Willmore energy under the total mean curvature constraint}\label{sec:ExistenceOfMinimizers}
In this chapter, we want to investigate the existence and smoothness of minimizers for the Willmore energy under the constraint $\mc T$. We will prove that if the total mean curvature constraint lies in a certain interval, for which the infimum is bounded by $8\pi$, minimizers are attained and smooth.
\subsection{Weak immersions}\label{subsec:WeakImmersions}

Our ultimate goal is to prove the existence of minimizers of the constrained problem. We will use the direct method in the calculus of variations and consider a minimizing sequence of the Willmore energy. Ideally, this minimizing sequence of immersions is precompact in some topology. However, the $C^\infty$-topology is too strong to expect any compactness. To address this challenge, we will define the class of weak immersions, originally introduced independently by \cite{KuwertLi} and \cite{RiviereVariationalPrinciples}, expanding our class of potential competitors to ensure that the minimizing sequence converges to another weak competitor in the limit. 

Consider $\Sigma$ a smooth, closed and oriented surface and let $g_0$ be a smooth reference metric on it. Let $u\in C^\infty(\Sigma, \R^n)$ and denote by $\nabla ^k u $ the $k$-th covariant derivative of $u$. Let $|\nabla ^k u|_{g_0}$ be the norm of $\nabla ^k u$ defined in a local chart by
\[|\nabla ^k u|^2_{g_0} = g_0^{i_1j_1}\cdots g_0^{i_kj_k} (\nabla ^k u_l)_{i_1\ldots i_k} (\nabla ^k u_l)_{j_1\ldots j_k}.\]
Let $p\in [1,\infty)$. For $u\in C^\infty(\Sigma)$, we consider the norm
\[\|u\|_{W^{k,p}} = \sum _{j=0}^k \left ( \int_\Sigma |\nabla ^j u|_{g_0}^p \dif \vol _{g_0}\right )^{\frac{1}{p}}.\]
Then, the Sobolev space $W^{k,p}(\Sigma, \R^n)$ is defined to be the completion of $C^\infty(\Sigma, \R^n)$ with respect to $\|\cdot \| _{W^{k,p}}$. One can view this space as a subspace of $L^p(\Sigma, \R^n)$, see \cite{Hebey} for an introduction. For $p=\infty$, we define 
\[W^{1,\infty}(\Sigma,\R^n) \cqq \{f:\Sigma \to \R^n, \; \text{$f$ is Lipschitz}\}.\]
Notice that for $f\in W^{1,\infty}(\Sigma,\R^n)$, for any chart $(x,U)$ the map $f\circ x^{-1}\vert _{x(V)}$, where $V\subset\subset U$, is Lipschitz and so $f\circ x^{-1}\in W^{1,\infty}(x(V), \R^n)$ and, thus, admits a weak derivative $D(f\circ x^{-1})$. Notice also that since $\Sigma$ is compact, these definitions do not depend on the choice of the metric $g_0$.

We will be interested in Lipschitz immersions $f \in W^{1,\infty}(\Sigma, \R^3)$ such that
\begin{equation}
\exists C \geq 1 \text{ such that } C^{-1} g_0(X,X)\leq |Df(X)|^2 \leq C g_0(X,X) \quad \forall X \in T\Sigma. \label{Lipschitz condition}
\end{equation}
This will ensure that the metric does not degenerate.
\begin{defi}
We define the \emph{space of weak immersions with finite total curvature} as
\[\mc E _\Sigma \cqq \left \{f \in W^{1,\infty}(\Sigma, \R^3)\cap W^{2,2}(\Sigma, \R^3), \; \eqref{Lipschitz condition} \text{ holds for some $C\geq 1$}\right \}.\]
\end{defi}
For $f \in \mc E_\Sigma$, we can define $\vec H, \vec n$ and all other quantities introduced in Section \ref{subsec:DefinitionsFromDifferentialGeometry}, possibly up to nullsets. The main goal will be to minimize $\mc W$ in the space $\mc E_\Sigma $ and find smooth minimizers. Thus, we introduce 
\begin{equation}
\beta_g(R) \cqq \inf \{\mc W(f), \; f\in \mc E_\Sigma,\; \mc T(f) = R\}.\label{beta g tau for weak surfaces}
\end{equation}
Here, $R \in \R$ and $\Sigma$ is a closed, oriented and connected surface of genus $g$. Notice that $\mc S_g\subset \mc E_\Sigma$. A priori, \eqref{beta g tau for weak surfaces} is an abuse of notation as we already introduced $\beta_g(R)$ as the constrained minimum of the Willmore energy in the class of smooth immersions. However, we will see that these two infima are equal as Theorem \ref{thm:KMR} shows that the infimum in $\mc E_\Sigma$ is attained by a smooth embedding.  

Notice that the Willmore energy is invariant under diffeomorphisms of the domain, i.e., $\mc W(f) =\mc W(f \circ \psi)$ if $\psi$ is a diffeomorphism of $\Sigma$. This is problematic as the group of diffeomorphisms of $\Sigma$ is not locally compact. One way to circumvent this is to work with conformal immersions. 
\begin{defi}\label{Weak conformality}
 A map $f\in \mc E_\Sigma$ is called \emph{weakly conformal} if and only if it holds
\begin{equation}
\begin{cases}
\langle\parpar{x^1}{f}, \parpar{x^2}{f}\rangle = 0 &\text{ a.e. in }\Sigma\\
|\parpar{x_1}{f}| = |\parpar{x_2}{f}| &\text{ a.e. in }\Sigma
\end{cases} \label{eq:weak conformality}
\end{equation}
in every conformal\footnote{In the sense of Definition \ref{Conformality}, i.e., as an immersion between the Riemannian manifolds $(\Sigma, g_0)$ and $(\R^2, \langle \cdot ,\cdot \rangle)$.} chart $x$ of $\Sigma$. By \eqref{eq:weak conformality}, there is some map $\lambda$, called the \emph{conformal factor of $f$} such that
\[f^*\langle \cdot, \cdot \rangle   = e^{2\lambda} g_{0}.\]
Furthermore, by \eqref{Lipschitz condition}, $\lambda \in L^\infty (\Sigma)$.
\end{defi}
\subsection{Existence of minimizers for spherical immersions}\label{subsec:g=0}
We will begin by considering immersions from the 2-sphere $\mb S^2$. We will show that for $R \in (0, \sqrt{2}\mc T(\mb S^2))$, minimizers are attained and smooth. In order to do so, we will employ the following compactness result, which was obtained by  \textsc{Mondino} and \textsc{Rivière} \cite[Theorem  1.5]{MondinoRiviere}, see also \cite[Theorem 3.2]{KMR}. The additional convergence of the total mean curvature was shown by \textsc{Mondino} and \textsc{Scharrer} \cite[Theorem 3.3]{MondinoScharrer}.
\begin{thm} \label{thm: Weak compactness}
Suppose $(f_n)_n\subset \mc E_{\mb S^2}$ is a sequence of weak immersions and that
\[\limsup _{n\to\infty} \mc A (f_n) < \infty, \quad \liminf _{n\to  \infty} \diam f_n (\mb S^2) >0\quad\text{and}\quad \limsup _{n\to \infty}\mc W(f_n) < 8\pi - \delta\]
for some $\delta > 0$. Then, there exists a subsequence that we still denote $(f_n)_n$, a sequence $(\psi_n)_n$ of conformal diffeomorphisms of $\mb S^2$ and finitely many points $\{a_1,\ldots, a_n\}$ such that
\[f_n \circ \psi_n \wto f_\infty \quad \text{weakly in $W^{2,2}_\loc (\mb S^2 \setminus \{a_1, \ldots, a_n\})$,}\]
where $f_\infty \in \mc E_{\mb S^2}$ is weakly conformal. In addition, by lower semicontinuity of $\mc W$ under weak $W^{2,2}$-convergence, we have
\[\mc W(f_\infty) \leq \liminf _{n\to \infty} \mc W(f_n) <8\pi-\delta.\]
So $f_\infty$ is a (weak) embedding, and moreover
\[\mc A (f_n) \to \mc A (f_\infty),\quad \mc V (f_n) \to \mc V(f_\infty)\quad \text{ and } \int _{\mb S^2} H_{f_n} \dif \vol _{f_n} \to \int _{\mb S^2} H _{f_\infty} \dif \vol _{f_\infty}\]
as $n\to \infty$.
\end{thm}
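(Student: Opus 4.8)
\emph{Proof proposal.} Because this is a deep compactness statement, the natural route is to deduce it from the moving-frame machinery for weak immersions, so I would first reduce everything to a statement about conformal parametrizations. By the uniformization theorem, for each $n$ there is a diffeomorphism $\psi_n$ of $\mb S^2$, unique up to a Möbius transformation, such that $f_n\circ\psi_n$ is weakly conformal with respect to a fixed round metric $g_0$; I fix the residual Möbius freedom by a three-point normalization and write $\lambda_n$ for the conformal factor of $f_n\circ\psi_n$. The key input is the Hélein/Rivière analysis (\cite{RiviereVariationalPrinciples,KuwertLi}): in a Coulomb moving frame the Gauss map satisfies an elliptic system with Jacobian-type right-hand side, and the Liouville-type equation for $\lambda_n$ together with the Codazzi identities then puts $\lambda_n$ under control by the local Willmore energy via Wente-type estimates, \emph{provided} the Willmore energy on the relevant ball lies below a universal threshold $\varepsilon_0<8\pi$.

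Next comes the concentration-compactness step. Set $\mu_n\cqq\tfrac14 H_{f_n}^2\,\dif\vol_{f_n}$, viewed as a Radon measure; since $\mu_n(\mb S^2)\le 8\pi-\delta$, after passing to a subsequence $\mu_n\wstarto\mu$, and the set $\{a_1,\dots,a_N\}$ of points carrying at least $\varepsilon_0$ of the limiting mass is finite, with $N\le(8\pi-\delta)/\varepsilon_0$. On any compact $K\subset\mb S^2\setminus\{a_1,\dots,a_N\}$ the Willmore energy of $f_n\circ\psi_n$ is eventually $<\varepsilon_0$ on every small ball meeting $K$, so the $\varepsilon$-regularity above yields, after subtracting constants, uniform two-sided bounds on $\lambda_n$ and uniform $W^{2,2}(K)$ bounds on $f_n\circ\psi_n$; the lower bound on the diameter and the upper bound on the area prevent the normalized parametrizations from collapsing to a point or escaping to infinity. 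A diagonal extraction then produces $f_n\circ\psi_n\wto f_\infty$ weakly in $W^{2,2}_\loc(\mb S^2\setminus\{a_1,\dots,a_N\})$, with $f_\infty$ weakly conformal.

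It remains to check that $f_\infty$ is an admissible competitor with the stated properties. The uniform two-sided bounds on $\lambda_n$ over compacta pass to $\lambda_\infty$, so \eqref{Lipschitz condition} holds and $f_\infty$ is a weak immersion with finite total curvature on $\mb S^2\setminus\{a_1,\dots,a_N\}$; the point-removability theorem for weak Willmore immersions (\cite{KuwertLi,RiviereVariationalPrinciples}) then fills in the punctures, so $f_\infty\in\mc E_{\mb S^2}$. Lower semicontinuity $\mc W(f_\infty)\le\liminf_n\mc W(f_n)$ is the standard weak-$W^{2,2}$ lower semicontinuity of $\int H^2$ (the second-order part of $D^2(f_n\circ\psi_n)$ converges weakly in $L^2_\loc$ and $|\vec H|^2$ is convex in it), so $\mc W(f_\infty)<8\pi-\delta$; by the Li-Yau inequality \eqref{intro:LiYau} this forces $f_\infty$ to have density one everywhere, i.e.\ to be a weak embedding.

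The delicate point is the convergence of $\mc A$, $\mc V$ and $\int H\,\dif\vol$, none of which is weakly continuous in $W^{2,2}$. On compacta away from the $a_i$ there is no issue: by Rellich, $D(f_n\circ\psi_n)\to Df_\infty$ strongly in $L^q_\loc$ for every $q<\infty$, and $\vec H_{f_n}$ converges weakly in $L^2_\loc$, so the area form, the volume integrand $\langle f,\vec n\rangle$, and the density $H\,\sqrt{\det g}$ all converge on compacta by weak--strong arguments. Hence mass can only be lost at the finitely many points $a_i$, and here I would use the threshold $8\pi$: if a nontrivial bubble formed at some $a_i$, rescaling around it would produce a second complete weak Willmore surface, with each of $f_\infty$ and the bubble carrying Willmore energy $\ge 4\pi$ by the Willmore inequality, so $\limsup_n\mc W(f_n)\ge 8\pi$, contradicting the hypothesis; thus no area or volume escapes. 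For the shrinking neck annuli joining the components --- over which the Willmore energy is small and, by the conformal-factor control, the area tends to $0$ --- the Cauchy--Schwarz bound $\int|H|\,\dif\vol\le(\int H^2\,\dif\vol)^{1/2}\,(\text{area})^{1/2}$ shows that the total mean curvature loses no mass there either, and we conclude $\mc A(f_n)\to\mc A(f_\infty)$, $\mc V(f_n)\to\mc V(f_\infty)$ and $\int H_{f_n}\,\dif\vol_{f_n}\to\int H_{f_\infty}\,\dif\vol_{f_\infty}$. I expect the conformal-factor estimate (metric non-degeneration in the limit) together with this no-mass-loss analysis --- precisely the content of \cite{MondinoRiviere} and \cite{MondinoScharrer} --- to be the main obstacle; the rest is a now-routine adaptation of the weak-immersion framework.
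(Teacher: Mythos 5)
The paper does not actually prove this theorem: it is imported from \textsc{Mondino} and \textsc{Rivière} \cite[Theorem 1.5]{MondinoRiviere} (see also \cite[Theorem 3.2]{KMR}), with the convergence of the total mean curvature taken from \textsc{Mondino} and \textsc{Scharrer} \cite[Theorem 3.3]{MondinoScharrer}, so there is no in-paper proof to compare against. Your sketch reconstructs the strategy of those cited proofs (conformal reparametrization, $\eps$-regularity via Hélein's moving frames and Wente estimates, finitely many concentration points, point removability, and exclusion of area/volume loss at the concentration points via the Willmore inequality and the $8\pi-\delta$ bound), and the overall architecture is sound. Two places where the sketch papers over genuine work: first, the concentration quantity must be the full Gauss-map energy $\int |\dif\vec n|^2\dif\vol$ rather than $\tfrac14\int H^2\dif\vol$, since the moving-frame $\eps$-regularity requires smallness of $|\dif\vec n|^2$ on balls (for spheres the two are interchangeable by Gauss--Bonnet, so this is cosmetic); second, and more seriously, a three-point normalization of the residual Möbius freedom does not prevent essentially all of the energy and area from concentrating at a single domain point, in which case the weak limit on the complement of the concentration set could be constant --- in \cite{MondinoRiviere} the gauge is instead fixed by balancing the concentration function (no geodesic ball of a fixed radius carries more than a fixed fraction, e.g.\ $8\pi/3$, of the energy), and it is this choice together with the diameter lower bound and area upper bound that guarantees nondegeneracy of $f_\infty$. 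With that normalization corrected, the rest of your argument (lower semicontinuity in conformal gauge, Li--Yau embeddedness, and the Cauchy--Schwarz control of $\int H$ on necks and concentration balls once no area is lost) matches the cited proofs.
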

With this compactness result, we can show the existence of smooth minimizers for $g=0$.
\begin{thm}[The genus 0 case]\label{thm: genus 0 case}
For all $R \in (0, \sqrt{2} \mc T(\mb S^2))$, there exists a smooth embedding $f_R^0\in \mc S_0$ which minimizes the Willmore energy subject to $\mc T(f_R^0)=R$. Furthermore, $\beta_0$ is continuous on $I$, where $I$ is defined as
\begin{equation}
I \cqq (0,\sqrt{2} \mc T(\mb S^2))\setminus \{\mc T(\mb S^2)\} = (0, \sqrt{32\pi})\setminus \{4\sqrt{\pi}\}.\label{I}
\end{equation}
\end{thm}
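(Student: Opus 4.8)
The plan is to run the direct method in the weak class $\mc E_{\mb S^2}$, feed a minimising sequence into the compactness result Theorem \ref{thm: Weak compactness}, and then promote the resulting weak minimiser to a smooth embedding by the regularity theory for constrained Willmore immersions, in complete analogy with the isoperimetric problem treated in \cite{KMR}. The first ingredient is the \emph{a priori} bound $\beta_0(R)<8\pi$ for every $R\in(0,\sqrt2\,\mc T(\mb S^2))$. For $R=\mc T(\mb S^2)$ this is the round sphere, so $\beta_0(\mc T(\mb S^2))=\beta_0=4\pi$. For $R\neq\mc T(\mb S^2)$ one combines Proposition \ref{prop:8piconstruction} with the Möbius lemmas: the genus-$0$ surfaces $\Sigma^{1,0}_n,\Sigma^{2,0}_n$ there satisfy $\mc W<8\pi$ and $\mc T(\Sigma^{1,0}_n)\to0$, $\mc T(\Sigma^{2,0}_n)\to\sqrt2\,\mc T(\mb S^2)$, while sphere inversions preserve $\mc W$ (Proposition \ref{prop: Willmore invariance}) and, by Lemma \ref{lem: Möbius blow down}, Lemma \ref{lem: Möbius blowup} and the intermediate value theorem (exactly the argument used for Corollary \ref{cor:Monotonicity}), the family of inversions of a fixed surface realises every value of $\mc T$ strictly between $\mc T$ of that surface and $\mc T(\mb S^2)$; hence $\beta_0(R)<8\pi$ on all of $(0,\mc T(\mb S^2))\cup(\mc T(\mb S^2),\sqrt2\,\mc T(\mb S^2))$.

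Fix now $R\in(0,\sqrt2\,\mc T(\mb S^2))$. If $R=\mc T(\mb S^2)$ then $\beta_0(R)=4\pi$ and any competitor attaining it is a round sphere by the rigidity case of the Willmore inequality, so assume $R\neq\mc T(\mb S^2)$. The constraint class is nonempty by Lemma \ref{lem:im T is R}, so pick $f_n\in\mc E_{\mb S^2}$ with $\mc T(f_n)=R$ and $\mc W(f_n)\to\beta_0(R)<8\pi$, and use scale invariance of $\mc T$ and $\mc W$ to normalise $\mc A(f_n)=1$. By Simon's diameter estimate \cite{Simon} (a consequence of the monotonicity formula), $\bigl(\diam f_n(\mb S^2)\bigr)^2\ge c\,\mc A(f_n)/\mc W(f_n)\ge c/(8\pi)>0$, so the hypotheses of Theorem \ref{thm: Weak compactness} are met. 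We obtain conformal diffeomorphisms $\psi_n$ of $\mb S^2$ and $f_\infty\in\mc E_{\mb S^2}$, weakly conformal and --- by $\mc W(f_\infty)<8\pi$ and the Li--Yau inequality \eqref{intro:LiYau}, as recorded in that theorem --- a weak embedding, with $f_n\circ\psi_n\wto f_\infty$, $\mc A(f_n)\to\mc A(f_\infty)=1$ and $\int_{\mb S^2}H_{f_n}\dif\vol_{f_n}\to\int_{\mb S^2}H_{f_\infty}\dif\vol_{f_\infty}$. Hence $\mc T(f_\infty)=R$ and $\mc W(f_\infty)\le\liminf_n\mc W(f_n)=\beta_0(R)$, so $f_\infty$ minimises $\mc W$ among all weak immersions of $\mb S^2$ with $\mc T=R$.

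It remains to show that $f_\infty$ is smooth. Since $\mc T(f_\infty)=R\neq\mc T(\mb S^2)$, $f_\infty$ does not parametrise a round sphere, so by Proposition \ref{prop: Alexandrov-corollary} the identity $4K\,\mc A(f_\infty)=H\int_{\mb S^2}H\dif\vol_{f_\infty}$ cannot hold on a full-measure set; by the first-variation formula \eqref{T variation} the differential of the constraint $\mc T$ at $f_\infty$ is then not identically zero, so the $\mc T$-constraint is non-degenerate and $f_\infty$ solves, in the weak sense, the Euler--Lagrange equation of $\mc W$ under the constraint $\mc T=R$, with a genuine Lagrange multiplier $\lambda\in\R$; this is a perturbed Willmore system. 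Running the regularity theory for such systems in the conformal gauge --- Rivière's $\eps$-regularity \cite{RiviereVariationalPrinciples} together with removability of point singularities \cite{KuwertLi}, exactly as in \cite{KMR} --- upgrades $f_\infty$ to a smooth immersion; being a smooth injective immersion of the closed surface $\mb S^2$ it is a smooth embedding $f^0_R\in\mc S_0$. Since $\mc S_0\subset\mc E_{\mb S^2}$ and $\mc W(f^0_R)=\beta_0(R)$, the infimum over $\mc S_0$ and the infimum over $\mc E_{\mb S^2}$ (both denoted $\beta_0(R)$) coincide.

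Finally, continuity of $\beta_0$ on $I$: upper semicontinuity on $\R\setminus\{\mc T(\mb S^2)\}$, in particular on $I$, is Corollary \ref{cor:usc}. For lower semicontinuity at $R_0\in I$, take $R_n\to R_0$; by upper semicontinuity $\limsup_n\beta_0(R_n)\le\beta_0(R_0)<8\pi$, so for large $n$ the smooth minimisers $f^0_{R_n}$ constructed above satisfy $\mc W(f^0_{R_n})<8\pi$. Normalising to unit area and applying Theorem \ref{thm: Weak compactness} once more yields a subsequential limit $f_\infty\in\mc E_{\mb S^2}$ with $\mc T(f_\infty)=\lim_nR_n=R_0$ and $\mc W(f_\infty)\le\liminf_n\mc W(f^0_{R_n})=\liminf_n\beta_0(R_n)$; since $f_\infty$ is a competitor for $\beta_0(R_0)$ this gives $\beta_0(R_0)\le\liminf_n\beta_0(R_n)$, hence continuity on $I$. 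The main obstacle is the regularity step: one must certify that the $\mc T$-constraint is non-degenerate at the weak minimiser --- which is precisely where the Alexandrov-type Proposition \ref{prop: Alexandrov-corollary} is used --- and then run the full constrained-Willmore regularity machinery in $\mc E_{\mb S^2}$. The only structural difference from the isoperimetric case of \cite{KMR} is that $\mc T$ involves a second-order differential expression in $f$; but its first variation is still controlled by $K$ and $H$, so the Euler--Lagrange system has the same shape and the same regularity theory applies.
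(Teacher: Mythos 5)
Your overall strategy coincides with the paper's: the $8\pi$ bound from Proposition \ref{prop:8piconstruction} combined with the M\"obius lemmas, the direct method in $\mc E_{\mb S^2}$ with unit-area normalisation, Simon's diameter estimate to enter Theorem \ref{thm: Weak compactness}, passage of the constraint to the limit via the convergence of $\mc A$ and $\int H$, and the same upper/lower semicontinuity argument for continuity on $I$. All of that is fine.

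The regularity step, however, is where your argument has a genuine gap, and it is exactly the point at which the paper does something different. You assert that because the first variation of $\mc T$ is ``still controlled by $K$ and $H$'', the Euler--Lagrange system ``has the same shape'' as in the isoperimetric case and that the $\eps$-regularity and point-removability analysis of \cite{KMR} applies verbatim. This is not correct as stated: the $L^2$-gradient of $\mc T$ in \eqref{T gradient} contains the Gauss curvature $K$, which is \emph{quadratic} in the second fundamental form, whereas the volume constraint in \cite{KMR} contributes only a zeroth-order term to the right-hand side of the Willmore system. The constrained equation here is the (Canham--)Helfrich system, not the isoperimetrically constrained Willmore system, and one cannot simply declare that the same regularity machinery goes through --- that is precisely the content that needs a proof or a citation. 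The paper sidesteps this by noting that fixing $\mc T$ is equivalent to fixing both $\mc A$ and $\int H$, applying the Lagrange multiplier rule to these two Fr\'echet-differentiable scalar constraints, and recognising the resulting critical point as a weak Canham--Helfrich immersion, whose smoothness is the already-established \cite[Theorem 4.3]{MondinoScharrer}. A second, smaller issue: you invoke Proposition \ref{prop: Alexandrov-corollary} to certify non-degeneracy of the constraint at $f_\infty$, but that proposition is stated and proved for \emph{smooth} immersions, while at that stage $f_\infty$ is only a weak immersion, so the appeal is not licensed as written; the paper does not route through that proposition in its regularity step. With the regularity step replaced by the Helfrich reformulation (or with an actual verification that the extra $K$-term is harmless for the $\eps$-regularity scheme), the rest of your proof stands.
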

\begin{proof}
We claim that for $R \in (0, \sqrt{2} \mc T(\mb S^2))$, it holds that $\beta_0(R)<8\pi$. We will postpone the proof of the claim until Section \ref{sec:8piconstruction}, Proposition \ref{prop:8piconstruction}. Let $R \in (0, \sqrt{2} \mc T(\mb S^2))$. Then, we find a minimizing sequence of weak immersions $f_n : \mb S^2 \to \R^3$ with $\mc T(f_n) = R$. By rescaling, we may assume that $\mc A (f_n) = 1$. By Simons inequality, Lemma \ref{lem:Simon}, we have that $\diam(f_n(\mb S^2))>C>0$ independent of $n$. Thus, we can apply Theorem \ref{thm: Weak compactness} to find $f_R^0\in \mc E_{\mb S^2}$ such that, by the convergence of the area and the total mean curvature, we see that $\mc T(f_R^0) = R$. By the lower semicontinuity of $\mc W$ under $W^{2,2}$-convergence, we obtain $\mc W(f_R^0) = \beta_0(R) $. In particular, by the Li-Yau inequality, $f_R^0$ is actually an embedding. 

Since $\mc W$, the area and the total mean curvature are Fréchet-differentiable, we can apply the method of Lagrange multipliers. Thus, if we set
\[\mc H_{\alpha}^{c_0}(f) \cqq \int _{\mb S^2} \left (\frac{H_{f}}{2} - c_0\right )^2 \dif \vol _{g_f} +\alpha \mc A  (f),\]
we find suitable $c_0, \alpha \in \R$ such that
\[\dv{}{t}\bigg\vert _{t=0} \mc H_\alpha ^{c_0} (f_R^0 +t\psi) =0 \quad \text{for all $\psi \in C^\infty(\mb S^2, \R^3)$}.\]
This means that $f_R^0$ is a weak Canham-Helfrich-immersion, see \cite[(4.1)]{MondinoScharrer}, for which the smoothness was shown in \cite[Theorem 4.3]{MondinoScharrer}. Finally, we prove continuity of $\beta_0\vert _I$. The upper semicontinuity of $\beta_0\vert _I$ follows by Corollary \ref{cor:usc}. To prove the lower semicontinuity of $\beta_0\vert _I$, we consider a sequence $R_n\in I$ such that $R_n \to R \in I$. Then as before, there exist constrained minimizers $f^0_{R_n}\in \mc S_0$ satisfying $\beta_0(R_n) = \mc W(f^0_{R_n})$, $\mc T(f^0_{R_n})=R_n$. Again by Theorem \ref{thm: Weak compactness}, the $f^0_{R_n}$ weakly converge to some $\psi\in \mc E_{\mb S^2}$ with $\mc T(\psi) = \limn \mc T(f^0_{R_n}) = R$ and 
\[\liminf _{n\to \infty} \beta_0(R_n)=\liminf _{n\to \infty} \mc W(f^0_{R_n})\geq \mc W(\psi)\geq \beta_0(R).\]
This proves lower semicontinuity and so $\beta_0$ is continuous in ${(0, \sqrt{2} \mc T(\mb S^2))\setminus \{\mc T(\mb S^2)\}}$ by Corollary \ref{cor:usc}. 
\end{proof}
\begin{remark}
The definition of a weak Canham-Helfrich immersion in \cite[(4.1)]{MondinoScharrer} assumes that $\alpha \geq 0$. However, the regularity result works despite the theorem being stated only for $\alpha \geq 0$ as the proof remains unchanged.	
\end{remark}

\subsection{The general case}\label{subsec:GeneralCase}
Let us state the main theorem, which for completeness also includes the case $g=0$ covered in Theorem \ref{thm: genus 0 case}.
\begin{thm}[Total mean curvature constrained minimizers of $\mc W$ for arbitrary genus] \label{thm:KMR}
Suppose that $\Sigma$ is a closed, orientable and connected surface of genus $g\in \N_0$. 

Then, for any $R\in I$, where $I$ is the interval defined in \eqref{I}, there exists a smooth embedding $f^g_R$ of $\Sigma$ into $\R^3$, with $\mc T(f_R^g) = R$ and 
\[\mc W(f^g_R) = \inf _{\substack{f \in \mc E_\Sigma \\ \mc T(f) = R}} \mc W(f),\] i.e., $f^g_R$ minimizes the Willmore energy among all Lipschitz immersions of $\Sigma$ with second fundamental form bounded in $L^2$ and fixed total mean curvature constraint equal to $R$. Furthermore, the minimizers $f^g_R$ are smooth embedded surfaces of genus $g$. Finally, $\beta_g \vert _I$ is continuous.
\end{thm}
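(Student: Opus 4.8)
\textbf{Proof proposal for Theorem \ref{thm:KMR}.}

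The plan is to run the direct method in the calculus of variations on the class $\mc E_\Sigma$, following the strategy of Keller--Mondino--Rivière \cite{KMR} adapted to the constraint $\mc T$ in place of $\iso$. Fix $R\in I$ and let $(f_n)_n\subset\mc E_\Sigma$ be a minimizing sequence with $\mc T(f_n)=R$ and $\mc W(f_n)\to\beta_g(R)$. By Proposition \ref{prop:8piconstruction} we have the strict bound $\beta_g(R)<8\pi$, which is the crucial ingredient: it keeps us below the first bubbling threshold coming from the Li--Yau inequality \eqref{intro:LiYau}, and it lets us invoke \eqref{intro2:KMRomegag} and \eqref{intro2:KMRBauerKuwert} (the latter from Theorem \ref{thm:connectedsum}). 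After rescaling we may assume $\mc A(f_n)=1$; Simon's inequality (Lemma \ref{lem:Simon}) then gives a uniform lower diameter bound. One then pulls back by conformal diffeomorphisms $\psi_n$ to put the $f_n$ in conformal gauge and applies the compactness machinery of Mondino--Rivière together with the degenerating-to-branched-spheres analysis: either (Case 1) the conformal structures stay in a compact part of moduli space and $f_n\circ\psi_n\wto f_\infty$ weakly in $W^{2,2}_{\loc}$ away from finitely many points, with $f_\infty\in\mc E_\Sigma$ weakly conformal of the correct genus, or (Case 2) the conformal structure degenerates and/or a bubble splits off, in which case the surface decomposes into a genus-$g$ piece together with a spherical bubble of genus $0$ carrying the constraint $R$, forcing $\beta_g(R)\geq\beta_g+\beta_0(R)-4\pi$ — contradicting \eqref{intro2:KMRBauerKuwert}. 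Hence Case 1 holds.

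In Case 1 the key point is that the constraint passes to the limit: by the refined compactness statement (Theorem \ref{thm: Weak compactness} and its analogue for higher genus, using \cite[Theorem 3.3]{MondinoScharrer}) one has $\mc A(f_n\circ\psi_n)\to\mc A(f_\infty)$ and $\int H\,d\vol\to\int H_{f_\infty}\,d\vol$, so $\mc T(f_\infty)=R$. Lower semicontinuity of $\mc W$ under weak $W^{2,2}$-convergence gives $\mc W(f_\infty)\leq\liminf\mc W(f_n)=\beta_g(R)$, so $f_\infty$ is a minimizer; since $\mc W(f_\infty)<8\pi$, Li--Yau forces $f_\infty$ to be an embedding. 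To upgrade $f_\infty$ from a weak minimizer to a smooth one, note that $\mc W$, $\mc A$ and $f\mapsto\int H\,d\vol$ are Fréchet-differentiable on $\mc E_\Sigma$, and by Lemma \ref{lem:First variation of T} (via Proposition \ref{prop: Alexandrov-corollary}) the differential of $\mc T$ is non-degenerate at any surface that is not a round sphere — and $f_\infty$ is not, since $\mc T(\mb S^2)\notin I$. Thus the Lagrange multiplier rule applies: there exist $c_0,\alpha\in\R$ so that $f_\infty$ is a weak Canham--Helfrich immersion for $\mc H^{c_0}_\alpha$, and the regularity theorem \cite[Theorem 4.3]{MondinoScharrer} yields that $f_\infty$ is smooth (with the extension to $\alpha$ of either sign noted in the Remark). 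Set $f_R^g\cqq f_\infty$.

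Finally, continuity of $\beta_g\vert_I$: upper semicontinuity on $I$ (indeed on all of $\R$ for $g\geq 1$, and on $\R\setminus\{\mc T(\mb S^2)\}$ for $g=0$) is Corollary \ref{cor:usc}. For lower semicontinuity, take $R_n\to R$ in $I$; the minimizers $f_{R_n}^g$ just constructed satisfy $\mc W(f_{R_n}^g)=\beta_g(R_n)<8\pi$ and $\mc T(f_{R_n}^g)=R_n$, so after rescaling to unit area the same compactness argument (Case 1 again — Case 2 is excluded exactly as above since $R\in I$) produces a weak subsequential limit $\psi\in\mc E_\Sigma$ with $\mc T(\psi)=R$, whence $\liminf_n\beta_g(R_n)=\liminf_n\mc W(f_{R_n}^g)\geq\mc W(\psi)\geq\beta_g(R)$. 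Combining the two bounds gives continuity.

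\textbf{Main obstacle.} The heart of the argument — and the step I expect to be most delicate — is the dichotomy in the non-compact/degenerate situation (Case 2): one must carry out the bubble-tree decomposition carefully enough to (i) track the genus, ensuring the genus-$g$ part really survives with $\mc W\geq\beta_g$, and (ii) show the escaping bubble is a genus-$0$ weak immersion inheriting $\mc T=R$, so that its energy is at least $\beta_0(R)$, with the $-4\pi$ appearing from the neck/point-removal; only then does the contradiction with \eqref{intro2:KMRBauerKuwert} close. Passing the second-order quantity $\mc T$ (rather than the first-order $\iso$) through these limits, and in particular through the neck regions, is the technical point where the adaptation from \cite{KMR} requires the most care, and is precisely where the convergence $\int H\,d\vol\to\int H_{f_\infty}\,d\vol$ from \cite{MondinoScharrer} does the work.
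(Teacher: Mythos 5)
Your proposal is correct and follows essentially the same route as the paper: direct method in conformal gauge, the dichotomy between convergence and bubbling, exclusion of the bubble via the $\beta_g+\beta_0(R)-4\pi$ bound (implemented in the paper by the Cut-and-Fill Lemma \ref{Cut and Fill Lemma}), Lagrange multipliers plus the Mondino--Scharrer regularity theorem for smoothness, and the same two-sided argument for continuity of $\beta_g\vert_I$. The only cosmetic difference is that in the paper degeneration in moduli space is excluded outright by the $8\pi$ and $\omega_g$ bounds via Kuwert--Sch\"atzle, so the actual dichotomy is on the boundedness of the conformal factors $C_k$ rather than on the conformal structures themselves.
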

\begin{remark}[The interval $I$] \label{remark: I}
The set $I$ is chosen in such a way that for $R\in I$, we have
\begin{equation}
\inf _{\substack{f \in \mc E_{\Sigma}\\ \mc T(f)=R}} \mc W(f) < \min \{8\pi, \omega_g, \beta_g + \beta_0(R) - 4\pi\}. \label{explicit interval}
\end{equation}
Here,
\[\omega_g   \cqq \min \left \{4\pi + \sum _{i=1}^p (\beta _{g_i} - 4\pi) , \;  g= g_1 + \ldots + g_p,\;  1\leq g_i < g\right \},\]
where $\omega_1  = \infty$. The proof of \eqref{explicit interval} for $R\in I$ is the content of Sections \ref{sec:BauerKuwert} and \ref{sec:8piconstruction}. For the current section, however, we will assume that \eqref{explicit interval} holds. We want to quickly summarize why we need these inequalities. 

By the Li-Yau inequality, the $8\pi$ bound ensures that any minimizer is actually embedded which makes it a natural requirement to work with. Furthermore, the $8\pi$ bound, along with the $\omega_g $ bound, are a technical assumption to ensure that the conformal structures of the minimizing sequence do not degenerate in the moduli space, see \cite[Theorem 5.1]{CompactnessKuwertSchatzle}. It is already known by the work of \cite{MarquesNeves} that $\omega_g  \geq 8\pi$ for $g\geq 1$, which means that we effectively only need to prove the $8\pi$ bound and the $\beta_g  + \beta_0(R) - 4\pi$ bound. The $\beta_g  + \beta_0(R) - 4\pi$ bound is used to exclude a blow-up phenomenon of the surface  under which the topology is lost in the limit. In this case, the minimizing sequence appears as a spherical bubble resembling a minimizer of $\beta_0(R)$, to which a small region carrying the $g$ handles is glued. We will show that when such a bubble arises, the energy is at least $\beta_g  + \beta_0(R) - 4\pi$. 
\end{remark}
\subsection{Proof of Theorem \ref{thm:KMR}}\label{subsec:ProofOfKMR}
As the case of $g=0$ is already treated in Theorem \ref{thm: genus 0 case}, we will assume from now on that $g\geq 1$. We will follow closely the work of \textsc{Keller}, \textsc{Mondino}, and \textsc{Rivière} \cite{KMR}, who proved a similar result for the isoperimetric ratio instead of our total mean curvature constraint, see the discussion from Section \ref{subsec:IntroductionIsoConstrainedProblem}. Let us state what the main ideas for the proof will be. 
%

Suppose $R\in I $. Consider a minimizing sequence $(f_k)_{k}\subset \mc E_\Sigma $ of the Willmore energy with a fixed total mean curvature ratio, i.e., $\lim _{k\to \infty} \mc W(f_k) = \beta_g(R)$  and $\mc T(f_k) = R$. We want to show that this sequence is weakly compact in $W^{2,2}(\Sigma, \R^3)$ with a limit $f \in \mc E_\Sigma$ which also satisfies $\mc T(f) = R$. Then, by the lower semicontinuity of $\mc W$ under weak $W^{2,2}$-convergence, it follows that 
\[\beta _g(R) = \mc W(f) \leq \liminf _{k\to \infty} \mc W(f_k) = \beta_g(R).\]
Because the Willmore energy and the total mean curvature ratio are scaling-invariant, we may assume that $\mc A (f_k)=1$ is fixed. We will initially deal with the problem of the right choice of coordinates to work around the invariance of the Willmore energy and the total mean curvature ratio under diffeomorphisms of the domain. For this reason, we will choose coordinates such that $f_k$ becomes a weakly conformal map, see Definition \ref{Weak conformality}. This is done in Section \ref{subsubsection:Conformality}. In Section \ref{subsub:Energy concentration}, we use a Besicovitch covering of $\Sigma$ to obtain at most finitely many points $\{a_1,\ldots, a_n\}$ where the Willmore energy accumulates. After that, we show that we can estimate the conformal factors $\lambda_k$ of the immersion $f_k$ away from the points of energy concentration. We then face two cases:
\begin{enumerate}[label=Case \arabic*)]
\item  We first assume that the conformal factors stay bounded. Then, we see that the $f_k$ remain bounded in $W^{2,2}$ away from the points of energy concentration so we can use weak compactness to obtain a subsequence which weakly converges in $W^{2,2}$ away from these points. Using a lemma from \textsc{Rivière} \cite{RiviereVariationalPrinciples}, we can show that there are no points where the energy concentrates, which allows us to conclude that the immersions converge weakly in $W^{2,2}$ to an immersion $f \in \mc E_{\Sigma}$. Finally, we check that the total mean curvature constraint is preserved in the limit and that $f$ satisfies a certain Euler-Lagrange equation which allows us to conclude that $f$ is smooth.
\item In this case, we assume that the conformal factors diverge to $-\infty$. One can show that each point of area concentration has at least $4\pi$ Willmore energy, and by bounds on the Willmore energy, we conclude that there must be exactly one point of concentration of area and energy. Finally, we perform a ``Cut and Fill`` operation in which we cut out the blow-up point, fill in a disk with almost no Willmore energy and glue a minimizer of the genus $0$ problem with total mean curvature ratio equal to $R$ to the problematic area to estimate the Willmore energy. This will then contradict the $\beta_g + \beta_0(R) -4\pi$ bound.
\end{enumerate}
\subsubsection{Conformality of the minimizing sequence}\label{subsubsection:Conformality}
We will use the following \cite[Corollary 4.4]{RiviereLectureNotes}:
\begin{thm}
Let $\Sigma$ be a closed smooth $2$-dimensional manifold. Then, any $f \in \mc E_\Sigma$ defines a smooth conformal structure on $\Sigma$. In particular, there exists a constant curvature metric $h$ of unit area on $\Sigma$ and a Lipschitz diffeomorphism $\Psi$ of $\Sigma$ such that $f \circ \Psi$ realizes a weak conformal immersion of the Riemann surface $(\Sigma,h)$ and $h$ and $g_{f}$ are conformally equivalent.
\end{thm}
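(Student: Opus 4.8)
The plan is to reduce the statement to the classical construction of isothermal coordinates for a measurable conformal structure, followed by the uniformization theorem. First I would record that, by the non-degeneracy assumption \eqref{Lipschitz condition}, the pullback $g_f=f^*\langle\cdot,\cdot\rangle$ is a measurable Riemannian metric on $\Sigma$ which is uniformly comparable to a fixed smooth reference metric; in an atlas $\{(U_\alpha,x_\alpha)\}$ the coefficients $(g_{ij})$ and $(g^{ij})$ therefore lie in $L^\infty$, with $(g_{ij})\geq c\,\mathrm{Id}$ for some $c>0$. On each $x_\alpha(U_\alpha)\subset\R^2$ the positive symmetric matrix $(g_{ij})$ determines a measurable Beltrami coefficient $\mu_\alpha$, and the uniform ellipticity built into \eqref{Lipschitz condition} is exactly the bound $\|\mu_\alpha\|_{L^\infty}<1$. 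Orientability of $\Sigma$, which is part of the definition of $\mc E_\Sigma$, is what lets me orient these Beltrami coefficients coherently.

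Next I would invoke the measurable Riemann mapping theorem (Morrey; Ahlfors--Bers): each $\mu_\alpha$ is solved by a quasiconformal homeomorphism $w_\alpha$ from a subdomain of $x_\alpha(U_\alpha)$ onto a planar domain, satisfying $\partial_{\bar z}w_\alpha=\mu_\alpha\,\partial_z w_\alpha$, so that $g_f$ becomes pointwise conformal to the Euclidean metric in the coordinates $w_\alpha$. On overlaps the transition map between two such coordinates is a homeomorphism with Beltrami coefficient vanishing almost everywhere, hence $1$-quasiconformal and therefore conformal by Weyl's lemma. Thus the $w_\alpha$ assemble into a holomorphic atlas, and $g_f$ induces a genuine \emph{smooth} (indeed real-analytic) conformal structure on $\Sigma$; although $g_f$ itself is only $L^\infty$, the conformal structure it induces is transported by holomorphic transition functions and is therefore rigid. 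This gives the first assertion.

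For the second assertion I would apply the uniformization theorem to the Riemann surface just obtained, producing in its conformal class the (up to isometry unique) constant curvature metric, normalised to unit area, which I call $h$ --- round for genus $0$, flat for genus $1$, hyperbolic for genus $\geq 2$. Patching the coordinate changes from the $x_\alpha$-charts to the $w_\alpha$-charts yields a homeomorphism $\Psi$ of $\Sigma$ with $\Psi^*g_f=e^{2\lambda}h$ almost everywhere; equivalently $f\circ\Psi$ is weakly conformal on $(\Sigma,h)$ in the sense of Definition \ref{Weak conformality}, and $h$ is conformal to $g_f$ by construction. Once $\Psi$ is known to be bi-Lipschitz, with the additional Sobolev regularity that preserves the $L^2$-bound on the second fundamental form, $f\circ\Psi\in\mc E_\Sigma$ follows at once.

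The hard part will be exactly this regularity of $\Psi$: a quasiconformal solution of the Beltrami equation with merely $L^\infty$ coefficient is a priori only Hölder continuous and of class $W^{1,p}$ for some $p>2$, not Lipschitz. To close the gap one must exploit the hypothesis $f\in W^{2,2}$, i.e.\ $\vec{H}\in L^2$, which through $|\mb{I}|^2=H^2-2K$ and the Gauss equation forces the Gauss curvature of $g_f$ into $L^1$; combined with the $W^{1,\infty}$ bound, Hélein's moving-frame estimates together with Wente-type estimates for the conformal factor $\lambda$ yield $\lambda\in L^\infty$, and hence that the isothermal charts, and with them $\Psi$, have the required (bi-Lipschitz, $W^{2,2}$-type) regularity. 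This is precisely what the cited results of Rivière \cite{RiviereLectureNotes,RiviereVariationalPrinciples} and of Kuwert--Li \cite{KuwertLi} provide, and for this last step I would quote them rather than reproduce the moving-frame machinery.
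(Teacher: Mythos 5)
Your outline is correct and takes essentially the same route as the paper, which offers no proof of its own but quotes this statement as \cite[Corollary 4.4]{RiviereLectureNotes}: your sketch (isothermal coordinates for the $L^\infty$ metric $g_f$ via the measurable Riemann mapping theorem, Weyl's lemma on overlaps to get a holomorphic atlas, uniformization for $h$, and the upgrade of the charts to bi-Lipschitz regularity through H\'elein's moving frames and Wente estimates using the $L^2$ bound on the second fundamental form) is precisely the argument of that reference and of \cite{KuwertLi}. You correctly isolate the Lipschitz regularity of $\Psi$ as the only non-classical step and defer it to the same sources the paper cites, so there is nothing substantive to compare.
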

Thus, we can replace the $f_k$ by $f_k \circ \Psi_k$ where the $\Psi_k$ are Lipschitz diffeomorphisms on $\Sigma$, such that $f_k \circ \Psi_k$ is a weak conformal immersion and we do not change the conformal class. 
\subsubsection{Points of energy concentration}\label{subsub:Energy concentration}
Let us denote by $h_k$ the metric of constant scalar curvature which lies in the same conformal class $c_k$ as $g_k = g_{f_k}$. By our assumption $\sup _{k\in \N}\mc W(f_k) < \min\{8\pi, \omega_g \}$, see Remark \ref{remark: I}. Hence, by \cite[Theorem 5.1]{CompactnessKuwertSchatzle}, the conformal classes $c_k$ lie in a precompact set in the moduli space\footnote{For a definition of the moduli space and its topology, see \cite{jost,imayoshi}.}. Thus, up to subsequences, we may assume that $c_k \to c_\infty$, which implies that $h_k $ converge to $h_\infty$, the constant scalar curvature metric associated to $c_\infty$, where this convergence holds in $C^s(\Sigma)$ for all $s$.  

Consider $x\in \Sigma$ and consider balls in $(\Sigma,h_k)$ around $x$ and denote by 
 \[{\tilde \rho}_k^x \cqq \inf \left \{\rho_k,\; \int_{B_{\rho_k(x)}} |\dif \vec n _{f_k}|^2 _{h_k} \dif  \vol _{h_k} \geq \frac{8\pi}{3}\right \}.\]
 Here, $B_{\rho_k}(x)$ is the geodesic ball in $(\Sigma,h_k)$ around $x$ with radius $\rho_k$. The reason why we want the integral to be bounded by $\frac{8\pi}{3}$ is to use Hélein's moving frame technique in the proof of Lemma \ref{lem:lambdakminusCk} as done in \cite[Lemma 4.1]{KMR}. Next, consider the convexity radius $r_0$ in $(\Sigma, h_\infty)$, i.e., the largest radius such that for $r\leq r_0$, the balls $B_r(x)\subset \Sigma$ are strictly geodesically convex. This is positive as $\Sigma$ is compact, see \cite[Proposition 95]{berger}. Now, set $\rho_k^x \cqq \min\{r, {\tilde \rho}_k^x\}$ for some fixed $0<r\leq r_0$. Then,  $\{\overline{B_{\rho_k^x}(x)}\} _{x\in \Sigma}$ is a cover of $\Sigma$. We choose $r$ small enough such that we can apply the Besicovitch covering theorem \cite[Theorem 2.8.14]{GMT} to obtain a subcover $\{\overline{B_{\rho_k^i}(x^i_k)}\} _{i\in I_k}$ where $I_k$ is finite and each point in $\Sigma$ is covered by at most $N\in \N$ balls, where $N$ is independent of $k$ by $C^\infty$-convergence of the metrics $h_k$ to $h_\infty$. In $(\Sigma, h_\infty)$, the cardinality of any family of disjoint balls in $\Sigma$ with radius $r$ is at most $M = M(h_\infty)$\footnote{By the continuity of $h_\infty$, every ball of radius $r$ has at least area $c r^2$ for some $c>0$.}. This also implies
 \[|\{i \in I_k, \; \rho_k^i = r\}| \leq NM\]
 for sufficiently large $k$ again by the convergence $h_k \to h_\infty$. To show that the number of balls with $\rho_k^i = \tilde{\rho}_k^i $ is bounded, we use that 
 \[\int_{ B_{\rho_k^i}(x^i_k)} | \dif \vec n_{f_k}|^2_{h_k} \dif  \vol  _{h_k} = \frac{8\pi}{3},\]
 so that by the Besicovitch covering and the fact that the Willmore energy $\mc W(f_k)$ is uniformly bounded from above, we obtain, also using \cite[(2.30)]{RiviereLectureNotes}, that
 \[|\{i\in I_k,\; \rho_k^i  = {\tilde \rho}_k^i \}| \leq N  \frac{\sup _{k\in \N}\int _{\Sigma } |\dif \vec n _{f_k}| ^2 _{h_k} \dif  \vol  _{h_k}}{ \frac{8\pi}{3}} <\infty.\]
Thus, up to a subsequence, we get a covering $\{\overline{B_{\rho_k^i}(x^i_k)}\}_{i \in I}$ such that 
\[I \text{ does not depend on $k$}, \quad x_k^i \to x_\infty ^i \quad \text{and}\quad\rho_k^i \to \rho_\infty ^i.\]
For $I_0 = \{i \in I, \rho_\infty ^i = 0\}$ and $I_1  = I\setminus I_0$, we obtain by the convergence of the metric that
\[\Sigma \subset \bigcup _{i\in I_1}\overline{B_{\rho_\infty ^i}(x^i _\infty)}.\]
As $\rho_\infty ^i \leq r_0$, the balls are strictly geodesically convex and thus the number of intersections of the boundaries of two such balls is at most 2. In other words,
\[\Sigma \setminus \bigcup _{i\in I_1}B_{\rho_\infty ^i}(x^i _\infty) = \{a_1,\ldots, a_n\}\]
is a finite union of points. These are our points of energy concentration. 

 Away from the $a_i$, the conformal factors of the immersions $f_k$ are well-behaved. This is captured in the following \cite[Lemma 4.1]{KMR}.
 \begin{lemma}\label{lem:lambdakminusCk}
 Let $\eps>0$. Then, denoting by $\lambda_k$ again the conformal factor (with respect to the reference metric $g_0$), see Definition \ref{Weak conformality}, there exist constants $C_k$ and a constant $C_\eps$ such that up to subsequences, we have
 \[\|\lambda_k - C_k\|_{L^\infty (\Sigma \setminus \bigcup_{i=1}^n B_\eps(a_i))} \leq C_\eps.\]
 
 \end{lemma}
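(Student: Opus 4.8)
The plan is to recognize that this is precisely \cite[Lemma 4.1]{KMR}, whose proof uses only the conformal structure of the $f_k$, the uniform bound $\sup_k \mc W(f_k) < 8\pi$, and the energy quantization threshold $\tfrac{8\pi}{3}$ built into the definition of $\rho_k^i$ — none of which sees the particular constraint $\mc T$ — so it transfers verbatim; below I only sketch the mechanism. The objective is a uniform \emph{oscillation} estimate for $\lambda_k$ on each compact piece $\Sigma_\eps \cqq \Sigma \setminus \bigcup_{i=1}^n B_\eps(a_i)$. Granting such an estimate, one fixes once and for all a reference ball $B_0$ of small fixed radius about some $x_\infty^{i_0}$ (independent of $\eps$), sets $C_k$ to be the $h_k$-average of $\lambda_k$ over $B_0$, and the claimed bound follows by anchoring the oscillation at $B_0$.

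For the local ingredient, fix $i \in I_1$, so $\rho_\infty^i > 0$ and hence for $k$ large $B_k^i \cqq B_{\rho_k^i}(x_k^i)$ has radius bounded below by a fixed $\delta_0 > 0$, while $\int_{B_k^i} |\dif \vec n_{f_k}|^2_{h_k} \dif \vol_{h_k} \le \tfrac{8\pi}{3}$ by construction of $\rho_k^i$. In a conformal chart of $(\Sigma, h_k)$ in which $f_k$ is weakly conformal with factor $\lambda_k$, this smallness lies below Hélein's threshold, so $f_k$ admits a Coulomb moving frame $(\vec e_1, \vec e_2)$ on $B_k^i$ with $\|\nabla \vec e_\alpha\|_{L^2}^2$ controlled by $\int_{B_k^i} |\dif \vec n_{f_k}|^2$; in this gauge the Gauss equation takes the Jacobian form $-\Delta \lambda_k = \nabla^\perp \vec e_1 \cdot \nabla \vec e_2$ up to lower-order terms from $h_k$ (uniformly controlled since $h_k \to h_\infty$ in $C^s$), and Wente's inequality gives, on the concentric half-ball and with $\overline{\lambda}_k^i$ the average of $\lambda_k$ there,
\[ \|\lambda_k - \overline{\lambda}_k^i\|_{L^\infty} + \|\nabla \lambda_k\|_{L^2} \le C\Big(\textstyle\int_{B_k^i} |\dif \vec n_{f_k}|^2 + 1\Big). \]
The right-hand side is bounded uniformly in $i$ and $k$: there are finitely many $i \in I_1$, and $\int_\Sigma |\dif \vec n_{f_k}|^2_{h_k} \dif \vol_{h_k} = 4\mc W(f_k) - 4\pi \chi(\Sigma)$ by \cite[(2.30)]{RiviereLectureNotes}, which is uniformly bounded by hypothesis.

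To globalize I would chain. Recall from Section \ref{subsub:Energy concentration} that $\bigcup_{i \in I_1} B_{\rho_\infty^i}(x_\infty^i) = \Sigma \setminus \{a_1, \dots, a_n\}$, so the compact set $\Sigma_\eps$ lies inside this open union; by the convergences $x_k^i \to x_\infty^i$, $\rho_k^i \to \rho_\infty^i$, $h_k \to h_\infty$, the concentric half-balls still cover $\Sigma_\eps$ for $k$ large, with nonempty overlaps of $h_k$-measure bounded below. As $\Sigma$ minus finitely many points is connected, the nerve of this finite cover is connected, so (choosing $B_0$ contained in one of the half-balls for $k$ large) each member can be joined to $B_0$ by a chain of at most $N_\eps$ overlapping half-balls, with $N_\eps$ independent of $k$. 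On each overlap the two local averages of $\lambda_k$ differ by at most $2C$ (averages over sets of comparable, bounded-below measure differ from the $L^\infty$ value by at most the oscillation), so every local average lies within $C N_\eps$ of $C_k$; adding back the local oscillation bound yields $\|\lambda_k - C_k\|_{L^\infty(\Sigma_\eps)} \le C_\eps$. The only genuinely analytic input is the Coulomb-frame and Wente oscillation estimate under the $\tfrac{8\pi}{3}$-quantization, which is classical; there is no real new obstacle, and the only point needing care is organizational — the balls $B_k^i$ are themselves $k$-dependent, handled by passing to large $k$ and using the $C^s$-convergence $h_k \to h_\infty$ so that all covering constants ($N$, $M$, $N_\eps$) are $k$-independent, exactly as in \cite[Lemma 4.1]{KMR}.
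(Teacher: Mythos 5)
Your proposal is correct and matches the paper's treatment: the paper does not prove this lemma at all but simply quotes it as \cite[Lemma 4.1]{KMR}, and your sketch (Hélein's Coulomb frame under the $\tfrac{8\pi}{3}$ threshold, Wente's inequality for the Liouville equation, then chaining the local oscillation bounds over the finite connected cover $\{B_{\rho_k^i}(x_k^i)\}_{i\in I_1}$ to define $C_k$ as an anchored average) is exactly the mechanism of that cited proof, which the paper itself reuses in detail in the Cut and Fill Lemma. The only compression is attributing the full local oscillation estimate to Wente alone: Wente controls the Jacobian-sourced part $\mu_k$ with zero boundary data, while the harmonic remainder $\lambda_k-\mu_k$ must be controlled separately via the uniform $L^{2,\infty}$ bound on $\nabla\lambda_k$ coming from the bounded area and Willmore energy (as in \cite[Lemma 2.2]{MondinoRiviere}), both of which are available under your hypotheses.
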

It is important to stress that the constants $C_k$ do \emph{not} depend on $\eps$. We now face two possibilities. Either the $C_k$ remain bounded or they diverge to $\pm \infty$. In the next section, we assume that $\sup _{k\in \N} |C_k| <\infty$, i.e., for each $\eps>0$ the conformal factors $\lambda_k$ remain bounded on $\Sigma \setminus \bigcup_{i=1}^n B_\eps(a_i)$.
 \subsubsection{Case 1: Boundedness of the conformal factors}\label{subsubsec:Case1}
 We now assume that the $C_k$ remain bounded. By the previous lemma, this immediately implies that also the conformal factors $\lambda_k$ remain bounded on $\Sigma \setminus \bigcup_{i=1}^n B_\eps(a_i)$. Because $\Delta f_k =  e^{2 \lambda_k} \vec H_{f_k}$ in a conformal chart\footnote{Here, $\Delta$ denotes the flat Laplacian $\Delta \cqq \partial _{x^1}^2 + \partial ^2_{x^2}$ in the chart $x$.} and by assumption $\limsup _{k\to \infty}\mc W(f_k) < \infty$, we obtain
 \begin{align*}
 \|\Delta f_k\| _{L^2\left(\Sigma \setminus \bigcup_{i=1}^n B_\eps(a_i)\right)} &< C(\eps)\\
 \|\log |\nabla f_k|\|_{L^\infty\left(\Sigma \setminus \bigcup_{i=1}^n B_\eps(a_i)\right)} &<C(\eps).
 \end{align*}
By usual regularity theory, this also shows that $\|\nabla ^2 f_k\|_{L^2\left(\Sigma \setminus \bigcup_{i=1}^n B_\eps(a_i)\right)} < C(\eps)$. Also recall Simons lemma \ref{lem:Simon} from the Appendix, by which we obtain a bound of the diameter in terms of the area and the Willmore energy. As the area is fixed and the Willmore energy is bounded, the diameters of our immersions are bounded uniformly. Thus, after a possible shift of our immersions, we obtain that $f_k$ are bounded in $W^{2,2}\left(\Sigma \setminus \bigcup_{i=1}^n B_\eps(a_i)\right)$ and hence weakly converge to $f_\infty$ (up to subsequences). By Rellich-Kondrachov, we also obtain strong convergence of the gradients, i.e., $\nabla f_k \to \nabla f_\infty$ in $L^p\left(\Sigma \setminus \bigcup_{i=1}^n B_\eps(a_i)\right)$ for all $p<\infty$. Up to another subsequence, they also converge pointwise almost everywhere. By the strong $L^2$-convergence of the gradients, we also obtain
\[\mc A \left (f_k \vert _{\Sigma \setminus \bigcup_{i=1}^n B_\eps(a_i)}\right ) \to \mc A  \left (f_{\infty}\vert _{\Sigma \setminus \bigcup_{i=1}^n B_\eps(a_i)}\right ).\]
By the pointwise convergence of the gradients, it also follows that the conformality condition is preserved in the limit away from the points of energy concentration, i.e., $f_\infty \in \mc E _{\Sigma \setminus \bigcup_{i=1}^n B_\eps(a_i)}$.  

In \cite[Section 4.2.3.]{KMR}, it is shown that under these assumptions, $f_\infty$ does not have any branch points. The idea is to use the following lemma \cite[Lemma A.5]{RiviereVariationalPrinciples}:
\begin{lemma}
Let $\xi$ be a conformal immersion of $D^2\setminus \{0\}$ into $\R^3$ in $W^{2,2}(D^2\setminus \{0\}, \R^3)$ and such that $\log |\nabla \xi| \in L^\infty _{\mathrm{loc}} (D^2\setminus \{0\})$. Assume that $\xi$ extends to a map in $W^{1,2}(D^2)$ and that the corresponding Gauss map $n_{\xi}$ also extends to a map in $W^{1,2} (D^2,\mb S^2)$. Then, $\xi$ realizes a Lipschitz conformal immersion of the whole disc $D^2$ and there exists a positive integer $n$ and a constant $C$ such that
\[(C-o(1)) |z|^{n-1} \leq\left | \parpar{z}{\xi}\right | \leq (C+o(1)) |z|^{n-1}.\]
\end{lemma}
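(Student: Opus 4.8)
The plan is to combine the sharp regularity theory for the Liouville equation satisfied by the conformal factor with a blow-up analysis at the puncture. Write $\lambda$ for the conformal factor of $\xi$ in the given chart, so that $|\partial_{x^1}\xi|=|\partial_{x^2}\xi|=e^{\lambda}$ and $\langle\partial_{x^1}\xi,\partial_{x^2}\xi\rangle=0$ a.e.\ on $D^2\setminus\{0\}$, hence $|\partial_z\xi|=\tfrac{1}{\sqrt2}e^{\lambda}$. The hypothesis $\log|\nabla\xi|\in L^\infty_\loc(D^2\setminus\{0\})$ means precisely that $\xi$ is a nondegenerate immersion away from the origin, so $K$, $\vec H_\xi$, $\vec n_\xi$ are classically defined there; the two extension hypotheses then force the total curvature to be globally finite, $\int_{D^2}|\nabla\vec n_\xi|^2\,dx<\infty$ (the flat integral equals $\int(\kappa_1^2+\kappa_2^2)\,\dif\vol_{g_\xi}$), and $\int_{D^2}e^{2\lambda}\,dx<\infty$ (from $\xi\in W^{1,2}$), so in particular $\Delta\xi=e^{2\lambda}\vec H_\xi\in L^1(D^2)$ by Cauchy--Schwarz.

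First I would set up the Liouville equation: on $D^2\setminus\{0\}$ one has $-\Delta\lambda=Ke^{2\lambda}$ for the flat Laplacian, and in coordinates $Ke^{2\lambda}=\vec n_\xi\cdot(\partial_{x^1}\vec n_\xi\times\partial_{x^2}\vec n_\xi)$, a sum of terms $n^i\{n^j,n^k\}$ with $\{a,b\}\cqq\partial_{x^1}a\,\partial_{x^2}b-\partial_{x^2}a\,\partial_{x^1}b$. Since $\vec n_\xi\in W^{1,2}(D^2,\mb S^2)$, the Hardy-space/compensated-compactness theory (Wente, Coifman--Lions--Meyer--Semmes, as used for the Gauss map in H\'elein's work) shows that the solution $v$ of $-\Delta v=Ke^{2\lambda}$ on $D^2$ with zero boundary values lies in $C^0(\overline{D^2})\cap W^{1,2}(D^2)$. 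Then $h\cqq\lambda-v$ is harmonic on the punctured disk; writing a harmonic function on $D^2\setminus\{0\}$ as $h(z)=\alpha\log|z|+h_1(z)+h_2(z)$, where $h_1$ is harmonic on all of $D^2$ and $h_2$ is the singular part carrying the negative powers of $z$, the bound $\int_{D^2}e^{2\lambda}\,dx<\infty$ together with $v,h_1$ bounded near $0$ forces $h_2\equiv0$ and $\alpha>-1$. Thus $\lambda(z)=\alpha\log|z|+\mu(z)$ with $\mu$ continuous on $D^2$ and $\alpha>-1$; in particular $e^{\lambda}\le C|z|^{\alpha}$, so $\operatorname{osc}_{B_r}\xi\to0$ and $\xi$ extends continuously to $0$, with value $\xi(0)$.

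The decisive step, and the one I expect to be the main obstacle, is to upgrade $\alpha$ to a nonnegative integer by a blow-up. Set $\xi_\rho(z)\cqq\rho^{-(1+\alpha)}\bigl(\xi(\rho z)-\xi(0)\bigr)$ for $\rho\to0$; this is again conformal with conformal factor $\lambda_\rho(z)=\alpha\log|z|+\mu(\rho z)$, so $|\nabla\xi_\rho|\to e^{\mu(0)}|z|^{\alpha}$ uniformly on compact subsets of $\C\setminus\{0\}$ by continuity of $\mu$ at $0$. Conformal (hence scale) invariance of the total curvature and $\int_{D^2}|\nabla\vec n_\xi|^2<\infty$ give $\int_{B_{2\rho}\setminus B_{\rho/2}}|\nabla\vec n_\xi|^2\to0$, i.e.\ the curvature energy of $\xi_\rho$ on every fixed annulus tends to $0$; combined with the two-sided bound on $|\nabla\xi_\rho|$ and $\nabla v\in L^2$ (Wente), the $\xi_\rho$ are bounded in $W^{2,2}_\loc(\C\setminus\{0\})$, and a subsequence converges weakly in $W^{2,2}_\loc$ and a.e.\ to a conformal map $\xi_0\colon\C\setminus\{0\}\to\R^3$ with $\nabla\vec n_{\xi_0}\equiv0$ and $|\nabla\xi_0|=e^{\mu(0)}|z|^{\alpha}$. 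Hence $\xi_0$ takes values in a fixed affine $2$-plane and, once that plane is identified with $\C$ with the induced orientation, $\xi_0$ is holomorphic; $\xi_0'$ is then holomorphic on $\C\setminus\{0\}$, lies in $L^2_\loc$ near $0$, and satisfies $|\xi_0'(z)|\asymp|z|^{\alpha}$ globally, so the singularity at $0$ is removable and $\xi_0'$ is an entire function with $|\xi_0'(z)|\asymp|z|^{\alpha}$, hence a polynomial of degree exactly $\alpha$; therefore $n\cqq\alpha+1\in\{1,2,3,\dots\}$.

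Collecting everything, $\lambda(z)=(n-1)\log|z|+\mu(z)$ with $\mu$ continuous on $D^2$, so $|\partial_z\xi|=\tfrac{1}{\sqrt2}e^{\lambda}=(C+o(1))|z|^{n-1}$ with $C=\tfrac{1}{\sqrt2}e^{\mu(0)}>0$; since $n\ge1$ this is bounded near $0$, whence $|\nabla\xi|\in L^\infty(D^2)$ and the conformality relations extend to all of $D^2$ by continuity, so $\xi$ is a Lipschitz conformal immersion of $D^2$. The genuinely delicate points are the $\mc H^1$-regularity of $v$ --- where the structure of $Ke^{2\lambda}$ as a Jacobian in the Gauss map, rather than mere $L^1$-integrability, is essential --- and the compactness in the blow-up, namely verifying that the rescalings converge in a topology strong enough to preserve both the pointwise conformality relations and the curvature information; this is exactly where the local nondegeneracy hypothesis $\log|\nabla\xi|\in L^\infty_\loc$ enters.
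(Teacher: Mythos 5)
Your argument is correct in substance. Note first that the paper does not prove this statement at all: it is quoted verbatim as \cite[Lemma A.5]{RiviereVariationalPrinciples} and used as a black box in Section 4.2.3 of \cite{KMR} to rule out branch points, so the only comparison available is with the standard proof of R\'ivi\`ere/M\"uller--\v{S}ver\'ak/Kuwert--Li. Your first half coincides with that proof: the Liouville equation $-\Delta\lambda=Ke^{2\lambda}$ with Jacobian right-hand side in the Gauss map (in R\'ivi\`ere's version one first produces a Coulomb frame from $\vec n_\xi\in W^{1,2}(D^2)$ and writes $\Delta\lambda=-\nabla^\perp\vec e_1\cdot\nabla\vec e_2$, exactly as in the proof of Lemma \ref{Cut and Fill Lemma} above), Wente's estimate for the inhomogeneous part, the Laurent-type classification of harmonic functions on the punctured disc, and the exclusion of the essential singular part and of $\alpha\le-1$ via $e^{2\lambda}\in L^1$. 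Where you genuinely diverge is the integrality of $\alpha$: the standard route factors $\partial_z\xi$ (or the rotated Coulomb frame) as $e^{s}P$ with $s$ continuous and $P$ holomorphic on the punctured disc via the similarity principle, so that $P\in L^2$ forces a removable singularity and an integer-order zero; you instead rescale $\xi_\rho(z)=\rho^{-(1+\alpha)}(\xi(\rho z)-\xi(0))$ and identify the blow-up limit as a planar holomorphic map with $|\xi_0'|=c|z|^{\alpha}$, which must be $az^{m}$. Both work; your blow-up is heavier on compactness bookkeeping (you correctly isolate the needed ingredients: local uniform convergence of $e^{\lambda_\rho}$ from continuity of $\mu$ at $0$, vanishing of $\int_{\rho K}|\nabla\vec n_\xi|^2$ by absolute continuity, and elliptic $W^{2,2}_{\loc}$ bounds from $\Delta\xi_\rho=e^{2\lambda_\rho}\vec H\to0$ in $L^2_{\loc}$), but it avoids the Bers--Vekua machinery entirely and makes the geometric meaning of $n$ (the tangent cone is a multiplicity-one branched plane $z\mapsto az^{n}$) transparent. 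The only steps you assert rather than prove --- that a nonzero essential harmonic part is incompatible with $e^{2\lambda}\in L^1$, and the validity of the Liouville equation distributionally across the Wente solution --- are standard and do not constitute gaps.
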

We apply this lemma, where $\xi = f_\infty \circ \psi$, where $\psi:D^2\setminus \{0\}\to \Sigma$ is some conformal chart around $a_i$. One can show, see \cite[Section 4.2.3.]{KMR}, that the 2-dimensional lower density of $f_\infty(\Sigma)$ at $f_\infty(a_i)$ is at least $n$ and the Li-Yau inequality implies that $n<2$. Hence, $n=1$, the branch point is removable and we conclude that $f_\infty \in \mc E_\Sigma$.

It remains to show that the total mean curvature constraint is preserved in the limit. In \cite[Section 4.2.5.]{KMR}, it is already shown that $\mc A (f_k) \to \mc A (f_\infty)$ and that
\[\liminf _{\eps \to 0} \liminf _{k\to \infty} \mc A (f_k\vert _{B_\eps(a_i)}) = 0.\]
It follows by Cauchy-Schwarz that
\[\liminf _{\eps \to 0} \liminf _{k\to \infty}\left |\int _{B_\eps(a_i)} H_{f_k} \dif \vol_{g_k} \right |\leq \liminf _{\eps \to 0} \liminf _{k\to \infty}2\sqrt{\mc A (f_k\vert _{B_\eps(a_i)})} \sqrt{\mc W(f_k)} = 0.\]
Furthermore, since $f_k \wto f_\infty $ in $W^{2,2}(\Sigma \setminus \bigcup _{i=1}^n B_\eps(a_i))$, in particular it follows that 
\[\int_{\Sigma \setminus \bigcup _{i=1}^n B_\eps(a_i)} H_{f_k} \dif \vol _{g_k} \to \int_{\Sigma \setminus \bigcup _{i=1}^n B_\eps(a_i)} H_{f_\infty} \dif \vol _{g_\infty}.\]
Thus, the total mean curvature converges as well. Hence $\mc T(f_\infty) = R$. We use the lower semicontinuity of the Willmore energy under weak $W^{2,2}$ convergence to obtain $\mc W(f_\infty)\leq \liminf _{n\to \infty} \mc W(f_k)$. Finally, the smoothness of $f_\infty$ follows by \cite[Theorem 4.3]{MondinoScharrer} and we denote by $f^g _R$ the map $f_\infty$.
\subsubsection{Case 2: Divergence of the conformal factors}\label{subsubsec:Case2}
Up until now, we assumed that the conformal factors remain bounded. In this section, we look at the case where the conformal factors diverge. Since we assumed that the area is fixed, we can exclude the case $C_k \to \infty$. However, we cannot exclude the case $C_k \to -\infty$ right now because the area might concentrate around the points of energy concentration. 

In \cite[Lemma 4.4]{KMR}, it was shown that any such point of area concentration contains at least $4\pi$ Willmore energy and thus, at most one such point can exist. This follows by the $8\pi - \delta$ bound of the Willmore energy.
\begin{lemma}\emph{\cite[Lemma 4.4]{KMR}}\label{lem:area->willmore>=4pi}
Denote again by $(f_k)_k$ our minimizing sequence of weak immersions, where $a_i$, $ i=1,\ldots, n$ are the points of energy concentration. Assume that there exists an index $i$ such that
\[\liminf_{\eps \to 0} \liminf_{k\to \infty} \mc A (f_k(B_\eps(a_i))) > 0.\]
Then, 
\[\liminf _{\eps \to 0} \liminf_{k\to \infty} \mc W(f_k(B_\eps(a_i))) \geq 4\pi.\]
\end{lemma}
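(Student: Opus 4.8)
\emph{Plan of proof.} The strategy is the standard blow‑up at the concentration point $a_i$, which reduces the statement to the Willmore inequality $\mc W\ge 4\pi$ for closed genus $0$ surfaces. Fix the index $i$; by hypothesis there are $\eps_0>0$ and $\alpha>0$ with $\liminf_{k\to\infty}\mc A(f_k(B_\eps(a_i)))\ge\alpha$ for all $\eps\le\eps_0$. It suffices to prove that $\liminf_{k\to\infty}\mc W(f_k(B_\eps(a_i)))\ge 4\pi$ for each such $\eps$; letting $\eps\to 0$ then gives the claim.

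First I would produce the bubble. In a conformal chart centred at $a_i$ the $f_k$ are weakly conformal with conformal factor $\lambda_k$; by Lemma \ref{lem:lambdakminusCk} and the standing assumption $C_k\to-\infty$, $\lambda_k$ is of size $C_k\to-\infty$ on $\partial B_\eps(a_i)$, yet it must become large somewhere inside $B_\eps(a_i)$ because the image there keeps area $\ge\alpha$. Choosing a point $p_k\to a_i$ and a scale $\sigma_k\to 0$ realising this concentration of area (e.g. the smallest $\sigma_k$ for which $\mc A(f_k(B_{\sigma_k}(p_k)))$ equals a fixed fraction of $\alpha$) and setting $\tilde f_k(z)\cqq f_k(\exp_{p_k}(\sigma_k z))$ in the $h_k$‑normal chart, one obtains weakly conformal immersions of discs $D_{\rho_k}$ with $\rho_k\to\infty$, with conformal factor bounded on compact subsets, with $\sup_k\mc A(\tilde f_k)\le 1$, $\sup_k\mc W(\tilde f_k)<8\pi$, and with $\mc A(\tilde f_k(D_1))$ bounded away from $0$. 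Since $\tilde f_k$ is a reparametrisation of a restriction of $f_k$ to (a subdomain of) $B_\eps(a_i)$, scale and reparametrisation invariance of $\mc W$ give $\mc W(\tilde f_k)\le\mc W(f_k(B_\eps(a_i)))$; by Simon's Lemma \ref{lem:Simon} the images $\tilde f_k(D_{\rho_k})$ have uniformly bounded diameter, so the rescaled surfaces do not escape to infinity, and the diameter bound on $\partial D_{\rho_k}$ together with $\lambda_k\approx C_k\to-\infty$ there forces $\tilde f_k$ to converge to a single point $P_\eps\in\R^3$ along the end.

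Next I would pass to the limit. By the weak compactness theorem (Theorem \ref{thm: Weak compactness}, in its local/bubbling form due to \cite{MondinoRiviere}), after reparametrising by conformal diffeomorphisms and passing to a subsequence, $\tilde f_k\wto\hat f$ weakly in $W^{2,2}_\loc(\C\setminus\{b_1,\dots,b_m\})$ with $\hat f$ a weakly conformal element of $\mc E_{\C\setminus\{b_j\}}$; it is non‑constant because $\mc A(\hat f)\ge\liminf_k\mc A(\tilde f_k(D_1))>0$, and lower semicontinuity of $\mc W$ gives
\[\mc W(\hat f)\le\liminf_{k\to\infty}\mc W(\tilde f_k)\le\liminf_{k\to\infty}\mc W(f_k(B_\eps(a_i))).\]
Finiteness of $\mc W(\hat f)$ puts the Gauss map of $\hat f$ in $W^{1,2}$ near each $b_j$ and near $\infty$, and boundedness of the image lets $\hat f$ extend in $W^{1,2}$ across these punctures; hence \cite[Lemma A.5]{RiviereVariationalPrinciples} applies at every puncture, showing each is a removable (at worst branched) point. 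Filling them in, with $\hat f(\infty)=P_\eps$, yields a closed, weakly (branched) conformal immersion $F\colon\mb S^2\to\R^3$ with $\mc W(F)=\mc W(\hat f)$, the punctures being a null set. Being a non‑constant closed branched immersion of a genus $0$ surface, $F$ satisfies $H^2\ge 4K$ away from the (null) branch set, so the Gauss--Bonnet theorem (branch points only increase $\int_{\mb S^2}K\dif\vol$) gives $\mc W(F)\ge\int_{\mb S^2}K\dif\vol\ge 4\pi$, which is \eqref{intro:Willmore inequality}. Combining the displays yields $\liminf_{k\to\infty}\mc W(f_k(B_\eps(a_i)))\ge 4\pi$ for all $\eps\le\eps_0$, and $\eps\to 0$ finishes the proof.

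The main obstacle is the first step: choosing $p_k$ and $\sigma_k$ so that the rescaled immersions genuinely capture the concentrated area — so the bubble $\hat f$ is non‑trivial — while simultaneously keeping the conformal factor bounded on compact sets and the spatial diameter bounded, so that the end at infinity is a true removable point rather than one requiring a sphere inversion (the inversion variant would lose $4\pi$ through the Li--Yau defect and spoil the estimate). This is precisely where the area‑concentration hypothesis, Lemma \ref{lem:lambdakminusCk} and Simon's Lemma \ref{lem:Simon} are all needed; once the bubble is in hand, the remaining steps are assembly of the quoted results.
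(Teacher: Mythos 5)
The paper does not actually prove this lemma: it is quoted verbatim from \cite[Lemma 4.4]{KMR}, and the proof there is much shorter than what you propose. For fixed $\eps$, Lemma \ref{lem:lambdakminusCk} together with $C_k\to-\infty$ forces $\mathrm{length}\bigl(f_k(\partial B_\eps(a_i))\bigr)\le C_\eps\, e^{C_k}\to 0$ while the enclosed area stays $\ge\alpha$; one then applies Simon's monotonicity formula (the Li--Yau inequality with boundary terms) at a point of the image lying at definite distance from the shrinking boundary curve, the boundary contribution being of order $\mathrm{length}(\partial)/\dist\to 0$, which gives $4\pi\le \mc W(f_k(B_\eps(a_i)))+o(1)$ directly --- no compactness theorem, no bubble extraction, no removal of singularities, no branched Gauss--Bonnet.

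Your blow-up route is morally viable, but it has genuine gaps exactly where you flag them, and these are the substantive parts of the argument. First, the selection of $(p_k,\sigma_k)$: taking $\sigma_k$ to be the smallest scale capturing a fixed fraction of $\alpha$ gives the area lower bound on $D_1$ but does \emph{not} give local boundedness of the conformal factor of $\tilde f_k$ on compact sets; for that you need a second round of the $8\pi/3$-energy covering and of Lemma \ref{lem:lambdakminusCk} at the new scale, plus the area lower bound to pin down the additive constant $C_k$ of the rescaled maps, and you must reconcile the energy-based and area-based choices of scale. Second, applying \cite[Lemma A.5]{RiviereVariationalPrinciples} at the punctures $b_j$ and at $\infty$ requires $\log|\nabla\hat f|\in L^\infty_{\loc}$ near each puncture; near $\infty$ this amounts to an oscillation (Harnack-type) estimate for the conformal factor on the neck annuli $D_{\rho_k}\setminus D_R$, which is precisely the hard analytic content of the first half of the proof of Lemma \ref{Cut and Fill Lemma} (Wente's inequality plus the $L^{2,\infty}$ control of the harmonic part), and in your sketch it is asserted rather than proved. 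Until these two points are carried out, the closed bubble $F$ and hence the inequality $\mc W(F)\ge 4\pi$ are not established. The concluding steps (lower semicontinuity of $\mc W$ under weak $W^{2,2}_{\loc}$ convergence, and either branched Gauss--Bonnet or Li--Yau for the image varifold) are fine as stated.
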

Furthermore, since the $a_i$ are the only points where area can concentrate, there must be at least one $a_i$ for which the assumptions of Lemma \ref{lem:area->willmore>=4pi} hold. Consequently, there exists exactly one point, denoted as $a^*$, where both area and energy concentrations occur.  

The behavior that we observe is that the region around $a^*$ blows up, while the remaining surface is blown down. In other words, $f_k(B_\eps(a^*))$ contains almost all of the area, while $f_k(\Sigma \setminus B_\eps(a^*))$ is the shrinking region that contains the topological information. We want to isolate these two parts and complete each of them to obtain new closed surfaces. For these new surfaces, we estimate their Willmore energy and bring this to a contradiction. This procedure is done by the following lemma, see also \cite[Lemma 5.2]{MondinoRiviere} and \cite[Lemma 4.9]{KMR} for a similar statement. 
%
\begin{lemma}[Cut and Fill]\label{Cut and Fill Lemma}

Let $(f_k)_k\subset \mc E_\Sigma$ be a sequence of conformal weak immersions mapping into $\R^3$. Assume that
\begin{equation}
\limsup _{k\to \infty} \left (\mc A (f_k) + \mc W(f_k)\right ) < \infty. \label{area and dn estimate}
\end{equation}
Let $a\in \Sigma$ and $s_k,t_k\searrow 0$ such that
\[\frac{t_k}{s_k}\to 0\]
and suppose that
\begin{equation}
\lim _{k\to \infty} \int _{B_{s_k}(a)\setminus B_{t_k}(a)}[1+ |\mb I_{ f_k}|_{g_k}^2]\dif  \vol  _{g_k} =0. \label{No area and Willmore in annulus}
\end{equation}
We fix $x_0 \in \mb S^2$ and denote by $\pi: \mb S^2\setminus \{x_0\} \to \C$ the stereographic projection from $x_0$ onto $\C$ and by $\omega: B_1(0) \subset \C \to \Sigma$, $ \omega(0)=a$ a conformal chart. Then, for sufficiently large $k$ there exist possibly branched weak immersions\footnote{See \cite{MondinoRiviere} for a definition.}
\begin{enumerate}[label=\roman*)]
\item  $\xi_k^1: \mb S^2 \to \R^3$ such that 
\begin{equation}
\xi_k^1 \circ \pi^{-1} = f_k \circ \omega \quad \text{ in }\omega ^{-1}(B_{t_k}(a))\label{xi^1 k}
\end{equation}
and 
\begin{equation}
\lim _{k\to \infty}\int _{\mb S^2 \setminus \pi^{-1}(\omega ^{-1}(B_{t_k}(a)))}|\mb I^0_{\xi^1_k}|_{g_{\xi^1_k}}^2 \dif  \vol _{g_{\xi^1_k}} =0,\label{tracefree form 1}
\end{equation}
\item  $\xi_k^2: \Sigma \to \R^3$ such that 
\begin{equation}
\xi_k^2\vert _{\Sigma \setminus B_{s_k}(a)} = f_k \vert _{\Sigma \setminus B_{s_k}(a)}\label{xi^2 k}
\end{equation}
and 
\begin{equation}
\lim _{k\to \infty}\int _{B_{s_k}(a)}|\mb I^0_{\xi^2_k}|_{g_{\xi^2_k}}^2 \dif  \vol_{_{\xi^2_k}}  =0,\label{tracefree form 2}
\end{equation}
\end{enumerate}

where $\mb I^0$ is the trace-free second fundamental form. The balls $B_{s_k}(a)$ and $B_{t_k}(a)$ are always measured in a reference metric $g_0$.
\end{lemma}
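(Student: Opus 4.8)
The plan is to follow the "cut and fill" constructions of \textsc{Mondino} and \textsc{Rivière} \cite[Lemma 5.2]{MondinoRiviere} and of \textsc{Keller}, \textsc{Mondino}, and \textsc{Rivière} \cite[Lemma 4.9]{KMR} almost verbatim, since the statement only involves the trace-free second fundamental form and makes no reference to the constraint $\mc T$; only the bookkeeping has to be adapted to the present setting. Write $g_k \cqq f_k \circ \omega$ on $B_1(0)$ and recall that by weak conformality of $f_k$ the map $g_k$ is conformal, with conformal factor $\lambda_k$. The key preliminary is a neck analysis on the thin annulus. By \eqref{No area and Willmore in annulus}, for every dyadic sub-annulus $D_{2\rho}\setminus D_{\rho}$ with $t_k \le \rho \le s_k/2$ one has $\int_{D_{2\rho}\setminus D_{\rho}}(1+|\mb I_{f_k}|^2_{g_k})\dif\vol_{g_k}\to 0$, so in particular the hypothesis of Hélein's moving frame lemma (threshold $8\pi/3$, cf.\ the discussion around Lemma \ref{lem:lambdakminusCk}) holds there for large $k$. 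Combining this with the Liouville equation $-\Delta\lambda_k = K_{g_k}e^{2\lambda_k}$, the pointwise bound $|K_{g_k}|\le\tfrac12|\mb I_{f_k}|^2_{g_k}$, and Wente-type elliptic estimates, I would conclude that on each such sub-annulus $\lambda_k$ has oscillation $o(1)$ and $\|\nabla\lambda_k\|_{L^2}=o(1)$, and hence that after subtracting a suitable similarity $S_k$ of $\R^3$ (a composition of translation, rotation and dilation, all of which preserve umbilicity) the rescaled map $S_k\circ g_k$ is $W^{2,2}$-close, in the scale-invariant norm, to the flat inclusion $z\mapsto(z,0)\in\R^3$.

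\textbf{The two gluings.} With this at hand I would pick good radii $\rho_k^1\in(t_k,2t_k)$ and $\rho_k^2\in(s_k/2,s_k)$ on whose circles $g_k$ and $\nabla g_k$ are controlled. For $\xi_k^1$: set $\xi_k^1\circ\pi^{-1}\cqq g_k$ on $\omega^{-1}(B_{t_k}(a))$, so that \eqref{xi^1 k} holds by construction; on $\{|z|\ge\rho_k^1\}\cup\{\infty\}$ take a conformal parametrization of a genuine round sphere in $\R^3$, chosen — using the affine approximation data on $D_{2t_k}\setminus D_{t_k}$ — so as to match $g_k$ up to $o(1)$ errors near $|z|=\rho_k^1$ and to extend smoothly across $z=\infty$ (the image of $x_0$); on the transition annulus $\rho_k^1>|z|>t_k$ interpolate between $g_k$ and the round-sphere parametrization by a cutoff $\eta$ that is logarithmic in $|z|$. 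Since a round sphere is totally umbilic, $\mb I^0_{\xi_k^1}$ vanishes on $\{|z|>\rho_k^1\}$, so \eqref{tracefree form 1} reduces to showing $\int_{D_{\rho_k^1}\setminus D_{t_k}}|\mb I^0_{\xi_k^1}|^2\to 0$. For $\xi_k^2$: keep $\xi_k^2\cqq f_k$ on $\Sigma\setminus B_{s_k}(a)$, so that \eqref{xi^2 k} holds; fill $D_{\rho_k^2}$ with a flat disk (again totally umbilic) matching $g_k$ near $|z|=\rho_k^2$; interpolate on $s_k>|z|>\rho_k^2$ by a logarithmic cutoff. Then \eqref{tracefree form 2} again reduces to the transition annulus. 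The scales of the round sphere and flat disk are chosen comparable to $e^{\lambda_k}$ on the gluing circles, so that the resulting metrics do not degenerate and $\xi_k^1,\xi_k^2\in\mc E$; a branch point is permitted where a parametrization degenerates (e.g.\ at $z=\infty$), which is why the conclusion only asserts branched weak immersions, and \eqref{area and dn estimate} is used to keep the Willmore energy of the transition pieces finite.

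\textbf{The final estimate and the main obstacle.} It remains to verify $\|\mb I^0_{\xi_k^j}\|_{L^2(\text{transition annulus})}\to 0$. Schematically, $\partial^2\xi_k^j = \eta\,\partial^2 g_k + (1-\eta)\partial^2(\text{target}) + 2\,\nabla\eta\cdot\nabla(g_k-\text{target}) + (\nabla^2\eta)(g_k-\text{target})$; the first two terms are controlled by $\|\mb I_{f_k}\|_{L^2}+\|\nabla\lambda_k\|_{L^2}=o(1)$ on the annulus (from the neck analysis) together with the vanishing contribution of the umbilic target, and for a cutoff with $|\nabla\eta|\lesssim(|z|\log)^{-1}$, $|\nabla^2\eta|\lesssim(|z|^2\log)^{-1}$ the last two terms are bounded by $(\log)^{-1}$ times the scale-invariant $W^{2,2}$-distance of $g_k$ from the target, which is also $o(1)$; passing from $\partial^2\xi_k^j$ to $\mb I^0_{\xi_k^j}$ and integrating against $\dif\vol_{\xi_k^j}$ only costs the now-controlled conformal factor. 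I expect the neck analysis — extracting the scale-invariant $W^{2,2}$-closeness of $g_k$ to an affine map from the single hypothesis \eqref{No area and Willmore in annulus} via the Liouville equation and the moving-frame estimate — to be the technical heart; the remaining gluing bookkeeping is carried out in full detail in \cite{MondinoRiviere, KMR}.
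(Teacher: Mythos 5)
Your overall architecture matches the paper's (which follows \cite[Lemma 5.2]{MondinoRiviere}): a neck analysis via Hélein's moving frames and Wente estimates to control the conformal factor, followed by a gluing to totally umbilic pieces. But there is a genuine gap in the key step. You claim that the hypothesis \eqref{No area and Willmore in annulus} forces the rescaled map to be $W^{2,2}$-close, in the scale-invariant norm, to the flat inclusion $z\mapsto(z,0)$, i.e.\ to an \emph{affine} map, and you then choose the round sphere (for $\xi^1_k$) and the flat disk (for $\xi^2_k$) from this affine data. This is false: \eqref{No area and Willmore in annulus} controls the second fundamental form of the \emph{image}, not the parametrization, and a conformal map into a plane has $\mb I\equiv 0$ without being anywhere near affine. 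This is exactly what happens in the situation the lemma is built for: in the paper the rescaled maps $\hat\psi_k$ converge to a \emph{holomorphic} limit $\hat\psi_\infty$ which, in the bubbling application, equals $z\mapsto 1/(az)$. On any dyadic annulus the derivative of $1/(az)$ rotates through a full $4\pi$ as one traverses a circle, so its scale-invariant distance to every affine map is bounded below. Consequently your round sphere/flat disk "targets" cannot be chosen to match $g_k$ up to $o(1)$ near the gluing circles, the terms $\nabla\eta\cdot\nabla(g_k-\text{target})$ are not $o(1)$, and for $\xi^2_k$ even the boundary orientation and the normal derivative of the flat-disk cap have the wrong sign relative to $1/(az)$, producing a crease carrying a fixed amount of curvature. (A smaller issue: your logarithmic cutoff gains nothing on a transition annulus of bounded modulus such as $D_{2t_k}\setminus D_{t_k}$.)

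The paper avoids this by never assuming the limit is affine. It rescales to $\hat\psi_k=e^{-\overline\lambda_k}(\psi_k-\psi_k(0,1/2))$ on a \emph{fixed} annulus, extracts the weak $W^{2,2}$ limit $\hat\psi_\infty$ (a conformal, hence holomorphic, map into a plane), and uses as the filling piece $\pi_k^{-1}\circ\hat\psi_\infty$, where $\pi_k$ is the stereographic projection from a sphere of radius $e^{-\overline\lambda_k}$. Since $\hat\psi_\infty$ is conformal into a plane, $\pi_k^{-1}\circ\hat\psi_\infty$ is conformal into a round sphere and therefore totally umbilic \emph{whatever} $\hat\psi_\infty$ is; moreover $\pi_k^{-1}\to\id$ in $C^l_{\loc}$, so $\hat\psi_k-\pi_k^{-1}\circ\hat\psi_\infty\to 0$ strongly on good gluing circles, and the biharmonic interpolation on the fixed annulus $B_r\setminus B_{r/2}$ then converges strongly in $W^{2,2}$ to $\hat\psi_\infty$ and carries no trace-free curvature in the limit. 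To repair your argument you would need to replace "affine approximation / flat disk" by this (or an equivalent) device; as written, the construction of both $\xi^1_k$ and $\xi^2_k$ breaks down precisely in the regime where the lemma is applied.
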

An intuition for this statement is given in Figure \ref{tikz figure for cut and fill}. We give the proof for this statement and follow closely the proof in \cite[Lemma 5.2]{MondinoRiviere}.
\begin{proof}
Consider first a conformal chart $\omega: B_1(0)\to U\subset \Sigma$, where $\omega(0)=a\in U$ and choose $\lambda\geq 1$ such that $\lambda^{-1} |v|^2 \leq |\dif \omega(v)|_{g_0}^2 \leq \lambda |v|^2$ for all $v\in \C$. Then, for $k$ large enough such that $B_{s_k}(a) \subset U$ we have $\omega(B_{s_k/\lambda}(0)) \subset B_{s_k}(a)$ and $\omega(B_{t_k \lambda}(0)) \supset B_{t_k}(a)$. We set $\rho_k \cqq \lambda \sqrt{t_k /s_k} $ and consider the map 
\[\psi_k: B_{1/\rho_k}(0)\setminus B_{\rho_k}(0) \to \R^3,\quad \psi_k = f_k \circ \omega \circ e_{\sqrt{t_k s_k}}, \]
where $e_{r}(z) \cqq rz$ for $r\in \R$. Notice that $\rho_k \to 0$ and that $\psi_k$ is still a Lipschitz conformal immersion. Furthermore, by \cite[(2.29)]{RiviereLectureNotes} and \eqref{No area and Willmore in annulus}
\begin{equation}
\limsup  _{k\to \infty} \int _{B_{1/\rho_k}(0) \setminus B_{\rho_k}(0)}|\nabla \vec n_{\psi_k}|^2 \dif x \leq  \lim _{k\to\infty} \int _{B_{s_k}(a) \setminus B_{t_k}(a)}|\dif \vec n_{f_k}|_{g_k}^2 \dif \vol _{g_k} =0. \label{limit of gradient of normal goes to 0}
\end{equation}
We denote by $\lambda_k = \log |\partial _{x_1} \psi_k| = \log |\partial _{x_2} \psi_k| $ the conformal factor of $\psi_k$ in $B_{1/\rho_k}(0)\setminus B_{\rho_k}(0)$. First, we want to estimate these $\lambda_k$ and show that they tend to $-\infty$. 

 By \cite[Lemma IV.1]{Bernard}, it is possible to extend $\vec n_{\psi_k} \in W^{1,2}(B_{{1/\rho_k}}(0)\setminus B_{\rho_k}(0),\mb S^2) $ to a ${W^{1,2}(B_{1/\rho_k}(0), \mb S^2)}$ map such that 
\[ \int _{B_{1/\rho_k}(0)}|\nabla \vec n_{\psi_k}|^2 \dif x \leq C  \int _{B_{1/\rho_k}(0) \setminus B_{\rho_k}(0)}|\nabla \vec n_{\psi_k}|^2 \dif x  .\]
In particular, for $k$ large enough, the right-hand side is bounded by $8\pi/3$ so that we can use Hélein's energy controlled moving frame theorem \cite[Theorem V.2.1]{Helein} to obtain the existence of an orthonormal frame $(\vec e_{1,k}, \vec e_{2,k})$ on $B_{1/\rho_k}(0)\setminus B_{\rho_k}(0)$ such that
\[\vec e_{1,k} \times \vec e_{2,k} = \vec n_{\psi_k}\]
and
\begin{equation}
\int _{B_{1/\rho_k}(0) \setminus B_{\rho_k}(0) } |\nabla \vec e_{1,k}|^2 + |\nabla \vec e_{2,k}|^2 \dif x \leq C \int _{B_{1/\rho_k}(0)\setminus B_{\rho_k}(0)}|\nabla \vec n_{\psi_k}|^2 \dif x. \label{Heleins frame estimate}
\end{equation}
By \cite[(5.21)]{RiviereLectureNotes}, we know that
\begin{equation}
\Delta \lambda_k = - \nabla ^\perp \vec e_{1,k} \cdot \nabla \vec e_{2,k}, \label{Delta lambda_k}
\end{equation}
where we employ the notation from \cite{RiviereLectureNotes}. \eqref{Delta lambda_k} is interpreted in the usual distributional sense. Let $\alpha>0$ and $k$ large enough such that $ \rho_k < \alpha$ and consider $\mu_k$ which satisfy the same equation as \eqref{Delta lambda_k} but with zero boundary values:
\begin{equation}
\begin{cases}
\Delta \mu_k = -\nabla ^\perp \vec e_{1,k} \cdot \nabla \vec e_{2,k} & \text{ in } B_{1/\alpha}(0) \setminus B_\alpha(0),\\
\mu_k = 0 &\text{ in } \partial B_{1/\alpha}(0) \cup \partial B_\alpha(0).
\end{cases} \label{Delta mu_k}
\end{equation}
We can now employ Wente's inequality \cite[Corollary 2, Theorem 3]{Topping} to obtain $C>0$ independent of $\alpha$ such that
\begin{align}
\|\mu_k\|_{L^\infty(B_{1/\alpha}(0)\setminus B_{\alpha}(0))} + \|\nabla \mu_k \|_{L^2(B_{1/\alpha}(0)\setminus B_\alpha(0))}&\leq C \|\nabla \vec e_{1,k}\|_{L^2} \|\nabla \vec e_{2,k}\|_{L^2}\notag\\
&\stackrel{\mathclap{\eqref{Heleins frame estimate}}}{\leq } C \int _{B_{1/\rho_k}(0)\setminus B_{\rho_k}(0)} |\nabla \vec n _{\psi_k}|^2 \dif x.\label{Wente}
\end{align}
By \cite[Lemma  2.2]{MondinoRiviere}, using \eqref{area and dn estimate} and that the sectional curvature of $\R^3$ is 0, we obtain a uniform estimate of $\|\nabla \lambda_k\|_{L^{2,\infty}(B_{1/\alpha}(0)\setminus B_\alpha(0))}$\footnote{Here $\|\cdot \|_{L^{2,\infty}(\Sigma)}$ is the Lorentz space (quasi)norm where for a measurable function $f:\Sigma \to \R$, it is defined as
\[\|f\|_{L^{2,\infty}(\Sigma)} \cqq (\sup _{t>0} t^2 \vol_k (\{|f| > t\}) )^{1/2}<\infty.\]
Note that $\|f\|_{L^{2,\infty}(\Sigma)}\leq \|f\|_{L^2(\Sigma) }$ by Chebyshev. Also, since $\mc A (\Sigma)< \infty$, we have ${\|f\|_{L^p(\Sigma)}\leq C_p(1+ \|f\|_{L^{2,\infty}(\Sigma)})}$ for any $p<2$.}. We obtain that $\nu_k \cqq \lambda_k - \mu_k$ is harmonic and by the bound on the $L^2$ norm in \eqref{Wente}, we obtain that it satisfies
\[\limsup _{k\to \infty} \|\nabla \nu_k \|_{L^{2,\infty}(B_{1/\alpha}(0)\setminus B_{\alpha}(0))} < \infty.\]
We now follow the argument presented in the proof of \cite[Theorem 5.5]{RiviereLectureNotes} to show that there exist constants $ \overline{\lambda}_k \in \R$ such that for $\alpha'>\alpha$, we have
\begin{equation}
\limsup _{k\to \infty}  \|\lambda_k  - \overline{\lambda}_k\|_{L^\infty (B_{1/\alpha'}(0) \setminus B_{\alpha'}(0))} < \infty. \label{lambda k dont vary too much}
\end{equation}
By the Poincaré inequality, we have for $1\leq p <2$
\begin{align*}
\|\nu_k - \overline{\nu}_k\|_{L^p(B_{1/\alpha}(0)\setminus B_\alpha(0))} & \leq C_p \|\nabla \nu_k \|_{L^p(B_{1/\alpha}(0)\setminus B_\alpha(0))} \\
&\leq C_p(1+ \|\nabla \nu_k\|_{L^{2,\infty}(B_{1/\alpha}(0)\setminus B_{\alpha}(0))}) < C_p.
\end{align*}
Here, $\overline{\nu}_k$ is the average of $\nu_k$ in $B_{1/\alpha}(0)\setminus B_\alpha(0)$. As $W^{1,p}(B_{1/\alpha}(0) \setminus B_{\alpha}(0))$ embeds into $L^1(\partial B_{1/\alpha}(0) \cup \partial B_\alpha(0))$, it follows that
\[\limsup _{k\to \infty} \|\nu_k - \overline{\nu}_k\|_{L^1(\partial B_{1/\alpha}(0) \cup \partial B_\alpha(0))}\leq C.\]
Using Green's representation formula and general bounds on the Poisson kernel, see \cite[Theorem 1]{Krantz}, it holds that
\[\|\nu_k  - \overline{\nu}_k\|_{L^{\infty}(B_{1/\alpha'}(0)\setminus B_{\alpha'}(0))} < C_{\alpha'}\]
for any $\alpha'>\alpha$. Combining this with \eqref{Wente} and \eqref{limit of gradient of normal goes to 0} implies \eqref{lambda k dont vary too much}. 

By the assumption \eqref{No area and Willmore in annulus} we know that 
\[\lim _{k\to \infty} \int _{B_{1/\alpha'}(0) \setminus B_{\alpha'} (0)}e^{2\lambda_k} \dif x =0.\]
It follows that $\overline{\lambda}_k \to - \infty$. We now want to rescale and shift $\psi_k$ such that the area is uniformly bounded from below and above, thus we set (for some fixed $\alpha'<\frac{1}{2}$)
\[\hat \psi_k  \cqq e^{-\overline{\lambda}_k}  (\psi_k - \psi_k  (0,1/2) ).\]
We set
\[\hat \lambda_k \cqq \lambda_k - \overline{\lambda}_k = \log |\parpar{x_1} {\hat \psi_k} | =\log |\parpar{x_2} {\hat \psi_k} |\]
the conformal factor of $\hat \psi_k$. $\hat \lambda_k \in L^\infty(B_{1/\alpha'}(0)\setminus B_{\alpha'}(0))$ by \eqref{lambda k dont vary too much} and hence $\hat \psi_k$ is uniformly Lipschitz. Because $\hat \psi_k(0,1/2 )$ is fixed, we obtain that the image is bounded, i.e.,	
\begin{equation}
\hat \psi_k (B_{1/\alpha'}(0) \setminus B_{\alpha'}(0))\subset B_{R_{\alpha'}}(0)\label{image bounded}
\end{equation}
for some $R_{\alpha'}>0$. By \cite[(2.42)]{RiviereLectureNotes}, it holds that
\[\Delta \hat \psi_k =  e^{2\hat \lambda_k} \vec H_{\hat \psi_k}.\]
The $L^\infty$-boundedness of $\hat \lambda_k$ and \eqref{area and dn estimate} imply that the right-hand side is uniformly bounded in $L^2(B_{1/\alpha'}(0)\setminus B_{\alpha'}(0))$. Thus $\hat \psi_k$ is uniformly bounded in $W^{2,2}(B_{1/\alpha'}(0)\setminus B_{\alpha'}(0))$ and using weak compactness, we obtain that up to a subsequence (which we do not relabel) 
\begin{equation}
\hat \psi_k \wto \hat \psi_\infty \quad \text{ in } W^{2,2}(B_{1/\alpha'}(0)\setminus B_{\alpha'}(0)). \label{W22 convergence of Psi k}
\end{equation}
In particular, $\hat \psi_\infty$ is Lipschitz in $B_{1/\alpha'}(0)\setminus B_{\alpha'}(0)$ and conformal, where the conformality follows by the strong $W^{1,2}$ convergence of $\hat \psi_k$ to $\hat \psi_\infty$. This convergence also implies that the conformal factors converge, i.e.
\begin{equation}
\hat \lambda_k \to \hat \lambda_\infty \cqq \log |\parpar{x_1}{\hat \psi_\infty}| =\log |\parpar{x_2	}{\hat \psi_\infty}| \quad \text{ in }L^2(B_{1/\alpha'}(0)\setminus B_{\alpha'}(0)). \label{convergence of conformal factors}
\end{equation} 
It follows $\hat \lambda_\infty\in L^\infty(B_{1/\alpha'}(0)\setminus B_{\alpha'}(0))$ and $\hat\psi_\infty$ is a weak immersion. Furthermore, \eqref{W22 convergence of Psi k} implies $\nabla \vec n_{\hat \psi_k} \wto  \nabla \vec n_{\hat \psi_\infty} $ in $L^2(B_{1/\alpha'}(0)\setminus B_{\alpha'}(0))$ and by the lower semicontinuity of the norm with respect to weak convergence
\[\| \nabla \vec n _{\hat \psi_\infty}\|_{L^2(B_{1/\alpha'}(0)\setminus B_{\alpha'}(0))}=0.\]
This holds for any $\alpha' < \frac{1}{2}$, and thus $\vec n_{\hat \psi_\infty} = \vec n_0$ on $\C\setminus \{0\}$ for some $\vec n_0 \in \mb S^2$, where we identify $\C$ with $\R^2$. Hence $\hat \psi_\infty$ is a conformal map from $\C \setminus \{0\}$ into a plane $P_0^2 \subset \R^3$ and after a possible affine transformation, we may assume that $P_0^2 = \R^2 \times \{0\}$ and that $\hat \psi_\infty$ is holomorphic. 

Consider $\pi_k$ the stereographic projection from the sphere $\mb S^2_k$ with radius $e^{-\overline{\lambda}_k}$ and center $(0,0, e^{-\overline{\lambda}_k})$ onto $\R^2\times \{0\}$. We will work with $\pi_k^{-1} \circ \hat \psi _\infty$ instead of $\hat \psi_\infty$. This will, on one hand, ensure the boundedness of the image, and on the other hand ensure that the trace-free second fundamental form is 0 in this region. Notice that for any $R>0$ and $l\in \N$, we have
\begin{equation}
\|\pi_k^{-1}  - \id \|_{C^l(B_R(0))} \leq C_{R,l} e^{\overline{\lambda}_k}\label{stereographic projection}
\end{equation}
and so for any choice of $0<\alpha < \alpha'$ we have, using \eqref{image bounded}, that
\[\|\pi^{-1}_k \circ \hat \psi_\infty - \hat \psi_\infty\|_{C^l(B_{1/\alpha'}(0)\setminus B_{\alpha'}(0))}\to 0.\]
Then, applying \eqref{W22 convergence of Psi k}, we obtain
\begin{equation}
\hat \psi_k - \pi^{-1}_k \circ \hat \psi_\infty \wto 0 \text{ in }W^{2,2}(B_{1/\alpha'}(0)\setminus B_{\alpha'}(0)).\label{W22 convergence of Psi k after stereogr. proj.}
\end{equation}
Fix $r\in (1,2)$ such that it holds
\begin{equation}
\limsup _{k\to \infty} \int _{\partial B_r(0)\cup \partial B_{r/2}(0)} |\nabla^2 \hat \psi_k|^2 \dif l < \infty. \label{good radius}
\end{equation}
This exists by Fubini and Chebyshev, using that $\hat \psi_k$ is bounded in $W^{2,2}(B_{1/\alpha'}(0)\setminus B_{\alpha'}(0))$. As the trace operator is bounded, we obtain from \eqref{W22 convergence of Psi k after stereogr. proj.} that
\[\hat \psi_k - \pi^{-1}_k \circ \hat \psi_\infty \wto 0 \text{ in } W^{3/2, 2}(\partial B_r(0)\cup \partial B_{r/2}(0)).\]
Combining this with \eqref{good radius} gives $\|\hat \psi_k - \pi^{-1}_k \circ \hat \psi_\infty\|_{W^{2,2}(\partial B_r(0)\cup \partial B_{r/2}(0))}\leq C$. Using the Gagliardo-Nirenberg inequality \cite{Mironescu} and the fact that $W^{2,2}(\partial B_r(0)\cup \partial B_{r/2}(0))$ compactly embeds into $W^{1,2}(\partial B_r(0)\cup \partial B_{r/2}(0))$ \cite[Theorem 3.85]{Demengel}, we obtain by interpolation that the embedding $W^{2,2}(\partial B_r(0))\embeds W^{3/2,2}(\partial B_r(0))$ is compact. It follows, using the weak compactness of $W^{2,2}$ that
\begin{equation}
\hat \psi_k - \pi^{-1}_k \circ \hat \psi_\infty \to 0 \text{ in } W^{3/2, 2}(\partial B_r(0)\cup \partial B_{r/2}(0)).\label{strong W22 convergence}
\end{equation}
Similarly, we also obtain $\partial_r\hat \psi_k - \partial_r(\pi_k^{-1}\circ \hat \psi_\infty )\to 0$ in $W^{1/2,2}(\partial B_r(0)\cup \partial B_{r/2}(0))$. We now want to glue together the maps $\hat \psi_k$ and $\pi^{-1}_k \circ \hat \psi_\infty$. For this, we use a biharmonic strip. To obtain $\xi^1_k$ and $\xi^2_k$, we do this with two different boundary conditions. We use the biharmonic strip given by Lemma \ref{lem: biharmonic equation}, i.e., the maps $\tilde{\psi}^1_k$, $ \tilde{\psi}^2_k$ in $W^{2,2}(B_r(0)\setminus B_{r/2}(0), \R^3)$ solving
\begin{equation}
\left \{ \begin{alignedat}{2}
\Delta^2 \tilde{\psi}^1_k &=0 &&\text{ in }B_r(0)\setminus B_{r/2}(0)\\
\tilde{\psi}^1_k &= \pi ^{-1}_k \circ \hat \psi_\infty &&\text{ in } \partial B_r(0)\\
\tilde{\psi}^1_k &= \hat \psi_k &&\text{ in } \partial B_{r/2}(0)\\
\partial_r \tilde{\psi}^1_k &= \partial_r (\pi ^{-1}_k \circ \hat \psi_\infty) &&\text{ in } \partial B_r(0)\\
\partial_r \tilde{\psi}^1_k &= \partial _r \hat \psi_k &&\text{ in } \partial B_{r/2}(0),
\end{alignedat}
\right. \left \{ \begin{alignedat}{2}
\Delta^2 \tilde{\psi}^2_k &=0 &&\text{ in }B_r(0)\setminus B_{r/2}(0)\\
\tilde{\psi}^2_k &= \hat \psi_k &&\text{ in } \partial B_r(0)\\
\tilde{\psi}^2_k &= \pi ^{-1}_k \circ \hat \psi_\infty &&\text{ in } \partial B_{r/2}(0)\\
\partial_r \tilde{\psi}^2_k &= \partial _r \hat \psi_k &&\text{ in } \partial B_r(0)\\
\partial_r \tilde{\psi}^2_k &= \partial_r (\pi ^{-1}_k \circ \hat \psi_\infty) &&\text{ in } \partial B_{r/2}(0).
\end{alignedat}
\right. 
\end{equation}
By Lemma \ref{lem: biharmonic equation} and \eqref{strong W22 convergence}, we obtain \emph{strong} convergence of the biharmonic solutions to $\hat \psi_\infty$, i.e.,
\begin{equation}
\tilde{\psi}^{i}_k - \hat\psi_\infty \to 0 \text{ in }W^{2,2}(B_r(0)\setminus B_{r/2}(0)),\quad i=1,2. \label{convergence of biharmonic strip}
\end{equation}
The fact that $\tilde{\psi}^{1}_k$ and $\tilde{\psi}^2_k$ are weak immersions follows from the slightly stronger estimate $\|\tilde{\psi}^{i}_k - \hat\psi_\infty\|_{C^{1,\alpha}(B_r(0)\setminus B_{r/2}(0))}\to 0$ for $\alpha<1/2$, see \cite[(5.37)]{MondinoRiviere}.  We extend $\tilde{\psi}^1_k$ by $\hat \psi_k$ in $B_{r/2}(0)\setminus B_{\rho_k}(0)$ and by $\pi ^{-1}_k \circ \hat \psi_\infty$ in the complement of $B_r(0)$. Similarly, we extend $\tilde{\psi}^2_k$ by $\pi ^{-1}_k \circ \hat \psi_\infty$ in $B_{r/2}(0)$ and by $\hat \psi_k$ in $B_{1/\rho_k}(0)\setminus B_r(0)$. Due to the matching boundary conditions, these maps are in $W^{2,2}(B_{1/\alpha'}(0)\setminus B_{\alpha'}(0))$. Now set 
\[\zeta_k ^{i}(x) = e^{\overline{\lambda}_k} \tilde{\psi}^{i}_k (x) + \hat \psi_k (0,1/2), \quad i=1,2.\]
Then, it holds 
\begin{equation}
\zeta^{1}_k = \psi_k \text{ in } B_{r/2}(0)\setminus B_{\rho_k}(0),\quad \zeta^{2}_k = \psi_k \text{ in } B_{1/\rho_k}(0)\setminus B_{r}(0). \label{matching boundary conditions}
\end{equation}
Finally, we define $\xi^i_k$, $ i=1,2$ by
\begin{equation}
\xi^1_k(x) = \begin{cases}
f_k \circ \omega \circ \pi(x) &, x\in \pi^{-1}(B_{\lambda t_k}(0)),\\
\zeta^1_k \circ e_{\sqrt{t_ks_k}}^{-1 }\circ \pi(x) &, x\in \mb S^2 \setminus \pi^{-1}(B_{\lambda t_k}(0))
\end{cases}
\end{equation}
and
\begin{equation}
\xi^2_k(x) = \begin{cases}
f_k(x) & ,x\in \Sigma \setminus \omega(B_{s_k/\lambda}(0)),\\
\zeta^2_k \circ e_{\sqrt{t_ks_k}}^{-1}\circ \omega ^{-1} &, x \in \omega(B_{s_k/\lambda}(0)).
\end{cases}
\end{equation}
The conditions \eqref{xi^1 k} and \eqref{xi^2 k} are satisfied by definition. \eqref{matching boundary conditions} ensures that $\xi^1_k \in W^{1,2}(\mb S^2)$, $ \xi^2_k \in W^{1,2}(\Sigma)$. Furthermore, \eqref{tracefree form 1} and \eqref{tracefree form 2} are satisfied due to \eqref{convergence of biharmonic strip}, the assumption \eqref{No area and Willmore in annulus}, and the fact that $\pi^{-1}_k \circ \hat \psi_\infty$ maps into a sphere, so the trace-free part of the second fundamental form vanishes. Branch points may occur at $x_0 \in \mb S^2$ and $a\in \Sigma$.
\end{proof}
Now we want to apply this lemma to the minimizing sequence $(f_k)_k$. We need to make sure that we do not create branch points. Our goal is to estimate the Willmore energy of the immersions $\xi^1_k$, $ \xi^2_k$. By the uniform boundedness of the Willmore energy for the immersions $f_k$ and the concentration of the area (see \cite[(4.14)]{KMR}), we find positive $s_k, t_k \to 0$ satisfying $t_k /s_k \to 0$ such that \eqref{No area and Willmore in annulus} is satisfied and
\begin{equation}
\liminf _{k\to \infty} \mc W(f_k( B_{t_k}(a^*)))\geq 4\pi. \label{4pi willmore energy in tk ball}
\end{equation} 

To prove this, choose $\tilde{t}_k\to 0$ such that 
\[\lim _{k\to \infty} \mc A (f_k(\Sigma \setminus B_{\tilde{t}_k}(a^*))) = 0\quad \text{and}\quad \liminf _{k\to \infty} \mc W(f_k( B_{\tilde{t}_k}(a^*)))\geq 4\pi.\]
This is possible by \cite[Lemma 4.4, (4.11)]{KMR}. After choosing a subsequence, we may assume $\tilde{t}_k < k^{-k-1}$. Define $t_{k,i} = k^i \tilde{t}_k$, $ i =0,\ldots, k$. Then, there is some $i_k \in \{0,\ldots, k-1\}$ such that \[\int_{B_{t_{k,i_k+1}}(a^*)\setminus B_{t_{k,i_k}}(a^*)} |\mb I_{f_k}|^2 \dif \vol_{g_k} < \frac{C}{k}.\] Then, define $t_k = t_{k,i_k}$, $ s_k = t_{k, i_k+1}$. 

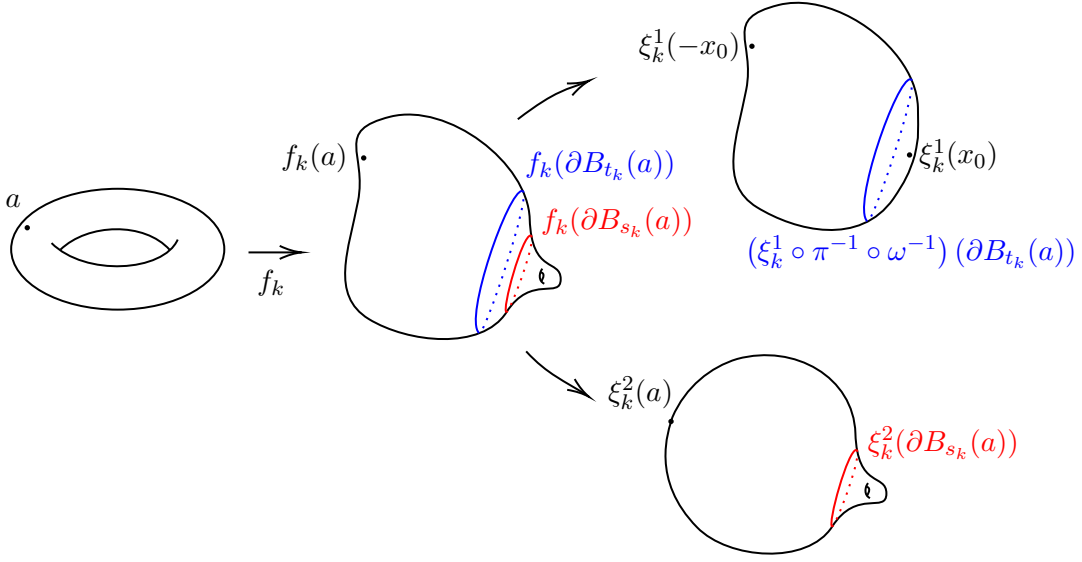
\begin{figure}[H]
\centering
\tikzset{every picture/.style={line width=0.75pt}} 

\begin{tikzpicture}[x=0.75pt,y=0.75pt,yscale=-1,xscale=1]

\draw   (234.15,97.92) .. controls (238.8,73.74) and (221.77,62.52) .. (250.4,55.36) .. controls (279.03,48.21) and (318.96,78.56) .. (319.86,106.8) .. controls (320.76,135.04) and (335.33,126.66) .. (335.33,137.1) .. controls (335.33,147.55) and (320.61,134.3) .. (308.25,153.74) .. controls (295.89,173.17) and (259.02,169.18) .. (239.52,157.15) .. controls (220.03,145.11) and (229.51,122.1) .. (234.15,97.92) -- cycle ;
\draw  [color={rgb, 255:red, 255; green, 0; blue, 0 }  ,draw opacity=1]  (308.25,153.74) .. controls (305.72,152.85) and (316.16,114.55) .. (320.42,115.13) ;
\draw   [color={rgb, 255:red, 0; green, 0; blue, 255 }  ,draw opacity=1] (294.55,164.16) .. controls (286.82,156.81) and (309.2,92.31) .. (316.16,92.7) ;
\draw  [fill={rgb, 255:red, 0; green, 0; blue, 0 }  ,fill opacity=1 ] (236.16,76.14) .. controls (236.16,75.68) and (236.54,75.31) .. (237,75.31) .. controls (237.46,75.31) and (237.83,75.68) .. (237.83,76.14) .. controls (237.83,76.6) and (237.46,76.97) .. (237,76.97) .. controls (236.54,76.97) and (236.16,76.6) .. (236.16,76.14) -- cycle ;
\draw  [color={rgb, 255:red, 0; green, 0; blue, 255 }  ,draw opacity=1, dash pattern={on 0.84pt off 2.51pt}]  (316.16,92.7) .. controls (321.45,93.45) and (296.43,168.37) .. (294.55,164.16) ;
\draw  [color={rgb, 255:red, 255; green, 0; blue, 0}  ,draw opacity=1,dash pattern={on 0.84pt off 2.51pt}]  (320.42,115.13) .. controls (323.54,116.94) and (311.03,151.5) .. (308.25,153.74) ;
\draw    (326.18,131.84) .. controls (324.51,133.91) and (324.06,138.03) .. (327.15,139.39) ;
\draw    (324.96,133.64) .. controls (327.73,133.78) and (326.76,136.49) .. (325.6,138.23) ;
\draw   (427.95,41.73) .. controls (432.59,17.55) and (415.57,6.33) .. (444.2,-0.82) .. controls (464.62,-5.93) and (490.72,7.56) .. (504.38,26.32) .. controls (509.87,33.86) and (513.26,41.78) .. (513.65,50.61) .. controls (513.67,64.89) and (514.42,63.63) .. (512.03,74.99) .. controls (509.34,83.86) and (507.14,88.07) .. (501.4,96.63) .. controls (484.56,120.71) and (452.81,112.99) .. (433.32,100.96) .. controls (413.83,88.92) and (423.31,65.91) .. (427.95,41.73) -- cycle ;
\draw  [color={rgb, 255:red, 0; green, 0; blue, 255 }  ,draw opacity=1]  (488.35,107.98) .. controls (480.61,100.63) and (503,36.12) .. (509.96,36.51) ;
\draw  [fill={rgb, 255:red, 0; green, 0; blue, 0 }  ,fill opacity=1 ] (429.96,19.95) .. controls (429.96,19.49) and (430.33,19.12) .. (430.79,19.12) .. controls (431.25,19.12) and (431.62,19.49) .. (431.62,19.95) .. controls (431.62,20.41) and (431.25,20.78) .. (430.79,20.78) .. controls (430.33,20.78) and (429.96,20.41) .. (429.96,19.95) -- cycle ;
\draw  [color={rgb, 255:red, 0; green, 0; blue, 255 }  ,draw opacity=1,dash pattern={on 0.84pt off 2.51pt}]  (509.96,36.51) .. controls (515.25,37.26) and (490.23,112.18) .. (488.35,107.98) ;
\draw   (391.09,206.58) .. controls (397.68,191.15) and (410.27,181.27) .. (423.41,177.25) .. controls (452.18,169.44) and (481.54,186.41) .. (482.44,214.65) .. controls (483.34,242.89) and (497.91,234.5) .. (497.91,244.95) .. controls (497.91,255.39) and (483.19,242.15) .. (470.83,261.58) .. controls (458.47,281.02) and (419.18,278.2) .. (403.44,261.97) .. controls (380.21,238.15) and (388.78,213.94) .. (391.09,206.58) -- cycle ;
\draw  [color={rgb, 255:red, 255; green, 0; blue, 0 }  ,draw opacity=1]  (470.83,261.58) .. controls (468.3,260.69) and (478.74,222.4) .. (483,222.98) ;
\draw  [color={rgb, 255:red, 255; green, 0; blue, 0 }  ,draw opacity=1,dash pattern={on 0.84pt off 2.51pt}]  (483,222.98) .. controls (486.12,224.79) and (473.61,259.35) .. (470.83,261.58) ;
\draw    (488.76,239.69) .. controls (487.09,241.76) and (486.64,245.88) .. (489.73,247.23) ;
\draw    (487.54,241.49) .. controls (490.31,241.62) and (489.34,244.33) .. (488.18,246.07) ;
\draw  [fill={rgb, 255:red, 0; green, 0; blue, 0 }  ,fill opacity=1 ] (389.57,208.68) .. controls (389.57,208.22) and (389.94,207.85) .. (390.4,207.85) .. controls (390.86,207.85) and (391.24,208.22) .. (391.24,208.68) .. controls (391.24,209.14) and (390.86,209.51) .. (390.4,209.51) .. controls (389.94,209.51) and (389.57,209.14) .. (389.57,208.68) -- cycle ;
\draw  [fill={rgb, 255:red, 0; green, 0; blue, 0 }  ,fill opacity=1 ] (508.63,74.63) .. controls (508.63,74.17) and (509,73.8) .. (509.46,73.8) .. controls (509.92,73.8) and (510.3,74.17) .. (510.3,74.63) .. controls (510.3,75.09) and (509.92,75.46) .. (509.46,75.46) .. controls (509,75.46) and (508.63,75.09) .. (508.63,74.63) -- cycle ;
\draw    (313.91,56.67) .. controls (333.64,41.47) and (338.51,41.89) .. (348.04,37.93) ;
\draw [shift={(349.75,37.19)}, rotate = 155.9] [color={rgb, 255:red, 0; green, 0; blue, 0 }  ][line width=0.75]    (10.93,-3.29) .. controls (6.95,-1.4) and (3.31,-0.3) .. (0,0) .. controls (3.31,0.3) and (6.95,1.4) .. (10.93,3.29)   ;
\draw    (318.09,173.42) .. controls (328.82,185.21) and (337.5,188.35) .. (347.78,195.17) ;
\draw [shift={(349.42,196.28)}, rotate = 214.7] [color={rgb, 255:red, 0; green, 0; blue, 0 }  ][line width=0.75]    (10.93,-3.29) .. controls (6.95,-1.4) and (3.31,-0.3) .. (0,0) .. controls (3.31,0.3) and (6.95,1.4) .. (10.93,3.29)   ;
\draw   (61.02,122.04) .. controls (61.02,105.25) and (84.83,91.64) .. (114.21,91.64) .. controls (143.59,91.64) and (167.41,105.25) .. (167.41,122.04) .. controls (167.41,138.82) and (143.59,152.43) .. (114.21,152.43) .. controls (84.83,152.43) and (61.02,138.82) .. (61.02,122.04) -- cycle ;
\draw    (81.18,118.95) .. controls (97.94,135.88) and (136.92,133.59) .. (144.16,117.19) ;
\draw    (85.59,122.3) .. controls (99.35,108.9) and (127.58,108.19) .. (140.16,122.37) ;
\draw  [fill={rgb, 255:red, 0; green, 0; blue, 0 }  ,fill opacity=1 ] (68.23,111.24) .. controls (68.23,110.78) and (68.6,110.41) .. (69.06,110.41) .. controls (69.52,110.41) and (69.89,110.78) .. (69.89,111.24) .. controls (69.89,111.7) and (69.52,112.07) .. (69.06,112.07) .. controls (68.6,112.07) and (68.23,111.7) .. (68.23,111.24) -- cycle ;
\draw    (179.5,123.65) -- (204,123.65) ;
\draw [shift={(206,123.65)}, rotate = 180] [color={rgb, 255:red, 0; green, 0; blue, 0 }  ][line width=0.75]    (10.93,-3.29) .. controls (6.95,-1.4) and (3.31,-0.3) .. (0,0) .. controls (3.31,0.3) and (6.95,1.4) .. (10.93,3.29)   ;

\draw (194.36,67.17) node [anchor=north west][inner sep=0.75pt]    {$f _{k}( a)$};
\draw (316.16,70.8) node [color={rgb, 255:red, 0; green, 0; blue, 255 }  ,draw opacity=1,anchor=north west][inner sep=0.75pt]    {$f _{k}( \partial B_{t_{k}}( a))$};
\draw (324.02,99.35) node [color={rgb, 255:red, 255; green, 0; blue, 0 }  ,draw opacity=1,anchor=north west][inner sep=0.75pt]    {$f _{k}( \partial B_{s_{k}}( a))$};
\draw (372.9,9.97) node [anchor=north west][inner sep=0.75pt]    {$\xi _{k}^{1}( -x_{0})$};
\draw (426.03,113.61) node [color={rgb, 255:red, 0; green, 0; blue, 255 }  ,draw opacity=1,anchor=north west][inner sep=0.75pt]    {$\left( \xi _{k}^{1} \circ \pi ^{-1} \circ \omega ^{-1}\right)( \partial B_{t_{k}}( a))$};
\draw (487.1,209.69) node [color={rgb, 255:red, 255; green, 0; blue, 0 }  ,draw opacity=1,anchor=north west][inner sep=0.75pt]    {$\xi _{k}^{2}( \partial B_{s_{k}}( a))$};
\draw (357.5,184.72) node [anchor=north west][inner sep=0.75pt]    {$\xi _{k}^{2}( a)$};
\draw (512.79,65.24) node [anchor=north west][inner sep=0.75pt]    {$\xi _{k}^{1}( x_{0})$};
\draw (56.77,92.88) node [anchor=north west][inner sep=0.75pt]    {$a$};
\draw (182.36,131.11) node [anchor=north west][inner sep=0.75pt]    {$f _{k}$};

\end{tikzpicture}
\caption{Intuitively, the map $\xi^1_k$ cuts away the portion of the  surface with the topology and glues in a portion of a sphere, while $\xi^2_k$ cuts away the bubble and glues in an almost complete sphere while keeping the topology.}\label{tikz figure for cut and fill}
\end{figure}
\begin{lemma}
Assume that $s_k$, $ t_k$ are chosen as above. Denote by $\xi^1_k$ and $\xi^2_k$ the immersions obtained from Lemma \ref{Cut and Fill Lemma} applied to our minimizing sequence $(f_k)_k$. Then $\xi^1_k$ respectively $\xi^2_k$ are unbranched immersions in $\mc E_{\mb S^2}$ respectively $\mc E_{\Sigma}$. Furthermore, $\mc A(\xi^1_k) = \mc A(f_k) + o(1)$ as $k\to \infty$ and $\mc T(\xi^1_k) \to R$ where $R = \mc T(f_k)$.
\end{lemma}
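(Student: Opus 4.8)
The plan is to cut $\xi^1_k$ (and likewise $\xi^2_k$) into the four pieces produced by the Cut and Fill construction and to estimate $\mathcal A$, $\int H\,\dif\vol$ and $\mathcal W$ on each separately. For $\xi^1_k$ these are: the \emph{bubble} $\pi^{-1}(B_{\lambda t_k}(0))$, where $\xi^1_k=f_k\circ\omega\circ\pi$ and the image is $f_k(\omega(B_{\lambda t_k}(0)))\supset f_k(B_{t_k}(a^*))$; the \emph{rescaled collar}, where $\xi^1_k$ equals $\psi_k$ and the image lies in $f_k(B_{s_k}(a)\setminus B_{t_k}(a))$; the \emph{biharmonic transition} $e^{\overline\lambda_k}\tilde\psi^1_k$ on $B_r(0)\setminus B_{r/2}(0)$; and the \emph{spherical cap} $e^{\overline\lambda_k}\pi_k^{-1}\circ\hat\psi_\infty$ on $\mathbb C\setminus B_r(0)$, a subset of the round sphere of radius $e^{-\overline\lambda_k}$. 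Throughout I use that $a=a^*$ is the unique point of area concentration, so $\mathcal A(f_k|_{\Sigma\setminus B_{t_k}(a^*)})\to0$, together with \eqref{No area and Willmore in annulus}, \eqref{convergence of biharmonic strip} and \eqref{stereographic projection}.

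For the area: the bubble contributes $\mathcal A(f_k)-\mathcal A(f_k|_{\Sigma\setminus\omega(B_{\lambda t_k}(0))})=\mathcal A(f_k)-o(1)$; the collar contributes at most $\mathcal A(f_k|_{B_{s_k}(a)\setminus B_{t_k}(a)})\to0$ by \eqref{No area and Willmore in annulus}; the transition contributes $\le Ce^{2\overline\lambda_k}\to0$, since $\tilde\psi^1_k$ is bounded in $C^1$ and $\overline\lambda_k\to-\infty$; and the spherical cap contributes $\mathcal A\big(e^{\overline\lambda_k}\pi_k^{-1}(\hat\psi_\infty(\mathbb C\setminus B_r(0)))\big)=O(e^{2\overline\lambda_k})$ by \eqref{stereographic projection}, because $\hat\psi_\infty(\mathbb C\setminus B_r(0))$ is a bounded subset of the limiting plane — it is the small ``portion of a sphere'' glued into $\xi^1_k$, cf.\ Figure \ref{tikz figure for cut and fill}. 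Summing, $\mathcal A(\xi^1_k)=\mathcal A(f_k)+o(1)$. The same bookkeeping for $\mathcal W$ — using $H^2\le 2|\mathbb I|^2$ with \eqref{No area and Willmore in annulus} on the collar, the strong convergence \eqref{convergence of biharmonic strip} (so $H_{\tilde\psi^1_k}\to H_{\hat\psi_\infty}=0$ in $L^2$) on the transition, and that the spherical cap is a piece of the round sphere of radius $e^{-\overline\lambda_k}$ of area $O(1)$, hence Willmore energy $O(e^{2\overline\lambda_k})$ — yields $\mathcal W(\xi^1_k)=\mathcal W(f_k|_{B_{t_k}(a)})+o(1)\le\mathcal W(f_k)+o(1)$. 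As $R\in I$ gives $\mathcal W(f_k)\to\beta_g(R)<8\pi$, we obtain $\mathcal W(\xi^1_k)<8\pi$ for $k$ large; by Li-Yau, $\xi^1_k$ is then an embedding of two-dimensional density $\equiv1$, hence unbranched, so $\xi^1_k\in\mathcal E_{\mathbb S^2}$.

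For $\mathcal T(\xi^1_k)$: from $\int_\Sigma H_{f_k}\,\dif\vol_{g_k}=R\sqrt{\mathcal A(f_k)}$, the bubble contributes $\int_\Sigma H_{f_k}\,\dif\vol_{g_k}-\int_{\Sigma\setminus\omega(B_{\lambda t_k}(0))}H_{f_k}\,\dif\vol_{g_k}$, and Cauchy--Schwarz bounds the second term by $2\sqrt{\mathcal W(f_k)}\,\sqrt{\mathcal A(f_k|_{\Sigma\setminus B_{t_k}(a^*)})}\to0$; on the remaining three pieces $\big|\int H\,\dif\vol\big|\le 2\sqrt{\mathrm{area}}\,\sqrt{\mathcal W}\to0$ by the area estimates above and the uniform Willmore bound. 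Hence $\int_{\mathbb S^2}H_{\xi^1_k}\,\dif\vol=R\sqrt{\mathcal A(f_k)}+o(1)$, and with the normalisation $\mathcal A(f_k)\equiv1$ this gives $\mathcal T(\xi^1_k)=\int_{\mathbb S^2}H_{\xi^1_k}\,\dif\vol\,/\sqrt{\mathcal A(\xi^1_k)}\to R$.

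It remains to prove $\xi^2_k\in\mathcal E_\Sigma$ is unbranched; its only possible branch point is $a$, where for $k$ large it coincides (after the conformal reparametrisation $e_{\sqrt{t_ks_k}}^{-1}\circ\omega^{-1}$) with $\pi_k^{-1}\circ\hat\psi_\infty$, a conformal immersion of $D^2\setminus\{0\}$ into the round sphere $\mathbb S^2_k$. Rivière's removability lemma \cite[Lemma A.5]{RiviereVariationalPrinciples} applies — the Gauss map is that of a sphere and so extends to $W^{1,2}$, the map extends to $W^{1,2}$ as it is bounded, and $\log|\nabla\xi^2_k|$ is bounded near $a$ — giving $|\partial_z\xi^2_k|\sim|z|^{n-1}$ for an integer $n\ge1$, and, as in \cite[Section 4.2.3]{KMR}, the density of $\xi^2_k(\Sigma)$ at $\xi^2_k(a)$ is at least $n$. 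If $n\ge2$, then $\mathcal W(\xi^2_k)\ge8\pi$ by Li-Yau; but the decomposition also gives $\mathcal W(\xi^2_k)\le\mathcal W(f_k|_{\Sigma\setminus B_{t_k}(a)})+2\pi(1-\cos\theta_k)+o(1)$ with $\theta_k\to\pi$, where $\limsup_k\mathcal W(f_k|_{\Sigma\setminus B_{t_k}(a)})=\beta_g(R)-\liminf_k\mathcal W(f_k|_{B_{t_k}(a)})\le\beta_g(R)-\beta_0(R)$, using $\mathcal W(f_k|_{B_{t_k}(a)})=\mathcal W(\xi^1_k)+o(1)\ge\beta_0(\mathcal T(\xi^1_k))+o(1)\to\beta_0(R)$ from the previous step and continuity of $\beta_0$ (Theorem \ref{thm: genus 0 case}); thus $\limsup_k\mathcal W(\xi^2_k)\le\beta_g(R)-\beta_0(R)+4\pi<8\pi$ by the assumed bound \eqref{intro2:KMRBauerKuwert} together with $\beta_g<8\pi$, a contradiction. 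Hence $n=1$; since the remaining pieces of both maps are immersions for $k$ large — the biharmonic strips by $C^{1,\alpha}$-convergence to the immersive $\hat\psi_\infty$, and $\pi_k^{-1}\circ\hat\psi_\infty$ on $\mathbb C\setminus\{0\}$ because \eqref{Lipschitz condition} precludes a critical point of the holomorphic $\hat\psi_\infty$ — and \eqref{Lipschitz condition} holds with a uniform constant on each piece, both $\xi^1_k$ and $\xi^2_k$ are unbranched. The main obstacle is precisely this unbranchedness: extracting genuine order-one (rather than multiple-cover) behaviour of the blow-up limit $\hat\psi_\infty$ at the distinguished points forces one to invoke the $8\pi$ Willmore threshold via Li-Yau — directly for $\xi^1_k$, but only through the $\beta_g+\beta_0(R)-4\pi$ bound and the continuity of $\beta_0$ for $\xi^2_k$ — and a smaller but delicate point is deciding which end of the degenerating collar carries the small spherical cap (needed for the area bound), which is dictated by the assignment of inner versus outer extensions in the definition of $\zeta^1_k$.
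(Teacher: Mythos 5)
Your bookkeeping of area, total mean curvature and Willmore energy over the four pieces of $\xi^1_k$ and $\xi^2_k$ is reasonable as far as it goes, and your route to unbranchedness (Rivière's removability lemma plus a Li--Yau density count, as in \cite[Section 4.2.3]{KMR}) is legitimate in principle. The genuine gap is that every one of your key estimates presupposes structural information about the blow-up limit $\hat\psi_\infty$ that you never establish, and which is in fact the entire content of the paper's proof of this lemma. Concretely: (i) your area estimate for the ``spherical cap'' of $\xi^1_k$ and the bound $\mc W(\xi^1_k)<8\pi$ both require that $\hat\psi_\infty(\C\setminus B_r(0))$ be bounded, i.e.\ that $\hat\psi_\infty$ have a removable singularity at $\infty$; a priori $\hat\psi_\infty$ could be, say, entire and nonconstant, in which case $\pi_k^{-1}\circ\hat\psi_\infty|_{\C\setminus B_r(0)}$ covers essentially all of $\mb S^2_k$ (possibly several times) and contributes area and Willmore energy bounded below by $4\pi$ rather than $o(1)$, so both $\mc A(\xi^1_k)=\mc A(f_k)+o(1)$ and your Li--Yau step for $\xi^1_k$ fail. (ii) Your upper bound $\mc W(\xi^2_k)\le \mc W(f_k|_{\Sigma\setminus B_{t_k}(a)})+2\pi(1-\cos\theta_k)+o(1)$ with $\theta_k\to\pi$ presupposes that the glued-in piece $\pi_k^{-1}\circ\hat\psi_\infty|_{B_{r/2}(0)}$ is a \emph{single} cover of most of the sphere, i.e.\ that $\hat\psi_\infty$ is injective with at most a simple pole at $0$ --- which is exactly the branching order $n=1$ you are trying to deduce from that bound. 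If the singularity at $0$ had order $d\ge 2$, the glued-in piece would contribute roughly $4\pi d$ and your claimed upper bound would simply be false, so the ``contradiction'' with Li--Yau is circular.

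The paper's proof consists precisely of closing this gap. It first proves that $\hat\psi_\infty$ is injective, via a coarea-formula argument combined with Simon's estimate (Lemma \ref{lem:Simon 8pi - C estimate}), which would otherwise force $\mc W(f_k)\to 8\pi$ and contradict the $8\pi-\delta$ bound; it then shows that $\hat\psi_\infty$ cannot be bounded at the origin, since otherwise $\limsup_{k}\mc W(\xi^2_k)\le 4\pi-\delta$, contradicting the Willmore inequality. Together these force $\hat\psi_\infty(z)=\frac{1}{az}$, a Möbius map, from which the smallness of the cap, the single-cover structure of the glued-in sphere, and the unbranchedness of both $\xi^1_k$ and $\xi^2_k$ all follow at once. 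Neither of these two steps appears in your proof, and without them (or an equivalent substitute) none of the claimed conclusions is actually established.
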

\begin{proof}
Let us first show that $\hat \psi_\infty$ is injective. Assume that this is not the case so that we find $x_1,x_2 \in \C\setminus \{0\}$ with $x_1\neq x_2$ and $\hat \psi_\infty(x_1) = \hat \psi_\infty(x_2) = y$. Since $\hat \psi_\infty$ is a smooth immersion, we find a fixed radius $0<s<|x_1-x_2|/2$ and constants $c_1, c_2 > 0$ 
\begin{equation}
c_1 |z-z'| \geq |\hat \psi_\infty(z) - \hat \psi_\infty(z')| \geq c_2 |z-z'| \text{ for all }z, z' \in B_s(x_i) \text{ and }i=1,2.\label{psi infty lipschitz estimate}
\end{equation}
We also fix $n$ such that $n> c_1/c_2$ and define
\[g_{k,i}:B_s(x_i)\setminus B_{s/n}(x_i)\to \R,\quad g_{k,i}(z)\cqq |\hat \psi_k(z) - y|.\]
Because the $\hat \psi_k$ are uniformly Lipschitz on precompact sets by \eqref{lambda k dont vary too much}, there is $c_3$ such that
\[c_3 |z-z'| \geq |\hat \psi_k(z) - \hat \psi_k(z')| \text{ for all }z, z' \in B_s(x_i) \text{ and }i=1,2.\]
$\hat \psi_k$ is bounded by \eqref{image bounded}, thus we get up to a subsequence by the use of the Arzela-Ascoli theorem that 
\begin{equation}
\|\hat \psi_k - \hat \psi_\infty\|_{L^{\infty}(B_s(x_1)\cup B_s(x_2))}\to 0.\label{c0 convergence}
\end{equation}
Let $\eps > 0$ and $k$ large enough such that $\|\hat \psi_k - \hat \psi_\infty\|_{L^{\infty}(B_s(x_1)\cup B_s(x_2))}<\eps$. Then, it follows by \eqref{psi infty lipschitz estimate} that
\begin{equation}
c_1 |z-x_i| +\eps \geq g_{k,i}(z) \geq c_2|z-x_i| - \eps.\label{gki estimate}
\end{equation}
As $|\nabla g_{k,i}|\leq |\nabla \hat \psi_k|$, we obtain, using the coarea formula
\begin{align*}
\int _{B_s(x_i)\setminus B_{s/n}(x_i)} |\nabla \hat \psi_k| \dif z &\geq \int _{B_s(x_i)\setminus B_{s/n}(x_i)} |\nabla g_{k,i}| \dif z = \int_0 ^\infty \mc H^1(g^{-1}_{k,i}(t)) \dif t \\
&\geq \int _{\frac{c_1 s}{n}+\eps} ^{c_2 s -\eps} \mc H^1(g^{-1}_{k,i}(t)) \dif t\geq \frac{1}{c_3} \int _{\frac{c_1 s}{n}+\eps} ^{c_2 s -\eps} \mc H^1(\hat \psi_k(g^{-1}_{k,i}(t))) \dif t \\
&= \frac{1}{c_3} \int _{\frac{c_1 s}{n}+\eps} ^{c_2 s -\eps} \mc H^1(\partial B_t(y) \cap \hat \psi_k(B_s(x_i))) \dif t. \intertext{The last equality follows by \eqref{gki estimate} so that $\hat \psi_k(g_{k,i}^{-1}(t)) = \partial B_t(y) \cap \hat \psi_k (B_s(x_i))$ and $\partial (\hat \psi_k(B_s(x_i))\cap B_t(y))\subset \partial B_t(y)$ for $t\in (\frac{c_1 s}{n}+\eps, c_2 s -\eps)$. Suppose that $\beta\in \R$ is such that the integrand on the right-hand side is pointwise bounded from below by $\beta t$. Then, the right-hand side is bounded from below by}
&\geq \frac{\beta}{2c_3}\left (\left (c_2s-\eps\right )^2 -\left (\frac{c_1 s}{n} + \eps\right )^2\right ).
\end{align*}
Because the left-hand side is bounded in $k$ by the strong $L^2$ convergence of the gradients, we get a uniform bound for $\beta$  from above (for $\eps$ sufficiently small). Hence, we fix $\beta$ large enough and $\eps$ sufficiently small such that there is a set of positive measure of radii $\rho_\eps \in (c_1 s/n , c_2 s)$ such that
\[ \mc H^1(\partial B_{\rho_\eps}(y) \cap \hat \psi_k(B_s(x_i))) \leq \beta \rho_\eps, \quad i=1,2.\] 
We choose $\rho_\eps$ such that the divergence theorem, Lemma \ref{Divergence Theorem}, holds for $I_0\circ (f_k-y)$. We now set $\Sigma_i =  B_{\rho_\eps}(y) \cap \hat \psi_k(B_s(x_i))$. Recalling that the $\hat\psi_k$ are injective, we see that the assumptions of Lemma \ref{lem:Simon 8pi - C estimate} are satisfied and we obtain
\[\mc W(f_k) \geq 8\pi - C \beta \frac{\eps}{\rho_\eps},\]
which converges to $8\pi$ as $k\to \infty$ and $\eps \to 0$ as $\rho_\eps$ is uniformly bounded by $c_1s/n$. This is a contradiction to the $8\pi -\delta$ bound for the Willmore energy of $f_k$. Consequently, $\hat \psi_\infty$ is injective.

Let us now show that $\hat \psi_\infty$ cannot be bounded at the origin. We argue by contradiction, so assume that $\hat \psi_\infty$ is bounded around $0$ so that one can extend it to a holomorphic map in $\C$. We then claim that $\limsup _{k\to \infty} \mc W(\xi^2_k) \leq 4\pi-\delta$, which is impossible by the Willmore inequality\footnote{This still holds in this case, even though $\xi^2_k$ might be branched.}.	\medskip

\noindent\emph{Proof of the claim.} By \eqref{4pi willmore energy in tk ball} and the fact that $\limsup _{k\to \infty} \mc W(f_k)\leq 8\pi- \delta$, we have that
\[\limsup _{k\to \infty}\mc W(\xi^2_k \vert _{\Sigma \setminus \omega (B_{r\sqrt{s_kt_k}}(0))}) \leq  4\pi - \delta .\]
By the strong convergence in \eqref{convergence of biharmonic strip}, we know that the biharmonic strip does not carry Willmore energy in the limit, i.e.,
\[\limsup _{k\to \infty}\mc W(\xi^2_k \vert _{\omega (B_{r\sqrt{s_kt_k}}(0) \setminus B_{\frac{r\sqrt{s_kt_k}}{2}}(0))})=0 .\]
 Using \eqref{stereographic projection} and the assumption that $\hat \psi_\infty \vert _{B_{r/2}(0)}$ is bounded, we have 
\[\lim_{k\to \infty}\mc W(\xi^2_k \vert _{\omega(B_{\frac{r\sqrt{s_kt_k}}{2}}(0))}) =0.\]
Together, this implies the claim. 

Using the injectivity of $\hat \psi_\infty$, we must have that the pole at $0$ is of order 1. Furthermore, by the injectivity we have that $\hat \psi_\infty$ is bounded at $\infty$ so that $z \mapsto \hat \psi_\infty(1/z)$ is an injective entire map, and thus of the form $az+b$ for some $a,b\in \C$. It follows $b=0$ and $\hat \psi_\infty(z) = \frac{1}{az}$. This also implies directly that $\xi^1_k$ and $\xi^2_k$ are unbranched which finishes the proof. \qedhere
\end{proof}


We now bring the case of diverging conformal factors to a contradiction.  

 Notice that due to the pole of order 1, we have
\[\lim _{k\to \infty} \mc W( \pi^{-1}_k\circ \hat \psi_\infty \vert _{B_{r/2}(0)}) = 4\pi.\]
It follows, estimating the Willmore energy of $\xi^2_k$ from below by $\beta_g  $, that
\begin{equation}
\mc W(\xi^2_k) = 4\pi + o(1) + \mc W(f_k \vert _{\Sigma \setminus \omega(B_{s_k/\lambda}(0))})\geq \beta_g  . \label{Willmore energy xi 2 k}
\end{equation}
Notice that $\xi^1_k \vert _{\mb S^2 \setminus \pi^{-1}(B_{r \sqrt{t_ks_k}}(0))}$ immerses a segment of a round sphere of radius 1 with vanishing area\footnote{Due to the fact that $\hat \psi_\infty$ is bounded near infinity.}, which implies that the Willmore energy of this disk is vanishing. Because $\xi^1_k \in \mc E_{\mb S^2}$, we can estimate its Willmore energy:
\begin{equation}
\mc W(\xi^1_k) = o(1) + \mc W(f_k \vert _{\omega(B_{s_k/\lambda}(0))}) \geq \beta_0(\mc T(\xi^1_k)). \label{Willmore energy xi 1 k}
\end{equation}
Summing \eqref{Willmore energy xi 2 k} and \eqref{Willmore energy xi 1 k} we obtain
\[\mc W(f_k) \geq \beta_g  + \beta_0(\mc T(\xi^1_k)) + o(1) - 4\pi.\]
Furthermore, it holds that $\mc A (\xi^1_k) = \mc A (f_k)+o(1)$ as $\mc A (\xi^1_k \vert _{\mb S^2 \setminus \pi^{-1}(B_{r \sqrt{t_k s_k}}(0)}) \to 0$, we obtain by Cauchy-Schwarz that $\mc T(\xi^1_k) -\mc T(f_k) \to 0$ as $k\to \infty$. Using the continuity of $\beta_0(R)$ given by Theorem \ref{thm: genus 0 case}, we obtain that
\[\liminf _{k\to \infty} \mc W(f_k) \geq \beta_g +\beta_0(R) - 4\pi.\]
However, this is a contradiction to our choice of $R$, see \eqref{I} and Remark \ref{remark: I}. 

In conclusion, this shows that the conformal factors of the minimizing sequence do not diverge. As we have already shown the existence of a smooth minimizer under the constraint $\mc T$ in the case of bounded conformal factors, this proves the existence of minimizers of the total mean curvature constrained problem.

Finally, we adress the continuity of $\beta_g \vert _I$. The upper semicontinuity follows by Corollary \ref{cor:usc}. The lower semicontinuity of $\beta_g\vert _I$ follows by using the maps $f_{R_n}^g$ instead of $f_k$ in the previous arguments, where $\limn R_n = R\in I$. Then, as before, we show that $f_{R_n}^g \wto f_\infty$ in $W^{2,2}(\Sigma,\R^3)$ where $f_\infty$ is another smooth minimizer of the Willmore energy under the constraint $\mc T(f_\infty) = R$, and the lower semicontinuity of $\mc W$ under weak $W^{2,2}$-convergence shows lower semicontinuity of $\beta_g\vert _I$. This finishes the proof of Theorem \ref{thm:KMR} under the assumption stated in Remark \ref{remark: I}.

\section{A Willmore energy estimate for the connected sum under a total mean curvature constraint} \label{sec:BauerKuwert}
In the last chapter, we proved Theorem \ref{thm:KMR} which stated that for a fixed parameter $R \in I$, where $I$ was defined in \eqref{I}, there exists a smooth minimizer of the Willmore energy under the total mean curvature constraint. In Remark \ref{remark: I} however, we assumed certain energy bounds to hold for $R\in I$. The goal of this chapter and Chapter \ref{sec:8piconstruction} is to prove the assumptions on the set $I$ from Theorem \ref{thm:KMR}. More precisely, we want to show that for $R \in (0,\sqrt{2}\mc T (\mb S^2))\setminus\{\mc T(\mb S^2)\}$ 
\begingroup
\allowdisplaybreaks
\begin{align}
\inf _{\substack{f \in \mc E_\Sigma \\ \mc T(f) = R}} \mc W(f)&< \beta_g   + \mc W(f_R^0 )- 4\pi,  \label{beta_g  bound}\\
\inf _{\substack{f \in \mc E_\Sigma \\ \mc T(f) = R}} \mc W(f) &< 8\pi, \label{8pi bound}\\
\inf _{\substack{f \in \mc E_\Sigma \\ \mc T(f) = R}} \mc W(f) &< \omega_g  . \label{omega_g  bound}
\end{align}
\endgroup
Here $f_R^0$ is the minimizer of the genus 0 case given by Theorem \ref{thm: genus 0 case}. Recall that in \eqref{intro:betag}, $\beta_g $ was defined as the infimum of $\mc W(f)$ where $f$ is a smooth immersion of a closed surface of genus $g$ and that in \eqref{intro:omegag}, $\omega_g $ was defined by
\[\omega_g   \cqq \min \left \{4\pi + \sum _{i=1}^p (\beta _{g_i} - 4\pi) , \;  g= g_1 + \ldots + g_p,\; 1\leq g_i < g\right \}.\]
The inequality \eqref{omega_g  bound} follows from \eqref{8pi bound} by the work of \textsc{Marques} and \textsc{Neves} \cite[Theorem B]{MarquesNeves}, who showed that $\beta_g  \geq 2\pi^2$ for $g\geq 1$ which implies $\omega_g  \geq 8\pi$ for $g\geq 1$. In this chapter, we will show the first inequality \eqref{beta_g  bound}. In Chapter \ref{sec:8piconstruction}, we will prove \eqref{8pi bound}. \textsc{Simon}\cite{Simon} and \textsc{Bauer} and \textsc{Kuwert} \cite{BauerKuwert} proved the existence of smooth minimizers of the unconstrained Willmore problem, i.e., there exists a smooth embedding of a genus $g$ surface $\Sigma$, $f_g:\Sigma\to \R^3$ such that 
\[\mc W(f_g) = \beta_g .\]
We will construct an immersion of $f:\Sigma \to \R^3$ that satisfies the constraint $\mc T(f) = R$ and furthermore $\mc W(f) < \mc W(f_g) + \mc W(f_R^0) - 4\pi = \beta_g  + \beta_0(R) - 4\pi$. Then, this implies \eqref{beta_g  bound}.
The goal of this section is the following. Given $f_1:\Sigma_1 \to \R^3$ a smooth embedding and a smooth immersion $f_2: \Sigma_2 \to \R^3$, both of which are not immersed round spheres, we want to construct an immersion $f:\Sigma_1 \# \Sigma_2\to \R^3$ of the connected sum $\Sigma_1 \# \Sigma_2$ such that
\[\mc W(f) < \mc W(f_1) + \mc W(f_2) - 4\pi \]
and $\mc T(f) = \mc T(f_2)$. We will use the construction done by \textsc{Bauer} and \textsc{Kuwert} \cite{BauerKuwert}, who inverted $f_1$ at a nonumbilic point in the image and glued the inverted surface into a large copy of $f_2$. The gluing was done using a biharmonic graph. We will adapt this construction and show that under slight modifications, we can ensure that $\mc T(f) = \mc T(f_2)$ still holds. This will be done by either  gluing the surfaces from the correct side and applying a suitable Möbius transformation to the glued surface, or in the case that the total mean curvature of the inverted surface vanishes, we will apply Lemma \ref{lem:First variation of T}. Let us first recall the construction done in \cite[Section 3]{BauerKuwert}.
\subsection{The original construction}
Let $f_i:\Sigma_i \to \R^3$, $ i=1,2$ be smooth immersions and $\Sigma_i$ are closed surfaces. Assume $p_i \in \Sigma_i$, $ i=1,2$ and
\[f_i^{-1}(0) = \{p_i\}, \quad \im Df_i(p_i) = \R^2 \times \{0\}\quad \text{for}\quad  i=1,2.\]
We can choose local graph representations $(z,u(z))$ for $f_1$ and $(z,v(z))$ for $f_2$ where
\begin{align*}
u(z) &= p(z) + \phi(z) \text{ on } D_\rho (0), \quad\text{where }p(z)= \frac{1}{2} P(z,z), \\
v(z) &= q(z) + \psi(z) \text{ on } D_\rho (0), \quad\text{where }q(z)= \frac{1}{2} Q(z,z)
\end{align*}
for some $\rho >0$. $P = \mb I_{f_1}(p_1)$ respectively $Q = \mb I_{f_2}(p_2)$ are the bilinear forms associated to the second fundamental forms of $f_1$ and $f_2$ at $p_1$ respectively $p_2$. The Taylor expansion gives the estimates
\begin{align}
\begin{split}
|z|^{-3} |\phi(z)| + |z|^{-2}|D\phi(z)| + |z|^{-1}|D^2\phi(z)| \leq C \quad \forall z\in D_\rho(0)\setminus \{0\},\label{Error term estimates}\\
|z|^{-3} |\psi(z)| + |z|^{-2}|D\psi(z)| + |z|^{-1}|D^2\psi(z)| \leq C \quad \forall z\in D_\rho(0)\setminus \{0\}.
\end{split}
\end{align}
We decompose $P = \frac{1}{2}\tr (P) \id + P^\circ$ into its pure trace part and its trace-free part $P^\circ$, such that it holds 
\[P(z,z) = \frac{1}{2} (\tr P)|z|^2 + P^\circ (z,z).\]
We may assume that the points $f_i(p_i)$ are nonumbilic by the assumption that $f_1$ and $f_2$ both do not immerse round spheres. Due to \cite[Lemma 4.5]{BauerKuwert}, after applying an orthogonal transformation, we can also assume that 
\begin{equation}
\langle P^\circ, Q^\circ \rangle >0,\label{BK:Frobenius Inner Product Positive}
\end{equation} where $\langle \cdot,\cdot \rangle$ is the Frobenius inner product. Consider the immersion 
\[f_1^\circ:\Sigma_1 \setminus \{p_1\} \to \R^3,\quad f_1^\circ (p) =   I\circ f_1 (p) - \frac{1}{2} \tr(P) e_3\]
where $I(x) = \frac{x}{|x|^2}$ for $0\neq x \in \R^3$.
For some sufficiently large $R<\infty$, the surface $f_1^\circ$ has a graph representation $(\zeta, u^\circ (\zeta))$ at infinity, see \cite[Lemma 2.3]{BauerKuwert}, given by
\[u^\circ (\zeta) = p^\circ (\zeta) + \phi^\circ (\zeta) \text{ on } \R^2 \setminus D_R(0), \quad \text{where}\quad p^\circ(\zeta) = \frac{1}{2} P^\circ \left (\frac{\zeta}{|\zeta|}, \frac{\zeta}{|\zeta|}\right ) \quad \text{and }\]
\[|\zeta| |\phi^\circ (\zeta)| + |\zeta|^2 |D\phi^\circ (\zeta)| + |\zeta|^3 |D^2\phi^\circ (\zeta)| \leq C \quad \forall \zeta \in \R^2 \setminus D_R(0).\]
The idea is to rescale both $f_1^\circ$ and $f_2$ by appropriate factors. To fix notation, we will define for $w:\Omega \to \R^k$, $ \Omega \subset \R^2$ and $\lambda > 0$ the rescaled function $w_\lambda$ 
\[w_\lambda:\Omega_\lambda \cqq \{z\in \R^2,\; z/\lambda \in \Omega \} \to \R^k, \quad w_\lambda(z) \cqq \lambda w(z/\lambda).\]
We rescale $f_1^\circ$ by some small $\alpha>0$ and $f_2$ by $1/\beta = 1/(t\alpha)$ for some fixed $t>0$. The local graph representation of $f_{1,\alpha}^\circ$ respectively $f_{2,1/\beta}$ are given by $u_\alpha^\circ$ respectively $v_{1/\beta}$. We also choose $\gamma$ such that $0<\alpha, \beta \ll \gamma \ll 1$. For given $\alpha >0$, fix a function $\eta \in C^\infty(\R)$ with the properties
\begin{equation}
\eta(s) = \begin{cases}
0& \text{ for $s\leq (1/4) \sqrt{\alpha}$},\\
1& \text{ for $s \geq (3/4) \sqrt{\alpha}$},
\end{cases} \label{BK:eta}
\end{equation}
\[|\eta| + \sqrt{\alpha} |\eta'| + \alpha |\eta''| \leq C \quad \text{with $C$ independent of $\alpha$}.\]
We will modify  $u^\circ_\alpha$ and $v_{1/\beta}$ by defining, for $r=|z|$
\begin{equation}
w(z) \cqq\begin{cases}
p^\circ _\alpha(z) + \eta(\gamma-r)\phi^\circ_\alpha (z) & \text{for $\alpha R < r < \gamma$},\\
q_{1/\beta}(z) + \eta(r-1)\psi_{1/\beta}(z)& \text{for $1\leq r < \frac{\rho}{\beta}$}.
\end{cases} \label{BK:w}
\end{equation}
It holds that $w= u_\alpha^\circ$ for $r\leq \gamma - (3/4) \sqrt{\alpha} $ and $w= v_{1/\beta}$ for $r\geq 1 + (3/4) \sqrt{\alpha}$ by \eqref{BK:eta}. On the remaining annulus $D_1(0)\setminus D_\gamma(0)$, we define the interpolation $w$ as the unique solution of the equation
\[\Delta^2 w = 0 \quad \text{on $D_1(0)\setminus D_\gamma(0)$}\]
subject to the Dirichlet boundary conditions
\begin{alignat*}{3}
w&= p^\circ_\alpha,& \partial_r w&= \partial_r p^\circ_\alpha  &\text{ on } |z|&=\gamma,\\
w&= q_{1/\beta} ,\quad  &\partial_r w&= \partial_r q_{1/\beta} &\quad\text{ on } |z|&=1.
\end{alignat*}
Now, let us define the glued surface. We let $U$ be the complement in $\Sigma_1$ of the preimage of the set $\{z\in \R^2,\; \gamma - \sqrt{\alpha} \leq |z| < \infty\}$ under the map $\alpha \pi \circ f_1^\circ$. Here, $\pi:\R^3\to \R^2$ is the orthogonal projection map onto the first two coordinates. Furthermore, $V$ is defined to be the complement in $\Sigma_2$ of the preimage of the set $\{z\in \R^2, \;|z|\leq 1+\sqrt{\alpha}\}$ under the map $(1/\beta) \pi \circ f_2$. Finally, we set $W\cqq D_{\rho/\beta}(0)\setminus D_{\alpha R}(0)$. Then, the connected sum $\Sigma$ is defined as
\[\Sigma \cqq (U\cup V \cup W)/ \sim,\]
where $\sim$ is the equivalence relation 
\begin{equation}
  \begin{alignedat}{2}
    p\sim z &= \alpha \pi(f_1^\circ (p)),  \quad &&\text{where}\quad  p\in U, z\in W\\
   q\sim z &= (1/\beta) \pi(f_2(q)), \quad &&\text{where}\quad  q\in V, z\in W.
  \end{alignedat}
\label{BK:sim}
\end{equation}
Topologically, $\Sigma$ is the connected sum $\Sigma_1 \# \Sigma_2$. Finally, the immersion $f:\Sigma \to \R^3$ can be defined by
\[f:\Sigma \to \R^3, \quad f(x) = \begin{cases}
\alpha f_1^\circ (p) &\text{for } x=p\in U \subset \Sigma_1,\\
(1/\beta) f_2(q)&\text{for } x=q \in V \subset \Sigma_2,\\
(z,w(z)) &\text{for } x=z\in W=D_{\rho/\beta}(0) \setminus D_{\alpha R}(0).
\end{cases}\]
Notice that by \eqref{BK:eta}, \eqref{BK:w} and \eqref{BK:sim}, this is well-defined. 
\begin{figure}[H]
    \centering
    \tikzset{every picture/.style={line width=0.75pt}} 
\begin{tikzpicture}[x=0.75pt,y=0.75pt,yscale=-1,xscale=1]

\draw[-stealth] (100,128) -- (574,128) node[pos=1, above right] {$r$};
\draw    (100,125) -- (100,131) node [above, inner sep=0.75pt, yshift=10pt] {$0$};
\draw    (133.6,125) -- (133.6,131)node [above, inner sep=0.75pt, yshift=10pt] {$\alpha R$} ;
\draw    (196.6,125) -- (196.6,131)node [above, inner sep=0.75pt, yshift=10pt] {$\gamma-\sqrt{\alpha}$} ;
\draw    (241.5,125) -- (241.5,131) node [above, inner sep=0.75pt, yshift=10pt] {$\gamma$} ;
\draw    (296.4,125) -- (296.4,131) node [above, inner sep=0.75pt, yshift=10pt] {$1$};
\draw    (341.6,125) -- (341.6,131)node [above, inner sep=0.75pt, yshift=10pt] {$1+\sqrt{\alpha}$} ;
\draw    (483.2,125) -- (483.2,131)node [above, inner sep=0.75pt, yshift=10pt] {$\rho/\beta$} ;

\draw    (133.6,249.15) .. controls (186.5,244.15) and (195.35,220.68) .. (247.85,200.93) ;
\draw  [dash pattern={on 0.84pt off 2.51pt}]  (191.6,223.68) .. controls (234.6,185.43) and (473,134.6) .. (570,133.6) ;
\draw  [dash pattern={on 0.84pt off 2.51pt}]  (100,128) .. controls (200.57,127.7) and (308.6,164.43) .. (346.6,186.18) ;
\draw    (290.88,167.68) .. controls (335.68,177.68) and (439.28,249.68) .. (484.48,278.48) ;
\draw [dashed, color={rgb, 255:red, 255; green, 0; blue, 0 }  ,draw opacity=1  ]   (196.6,227.26) .. controls (228.71,208.48) and (211.38,209.26) .. (241.5,198.43) ;
\draw [dashed, color={rgb, 255:red, 255; green, 0; blue, 0 }  ,draw opacity=1  ]   (296.4,163.97) .. controls (323.99,175.58) and (315.85,177.18) .. (341.6,189.18) ;
\draw [dashed, color={rgb, 255:red, 0; green, 0; blue, 255 }  ,draw opacity=1 ]   (241.5,198.43) .. controls (259.64,190.98) and (267.74,153.33) .. (296.4,163.97) ;
\draw (156.5,222.4) node [anchor=north west][inner sep=0.75pt]    {$u_{\alpha }^{\circ }$};
\draw (482.33,145.07) node [anchor=north west][inner sep=0.75pt]    {$p_{\alpha }^{\circ }$};
\draw (158.67,137.93) node [anchor=north west][inner sep=0.75pt]    {$q_{1/\beta }$};
\draw (437.67,235.77) node [anchor=north west][inner sep=0.75pt]    {$v_{1/\beta }$};
\draw (247.5,163.6) node [anchor=north west][inner sep=0.75pt]  [color={rgb, 255:red, 95; green, 175; blue, 253 }  ,opacity=1 ]  {$w$};

\end{tikzpicture}
\caption{The graphic (not to scale) illustrates the 1-dimensional profile curve of the connected sum construction from this section. The dotted curves denote $q_{1/\beta}$ and $p_{\alpha}^\circ$. In the region $\gamma-\sqrt{\alpha}<r<\gamma$ respectively $1<r<1+\sqrt{\alpha}$, the interpolation between $u_\alpha^\circ$ and $p_\alpha^\circ$ respectively $q_{1/\beta} $ and $v_{1/\beta}$ is depicted in red. Finally, in the region $\gamma<r<1$, the biharmonic solution $w$ with the corresponding boundary condition is shown in blue.}
\end{figure}
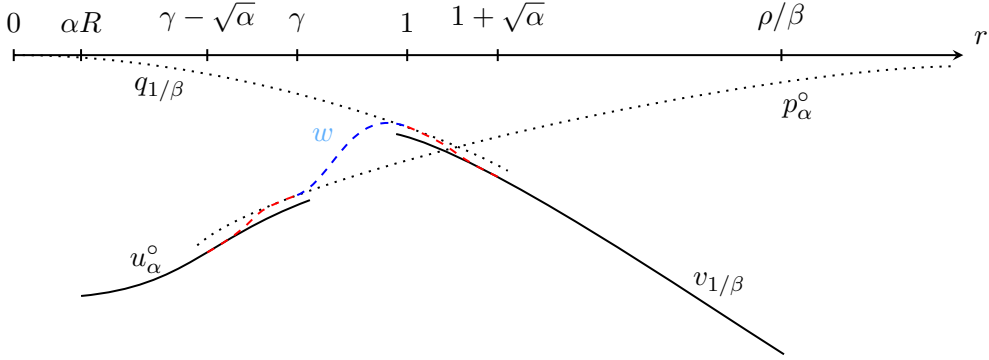

\subsection{The total mean curvature ratio of the connected sum}
From \cite[Lemma 2.1]{BauerKuwert}, we see that for $f:\Omega \to \R^3, f(z) = (z,u(z))$ a 2-dimensional graph such that $|Du|\leq 1$, the following Lemma holds.
\begin{lemma}[Mean curvature versus Laplacian]\label{mean curvature vs laplacian}
Let $f:\Omega \to \R^3$, $ f(z) = (z,u(z))$ be a 2-dimensional graph in $\Omega\times \R \subset\R^3$ with first fundamental form $G= (g_{ij})_{1\leq i,j\leq 2}$ and scalar mean curvature $H$. If $|Du|\leq 1$ in $\Omega$, then we have
\[|H \sqrt{\det G} - \Delta u| \leq C |Du| |D^2u|.\]
\end{lemma}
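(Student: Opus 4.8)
The plan is a direct pointwise computation in the graph coordinates, so the statement reduces to bookkeeping of the standard graph formulas. First I would record the quantities attached to the graph $f(z)=(z,u(z))$: the tangent vectors $\partial_i f=(e_i,\partial_i u)$, the first fundamental form $g_{ij}=\delta_{ij}+\partial_i u\,\partial_j u$, whence $\det G=1+|Du|^2$ and, by direct inversion of this rank-one update of the identity, $g^{ij}=\delta_{ij}-(1+|Du|^2)^{-1}\partial_i u\,\partial_j u$. Choosing the upward unit normal $\vec n=(1+|Du|^2)^{-1/2}(-\nabla u,1)$ (the orientation that makes the leading term of $H\sqrt{\det G}$ equal to $+\Delta u$), the second fundamental form is $\mb I_{ij}=\langle \partial_i\partial_j f,\vec n\rangle=(1+|Du|^2)^{-1/2}\,\partial_i\partial_j u$.

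Next I would contract these two expressions to obtain the mean curvature
\[
H=g^{ij}\mb I_{ij}=\frac{1}{\sqrt{1+|Du|^2}}\left(\Delta u-\frac{\partial_i u\,\partial_j u\,\partial_i\partial_j u}{1+|Du|^2}\right),
\]
and then multiply through by $\sqrt{\det G}=\sqrt{1+|Du|^2}$ to arrive at the exact identity
\[
H\sqrt{\det G}-\Delta u=-\frac{\partial_i u\,\partial_j u\,\partial_i\partial_j u}{1+|Du|^2}.
\]
Thus the difference is literally a remainder that is quadratic in $Du$ and linear in $D^2u$.

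To finish, I would estimate the right-hand side: by Cauchy--Schwarz the numerator is bounded in absolute value by $|Du|^2\,|D^2u|$ up to a fixed constant depending only on the choice of matrix norm on $2\times 2$ matrices, while the denominator is $\ge 1$; since $|Du|\le 1$ we have $|Du|^2\le|Du|$, which yields $|H\sqrt{\det G}-\Delta u|\le C\,|Du|\,|D^2u|$. The whole argument is algebraic and pointwise, requiring only $u\in C^2$; the only thing to be careful about is keeping the sign and normalization of $\vec n$ and $\mb I$ consistent with the conventions of Section~\ref{subsec:DefinitionsFromDifferentialGeometry}, and I do not expect any genuine obstacle.
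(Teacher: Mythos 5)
Your computation is correct and yields the result, but it proceeds differently from the paper. The paper does not compute $H$ from scratch: it imports the decomposition $H=H^{\mathrm{hor}}+H^{\mathrm{ver}}$ of Bauer--Kuwert together with their ready-made estimates $|H^{\mathrm{hor}}|\leq C|Du||D^2u|$ and $|H^{\mathrm{ver}}-\Delta u|\leq C|Du|^2|D^2u|$, and then combines these with $1\leq\sqrt{\det G}\leq 1+|Du|^2$ via the triangle inequality. You instead derive the exact pointwise identity
\[H\sqrt{\det G}-\Delta u=-\frac{\partial_i u\,\partial_j u\,\partial_i\partial_j u}{1+|Du|^2},\]
from the standard graph formulas for $g^{ij}$ and $\mb I_{ij}$, and then estimate the remainder directly. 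Your route is self-contained and arguably sharper (it gives an identity rather than a chain of one-sided bounds, and even shows the error is quadratic in $|Du|$ before the crude step $|Du|^2\leq|Du|$), at the cost of redoing a computation the paper prefers to cite; the paper's route is shorter on the page and keeps the notation aligned with the Bauer--Kuwert gluing estimates used throughout Section~\ref{sec:BauerKuwert}. Your remark about fixing the upward normal so that the leading term is $+\Delta u$ is the right thing to be careful about, since the paper's general convention \eqref{normal} applied to a different ordering or the inward normal of Lemma~\ref{lem: Möbius blowup} would flip the sign of $H$ and break the estimate.
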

\begin{proof}
From \cite[(2.13),(2.14)]{BauerKuwert}, we have a decomposition $H = H^{\text{hor}} + H^{\text{ver}}$ such that by \cite[(2.15)]{BauerKuwert}
\[|H^{\text{hor}} | \leq C |Du| |D^2u|\]
and \cite[(2.17)]{BauerKuwert}
\[|H^{\text{ver}} - \Delta u |\leq C|Du|^2 |D^2u| \leq C|Du| |D^2u|.\]
It follows that $|H - \Delta u| \leq C |Du||D^2u|$. Because $1\leq \sqrt{\det G} \leq 1 + |Du|^2$ by \cite[(2.9)]{BauerKuwert}, we have 
\begin{align*}
|H \sqrt{\det G} - \Delta u| &\leq |H - \Delta u|\sqrt{\det G}  +  |\Delta u| (\sqrt{\det G} - 1)\\
&\leq C |Du||D^2u|.\tag*\qedhere
\end{align*}
\end{proof}
Next, we want to estimate the difference of $\mc T(f)$ and $\mc T(f_2)$. For that, we need to find asymptotic estimates of the total mean curvature of the region of $f_2$ that we cut out and the biharmonic strip which we insert. 
\begin{lemma}[Behavior of removed disk of $f_2$ and biharmonic strip] \label{lem:Behavior of removed disks}
Let $\beta = t \alpha$ for fixed $t>0$. Define $e\cqq \frac{1}{2}(Q_{11} + Q_{22}) $. We have as $\gamma,\alpha \to 0$ that
\begin{align}
\int _{D_{1+\sqrt{\alpha}}\setminus D_{1}} \Delta w \dif x  +\int _{D_{\gamma}\setminus D_{\gamma-\sqrt{\alpha}}} \Delta w \dif x&= \mc O_{t, \gamma} (\alpha ^{3/2}),\label{BK:Integral in interpolation strip}\\
\int _{D_{1}\setminus D_{\gamma}} \Delta w \dif x   &=  2\beta \pi e + \mc O_t(\gamma^2 \alpha)\label{BK:Integral in biharmonic function},\\
\int_{D_{1+\sqrt{\alpha}}} \Delta v _{\frac{1}{\beta}} \dif x &= 2\beta \pi e  + \mc O_t(\alpha^{3/2})\label{BK:Integral of removed disk},
\end{align}
where the constants in $\mc O_{t,\gamma}$ respectively $\mc O_{t}$ depend on $t$ and $\gamma$ respectively $t$.
\end{lemma}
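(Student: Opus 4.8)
The plan is to compute the three integrals one at a time, in each case splitting $w$ (respectively $v_{\frac1\beta}$) into an explicit model term built from the second fundamental form plus a higher‑order remainder, and converting $\int\Delta(\cdot)$ into boundary integrals via the divergence theorem wherever no cutoff is present. The preliminary bookkeeping I would do is the following. Since $q(z)=\tfrac12 Q(z,z)$ is homogeneous of degree $2$, one has $q_{\frac1\beta}=\beta q$, hence $\Delta q_{\frac1\beta}=\beta\Delta q=2\beta e$ (a constant) and $\partial_r q_{\frac1\beta}(z)=\beta|z|^{-1}Q(z,z)$ by Euler's relation; and since $p^\circ(\zeta)=\tfrac12 P^\circ(\zeta/|\zeta|,\zeta/|\zeta|)$ is homogeneous of degree $0$, one has $p^\circ_\alpha=\alpha p^\circ$, hence $\partial_r p^\circ_\alpha\equiv 0$ (Euler again) and, by a direct computation $\Delta p^\circ(\zeta)=-2P^\circ(\zeta,\zeta)|\zeta|^{-4}$, so $|\Delta p^\circ_\alpha(z)|\le C\alpha|z|^{-2}$. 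Rescaling the Taylor bounds \eqref{Error term estimates} gives $|D^j\psi_{\frac1\beta}(z)|\le C\beta^2|z|^{3-j}$ for $j=0,1,2$, and rescaling the decay bound for $\phi^\circ$ stated just before \eqref{BK:eta} gives $|D^j\phi^\circ_\alpha(z)|\le C\alpha^2|z|^{-1-j}$ for $j=0,1,2$; finally the cutoff $\eta$ has $j$‑th derivatives of size $\mc O(\alpha^{-j/2})$, so $\Delta\bigl(\eta(\pm|z|+\mathrm{const})\bigr)=\mc O_\gamma(\alpha^{-1})$ on the relevant annuli.

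For \eqref{BK:Integral of removed disk} I would write $v_{\frac1\beta}=\beta q+\psi_{\frac1\beta}$, so that $\int_{D_{1+\sqrt\alpha}}\Delta v_{\frac1\beta}\dif x = 2\beta e\,\pi(1+\sqrt\alpha)^2+\int_{\partial D_{1+\sqrt\alpha}}\partial_r\psi_{\frac1\beta}\dif l$; the first term is $2\beta\pi e+2\beta\pi e(2\sqrt\alpha+\alpha)=2\beta\pi e+\mc O_t(\alpha^{3/2})$ using $\beta=t\alpha$, and the boundary term is $\mc O_t(\beta^2)=\mc O_t(\alpha^2)$. For \eqref{BK:Integral in biharmonic function}, the point is that $w$ is biharmonic on $D_1\setminus D_\gamma$ (using Lemma \ref{lem: biharmonic equation} for its existence and smoothness up to the boundary), so $\Delta w$ is harmonic there and the divergence theorem gives $\int_{D_1\setminus D_\gamma}\Delta w\dif x=\int_{\partial D_1}\partial_r w\dif l-\int_{\partial D_\gamma}\partial_r w\dif l$; by the prescribed Neumann data this equals $\int_{\partial D_1}\partial_r q_{\frac1\beta}\dif l-\int_{\partial D_\gamma}\partial_r p^\circ_\alpha\dif l=2\beta\pi e-0$, which is already of the claimed form (the error $\mc O_t(\gamma^2\alpha)$ being a generous bound). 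Finally, for \eqref{BK:Integral in interpolation strip} I would treat the two annuli of \eqref{BK:w} separately: on $D_{1+\sqrt\alpha}\setminus D_1$ one has $\Delta w=2\beta e+\Delta(\eta(r-1)\psi_{\frac1\beta})$, whose integral is $2\beta e\,\pi((1+\sqrt\alpha)^2-1)+\int\Delta(\eta(r-1)\psi_{\frac1\beta})$; the first summand is $\mc O_t(\alpha^{3/2})$, and for the second I would expand by the Leibniz rule, the dominant contribution being $|\psi_{\frac1\beta}|\,|\Delta\eta|\le C\beta^2\alpha^{-1}$ over an annulus of area $\le C\sqrt\alpha$, i.e. $\mc O_t(\alpha^{3/2})$; on $D_\gamma\setminus D_{\gamma-\sqrt\alpha}$ one has $\Delta w=\Delta p^\circ_\alpha+\Delta(\eta(\gamma-r)\phi^\circ_\alpha)$, the first term integrating to $0$ (since $\partial_r p^\circ_\alpha\equiv 0$ on both circles, which lie in $\R^2\setminus\{0\}$), and the dominant contribution to the second being $|\phi^\circ_\alpha|\,|\Delta\eta|\le C\alpha^2|z|^{-1}\alpha^{-1}\le C_\gamma\alpha$ over an annulus of area $\le C\gamma\sqrt\alpha$, i.e. $\mc O_{t,\gamma}(\alpha^{3/2})$; summing the two annuli gives \eqref{BK:Integral in interpolation strip}.

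The main obstacle is purely bookkeeping: the cutoff $\eta$ injects derivatives as large as $\alpha^{-1}$, so one must check that in every term these are beaten by the combination of the thinness $\sqrt\alpha$ of the transition annuli and the smallness ($\beta^2=t^2\alpha^2$, respectively $\alpha^2$) of the Taylor remainders $\psi_{\frac1\beta}$, $\phi^\circ_\alpha$; and one should confirm that the origin causes no trouble in the divergence‑theorem steps, which it does not, since $\gamma-\sqrt\alpha>0$ and $\psi_{\frac1\beta}$ is smooth at $0$.
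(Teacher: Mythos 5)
Your proof is correct, and for \eqref{BK:Integral in biharmonic function} it takes a genuinely different, and arguably cleaner, route than the paper. The paper obtains \eqref{BK:Integral in biharmonic function} by inserting the explicit Fourier decomposition $w=g(r)\cos(2\theta)+h(r)\sin(2\theta)+k(r)$ of the biharmonic interpolant from \cite{BauerKuwert}, noting that only the radial part $k(r)$ survives the angular integration, and then reading off the asymptotics of the coefficients $C_2,C_4$; you instead apply the divergence theorem on the annulus and evaluate the prescribed Neumann data, using $\partial_r p^\circ_\alpha\equiv 0$ (degree-zero homogeneity) and $\int_{\partial D_1}\partial_r q_{1/\beta}\dif l=\beta\int_{\partial D_1}Q(z,z)\dif l=2\beta\pi e$, which yields the \emph{exact} value $2\beta\pi e$ with no error term at all --- stronger than what is claimed, and independent of the explicit solution formula. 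For \eqref{BK:Integral in interpolation strip} the paper simply cites the bounds $|D^2w|\le C\alpha$ on the two transition annuli from \cite[(4.21),(4.22)]{BauerKuwert} and multiplies by the area $\mc O(\sqrt\alpha)$; your Leibniz bookkeeping with the rescaled Taylor remainders and the cutoff derivatives essentially re-derives those bounds and is more self-contained. For \eqref{BK:Integral of removed disk} the two arguments coincide in substance (Taylor expansion of $\Delta v$ at the origin versus splitting off $\beta q$ and estimating the boundary term of $\psi_{1/\beta}$). One cosmetic correction: the phrase ``$w$ is biharmonic, so $\Delta w$ is harmonic and the divergence theorem gives \dots'' is not the right justification --- harmonicity of $\Delta w$ is irrelevant here; what you need is that $w\in H^2$ of the annulus with the prescribed Neumann trace, which Lemma \ref{lem: biharmonic equation} (or the explicit representation in \cite{BauerKuwert}) does provide, so the identity $\int_{D_1\setminus D_\gamma}\Delta w\dif x=\int_{\partial D_1}\partial_r w\dif l-\int_{\partial D_\gamma}\partial_r w\dif l$ holds as you use it.
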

\begin{proof}
First, we estimate the contribution of the interpolation strip. We have by equations \cite[(4.21),(4.22)]{BauerKuwert} for $0 < \alpha \ll \gamma$
\begin{alignat*}{2}
 |D^2 w(z)| &\leq C(\gamma) \alpha \quad &&\text{for $\gamma - \sqrt{\alpha}\leq r \leq \gamma$,}\\
|D^2 w(z)| &\leq C(t) \alpha \quad &&\text{for $1\leq r \leq 1+\sqrt{\alpha}$}.
\end{alignat*}
Integration in the domains $D_{1+\sqrt{\alpha}}\setminus D_1$ and $D_{\gamma}\setminus D_{\gamma - \sqrt{\alpha}}$ yields \eqref{BK:Integral in interpolation strip}. 

To prove the second equality, we write $w$ as a sum of biharmonic functions as in \cite[(3.29),(3.30)]{BauerKuwert} with coefficients as in \cite[(3.35)]{BauerKuwert}, i.e.
\[w(r,\theta) = g(r) \cos(2\theta) + h(r) \sin(2\theta) + k(r)\]
for certain functions $g(r),h(r), k(r)$. Because of
\[\int_{\gamma}^1 \int_0^{2\pi} g(r) \cos (2\theta) r \dif \theta \dif r = \int_{\gamma}^1 \int_0^{2\pi} h(r) \sin (2\theta) r \dif \theta \dif r = 0\]
and \cite[(3.32)]{BauerKuwert}, we see that only $k(r)$ contributes to the integral, namely by \cite[(3.33)]{BauerKuwert}
\[\int_{D_1\setminus D_\gamma} \Delta w \dif x = \int _\gamma ^1 2\pi r (C_2 + C_4 + 2 C_4 \log r)\dif r = C_2 \pi (1-\gamma^2) - C_4 2\gamma^2 \pi \log(\gamma).\]
As $\gamma,\alpha \to 0$, we have that $C_2 - 2\beta e = \mc O_t(\gamma^2 \alpha)$ and $C_4=\mc O_t(\gamma \alpha)$, see \cite[(3.35)]{BauerKuwert}, which implies \eqref{BK:Integral in interpolation strip}.  

To prove the last equation, we notice that 
\begin{align*}\int _{D_{1+\sqrt{ \alpha}}} \Delta (v_{\frac{1}{\beta}}) \dif x &=  \frac{\beta}{\beta^2}\int_{D_{\beta(1+\sqrt{\alpha})}} \Delta v \dif x.\intertext{Using the differentiability of $\Delta v$, this equals}
&=\beta \pi  \Delta v(0) + \mc O_t(\alpha^{3/2}).
\end{align*}
Now by definition of $Q$, we have $\Delta v(0) = Q_{11} + Q_{22} = 2e,$ which proves \eqref{BK:Integral of removed disk} and finishes the proof. 
\end{proof}
\begin{lemma} \label{Difference of H-integrals}
Let 
\begin{align*}
U_\alpha  &= \Sigma_1 \setminus (\pi \circ f_1^\circ )^{-1} \left \{z\in \R^2,\; \frac{\gamma -\sqrt{\alpha}}{\alpha}<|z|<\infty\right \}.
\end{align*}
Then, as $\alpha \to 0$, $\gamma \to 0$, and again denoting $\beta = t\alpha$ for some fixed $t>0$, it holds that
\begin{equation}
\mc T(f) -\mc T(f_2) = \alpha \beta \mc A(f_2)^{-1/2}\left (\int _{U_\alpha } H_{f_1^\circ} \dif \vol_{f_1^\circ} + \mc O_t(\gamma^2)\right ) + \mc O_{t,\gamma}(\alpha ^{5/2}).\label{BK:Estimate T}
\end{equation}
\end{lemma}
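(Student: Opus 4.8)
The idea is to evaluate the numerator $\int_\Sigma H_f\,\dif\vol_f$ and the denominator $\sqrt{\mc A(f)}$ of $\mc T(f)$ separately and then take the quotient. The first step is purely combinatorial: from the definitions of $f$, of the cut-off $\eta$ in \eqref{BK:eta}, and of the gluing relation $\sim$ in \eqref{BK:sim}, one checks that $\Sigma$ splits, up to null sets, into three pieces on which $f$ is explicit — the set $U_\alpha$, where $f=\alpha f_1^\circ$; the neck $\{\gamma-\sqrt\alpha\le|z|\le 1+\sqrt\alpha\}\subset W$, where $f$ is the graph $(z,w(z))$; and $V=\Sigma_2\setminus D_2^{\mathrm{rem}}$ with $D_2^{\mathrm{rem}}=(\pi\circ f_2)^{-1}(D_{\beta(1+\sqrt\alpha)})$, where $f=(1/\beta)f_2$. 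Because total mean curvature is homogeneous of degree $1$ and area of degree $2$ under dilations, this gives $\int_\Sigma H_f\,\dif\vol_f=\alpha\int_{U_\alpha}H_{f_1^\circ}\,\dif\vol_{f_1^\circ}+\int_{\{\gamma-\sqrt\alpha\le|z|\le1+\sqrt\alpha\}}H_w\sqrt{\det G_w}\,\dif z+\tfrac1\beta\int_{\Sigma_2}H_{f_2}\,\dif\vol_{f_2}-\tfrac1\beta\int_{D_2^{\mathrm{rem}}}H_{f_2}\,\dif\vol_{f_2}$, together with the corresponding splitting of $\mc A(f)$.

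For the two ``local'' integrals I would apply Lemma \ref{mean curvature vs laplacian} to replace $H\sqrt{\det G}$ by the flat Laplacian of the defining function. This is allowed since $|Dw|\le1$ and $|Dv_{1/\beta}|\le1$ for small $\alpha$ (using \eqref{Error term estimates} and the Bauer--Kuwert gradient bounds on $w$), and the error is $C\int|Dw|\,|D^2w|$, which is $\mc O_{t,\gamma}(\alpha^2)$ on the neck and $\mc O_t(\alpha^3)$ on $D_2^{\mathrm{rem}}$. Lemma \ref{lem:Behavior of removed disks} then evaluates the Laplacian integrals: by \eqref{BK:Integral in interpolation strip} and \eqref{BK:Integral in biharmonic function} the neck contributes $2\beta\pi e+\mc O_t(\gamma^2\alpha)+\mc O_{t,\gamma}(\alpha^{3/2})$, while after rescaling and \eqref{BK:Integral of removed disk} one gets $\tfrac1\beta\int_{D_2^{\mathrm{rem}}}H_{f_2}\,\dif\vol_{f_2}=2\beta\pi e+\mc O_t(\alpha^{3/2})$. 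The decisive point is that the two $2\beta\pi e$ terms cancel, leaving $\int_\Sigma H_f\,\dif\vol_f=\tfrac1\beta\int_{\Sigma_2}H_{f_2}\,\dif\vol_{f_2}+\alpha\int_{U_\alpha}H_{f_1^\circ}\,\dif\vol_{f_1^\circ}+\mc O_t(\gamma^2\alpha)+\mc O_{t,\gamma}(\alpha^{3/2})$.

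For the area I would expand $\sqrt{\det G}=1+\tfrac12|D(\cdot)|^2+\dots$ on each piece. Using $|Du^\circ|\le C/|\zeta|$ near infinity, the part over $U_\alpha$ has area $\pi(\gamma-\sqrt\alpha)^2+\mc O_{t,\gamma}(\alpha^{3/2})$; the neck has area $\pi\big((1+\sqrt\alpha)^2-(\gamma-\sqrt\alpha)^2\big)+\mc O_{t,\gamma}(\alpha^2)$; and $\tfrac1{\beta^2}\mc A(D_2^{\mathrm{rem}})=\pi(1+\sqrt\alpha)^2+\mc O_t(\alpha^2)$. In $\mc A(f)=\alpha^2\mc A(f_1^\circ|_{U_\alpha})+\mc A_{\mathrm{neck}}+\tfrac1{\beta^2}\big(\mc A(f_2)-\mc A(D_2^{\mathrm{rem}})\big)$ these bulk terms cancel, leaving $\mc A(f)=\beta^{-2}\mc A(f_2)+\mc O_{t,\gamma}(\alpha^{3/2})$, hence $\sqrt{\mc A(f)}=\beta^{-1}\sqrt{\mc A(f_2)}\,(1+\mc O_{t,\gamma}(\alpha^{7/2}))$. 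Dividing the two expansions, using $\mc T(f_2)=\mc A(f_2)^{-1/2}\int_{\Sigma_2}H_{f_2}\,\dif\vol_{f_2}$, expanding $(1+\mc O(\alpha^{7/2}))^{-1}$, and absorbing the $\mc O_t(\gamma^2\alpha)$ term (after multiplication by $\beta=t\alpha$) into $\alpha\beta\mc A(f_2)^{-1/2}\mc O_t(\gamma^2)$, yields exactly \eqref{BK:Estimate T}.

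The real content is the error accounting, and the most delicate point is to obtain, on the biharmonic strip $D_1\setminus D_\gamma$ itself, a bound $|D^2w|\le C_{t,\gamma}\alpha$ (the estimates quoted in the proof of Lemma \ref{lem:Behavior of removed disks} concern only the two interpolation collars): this follows from Agmon--Douglis--Nirenberg estimates for $\Delta^2w=0$, since the boundary data $p^\circ_\alpha$, $\partial_r p^\circ_\alpha$, $q_{1/\beta}$, $\partial_r q_{1/\beta}$ is smooth with all norms of size $\mc O_t(\alpha)$, but it must be made explicit because without it the discrepancy between $H\sqrt{\det G}$ and $\Delta w$ would not fall below the target order $\alpha^{5/2}$ after dividing by $\sqrt{\mc A(f)}$. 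The other subtlety is simply to verify that the two exact cancellations — the $2\beta\pi e$ terms in the mean curvature and the $\pi(\gamma-\sqrt\alpha)^2$, $\pi(1+\sqrt\alpha)^2$ terms in the area — genuinely occur, since a surviving remainder of order $\alpha$ or $\alpha^2$ would spoil the asymptotics.
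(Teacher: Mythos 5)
Your proof is correct and follows essentially the same route as the paper: decompose $\int_\Sigma H_f\,\dif\vol_f$ into the $U_\alpha$, neck, and $V$ pieces, replace $H\sqrt{\det G}$ by the flat Laplacian via Lemma \ref{mean curvature vs laplacian}, invoke Lemma \ref{lem:Behavior of removed disks} so that the two $2\beta\pi e$ terms cancel, and divide by the area expansion. The only difference is bookkeeping: the paper obtains the neck derivative bounds and the area asymptotics $\sqrt{\mc A(f)}=\beta^{-1}(\sqrt{\mc A(f_2)}+\mc O_{t,\gamma}(\alpha^{5/2}))$ by citing \cite[(4.13),(4.21)--(4.26)]{BauerKuwert} and \cite[(3.18)]{MondinoScharrerInequality} rather than rederiving them as you do (and your $\mc O_t(\alpha^3)$ for the Laplacian-replacement error on $D_2^{\mathrm{rem}}$ should be $\mc O_t(\alpha^2)$, which still suffices).
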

\begin{proof}
By \cite[(4.13), (4.21)-(4.26)]{BauerKuwert}, we can apply Lemma \ref{mean curvature vs laplacian} and Lemma \ref{lem:Behavior of removed disks}, as $\alpha \to 0,\gamma \to 0$ to obtain
\begin{align}
&\quad\,\int _{\Sigma} H_f \dif \vol_f - \int _{\Sigma_2} H _{f_{2,\frac{1}{\beta}}} \dif \vol _{f_{2,\frac{1}{\beta}}} \\
&= \int _{D_{1+\sqrt{\alpha}}\setminus D_{\gamma-\sqrt{\alpha}}} H_f \dif \vol_f + \alpha \int _{U_\alpha } H_{f_1^\circ} \dif \vol_{f_1^\circ}  - \int _{\Sigma_2 \setminus V} H_{f_{2,\frac{1}{\beta}}}\dif \vol_{f_{2,\frac{1}{\beta}}}\notag\\
 &= \int _{D_{1}\setminus D_{\gamma}} \Delta w \dif x + \alpha  \int _{U_\alpha } H_{f_1^\circ} \dif \vol _{f_1^\circ}\notag\\
&\quad  \,  -\int _{D_{1+\sqrt{\alpha}}} \Delta v_{\frac{1}{\beta}} \dif x + \mc O_{t,\gamma}(\alpha^{3/2}) \notag\\
 &= \alpha \left (\int _{U_\alpha } H_{f_1^\circ} \dif \vol_{f_1^\circ} + \mc O_t(\gamma^2)\right ) + \mc O_{t,\gamma}(\alpha^{3/2}).\label{BK:Int H estimate}
\end{align}
By \cite[(3.18)]{MondinoScharrerInequality}, we know that
\[\sqrt{\mc A(f)} = \frac{1}{\beta}\left (\sqrt{\mc A(f_2)} + \mc O_{t,\gamma}(\alpha ^{5/2})\right ).\]
Thus
\begin{align*}
\mc T(f) &= \frac{\frac{1}{\beta}\left (\int _{\Sigma_2} H_{f_2} \dif \vol_{f_2} + \alpha \beta \left (\int _{U_\alpha } H_{f_1^\circ} \dif \vol_{f_1^\circ} + \mc O_t(\gamma^2)\right ) + \mc O_{t,\gamma}(\alpha^{5/2})\right )}{\frac{1}{\beta}\left (\sqrt{\mc A(f_2)} + \mc O_{t,\gamma}(\alpha ^{5/2})\right )}\\
&= \mc T(f_2) + \alpha \beta \mc A(f_2)^{-1/2} \left (\int _{U_\alpha } H_{f_1^\circ} \dif \vol_{f_1^\circ} + \mc O_t(\gamma^2)\right ) + \mc O_{t,\gamma}(\alpha ^{5/2}).\tag*\qedhere
\end{align*}
\end{proof}
The Willmore energy of the connected sum immersion was estimated in \cite[Lemma 4.4]{BauerKuwert}.
\begin{lemma} \label{lem: Bauer-Kuwert Willmore estimate}
Taking $\beta = t\alpha$ for any $t>0$, and letting $\alpha$ tend to 0, there holds
\begin{equation}
\begin{split}
&\quad \, \mc W(f) - (\mc W(f_1) + \mc W(f_2) - 4\pi) \\
&= \pi \alpha^2( |P^\circ |^2 - t\langle P^\circ, Q^\circ \rangle + \mc O _t(\gamma^2 \log(\gamma)^2) + \mc O_{t,\gamma}(\alpha^{1/2})),
\end{split}\label{Bauer Kuwert Willmore estimate }
\end{equation}
where the constants in $\mc O_t$ respectively $\mc O_{t,\gamma}$ depend on $t$ respectively $t$ and $\gamma$.
\end{lemma}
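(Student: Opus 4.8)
The plan is to observe that the immersion $f$, the parameters $\alpha$, $\beta = t\alpha$, $\gamma$, the cut-off $\eta$, and the biharmonic bridge $w$ have all been set up exactly as in \cite[Section~3]{BauerKuwert}, so that this is precisely \cite[Lemma~4.4]{BauerKuwert}; I would only recall the structure of that proof. The starting point is that $\mc W$ is conformally invariant (Proposition~\ref{prop: Willmore invariance}) and scale invariant, hence $\mc W(\alpha f_1^\circ) = \mc W(f_1^\circ)$, and since $f_1^\circ$ is obtained from the closed surface $f_1$ by a sphere inversion centered at the point $f_1(p_1) = 0$ lying on the surface with a single preimage (followed by a translation), $\mc W(f_1^\circ) = \mc W(f_1) - 4\pi$; likewise $\mc W((1/\beta) f_2) = \mc W(f_2)$.

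Next I would decompose $\mc W(f)$ over the three regions $U$, $V$, $W$ making up $\Sigma$. Over $U$ the immersion coincides with $\alpha f_1^\circ$ with a far end excised, over $V$ with $(1/\beta) f_2$ with a small cap around $p_2$ excised. Writing these excised pieces as graphs over $\{|\zeta| > \gamma/\alpha\}$ and $\{|w'| < \beta\}$ with small gradient, the decay estimates \eqref{Error term estimates} and Lemma~\ref{mean curvature vs laplacian} — which gives $\mc W = \frac14\int (\Delta u)^2\,dx$ up to an error $\le C\int |Du|\,|D^2 u|^2\,dx$ — reduce their Willmore energies to $\frac14\int_{|\zeta| > \gamma/\alpha}|\Delta p^\circ|^2 = \mc O(\alpha^2/\gamma^2)$ and $\frac14\int_{|w'| < \beta}(\Delta q)^2 = \pi e^2\beta^2$ up to lower order; here $\Delta q = Q_{11} + Q_{22} = 2e$ is constant, which is why the $f_2$-cap contributes the clean term $\pi e^2\beta^2$. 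On $W = D_{\rho/\beta}\setminus D_{\alpha R}$ the surface is a graph $(z,w(z))$ with $|Dw|$ small, so by Lemma~\ref{mean curvature vs laplacian} its Willmore energy is $\frac14\int_W (\Delta w)^2\,dx$ up to an $\mc O_{t,\gamma}(\alpha^{5/2})$ remainder; on the two $\eta$-interpolation strips $\gamma - \sqrt\alpha < |z| < \gamma$ and $1 < |z| < 1 + \sqrt\alpha$ the bounds $|D^2 w| \le C\alpha$ (recalled in the proof of Lemma~\ref{lem:Behavior of removed disks}) make the contribution $\mc O_{t,\gamma}(\alpha^{5/2})$ as well, so the decisive term is $\frac14\int_{D_1\setminus D_\gamma}(\Delta w)^2\,dx$ with $w$ the biharmonic extension of the mode-$2$ datum $p_\alpha^\circ$ (of order $\alpha$) on $|z| = \gamma$ and of the datum $q_{1/\beta}$ (a radial part $\frac{\beta e}{2}r^2$ plus a mode-$2$ part, both of order $\beta$) on $|z| = 1$.

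The crux is then the evaluation of $\frac14\int_{D_1\setminus D_\gamma}(\Delta w)^2\,dx$. I would expand $w$ into Fourier modes in $\theta$, use the biharmonic radial bases $\{1, r^2, \log r, r^2\log r\}$ for the mode-$0$ part and $\{1, r^{-2}, r^2, r^4\}$ for the mode-$2$ part, solve the two $4\times 4$ linear systems fixing the coefficients from the Dirichlet data at $|z| = \gamma$ and $|z| = 1$ (the mode-$0$ coefficients satisfying $C_2 = 2\beta e + \mc O_t(\gamma^2\alpha)$ and $C_4 = \mc O_t(\gamma\alpha)$, as recalled in the proof of Lemma~\ref{lem:Behavior of removed disks}), and integrate term by term — this is the computational heart of \cite[Sections~3--4]{BauerKuwert}. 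One must keep track of the cross-terms that, after the cancellations between this bridge integral, the excised far end of $f_1^\circ$ (of size $\mc O(\alpha^2/\gamma^2)$) and the excised cap of $f_2$ (of size $\pi e^2\beta^2$), combine into exactly $\pi\alpha^2\bigl(|P^\circ|^2 - t\,\langle P^\circ, Q^\circ\rangle\bigr)$ plus an error $\pi\alpha^2\,\mc O_t(\gamma^2(\log\gamma)^2)$, using $|P^\circ|^2 = 2\bigl((P^\circ_{11})^2 + (P^\circ_{12})^2\bigr)$ and $\langle P^\circ, Q^\circ\rangle = 2\bigl(P^\circ_{11}Q^\circ_{11} + P^\circ_{12}Q^\circ_{12}\bigr)$; together with the $\mc O_{t,\gamma}(\alpha^{5/2})$ errors already identified this gives \eqref{Bauer Kuwert Willmore estimate }.

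The main obstacle is exactly this bookkeeping. Several contributions are a priori of order $\alpha^2/\gamma^2$ (the $r^{-2}$ biharmonic modes on the bridge, the excised end of $f_1^\circ$) or of order $\beta^2$ (the mode-$0$ part of the bridge, the $f_2$-cap), and one has to verify that they cancel so that only the clean quadratic form in $(P^\circ, Q^\circ)$ and a genuinely small $\mc O_t(\gamma^2(\log\gamma)^2)$ remainder survive; at the same time one must check that the graph remainders $\int |Dw|\,|D^2 w|^2$ and the $\eta$-strip contributions really are $\mc O_{t,\gamma}(\alpha^{5/2})$. The decay estimates \eqref{Error term estimates} and the pointwise bounds on $w$ from \cite[Sections~3--4]{BauerKuwert} are precisely what makes this work, and rather than reproduce the computation I would simply invoke \cite[Lemma~4.4]{BauerKuwert}.
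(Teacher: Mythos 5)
Your proposal matches the paper exactly: the paper gives no proof of this lemma at all, simply citing \cite[Lemma 4.4]{BauerKuwert}, which is precisely what you do after (correctly) sketching the structure of the underlying Bauer--Kuwert computation. Your additional outline of the inversion identity, the graph-energy reduction via Lemma \ref{mean curvature vs laplacian}, and the biharmonic mode expansion is accurate and in fact more informative than the paper's treatment.
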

With this lemma at hand, we can prove the main theorem of this section.
\begin{thm} \label{thm:connectedsum}
Suppose that $\Sigma_1$, $ \Sigma_2$ are two closed, oriented surfaces, $f_1:\Sigma_1 \to \R^3$ is a smooth embedding, $f_2:\Sigma_2 \to \R^3$ is a smooth immersion and neither $f_1$ nor $f_2$ parametrize a round sphere and also $\mc T(f_2) \neq \mc T(\mb S^2)$. Denote with $\Sigma$ the connected sum $\Sigma_1 \# \Sigma_2$. Then, there exists a smooth immersion $f:\Sigma \to \R^3$ such that
\begin{equation}
\mc T(f) = \mc T(f_2) \label{BK:Tf = Tf2}
\end{equation}
and 
\begin{equation}
\mc W(f) < \mc W(f_1) + \mc W(f_2) - 4\pi.\label{BK:strict inequality}
\end{equation}
\end{thm}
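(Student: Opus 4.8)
The plan is to take for $f$ the Bauer--Kuwert connected sum recalled above, with parameters $\alpha,\gamma$ and $\beta=t\alpha$ still to be chosen, and then to post-compose it with either a sphere inversion or a small normal variation so as to restore the constraint $\mc T(f)=\mc T(f_2)$ exactly. First I would fix nonumbilic points $p_1\in\Sigma_1$ and $p_2\in\Sigma_2$, which exist since neither $f_1$ nor $f_2$ immerses a round sphere, so that $P^\circ\ne 0\ne Q^\circ$, and after an orthogonal transformation assume \eqref{BK:Frobenius Inner Product Positive} as in \cite[Lemma~4.5]{BauerKuwert}. By Lemma~\ref{lem: Bauer-Kuwert Willmore estimate}, choosing $t$ so large that $c_0\cqq\pi\bigl(t\langle P^\circ,Q^\circ\rangle-|P^\circ|^2\bigr)>0$, one has
\[\mc W(f)=\mc W(f_1)+\mc W(f_2)-4\pi-c_0\alpha^2+\pi\alpha^2\mc O_t(\gamma^2\log(\gamma)^2)+\pi\alpha^2\mc O_{t,\gamma}(\alpha^{1/2}),\]
so that after fixing $\gamma$ small and then $\alpha$ small, $\mc W(f)\le\mc W(f_1)+\mc W(f_2)-4\pi-\tfrac{c_0}{2}\alpha^2$; thus \eqref{BK:strict inequality} will hold, with a quantitative gap of order $\alpha^2$, for every small $\gamma$ and every sufficiently small $\alpha$. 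At the same time, by Lemma~\ref{Difference of H-integrals} and $\beta=t\alpha$,
\[\mc T(f)=\mc T(f_2)+t\alpha^2\mc A(f_2)^{-1/2}\bigl(m_1(\alpha)+\mc O_t(\gamma^2)\bigr)+\mc O_{t,\gamma}(\alpha^{5/2}),\]
where $m_1(\alpha)\cqq\int_{U_\alpha}H_{f_1^\circ}\dif\vol_{f_1^\circ}\to m_1\cqq\int_{\Sigma_1\setminus\{p_1\}}H_{f_1^\circ}\dif\vol_{f_1^\circ}$ as $\alpha\to0$; this limit is finite because the zero-homogeneous leading term $p^\circ$ of the graph of $f_1^\circ$ at infinity has vanishing angular mean, while the remainder satisfies $|D^2\phi^\circ|\le C|\zeta|^{-3}$. (If $f_2$ is not embedded then $\mc T(f_2)=\infty$, and choosing $p_2$ off a self-intersection of $f_2$ also forces $\mc T(f)=\infty$, so the theorem reduces to \eqref{BK:strict inequality}, which is \cite[Lemma~4.4]{BauerKuwert}; hence from now on $f_2$, and so $f$ for small $\alpha$, is an embedding, cf.\ \cite{BauerKuwert}.) The key dichotomy is whether $m_1$ vanishes.

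If $m_1\ne 0$, the discrepancy $\delta\cqq\mc T(f)-\mc T(f_2)$ is, for small $\gamma$ and $\alpha$, nonzero and of order $\alpha^2$, hence comparable to the Willmore gap, so it cannot be absorbed by a local variation, and I would instead kill it with a sphere inversion, which costs no Willmore energy. Because $\int H\dif\vol$ changes sign under an orientation-reversing isometry, running the construction with the mirror image of $f_1$ yields an inverted surface of total mean curvature $-m_1$ and, after a rotation in the tangent plane restoring \eqref{BK:Frobenius Inner Product Positive}, leaves everything else unchanged; using $\mc T(f_2)\ne\mc T(\mb S^2)$ I may therefore assume $\sgn(m_1)=\sgn(\mc T(f_2)-\mc T(\mb S^2))$. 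Then, for small $\gamma$ and $\alpha$, $\sgn(\delta)=\sgn(\mc T(f_2)-\mc T(\mb S^2))$ and $|\delta|<|\mc T(f_2)-\mc T(\mb S^2)|$, so $\mc T(f_2)$ lies strictly between $\mc T(f)$ and $\mc T(\mb S^2)$. Exactly as in the proof of Corollary~\ref{cor:Monotonicity} I would choose a path $\tau\mapsto a(\tau)\in\R^3\setminus\im f$, $\tau\in(0,\infty)$, with $a(\tau)\to p-\tau'\vec n(p)$ along the normal at some $p\in\im f$ as $\tau\to0$ and $|a(\tau)|\to\infty$ as $\tau\to\infty$; by Lemma~\ref{lem: Möbius blowup} and Lemma~\ref{lem: Möbius blow down} the continuous function $\tau\mapsto\mc T(I_{a(\tau)}\circ f)$ has limits $\mc T(\mb S^2)$ and $\mc T(f)$, hence attains $\mc T(f_2)$ at some $\tau_0$, and by Proposition~\ref{prop: Willmore invariance} the embedding $I_{a(\tau_0)}\circ f$ satisfies $\mc T(I_{a(\tau_0)}\circ f)=\mc T(f_2)$ and $\mc W(I_{a(\tau_0)}\circ f)=\mc W(f)<\mc W(f_1)+\mc W(f_2)-4\pi$, as required.

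If $m_1=0$, then $\delta=\mc O_t(\alpha^2\gamma^2)+\mc O_{t,\gamma}(\alpha^{5/2})$ is much smaller than the gap $\tfrac{c_0}{2}\alpha^2$ once $\gamma$ is small, but its sign is uncontrolled, so the inversion argument fails; I would instead repair the constraint by a local variation. With $\gamma$ and $\alpha$ fixed as above, $f$ is not a round sphere (it carries a neck), so Lemma~\ref{lem:First variation of T} supplies a normal field, which I would take of the form $\xi=\tfrac1{t\alpha}\xi_0$ for a fixed $\xi_0$ obtained by applying that lemma to $f_2$, with support in a fixed region $\Omega_0\subset V$ away from $p_2$, such that $\dv{}{s}\bigg\vert_{s=0}\mc T(f+s\xi)=c$ and $\mc W(f+s\xi)=\mc W(f)+\mc O(s)$; by scale invariance of $\mc W$ and $\mc T$ on the fixed patch $\Omega_0$, the rate $c$ is close to the nonzero number $\dv{}{s}\bigg\vert_{s=0}\mc T(f_2+s\xi_0)$ and the constant in $\mc O(s)$ is bounded, both uniformly in $\alpha$. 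The intermediate value theorem then gives $s_0$ with $\mc T(f+s_0\xi)=\mc T(f_2)$ and $|s_0|\le C|\delta|$, whence $\mc W(f+s_0\xi)\le\mc W(f_1)+\mc W(f_2)-4\pi-\tfrac{c_0}{2}\alpha^2+C'|\delta|$; shrinking $\gamma$ and then $\alpha$ makes $C'|\delta|<\tfrac{c_0}{4}\alpha^2$, so $f+s_0\xi$ is the desired immersion.

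The main obstacle is exactly the phenomenon dictating this case split: since $\mc T$ is a second-order differential quantity, whenever the inverted surface $f_1^\circ$ has nonzero total mean curvature the construction perturbs $\mc T$ by an amount of the same order $\alpha^2$ as the Willmore energy that it saves, so one cannot restore the constraint by a local variation (whose Willmore cost would be $\mc O(\alpha^2)$ with a constant one cannot make small) and is forced to use a conformal transformation. This makes it essential to know that $\mc T(f_2)$ lies strictly between $\mc T(f)$ and $\mc T(\mb S^2)$, which is precisely why both the mirror-image device (to fix the sign of $\delta$) and the hypothesis $\mc T(f_2)\ne\mc T(\mb S^2)$ are needed.
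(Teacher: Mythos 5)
Your proposal is correct and follows essentially the same route as the paper: the Bauer--Kuwert gluing with $t$ large to make the Willmore gap of order $\alpha^2$, the mirror-image device plus the Möbius blow-down/blow-up path to restore the constraint when the inverted surface has nonzero total mean curvature, and the first-variation repair (with the same balancing of the $\mc O_t(\gamma^2)\alpha^2$ and $\mc O_{t,\gamma}(\alpha^{5/2})$ errors against the $\alpha^2$ Willmore gap) in the degenerate case. The only difference is organizational: you assert that $\lim_{\alpha\to 0}\int_{U_\alpha}H_{f_1^\circ}\dif\vol_{f_1^\circ}$ exists (which your decay argument for $\Delta p^\circ$ and $\phi^\circ$ does justify) and so split into two cases, whereas the paper hedges with $\limsup$/$\liminf$ and uses three.
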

\begin{proof}
Let us first outline the strategy. We assume without loss of generality that $\mc T(f_2) > \mc T(\mb S^2)$. The proof for the case $\mc T(f_2) < \mc T(\mb S^2)$ is identical up to changes in the inequality signs. We will be concerned with three cases. In Case 1 and Case 2, we assume that there is some sequence $\alpha_n \to 0$ such that $\limn \int _{U_{\alpha_n} } H_{f_1^\circ} \dif  \vol _{f_1^\circ}\neq 0$. Then, we show that the glued surface $f:\Sigma \to \R^3$ with specific choices of coefficients $t, \gamma$ and $\alpha$ satisfies
\begin{equation}
\mc T(f) > \mc T(f_2)> \mc T(\mb S^2)\quad \text{and}\quad\mc W (f) < \mc W(f_1) + \mc W(f_2) - 4\pi. \label{BK:Tf greater than Tf2}
\end{equation} Once this is established, we apply Proposition \ref{prop: Willmore invariance}, Lemma \ref{lem: Möbius blow down} and Lemma \ref{lem: Möbius blowup}. We have that
\[\lim _{|a|\to \infty} \mc T(I_a \circ f) = \mc T(f), \quad \lim _{a\to \im f} \mc T(I_a \circ f) = \mc T(\mb S^2) \]
and $\mc W(I_a\circ f) = \mc W(f)$ for $a\not \in \im f$. So by continuity, we find some $a\in \R^3\setminus \im f$ such that $\mc T(I_a \circ f) = \mc T(f_2)$ and $\mc W(I_a \circ f) = \mc W(f)$ which finishes the proof. In the remaining Case 3, we will instead directly apply Lemma \ref{lem:First variation of T}.\medbreak
\textbf{Case 1: $\omega \cqq \limsup_{\alpha \to 0}\int _{U_\alpha } H_{f_1^\circ} \dif \vol_{f_1^\circ} > 0$.} By \eqref{BK:Frobenius Inner Product Positive}, we have that $\langle P^\circ, Q^\circ\rangle >0$. Thus, it is possible to choose $t>0$ large enough such that 
\begin{equation}
|P^\circ|^2 - t \langle P^\circ, Q^\circ \rangle < 0.\label{BK: t estimate}
\end{equation} 
Following from \eqref{BK: t estimate}, we choose $\gamma$ small enough such that 
\begin{equation}
|P^\circ |^2 - t\langle P^\circ, Q^\circ \rangle + \mc O _t(\gamma^2 \log(\gamma)^2)<0\label{BK:gamma estimate}
\end{equation}
which is possible as $\lim _{\gamma \to 0^+} \gamma^2 \log(\gamma)^2 =0$. Possibly decreasing $\gamma$, we also require that $\omega + \mc O_t(\gamma^2)> \omega/2 > 0$. Then, choose $\alpha > 0$ small enough such that 
\begin{equation}
\int _{U_\alpha } H_{f_1^\circ} \dif \vol_{f_1^\circ} + \mc O_t(\gamma^2) > \omega/4,\label{BK:alpha estimate 1}
\end{equation}
which is possible by definition of $\omega$. Possibly making $\alpha$ smaller, we also require that the right-hand side of \eqref{BK:Estimate T} is positive, and furthermore that the right-hand side of \eqref{Bauer Kuwert Willmore estimate } is still negative, which is possible due to \eqref{BK:gamma estimate}. Then, \eqref{Bauer Kuwert Willmore estimate } implies \eqref{BK:strict inequality} and together with \eqref{BK:Estimate T}, we deduce \eqref{BK:Tf greater than Tf2}. The statements from the theorem, \eqref{BK:Tf = Tf2} and \eqref{BK:strict inequality}, then follow as mentioned before by an application of Proposition \ref{prop: Willmore invariance}, Lemma \ref{lem: Möbius blow down} and Lemma \ref{lem: Möbius blowup}.  Notice also that this case includes the possibility that $\omega =+\infty$. \medbreak
\textbf{Case 2: $\omega \cqq \liminf_{\alpha \to 0}\int _{U_\alpha } H_{f_1^\circ} \dif \vol_{f_1^\circ} < 0$.} Denote by $A \in O(3)$ the matrix 
\[A = \begin{pmatrix}
0 & -1 & 0\\ 
1 & 0 & 0 \\ 
0 & 0 & -1
\end{pmatrix}. \] Instead of the immersion $f_1$, we consider $f_1' \cqq A\circ f_1$, which corresponds to mirroring $f_1$ along the $xy$-plane and then rotating it $90^\circ$ around the $z$-axis. Then, 
\[{\int _{U_\alpha } H_{{f_1'}^\circ} \dif \vol_{{f_1'}^\circ} = - \int _{U_\alpha } H_{f_1^\circ} \dif \vol_{f_1^\circ}}.\]
Furthermore, if $P'$ denotes the corresponding second fundamental form at the origin of $f_1'$, a simple computation shows $(P')^\circ=P^\circ$ and thus \eqref{BK:Frobenius Inner Product Positive} implies $\langle (P')^\circ, Q^\circ \rangle > 0$. We can now apply Case 1.\medbreak
\textbf{Case 3: $\lim_{\alpha \to 0}\int _{U_\alpha } H_{f_1^\circ} \dif \vol_{f_1^\circ} = 0$.} In this case, Lemma \ref{Difference of H-integrals} implies
\begin{align}
\mc T(f) = \mc T(f_2) +\mc O_t(\gamma^2) \alpha^2 + o_{t,\gamma}(\alpha^2). \label{BK:Case 3 estimate T}
\end{align}
Since $f_2$ is not a round sphere, we can apply Lemma \ref{lem:First variation of T} to $f_2$ to obtain $f_{2,s} \cqq f_2 + s\xi$ where $\xi$ is some smooth normal vector field supported away from $p_2$\footnote{Remember that $p_2$ was the nonumbilic point of $\Sigma_2$ around which we pasted the inverted surface $f_1^\circ$.} such that 
\begin{equation}
\dv{}{s}\bigg\vert _{s=0} \mc T(f_{2,s}) > 0\label{BK:Derivative T}
\end{equation}
where we possibly replaced $\xi$ by $-\xi$. We apply the connected sum construction to $f_1$ and $f_{2,s}$ around $p_1$ and $p_2$ to obtain a glued surface $f'_{s,\alpha,\gamma,t}:\Sigma \to \R^3$. For this glued surface, notice that the right-hand side of \eqref{Bauer Kuwert Willmore estimate } with $f_2$ replaced by $f_{2,s}$ does not depend on $s$ since we applied the variation supported away from the gluing region.  Consider next $f_{s,\alpha,\gamma,t} \cqq \beta f'_{s,\alpha,\gamma,t}$ and $f = f_{0,\alpha,\gamma,t}$, where the rescaling by $\beta$ changes neither the total mean curvature ratio nor the Willmore energy. For fixed $\gamma$ and $t$, it holds by \cite[(3.13),(3.17)]{MondinoScharrerInequality} that
\begin{equation*}
\mc A(f_{s,\alpha,\gamma, t}\vert _{\Sigma \setminus V}) = \mc A(f_{0,\alpha,\gamma, t}\vert _{\Sigma \setminus V}) \to 0 \qquad \text{as}\quad \alpha \to 0
\end{equation*}
and, by a computation similar as \eqref{BK:Int H estimate},
\begin{equation*}
\int _{\Sigma \setminus V} H_{f_{s,\alpha,\gamma, t}} \dif \vol _{f_{s,\alpha,\gamma, t}} = \int _{\Sigma \setminus V} H_{f_{0,\alpha,\gamma, t}} \dif \vol _{f_{0,\alpha,\gamma, t}} \to 0\qquad \text{as}\quad \alpha \to 0.
\end{equation*}
Thus, for fixed $\gamma, t$ and $\alpha$ sufficiently small,  \eqref{BK:Derivative T} together with $f_{s,\alpha,\gamma,t}\vert _{V} = f_{2,s}\vert _{V}$ implies
\begin{equation}
 \frac{\mc T(f_{s,\alpha,\gamma,t})- \mc T(f)}{s} > C_1 + o(1)\quad \text{as}\quad s\to 0, \label{derivative estimate}
\end{equation}
where the constant $C_1>0$ and $o(1)$ in \eqref{derivative estimate} do not depend on $\alpha,\gamma$ and $t$. Choose $C_2> 0$ such that 
\begin{equation}
|\mc W(f_{s,\alpha,\gamma,t}) - \mc W(f)| < C_2 |s|  \quad \text{as}\quad s\to 0,\label{Willmore difference}
\end{equation}
where $C_2$ and $o(s)$ in \eqref{Willmore difference} are independent of $\alpha, \gamma$ and $t$. Now, let us choose the coefficients $t, \gamma,\alpha$ and $s$ appropriately. First, choose $t$ large enough such that
\[|P^\circ|^2 - t \langle P^\circ, Q^\circ \rangle < 0,\]
which is again possible by \eqref{BK:Frobenius Inner Product Positive}. Second, choose $\gamma$ small enough such that 
\[-\eps \cqq |P^\circ |^2 - t\langle P^\circ, Q^\circ \rangle + \mc O _t(\gamma^2 \log(\gamma)^2) < 0,\]
Choosing $\gamma$ possibly even smaller, we also require that $ \left |\mc O_t(\gamma^2)\right |$ from \eqref{BK:Case 3 estimate T} satisfies $  {\left |\mc O_t(\gamma^2)\right |<\frac{1}{4}C_1 C_2^{-1}\pi\eps}$. This guarantees that, third, $\alpha>0$ can be taken sufficiently small such that 
\begin{equation}
|\mc O_t(\gamma^2)|\alpha^2 + |o_{t,\gamma}(\alpha^2)| < 
-\frac{1}{2}C_1 C_2^{-1}\pi \alpha^2( -\eps + \mc O_{t,\gamma}(\alpha^{1/2})). \label{alpha estimate}
\end{equation}
Finally, let $s_0=2C_1^{-1} (|\mc O_t(\gamma^2)|\alpha^2 + |o_{t,\gamma}(\alpha^2)|)$. Now, set $s=s_0$ and notice that for $\alpha$ sufficiently small, it holds that
\begin{align*}
&\quad \, \mc T(f_{s,\alpha,\gamma,t}) -\mc T(f_2) \\
& =\mc T(f_{s,\alpha,\gamma,t}) - \mc T(f) + \mc T(f) -\mc T(f_2) \\
\text{(by \eqref{BK:Case 3 estimate T} and \eqref{derivative estimate})}\quad &> C_1 s + o(s)-| \mc O_t(\gamma^2)|\alpha^2 - |o_{t,\gamma}(\alpha^2)|\\
\text{(by definition of $s_0$)}\quad&= o(s) + |\mc O_t(\gamma^2)|\alpha^2 + |o_{t,\gamma}(\alpha^2)| \\
\text{($o(s) = o_{t,\gamma}(\alpha^2)$)}\quad &> 0  
\end{align*}
by the choice of $s$. For $s =-s_0$ we analogously obtain $\mc T(f_{s,\alpha,\gamma,t}) -\mc T(f_2)  < 0$. By continuity, we find some $|s| \leq s_0$ such that $\mc T(f_{s,\alpha,\gamma,t}) = \mc T(f_2)$. For this choice of $t,\gamma,\alpha$ and $s$ we finally obtain, using the previous estimates, that
\begin{align*}
&\quad \, \mc W(f_{s,\alpha, \gamma,t}) - (\mc W(f_1) + \mc W(f_2) - 4\pi) \\
\text{(\eqref{Bauer Kuwert Willmore estimate } and \eqref{Willmore difference})}\quad &\leq C_2| s |+ \pi \alpha^2( -\eps + \mc O_{t,\gamma}(\alpha^{1/2}))\\
\text{(choice of $s_0$)}\quad &< 
2C_2 C_1^{-1}(|\mc O_t(\gamma^2)|\alpha^2 + |o_{t,\gamma}(\alpha^2)|) +\pi \alpha^2( -\eps + \mc O_{t,\gamma}(\alpha^{1/2}))\\
\text{(\eqref{alpha estimate})}\quad&<0.\tag*\qedhere
\end{align*}
\end{proof}
\section{An explicit construction of surfaces with less than \texorpdfstring{$8\pi$}{} Willmore energy and prescribed mean curvature constraint}\label{sec:8piconstruction}
In this section, we want to show that for an arbitrary genus $g$ and each $R \in I$, there exists a smooth embedded surface $\Sigma \subset \R^3$ of genus $g$ such that $\mc W(\Sigma)< 8\pi$ and $\mc T(\Sigma) = R$. This will confirm \eqref{8pi bound} and finish the proof of the constrained minimization problem, see also Remark \ref{remark: I}. Using Lemma \ref{lem: Möbius blow down} and  \ref{lem: Möbius blowup}, it is enough to construct genus $g$ surfaces $\Sigma^{1,g}_n$ and $\Sigma ^{2,g}_n$ with $\mc T(\Sigma ^{1,g}_n) \to 0$, $\mc T(\Sigma^{2,g}_n) \to \sqrt{2} \mc T(\mb S^2)$ and $\mc W(\Sigma^{1,g}_n)<8\pi$, $ \mc W(\Sigma ^{2,g}_n) < 8\pi$. We will use the construction from \textsc{Ndiaye} and \textsc{Schätzle} \cite[Proposition D.1.]{Ndiaye}, which we will recall in the following proposition, while also showing that the total mean curvature ratios converge. This construction can also be used to show the $8\pi$ bound for the isoperimetrically constrained problem \eqref{intro:KMR8pi}, see Remark \ref{rem:iso constrained 8pi}, providing an alternative proof to \cite{KusnerMcGrath}.
\begin{prop}\label{prop:8piconstruction}
There exist smooth genus $g$ surfaces $\Sigma^{1,g}_n,\Sigma^{2,g}_n \subset \R^3$ such that $\mc W(\Sigma^{1,g}_n)< 8\pi$, $ \mc W(\Sigma^{2,g}_n) < 8\pi$ and
\[\mc T(\Sigma^{1,g}_n) \to 0, \quad \mc T(\Sigma^{2,g}_n) \to\sqrt{2} \mc T(\mb S^2) \quad \text{as $n\to \infty$}.\]
\end{prop}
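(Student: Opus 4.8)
The plan is to recall the construction of Ndiaye and Schätzle \cite[Proposition D.1]{Ndiaye} and then to compute the total mean curvature ratio of the surfaces it produces. Fix a radius $r>0$ and parameters $\delta_n,\eps_n\searrow0$, with $\eps_n$ chosen much smaller than $\delta_n$ and than the other length scales entering the construction. The surface $\Sigma^{2,g}_n$ is obtained by taking two disjoint round spheres of radius $r$, removing from each of them $g+1$ geodesic disks of radius $\delta_n$, and joining the two spheres through the $g+1$ pairs of holes by thin necks of waist $\eps_n$ (for instance pieces of catenoid, which are minimal), the transitions between the spherical caps and the necks being realised by biharmonic graphs as in Chapter \ref{sec:BauerKuwert}; one checks as in \cite{BauerKuwert} that this is a smooth embedded surface of genus $g$ for $n$ large. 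The surface $\Sigma^{1,g}_n$ is built in the same way, but starting from a single round sphere $S$ of radius $r$ together with a second sphere $S'$ of radius $r_n'<r$ with $r_n'\to r$, placed inside the ball bounded by $S$: one removes $g+1$ small disks from each of $S$ and $S'$ and joins the two spheres by $g+1$ thin necks threaded through the shell between them, so that $S'$ forms an inward pocket. That $\mc W(\Sigma^{i,g}_n)<8\pi$ for $n$ large is precisely the assertion of \cite[Proposition D.1]{Ndiaye}: the spherical pieces contribute $8\pi$ to $4\mc W$ up to the positive deficit produced by the removed disks, whereas the necks and the biharmonic transitions contribute a strictly smaller amount, by scaling estimates of the same type as in Lemma \ref{lem: Bauer-Kuwert Willmore estimate}.

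It remains to compute $\mc T$. In each case we split $\int_\Sigma H\dif\vol$ and $\mc A$ over the spherical pieces and over the union of the necks and transition regions. On the latter the area tends to $0$ and, by Lemma \ref{mean curvature vs laplacian} together with the Taylor expansions for the biharmonic gluing already used in Lemma \ref{Difference of H-integrals}, $\int H\dif\vol$ also tends to $0$ as $n\to\infty$. With the inward-pointing normal, a round sphere of radius $\rho$ that bounds a ball on the side into which $\vec n$ points contributes $\int H\dif\vol=8\pi\rho$ and area $4\pi\rho^2$, while the inward pocket $S'$, whose inward normal points \emph{away} from the centre of $S'$, contributes $\int H\dif\vol=-8\pi r_n'$ and area $4\pi(r_n')^2$. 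Therefore
\begin{align*}
\mc T(\Sigma^{2,g}_n)&=\frac{2\cdot 8\pi r+o(1)}{\sqrt{2\cdot 4\pi r^2+o(1)}}\;\longrightarrow\;\frac{16\pi r}{\sqrt{8\pi}\,r}=\sqrt{32\pi}=\sqrt{2}\,\mc T(\mb S^2),\\[0.3em]
\mc T(\Sigma^{1,g}_n)&=\frac{8\pi r-8\pi r_n'+o(1)}{\sqrt{4\pi r^2+4\pi(r_n')^2+o(1)}}\;\longrightarrow\;0 ,
\end{align*}
which is the statement of the proposition.

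The difficulty lies not in these limits, which are immediate once the surfaces are available, but in the construction itself: one has to arrange the necks and biharmonic transition regions so that their total Willmore energy remains strictly below the positive deficit coming from the removed disks --- so that $\mc W<8\pi$ holds for \emph{every} $n$, not merely asymptotically --- and at the same time so that their contribution to $\int H\dif\vol$ is only $o(1)$. This is exactly the delicate part of \cite[Proposition D.1]{Ndiaye}, and it proceeds in complete analogy with the biharmonic-graph estimates of Chapter \ref{sec:BauerKuwert}, in particular Lemmas \ref{lem: Bauer-Kuwert Willmore estimate} and \ref{Difference of H-integrals}; we therefore only recall the construction and add the computation of $\mc T$.
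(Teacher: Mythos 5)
Your target configurations and the final computation of $\mc T$ are exactly right and coincide with what the paper does: a pair of nested spheres gives cancellation of the total mean curvature (hence $\mc T\to 0$), and a pair of spheres whose total mean curvatures add gives $\mc T\to\sqrt{2}\,\mc T(\mb S^2)$. The gap is in the construction itself and in the mechanism you invoke for $\mc W<8\pi$. The Bauer--Kuwert-type gluing estimates (Lemma \ref{lem: Bauer-Kuwert Willmore estimate}) cannot supply the energy saving here: the negative leading term there is $-t\langle P^\circ,Q^\circ\rangle$, built from the \emph{trace-free} second fundamental forms at \emph{nonumbilic} gluing points, and a round sphere is totally umbilic, so $P^\circ=Q^\circ=0$ and that mechanism yields nothing. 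The actual mechanism in Ndiaye--Sch\"atzle, and in the paper's proof, is different and uses no biharmonic interpolation at all: one takes the two spheres $\pm S(t)$ \emph{tangent} to a catenoid, so that replacing the two spherical caps by the catenoidal segment is already $C^{1,1}$, and the neck, being minimal, carries zero Willmore energy while the removed caps carry exactly $4\pi(1-\tanh t)>0$; this gives $\mc W = 8\pi-4\pi(g+1)(1-\tanh t)<8\pi$ with an explicit deficit, and only a final mollification is needed.

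Moreover, your $\Sigma^{2,g}_n$ --- two \emph{disjoint} spheres of fixed radius $r$ joined by $g+1$ short necks --- is not geometrically realizable for $g\ge 1$: two disjoint round spheres of equal radius are close to one another only near a single pair of points, so at most one neck can be short, and the remaining $g$ necks would have to span a macroscopic gap, where thin catenoidal pieces neither exist with the required boundary data nor have small Willmore energy. This is precisely why the paper first applies the M\"obius transformation $\Phi_t$ to pass to two \emph{concentric} spheres of radii $\sigma(t)$ and $1/\sigma(t)$ with $\sigma(t)\to 1$: the shell between them is uniformly thin, so the handle can be copied and pasted at $g+1$ points, giving $\Sigma^{1,g}_t$; the disjoint-sphere configuration needed for $\Sigma^{2,g}_t$ is then obtained by applying the inversion $T$ once more, which turns the nested spheres into two huge disjoint spheres carrying handles of bounded size, with $\mc W$ unchanged by conformal invariance. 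If you keep your outline, you must either reproduce this M\"obius-transformation step or supply a genuinely new energy estimate for long necks; as written, the bound $\mc W(\Sigma^{2,g}_n)<8\pi$ does not follow from the references you cite.
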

\begin{proof}
We denote by $\gamma: \mb S^1 \times \R \to \R^3$ the immersion 
\[\gamma(\theta, t)\cqq  \cosh t (\cos \theta, \sin \theta, 0) + t e_3,\]
where $e_3=(0,0,1)$. The image of this immersion is called \emph{catenoid}, a minimal surface in $\R^3$, i.e., $H_{\gamma} =0$. For now, the construction we are about to do is rotationally symmetric around the $z$-axis, so we simply work in the $xz$-plane. Let $S(t)$ be the sphere with center on the $z$-axis which touches the catenoid tangentially from above in $(\pm \cosh t, t)$ for $t\geq 0$. If we denote by $r(t)$ the radius, then by a simple computation, we obtain $r(t) = \cosh(t)^2$ and that the center is at $C(t) \cqq (0, \xi(t))$, where $\xi(t) = \sinh t \cosh t + t$. We denote by $\frac{\alpha(t)}{2}$ the angle between $z$-axis and the line between $C(t)$ and $(\cosh(t),t)$, such that
\[\cos \left (\frac{\alpha(t)}{2}\right ) = \frac{\sinh t \cosh t + t - t}{\cosh(t)^2} = \tanh t. \]
Similarly, the sphere $-S(t)$ tangentially touches the catenoid from below in the points $(\pm \cosh(t), -t)$. We denote by $\Capp(t)$ the spherical cap of $S(t)$ below $z=t$ and similarly $-\Capp(t)$ the spherical cap of $-S(t)$ above $z=-t$. We consider the surface $\Gamma_t$ obtained by gluing together $S(t)$ and $-S(t)$ along the segments $(\pm \cosh t, \pm t)$ with the segment $\cat(t) \cqq \gamma(\mb S^1 \times [-t,t])$ of the catenoid which lies in between both segments, while removing $\Capp(t)$ from $S(t)$ and $-\Capp(t)$ from $-S(t)$. Because the spheres touch the catenoid tangentially, this results in an orientable $C^{1,1}$- surface. Since the catenoidal segment has no Willmore energy, the Willmore energy of this surface is $8\pi$ minus the Willmore energy of $\pm\Capp(t)$. We can compute $\mc A(\Capp(t)) = 2\pi r(t)^2 (1-\tanh t)$. Since the mean curvature of a sphere of radius $r$ is $\frac{2}{r}$, we obtain that $\Gamma_t$ has a Willmore energy of
\[\mc W(\Gamma_t) = 8 \pi - 4\pi(1 - \tanh t).\]
We consider $t$ large enough such that $S(t)\cap -S(t) =\emptyset$, that is
\begin{equation}
1<\frac{\xi(t)}{r(t)} = \tanh t + \frac{t}{\cosh(t)^2}, \label{ratio of center and radius}
\end{equation}
which is true for sufficiently large $t$.  

We now consider the stereographic projection in $\R^3$ of the unit sphere centered at $e_3$ given by
\[T(p) \cqq e_3 +2 \frac{p-e_3}{|p- e_3|^2}.\]
Then, $T = T^{-1}$ and 
\[T(\partial B_1(0)) = \{z=0\}\cup \{\infty\}.\]
Furthermore, for $\sigma\neq 1$, the sphere $\partial B_\sigma(0)$ is mapped to another sphere via $T$, given by
\begin{equation}
T(\partial B_\sigma(0)) = B_{\rho(\sigma)}(\zeta(\sigma)e_3), \quad \zeta(\sigma) = -\frac{1+\sigma^2}{1-\sigma^2}, \quad \rho(\sigma) = \left | \frac{2\sigma}{1-\sigma^2}\right |.  \label{def of rho and sigma}
\end{equation}
It follows
\[T(\partial B_{1/\sigma}(0)) = -T(\partial B_{\sigma}(0)).\]
The map $\sigma \mapsto \frac{\rho(\sigma)}{|\zeta(\sigma)|} = \frac{2\sigma}{1+\sigma^2}$ is bijective on the interval $(0,1)$. Hence, after rescaling $S(t)$ appropriately by a factor of $\lambda(t)$ and using the fact that $S(t)$ and $-S(t)$ are disjoint, we find $\sigma = \sigma(t)\in (0,1)$ such that
\[-\lambda(t) S(t) = T(\partial B_{\sigma(t)}(0)), \quad \lambda(t) S(t) = T(\partial B_{1/\sigma(t)}(0)).\]
Then, it follows that
\[\tanh t + \frac{t}{\cosh(t)^2} = \frac{1+ \sigma^2}{2\sigma}, \quad \lambda(t) \cosh(t)^2 = \frac{2\sigma}{1-\sigma^2}.\]
The first equation implies $\sigma(t) \to 1$ as $t\to \infty$. Then, using \eqref{def of rho and sigma} results in
\[\rho(\sigma(t))^2 \left (\tanh t + \frac{t}{\cosh(t)^2}-1\right ) \to \frac{1}{2}\quad \text{as $t\to \infty$}, \quad \rho(\sigma(t))^2 te^{-2t} \to \frac{1}{8} \quad \text{as $t\to \infty$}.\]
This also implies $\lambda(t) = \rho(\sigma(t))/r(t) = \mc O(t^{-1/2} e^{-t})$. We now define the Möbius transformation $\Phi_t$ by $\Phi_t(p) \cqq T(\lambda(t)p)$ so that we obtain
\[\Phi_t(-S(t)) = \partial B_{\sigma(t)}(0),\quad \Phi_t(S(t))  = \partial B_{1/\sigma(t)}(0).\]
Furthermore, $\Sigma_t\cqq \Phi_t(\Gamma_t)$ is still a closed, orientable $C^{1,1}$-surface consisting of the two spheres $\partial B_{\sigma(t)}(0)$ and $\partial B_{1/\sigma(t)}(0)$ with the spherical caps 
\[\Capp^-(t) \cqq \Phi_t(-\Capp(t))\subset \partial B_{\sigma(t)}(0), \quad \Capp^+(t)\cqq \Phi_t(\Capp(t))\subset \partial B_{1/\sigma(t)}(0) \]
removed and replaced by $\Phi_t(\cat(t))$. If we denote by $I$ the inversion at the unit sphere, i.e., $I(w) = w/|w|^2$, then $T\circ I\circ T \circ (-\id) $ is the reflection around the $z$-axis\footnote{Rotational symmetry is preserved, so it suffices to show this for the $xz$-plane. Then, identifying the $xz$-plane with $\C$, this follows by a simple computation using $I(x+iy) = \frac{1}{x-iy}$ and $T(z) = i+\frac{2}{x+(1-y)i}$.}. It follows that \[I(\Capp^+(t)) = I (T(\lambda (t) \Capp(t))) = T(-\lambda (t) \Capp(t)) =  \Capp^-(t)\]
and both have the same opening angle $\beta(t)$. The Willmore energy stays invariant under this inversion by Proposition \ref{prop: Willmore invariance} so that 
\begin{equation}
\mc W(\Sigma_t) = \mc W(\Gamma_t)= 8\pi - 4\pi(1-\tanh t).\label{catenoidal bridge saves energy}
\end{equation}
To estimate the opening angle $\beta(t)$, we estimate the area of the rescaled caps $\Capp(t)$. Using that $\lambda(t) = \mc O(t^{-1/2} e^{-t})$, $\mc H^2(\Capp(t)) = \mc O(e^{2t})$ and $\diam(\Capp(t)) = \mc O(e^t)$, we obtain that $\mc H^2(\lambda(t)\Capp(t)) \to 0$ and $\diam(\lambda(t) \Capp(t))\to 0$ as $t\to \infty$. Furthermore, for sufficiently large $t$, ${\pm\lambda(t)\Capp(t)\subset B_{1/2}(0)}$ and since $DT$ is bounded in $B_{1/2}(0)$, it follows from the area formula that
\[\mc H^2(\Capp^\pm (t)) = \mc H^2(T(\pm \lambda(t) \Capp(t))) \to 0 \text{ and } \diam(\Capp^ \pm (t)) \to 0 \text{ as $t\to \infty$}.\]
Therefore, $\beta(t)\to 0$ as $t\to \infty$. 
By the same arguments, we obtain $\mc H^2(\Phi_t(\cat(t)))\to 0$ and $\diam(\Phi_t(\cat(t))) \to 0$. Thus, we have constructed two spheres of radius $\sigma(t)$ and $ 1/\sigma(t)$ connected by a handle of vanishing area and diameter in the limit. As the two spherical segments have a Willmore energy of almost $8\pi$, the Willmore energy of the catenoidal segment converges to 0. Furthermore, by \eqref{catenoidal bridge saves energy}, we see that the catenoidal bridge has less energy than the two removed caps. We can copy and paste this handle $g+1$ times at different points $p_0=-e_3,\ldots, p_g$ on $\partial B_1(0)$ to obtain a genus $g$ surface. More precisely, for $\eps>0$ sufficiently small such that the balls $B_\eps(p_i)$ are pairwise disjoint and $t$ sufficiently large, we define the surface
\[\Sigma^{1,g}_t \cqq \left (\bigcup_{i=0}^g R_i(\Sigma_t \cap B_{\eps}(p_0))\right )\cup \left (\Bigl(\partial B_{1/\sigma(t)}(0) \cup \partial B_{\sigma(t)}(0)\Bigr) \setminus \bigcup _{i=0}^g B_\eps(p_i)\right ).\]
Here, $R_i\in O(3)$ are rotations such that $R_i(p_0)=p_i$. This surface is a closed, connected and oriented $C^{1,1}$-surface as long as the catenoidal segment of $\Sigma_t$ is contained in $B_\eps(p_0)$, which is satisfied for large $t$ as $\beta(t)\to 0$ for $t\to \infty$. We can also choose the $p_i$ such that $\|e_3 - p_i\| > 1$ and define 
\[\Sigma^{2,g}_t \cqq T(\Sigma^{1,g}_t).\]
Then, it holds that $\mc W(\Sigma^{1,g}_t)= \mc W(\Sigma^{2,g}_t) = 8\pi - 4\pi (g+1)(1-\tanh t) < 8\pi$. 

Next, we show that $\mc T(\Sigma ^{1,g}_t) \to 0$ and $\mc T(\Sigma ^{2,g}_t) \to \sqrt{2}\mc T(\mb S^2)$ for $t\to \infty$. $\Sigma^{1,g}_t$ consists of two spheres with radius $\sigma(t)$ and $1/\sigma(t)$, where $g+1$ spherical caps of vanishing area are removed and replaced by $g+1$ catenoidal handles with vanishing area and vanishing Willmore energy. We observe that by the Cauchy-Schwarz inequality, the total mean curvature of the catenoid and the spherical caps goes to 0. We thus obtain\footnote{Notice here that the normal vectors of the two spheres point in opposite directions, resulting in the sign change.}
\begin{equation}
\mc A (\Sigma^{1,g}_t) \to 8\pi, \quad \int _{\Sigma^{1,g}_t} H_{\Sigma ^{1,g}_t} \dif \vol_g \to \int _{\partial B_1(0)} 2 \dif \vol _g - \int _{\partial B_1(0)} 2 \dif \vol _g =0.\label{NS:area estimate 2}
\end{equation}
Thus $\mc T(\Sigma^{1,g}_t) \to 0$. However, $\Sigma^{2,g}_t$ consists of two spheres with diverging radii $\lambda(t) r(t)$ and $g$ spherical caps of bounded area\footnote{By the assumption that the $p_i$ have positive distance to $e_3$, $DT$ is bounded.} removed and catenoidal handles of bounded area inserted. Hence
\begin{equation}
\mc A (\Sigma^{2,g}_t) = 8\pi (\lambda(t)r(t))^2 + \mc O(1), \quad \int _{\Sigma^{2,g}_t} H_{\Sigma^{2,g}_t}\dif \vol_g =8 \pi (\lambda(t)r(t))^2 \frac{2}{\lambda(t)r(t)} + \mc O(1)\label{NS:area estimate}
\end{equation}
as $t\to \infty$. Thus
\[\mc T(\Sigma^{2,g}_t) = \frac{8 \pi (\lambda(t)r(t))^2 \frac{2}{\lambda(t)r(t)} + \mc O(1)}{\sqrt{8\pi (\lambda(t)r(t))^2 + \mc O(1)}} \to \sqrt{2} \mc T(\mb S^2).\]
Finally, we can approximate this surface by smooth surfaces by writing the gluing regions in a local graph representation and using standard mollifiers $\rho_\delta$ to obtain smooth surfaces $\Sigma^{1,g}_{t,\delta}$, $\Sigma^{2,g}_{t,\delta}$ for which 
\[\lim _{\delta \to 0}\mc W(\Sigma ^{i,g}_{t,\delta}) \to \mc W(\Sigma ^{i,g}_t),\quad \lim _{\delta \to 0} \mc T(\Sigma ^{i,g}_{t,\delta}) \to \mc T(\Sigma ^{i,g}_t).\]
The proof is finished by taking a suitable diagonal sequence in $t$ and $\delta$.
\end{proof}
\begin{remark} \label{rem:iso constrained 8pi}
Similar to the estimate for the total mean curvature ratio, we can also estimate the isoperimetric ratio of the constructed surfaces $\Sigma^{1,g}_t$, $ \Sigma^{2,g}_t$. Since the radii of the two spheres $\partial B_{\sigma(t)}(0)$, $\partial B_{1/\sigma(t)}(0)$ converge to $1$ and the diameter of the catenoidal handles converge to 0, we obtain that
\[\mc V(\Sigma^{1,g}_t) = o(1)\quad\text{as }t\to 0.\]
For $\Sigma^{2,g}_t$, we recover, using the same argument
\[\mc V(\Sigma^{2,g}_t) = \frac{8\pi}{3} (\lambda(t)r(t))^3 + \mc O(1).\]
Together with the area estimate \eqref{NS:area estimate 2} and \eqref{NS:area estimate}, it follows that
\[\iso(\Sigma^{1,g}_t) \to \infty,\quad \iso(\Sigma^{2,g}_t) \to \sqrt[3]{72\pi}.\]
Notice that this proves \eqref{intro:KMR8pi} using \cite[Lemma 3.7, Lemma 3.8]{ScharrerPhD}.
\end{remark}
\section{Minimizers close to round spheres}\label{sec:CloseToSphere}
In 1903, \textsc{Minkowski} \cite{Minkowski} showed that for closed, convex surfaces $\Sigma \subset \R^3$, i.e., $\Sigma$ being the boundary of a convex set in $\R^3$, it holds that
\[\mc T(\Sigma)\geq \mc T(\mb S^2)\]
with equality if and only if $\Sigma$ is a round sphere. In particular, if one considers small $C^2$-variations of round spheres, these are still convex by \cite[Theorem 1.8]{DajczerTojeiro} and so they increase the total mean curvature ratio. As the Willmore energy varies continuously under these perturbations, it follows that $\beta_0$ is right-continuous at $\mc T(\mb S^2)$ and one might expect the left-continuity to fail. However, the opposite was shown in \cite{ChodoshEichmairKoerber}. For $u\in C^\infty(\mb S^2)$, we denote by $\Sigma(u)$ the graph of $u$ over $\mb S^2$, i.e.,
\[\Sigma(u)\cqq \{(1+u(x))x, \; x\in \mb S^2\}.\]
Notice that $\mc W(\Sigma(u)) \to 4\pi$ as $\|u\|_{W^{2,2}(\mb S^2)}\to 0$. \textsc{Chodosh}, \textsc{Eichmair}, and \textsc{Koerber} \cite{ChodoshEichmairKoerber} showed that there are $u_n\in C^\infty(\mb S^2)$ such that $\|u_n\|_{W^{2,2}(\mb S^2)}\to 0$ as $n\to \infty$ and $\mc T(\Sigma(u_n))<\mc T(\mb S^2)$. In particular, $\beta_0$ is continuous in $(0, \sqrt{2}\mc T(\mb S^2))$. 
\begin{cor} \label{cor:Continuity at sphere}
For $g\in \N_0$, $\beta_g \vert _{(0,\sqrt{2}\mc T(\mb S^2))}$ is continuous and $\beta_g(\mc T(\mb S^2)) = \beta_g$, where $\beta_g$ is the global minimum of the Willmore energy  for genus $g$ surfaces defined in \eqref{beta g for smooth surfaces}.
\end{cor}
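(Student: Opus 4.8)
The plan is to reduce everything to the single point $R=\mc T(\mb S^2)$, since Theorem \ref{thm:KMR} already gives continuity of $\beta_g$ on $I$. Throughout one uses the two trivial facts $\beta_g(R)\ge\beta_g$ for every $R$ and, in particular, $\beta_g(\mc T(\mb S^2))\ge\beta_g$. For $g=0$ the round sphere has $\mc T=\mc T(\mb S^2)$ and $\mc W=4\pi=\beta_0$, so $\beta_0(\mc T(\mb S^2))=\beta_0$, and the continuity of $\beta_0$ at $\mc T(\mb S^2)$ is exactly what the discussion preceding the statement records: the Chodosh--Eichmair--Koerber graphs \cite{ChodoshEichmairKoerber} handle the limit from the left, while Minkowski's inequality together with \cite{DajczerTojeiro} handles the limit from the right. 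So from now on assume $g\ge1$. By Corollary \ref{cor:Monotonicity}, $\beta_g$ is monotone on each side of $\mc T(\mb S^2)$, so the one-sided limits $L^{\pm}:=\lim_{R\to\mc T(\mb S^2)^{\pm}}\beta_g(R)$ exist, and it suffices to prove
\[L^{-}=L^{+}=\beta_g\qquad\text{and}\qquad\beta_g(\mc T(\mb S^2))\le\beta_g .\]

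\emph{The one-sided limits.} Let $f_g\colon\Sigma_1\to\R^3$ be a smooth embedded minimizer of $\beta_g$ (Simon, Bauer--Kuwert); since $g\ge1$ it is not a round sphere. Let $u_n\in C^\infty(\mb S^2)$ be the Chodosh--Eichmair--Koerber functions, so $\|u_n\|_{W^{2,2}}\to0$, $\mc T(\Sigma(u_n))<\mc T(\mb S^2)$, hence $\mc T(\Sigma(u_n))\to\mc T(\mb S^2)$ and $\mc W(\Sigma(u_n))\to4\pi$; each $\Sigma(u_n)$ is a smooth immersion, not a round sphere, with $\mc T(\Sigma(u_n))\neq\mc T(\mb S^2)$. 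Applying Theorem \ref{thm:connectedsum} with $f_1=f_g$ and $f_2=\Sigma(u_n)$ produces genus $g$ immersions $h_n^{-}$ with $\mc T(h_n^{-})=\mc T(\Sigma(u_n))$ and $\mc W(h_n^{-})<\beta_g+\mc W(\Sigma(u_n))-4\pi$. Hence $\beta_g(\mc T(\Sigma(u_n)))\le\mc W(h_n^{-})\to\beta_g$, and since (after passing to a subsequence) $\mc T(\Sigma(u_n))\nearrow\mc T(\mb S^2)$ while $\beta_g$ is decreasing on $(-\infty,\mc T(\mb S^2))$, this forces $L^{-}=\beta_g$. Replacing the $\Sigma(u_n)$ by small strictly convex graphs over $\mb S^2$ (for which Minkowski's inequality and \cite{DajczerTojeiro} give $\mc T>\mc T(\mb S^2)$) and repeating the argument with Theorem \ref{thm:connectedsum} gives $L^{+}=\beta_g$.

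\emph{The value at $\mc T(\mb S^2)$.} This is the delicate point, because the soft tools fail precisely here: sphere inversions (Lemmas \ref{lem: Möbius blow down}--\ref{lem: Möbius blowup}) only approach $\mc T(\mb S^2)$ as a limit, and the compactness analysis of Section \ref{subsec:ProofOfKMR} cannot exclude Case 2 at $R=\mc T(\mb S^2)$, since there the bound $\beta_g(R)<\beta_g+\beta_0(R)-4\pi$ degenerates (using $\beta_0(\mc T(\mb S^2))=4\pi$) into the false inequality $\beta_g(\mc T(\mb S^2))<\beta_g$. Instead I would re-run the connected-sum construction of Section \ref{sec:BauerKuwert} along a path. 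Fix $n$ large, let $v_n$ be a small strictly convex graph as above, and choose a smooth path $s\mapsto w_s$ in $C^\infty(\mb S^2)$ from $w_0=u_n$ to $w_1=v_n$ through functions of small $W^{2,2}$-norm, avoiding those $w$ for which $\Sigma(w)$ is a round sphere; then $s\mapsto\mc W(\Sigma(w_s))$ stays uniformly close to $4\pi$ and $s\mapsto\mc T(\Sigma(w_s))$ is continuous with $\mc T(\Sigma(w_0))<\mc T(\mb S^2)<\mc T(\Sigma(w_1))$. Carrying out the gluing of Section \ref{sec:BauerKuwert} with $f_1=f_g$ and $f_2=\Sigma(w_s)$, for one fixed admissible choice of the parameters $t,\gamma$ and a small $\alpha$, yields a continuous family $F_s$ of genus $g$ immersions (each embedded, since $\mc W(F_s)<8\pi$ for small parameters and large $n$, by Li--Yau) with, by Lemma \ref{Difference of H-integrals} and Lemma \ref{lem: Bauer-Kuwert Willmore estimate},
\[\mc T(F_s)=\mc T(\Sigma(w_s))+\mc O_{t,\gamma}(\alpha^2),\qquad \mc W(F_s)\le\beta_g+\bigl(\mc W(\Sigma(w_s))-4\pi\bigr)+\mc O_{t,\gamma}(\alpha^2),\]
the error terms being uniform in $s$. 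For $\alpha$ small enough $\mc T(F_0)<\mc T(\mb S^2)<\mc T(F_1)$, so the intermediate value theorem gives $s^{*}$ with $\mc T(F_{s^{*}})=\mc T(\mb S^2)$, whence
\[\beta_g(\mc T(\mb S^2))\le\mc W(F_{s^{*}})\le\beta_g+\sup_{s\in[0,1]}\bigl(\mc W(\Sigma(w_s))-4\pi\bigr)+\mc O_{t,\gamma}(\alpha^2).\]
Letting first $\alpha\to0$ and then $n\to\infty$ (so that the path contracts to $\{0\}$ and $\sup_s(\mc W(\Sigma(w_s))-4\pi)\to0$) yields $\beta_g(\mc T(\mb S^2))\le\beta_g$, as required.

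Combining the two displays in the first paragraph with the trivial lower bounds and with the continuity of $\beta_g$ on $I$ from Theorem \ref{thm:KMR} shows that $\beta_g$ is continuous on all of $(0,\sqrt2\,\mc T(\mb S^2))$ and that $\beta_g(\mc T(\mb S^2))=\beta_g$. \textbf{The main obstacle} is the last step: one must check that the Bauer--Kuwert construction depends continuously on the base immersion $f_2$ and that the estimates of Lemma \ref{Difference of H-integrals} and Lemma \ref{lem: Bauer-Kuwert Willmore estimate} hold with constants uniform along the path, so that the intermediate value theorem can be applied to land \emph{exactly} on $\mc T(\mb S^2)$ — the one value where inversions and the compactness machinery both break down. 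Everything else is a routine combination of Theorem \ref{thm:connectedsum}, the monotonicity of Corollary \ref{cor:Monotonicity}, and the known behaviour of small graphs over $\mb S^2$.
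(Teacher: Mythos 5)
Your treatment of the one-sided limits is correct and coincides with the paper's first step: combine Theorem \ref{thm:connectedsum} with the Chodosh--Eichmair--Koerber graphs on one side and small convex graphs on the other, and use $\mc W(\Sigma(u_n))\to 4\pi$ together with $\beta_g(R)\geq \beta_g$ and Corollary \ref{cor:Monotonicity}.

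For the value at $\mc T(\mb S^2)$, however, the step you yourself flag as ``the main obstacle'' is a genuine gap, not a routine verification. Gluing first and interpolating afterwards requires that the Bauer--Kuwert construction with fixed $(t,\gamma,\alpha)$ depend continuously on the base immersion $f_2=\Sigma(w_s)$ and that the implicit constants in Lemma \ref{Difference of H-integrals} and Lemma \ref{lem: Bauer-Kuwert Willmore estimate} be uniform in $s$. Neither is automatic: for each $s$ the construction requires a choice of nonumbilic point $p_2(s)$, a normalization $f_2(p_2(s))=0$ with $\im Df_2(p_2(s))=\R^2\times\{0\}$, a graph radius $\rho$ with the Taylor bounds \eqref{Error term estimates}, and an orthogonal rotation arranging $\langle P^\circ,Q^\circ(s)\rangle>0$; none of these choices is canonical, all of them enter the definition of $F_s$, and making them continuously and uniformly along the path (on surfaces that are degenerating to the round sphere, where nonumbilic points are scarce) is exactly the work you do not carry out. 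The paper avoids this entirely by reversing the order of operations: it applies the intermediate value theorem to the manifestly continuous map $t\mapsto \mc T\bigl(\Sigma((1-t)u_n+t\phi)\bigr)$, where $\phi$ is one fixed small convex graph function, to produce a single non-round genus-$0$ sphere $\Sigma_{n,t_n}$ with $\mc T(\Sigma_{n,t_n})=\mc T(\mb S^2)$ and $\mc W(\Sigma_{n,t_n})\to 4\pi$, and only then performs one connected sum with a minimizer of $\beta_g$. (Even there one should note that the stated hypothesis $\mc T(f_2)\neq\mc T(\mb S^2)$ of Theorem \ref{thm:connectedsum} is violated, so what is really being invoked is the construction in its proof --- in particular Case 3 via Lemma \ref{lem:First variation of T} --- but this is needed only for one fixed surface rather than uniformly along a family.) To repair your argument, either prove the continuity and uniformity statements for the glued family $F_s$, or restructure as the paper does by interpolating at the level of the graph functions before gluing.
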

\begin{proof}
	Using that $\beta_0(\tau)\to 4\pi$ as $\tau\to \mc T(\mb S^2)$, it follows by Theorem \ref{thm:connectedsum} that $\beta_g(\tau)\to \beta_g$ as $\tau \to \mc T(\mb S^2)$. It remains to show that $\beta_g(\mc T(\mb S^2)) = \beta_g$. For that, let $\eps>0$ and $\phi \in C^\infty(\mb S^2)$ be such that $\|\phi\|_{W^{2,2}(\mb S^2)} < \eps$, $\Sigma(\phi)$ is convex and such that $\Sigma_{n,t}\cqq \Sigma((1-t)u_n + t\phi)$ is not a round sphere\footnote{In \cite{ChodoshEichmairKoerber}, the $u_n$ are chosen rotationally symmetric, so it suffices to choose $\phi$ rotationally symmetric in one half of $\mb S^2$ and not rotationally symmetric in the other half.} for any $n\in \N$ and $t\in [0,1]$. As $\mc T(\Sigma_{n,t})$ varies continuously in $t$ and $\mc T(\Sigma_{n,0})<\mc T(\mb S^2)<\mc T(\Sigma_{n,1})$, it follows that there exists $t_n \in [0,1]$ such that $\mc T(\Sigma_{n, t_n }) = \mc T(\mb S^2)$.  Choose $n $ sufficiently large such that $\|u_n\|_{W^{2,2}(\mb S^2)}<\eps$. Then $\|(1-t_n ) u_n  + t_n \phi\|_{W^{2,2}(\mb S^2)}<\eps $ and passing to the limit yields
	\[\limsup_{\eps \to 0} \mc W(\Sigma_{n, t_n})=4\pi.\]
Finally, because $\Sigma_{n,t_n}$ is not a round sphere, we can apply Theorem \ref{thm:connectedsum} to obtain genus $g$ surfaces $\Sigma ^g_{n,t_n}$ with $\mc T(\Sigma ^{g}_{n,t_n}) = \mc T(\mb S^2)$ and 
\[\limsup_{\eps \to 0} \mc W(\Sigma^g_{n, t_n})\leq \beta_g.\qedhere\]
\end{proof}
By \cite[Proposition E.4]{ChodoshEichmair}, it holds that for for any sequence $u_n\in C^\infty(\mb S^2)$ such that $\|u_n\|_{C^1(\mb S^2)}\to 0$, we have as $n\to \infty$
\[\mc T(\Sigma(u_n)) - \mc T(\mb S^2)\geq - o(1)  (\mc W(\Sigma(u)) - 4\pi).\]
We will extend this result to arbitrary spherical surfaces with a total mean curvature ratio converging to $\mc T(\mb S^2)$. \medskip

Recall that the $L^2(\dif \vol_g)$-gradient of $\mc T$ as in \eqref{T gradient} is given by
\[\nabla \mc T(f) = \frac{\mc T(f) H}{2\mc A(f)} - \frac{2K}{\sqrt{\mc A(f)}}.\]
This gradient is non-zero whenever $f$ does not immerse a round sphere, see Proposition \ref{prop: Alexandrov-corollary}. The $L^2(\dif \vol_g)$-gradient $\nabla \mc W$ of the Willmore functional is given by 
\[\mc W(f) = \Delta H + |\mb I^0 |_g^2 H,\]
where $\mb I^0$ is the trace-free second fundamental form, see e.g. \cite[(7.40)]{Willmore1993}. The Euler-Lagrange equation for minimizers $f^0 _R$ of \eqref{beta g tau for smooth surfaces} from Theorem \ref{thm:KMR} then becomes
\begin{equation}
\nabla \mc W (f^0_R) = \lambda \nabla \mc T(f^0_R),\label{Euler Lagrange Equation}
\end{equation}
for some $\lambda \in \R$ depending on $f^0_R$. Notice that if $\beta_0$ is differentiable at $R$, then $\beta_0'(R) = \lambda$. Furthermore, it is easy to check that $\lambda$ is invariant under rescaling of $f$.
\begin{thm} \label{thm:asymptotic estimate at T(S2)}
Suppose that $\tilde{f_n}:\mb S^2\to \R^3$ are immersions such that $\mc T(\tilde{f_n})\to \mc T(\mb S^2)$. Then as $n\to \infty$
\[\mc T(\tilde{f_n})-\mc T(\mb S^2) \geq  - o(1) (\mc W(\tilde{f_n}) - 4\pi).\]
\end{thm}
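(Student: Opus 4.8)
The statement says: for immersions $\tilde f_n : \mb S^2 \to \R^3$ with $\mc T(\tilde f_n) \to \mc T(\mb S^2)$, we have $\mc T(\tilde f_n) - \mc T(\mb S^2) \geq -o(1)(\mc W(\tilde f_n) - 4\pi)$ as $n\to\infty$. The quantity $\mc W(\tilde f_n) - 4\pi \geq 0$ by the Willmore inequality, and the claim is trivially true unless $\mc W(\tilde f_n) \to 4\pi$ (otherwise the right-hand side is $-o(1)\cdot(\text{bounded away from }0)$, hence can be made $\leq$ any fixed negative number only if... actually one must be careful: if $\mc W(\tilde f_n)$ stays bounded away from $4\pi$, then $-o(1)(\mc W - 4\pi) \to 0^-$, and we'd need $\mc T(\tilde f_n) - \mc T(\mb S^2) \geq -o(1)$, which is \emph{not} automatic). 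So the first reduction is: it suffices to treat sequences with $\mc W(\tilde f_n) \to 4\pi$, and separately argue that if $\liminf \mc W(\tilde f_n) > 4\pi$ then necessarily $\mc T(\tilde f_n) - \mc T(\mb S^2)$ cannot go to $-\infty$ faster than... hmm. Actually the cleanest route: use the Li--Yau/Willmore gap together with a compactness argument. Since $\mc T(\tilde f_n) \to \mc T(\mb S^2) < \sqrt{32\pi}$, for large $n$ we have $\mc T(\tilde f_n)^2/4 < 8\pi$; combined with $\mc W(\tilde f_n) \geq \mc T(\tilde f_n)^2/4$ this alone doesn't bound $\mc W$. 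So first I would show that along any subsequence with $\mc W(\tilde f_n)$ bounded, after rescaling to unit area and using Simon's diameter bound (Lemma~\ref{lem:Simon}) plus the weak compactness theorem (Theorem~\ref{thm: Weak compactness}, valid once $\mc W < 8\pi - \delta$, which holds for large $n$ since $\mc W \geq \mc T^2/4$ is \emph{not} the obstruction — rather one needs $\limsup \mc W < 8\pi$; if $\mc W \to 8\pi$ or more we can discard that subsequence since then the RHS $\to 0^-$ and... no). Let me restructure.

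\textbf{The reduction.} Pass to a subsequence realizing $\liminf_n \big[\mc T(\tilde f_n) - \mc T(\mb S^2)\big]/\big(\mc W(\tilde f_n) - 4\pi\big)$ (interpreting the ratio as $+\infty$ when $\mc W(\tilde f_n) = 4\pi$, i.e.\ $\tilde f_n$ a round sphere, in which case $\mc T = \mc T(\mb S^2)$ and there is nothing to prove). We must show this $\liminf \geq 0$; equivalently, that one cannot have $\mc T(\tilde f_n) - \mc T(\mb S^2) \leq -c(\mc W(\tilde f_n) - 4\pi)$ for a fixed $c > 0$ along the subsequence. Rescale so $\mc A(\tilde f_n) = 4\pi$ (both $\mc W$ and $\mc T$ are scale-invariant). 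If $\limsup \mc W(\tilde f_n) \geq 4\pi + \eta$ for some $\eta > 0$, then the defect $\mc W(\tilde f_n) - 4\pi$ does not tend to $0$, so $\mc T(\tilde f_n) - \mc T(\mb S^2) = o(1)$ (by hypothesis) forces the ratio $\to 0^+ \geq 0$; done. Hence assume $\mc W(\tilde f_n) \to 4\pi$. Then for large $n$, $\mc W(\tilde f_n) < 8\pi - \delta$ and $\diam \tilde f_n(\mb S^2)$ is bounded below by Simon's inequality, so Theorem~\ref{thm: Weak compactness} gives conformal reparametrizations $\tilde f_n \circ \psi_n \wto f_\infty$ weakly in $W^{2,2}_{\mathrm{loc}}$ away from finitely many points, with $\mc W(f_\infty) \leq \liminf \mc W(\tilde f_n) = 4\pi$, so $f_\infty$ parametrizes a round sphere (of area $4\pi$, hence the unit sphere up to translation), and moreover $\mc A(\tilde f_n) \to \mc A(f_\infty)$, $\int H_{\tilde f_n} \to \int H_{f_\infty}$, so $\mc T(\tilde f_n) \to \mc T(\mb S^2)$ consistently. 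The key gain: the convergence $\tilde f_n \circ \psi_n \to f_\infty$ can be upgraded, via the $\eps$-regularity theory of Kuwert--Schätzle / Rivière (the Willmore-energy deficit is small), to \emph{strong} $W^{2,2}$ and in fact $C^{1,\alpha}$ convergence on all of $\mb S^2$ once one rules out energy concentration — and here there is no concentration because the total energy $4\pi$ cannot split into a bubble plus remainder each carrying $\geq 4\pi$. Thus for large $n$ we may write (after the Möbius normalization) $\tilde f_n \circ \psi_n = \Sigma(u_n)$ as a graph over $\mb S^2$ with $\|u_n\|_{C^1(\mb S^2)} \to 0$, bringing us exactly into the regime of \cite[Proposition E.4]{ChodoshEichmair}, which yields $\mc T(\Sigma(u_n)) - \mc T(\mb S^2) \geq -o(1)(\mc W(\Sigma(u_n)) - 4\pi)$. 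Since $\mc T$ and $\mc W$ are invariant under the reparametrization $\psi_n$, this is the claim.

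\textbf{The main obstacle.} The delicate point is the upgrade from the weak $W^{2,2}_{\mathrm{loc}}$ convergence furnished by Theorem~\ref{thm: Weak compactness} to a genuine \emph{graphical, $C^1$-small} representation over a fixed round sphere. Two things must be handled: (i) controlling the Möbius gauge, i.e.\ showing the conformal diffeomorphisms $\psi_n$ of $\mb S^2$ in Theorem~\ref{thm: Weak compactness} do not degenerate — this follows from the fact that the limit is an embedded round sphere (nonzero area, density one by Li--Yau since $\mc W < 8\pi$) together with the area and total-mean-curvature convergence, which pins down the conformal structure; (ii) ruling out the finitely many points $\{a_1,\dots,a_n\}$ of possible energy concentration: near such a point a nontrivial bubble would carry Willmore energy $\geq 4\pi$ by the Willmore inequality (applied to the bubble sphere), and the Li--Yau / lower-semicontinuity bookkeeping $\mc W(f_\infty) + 4\pi \leq \liminf \mc W(\tilde f_n) = 4\pi$ is impossible; hence no concentration, $f_\infty$ is a smooth immersion on all of $\mb S^2$, and $\eps$-regularity (e.g.\ \cite[Theorem 3.1]{KuwertSchatzleGradientFlow}-type estimates, or the regularity in \cite{RiviereVariationalPrinciples}) promotes the convergence to $C^{1,\alpha}(\mb S^2)$, after which the graph representation $u_n$ exists for large $n$ with $\|u_n\|_{C^1}\to 0$. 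I expect essentially all the real work to be in this compactness/regularity upgrade; once we are in graphical coordinates, the estimate is exactly \cite[Proposition E.4]{ChodoshEichmair} and nothing further is needed.
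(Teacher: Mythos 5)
Your reduction to the case $\mc W(\tilde f_n)\to 4\pi$ is fine and your instinct to get close to the round sphere is shared by the paper, but the step you yourself flag as ``the main obstacle'' is a genuine gap, and the tools you cite cannot close it. The compactness theorem and the De Lellis--M\"uller rigidity only give (weak, resp.\ strong) $W^{2,2}$ closeness to the standard embedding, and in two dimensions $W^{2,2}$ does not embed into $C^1$; so neither graphicality over $\mb S^2$ nor, crucially, $\|u_n\|_{C^1(\mb S^2)}\to 0$ follows. The $\eps$-regularity results of Kuwert--Sch\"atzle/Rivi\`ere that you invoke to promote the convergence to $C^{1,\alpha}$ apply to solutions of the (constrained) Willmore equation, i.e.\ to critical points, not to arbitrary immersions with small energy defect --- and your $\tilde f_n$ are arbitrary. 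Without an equation there is nothing to bootstrap, so you never reach the hypotheses of \cite[Proposition E.4]{ChodoshEichmair}, which is exactly the $C^1$-graphical regime.

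The paper resolves this by a different mechanism. It first reduces to the constrained minimizers $f^0_R$ (legitimate since $\mc W(\tilde f_n)\geq \beta_0(\mc T(\tilde f_n))$ and the right-hand side only becomes more negative as $\mc W$ grows), then argues by contradiction: the assumed failure $\mc T-\mc T(\mb S^2)\leq -c_0(\mc W-4\pi)$, combined with the monotonicity of $\beta_0$, produces parameters $R_n$ at which $|\beta_0'(R_n)|\leq 1/c_0$, i.e.\ minimizers whose Euler--Lagrange equation $\nabla\mc W=\lambda_n\nabla\mc T$ has \emph{uniformly bounded} Lagrange multipliers. Only with this equation in hand do the elliptic estimates (from \cite{Rupp2023volume} and \cite{GradientFlowWillmoreFunctional}) upgrade the De Lellis--M\"uller $L^2$ control to $\|\mb I_{f_n}-g\|_{L^\infty}\to 0$. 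The conclusion is then reached not via the graphical estimate of Chodosh--Eichmair at all, but by observing that $\mb I_{f_n}$ becomes positive definite, hence $f_n$ bounds a convex body, and Minkowski's inequality forces $\mc T(f_n)\geq\mc T(\mb S^2)$, contradicting $R_n<\mc T(\mb S^2)$. The two ideas you are missing are precisely the passage to minimizers (to have a PDE) and the extraction of the multiplier bound from the contradiction hypothesis (to make the PDE usable).
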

\begin{proof}
We may assume that $\mc T(\tilde{f_n})< \mc T(\mb S^2)$ and that the $\tilde{f_n}=f^0 _{\mc T(\tilde{f_n})}$ are the minimizers of the constrained problem \eqref{beta g tau for smooth surfaces} given by Theorem \ref{thm:KMR}. We will proceed by contradiction and assume that after possibly taking a subsequence
\begin{equation}
\mc T(\tilde{f_n}) - \mc T(\mb S^2)\leq -c_0 (\mc W(\tilde{f_n}) - 4\pi) \label{Assumption bounded lagrange multiplier}
\end{equation}
for some $c_0>0$. Notice that since $\beta_0$ is monotonically decreasing on $(-\infty, \mc T(\mb S^2)]$ by Corollary \ref{cor:Monotonicity}, $\beta_0'$ exists a.e. and 
\[\mc W(\tilde{f_n}) - 4\pi \geq \int ^{\mc T(\tilde{f_n})}_{\mc T(\mb S^2)} \beta_0'(t) \dif t.\]
From \eqref{Assumption bounded lagrange multiplier}, we see that there are $R_n \in (\mc T(\tilde{f_n}), \mc T(\mb S^2))$ such that $\beta_0$ is differentiable in $R_n$ and $|\beta_0'(R_n)| \leq \frac{1}{c_0}$. Setting $f_n = f^0 _{R_n}$ then implies that $f_n$ solve the Euler-Lagrange equation \eqref{Euler Lagrange Equation} with a Lagrange multiplier $\lambda_n$ such that $|\lambda_n| \leq \frac{1}{c_0}$. We may rescale that $\mc A(f_n) = 4\pi$. Also notice that since $R_n\to \mc T(\mb S^2)$, $\mc W(f_n)\to 4\pi$ by Corollary \ref{cor:Continuity at sphere}. \medskip

Let $i:\mb S^2\to \R^3$ be the standard isometric embedding. \textsc{De Lellis} and \textsc{Müller} \cite{deLellisMüller} proved the following rigidity estimate, namely that up to a reparametrization of $f_n$ 
\begin{equation}
\|f_n - i\|_{W^{2,2}(\mb S^2)} \to 0\label{strong W22 convergence of fn}
\end{equation}
and 
\begin{equation}
\|\mb I_{f_n} - g\|_{L^2(\mb S^2)} \to 0\label{strong L2 convergence of A}
\end{equation}
for the standard metric $g$ on $\mb S^2$. Let $(x_i, \rho_i)\in \R^3\times \R_{>0}$, $i\in I$ be a finite collection such that
\begin{equation}
i(\mb S^2) \subset \bigcup _{i\in I} B_{\rho_i/2}(x_i)\label{covering of sphere}
\end{equation}
and
\begin{equation}
\int _{i^{-1}(B_{2\rho_i}(x_i))}|\mb I_{i}|_g^2 \dif \vol_g < \frac{\eps_0}{2}.\label{local L^2 estimate for A}
\end{equation}
By the Sobolev embedding \cite[Theorem 2.10]{Aubin} and \eqref{strong W22 convergence of fn}, we obtain for $n$ sufficiently large and all $i\in I$
\begin{equation}
i^{-1}(B_{\rho_i/2}(x_i))\subset f_n^{-1}(B_{\rho_i}(x_i))\subset i^{-1}(B_{2\rho_i}(x_i)). \label{i balls subset fn balls}
\end{equation}
It follows by \eqref{strong L2 convergence of A} and \eqref{local L^2 estimate for A} that for $n$ sufficiently large
\[\int _{{f_n}^{-1}(B_{\rho_i}(x_i))}|\mb I_{f_n}|_g^2 \dif\vol_g < \eps_0,\]
where $\eps_0$ is the constant from \cite[Proposition 3.2]{Rupp2023volume}, see also \cite[Proposition 2.6]{WillmoreFlowSmallInitialEnergy}. For $i\in I$, take $\tilde \gamma_i\in C_c^\infty(\R^3)$ with $\chi _{B_{\rho_i/2}(x_i)}\leq \tilde \gamma_i \leq \chi_{B_{\rho_i}(x_i)}$ such that $\|D\tilde \gamma_i\|_{L^\infty}\leq \Lambda$, $\|D^2\tilde \gamma_i\|_{L^\infty}\leq \Lambda ^2$ for some fixed $\Lambda >0$ and $\gamma_{n,i} \cqq 
\tilde \gamma_i \circ f_n$. By \cite[Proposition 3.2]{Rupp2023volume}, the fact that we have bounded Lagrange multiplier and $\mc W(f_n)<C$, it holds that
\begin{equation}
\int (|\nabla^2 \mb I_{f_n}|_g^2 + |\mb I_{f_n}|_g^2 |\nabla \mb I_{f_n}|_g^2 + |\mb I_{f_n}|_g^6)\gamma_{n,i}^4 \dif \vol_g \leq C\left ( \int |\nabla \mc T(f_n)|_g^2 \gamma_{n,i}^4 \dif\vol_g + 1\right ).\label{local W22 estimate for A}
\end{equation}
Using \eqref{covering of sphere} and \eqref{i balls subset fn balls}, summation over $i$ in \eqref{local W22 estimate for A} yields
\begin{equation}
\int (|\nabla^2 \mb I_{f_n}|_g^2 + |\mb I_{f_n}|_g^2 |\nabla \mb I_{f_n}|_g^2 + |\mb I_{f_n}|_g^6) \dif \vol_g \leq C\left ( \int |\nabla \mc T(f_n)|_g^2 \dif\vol_g + 1\right ). \label{global W22 estimate for A}
\end{equation}
By summing over $i$ in \cite[Lemma 4.3]{GradientFlowWillmoreFunctional} and using \eqref{global W22 estimate for A}, it holds that
\begin{equation}
\|\mb I_{f_n}\|_{L^\infty(\mb S^2)}^4 \leq  C\left ( \int |\nabla \mc T(f_n)|_g^2 \dif\vol_g + 1\right ).\label{first L^infinity bound for A}
\end{equation}
Notice that $|K|\leq |\mb I|_g^2$ and $|H|\leq 2|\mb I|_g$. Hence $\|\nabla \mc T(f_n)\|_{L^2}^2 \leq C\left (\|\mb I_{f_n}\|_{L^\infty(\mb S^2)} + 1\right )$. Together with \eqref{first L^infinity bound for A}, this implies
\begin{equation}
\|\mb I_{f_n}\|_{L^\infty(\mb S^2)} \leq C. \label{second L^infinity bound for A}
\end{equation}
By \cite[Corollary 5.2]{GradientFlowWillmoreFunctional} and \eqref{global W22 estimate for A} it holds that $\|\mb I_{f_n}\|_{W^{2,2}}\leq C$. With this estimate, \eqref{strong L2 convergence of A}, and \eqref{global W22 estimate for A}, we apply again \cite[Lemma 4.3]{GradientFlowWillmoreFunctional} to $\phi = \mb I_{f_n} - g$ and sum over $i\in I$ to see
\begin{equation}
\|\mb I_{f_n} - g\|_{L^\infty}^4 \leq C \|\mb I_{f_n}-g\|_{L^2}^2 \to 0
\end{equation}
as $n\to \infty$. Notice that 
\[\mb I_{f_n}(v,v) = g(v,v) + (\mb I_{f_n}-g)(v,v) \geq |v|_g^2 - \|\mb I_{f_n}-g\|_{L^\infty} |v|_g^2 \geq \frac{1}{2} |v|_g^2\]
for any $v\in T_p \mb S^2$ and $n$ sufficiently large, and thus $\mb I_{f_n}$ is positive definite everywhere. By \cite[Theorem 1.8]{DajczerTojeiro}, $f_n$ immerses the boundary of a convex body in $\R^3$. However, by the Minkowski inequality, this implies $R_n= \mc T(f_n)\geq \mc T(\mb S^2)$, which is a contradiction.
\end{proof}
\section{On the total mean curvature of axisymmetric surfaces}\label{sec:OtherResults}
In \cite{Dalphin2016}, it was conjectured that any $C^{1,1}$-regular axisymmetric surface $\Sigma\subset \R^3$ satisfies
\begin{equation}
\int_\Sigma H \dif \vol_g \geq 0. \label{DalphinClaim}
\end{equation}
Let us first define the notion of an axisymmetric surface, following \cite{Dalphin2016} and focusing on embedded surfaces.
\begin{defi}[Axisymmetric $C^{1,1}$-sphere]
Suppose that $\gamma=(\gamma_1,\gamma_2):[0,T] \to \R^2$ is a $C^{1,1}$-immersion such that $\gamma(0) = (0,a_0)$, $\gamma(T) =  (0,a_T)$, $a_0<a_T$, $\gamma_2'(0) = \gamma_2'(T)=0$ and $\gamma_1(t)> 0$ for all $t\in (0,T)$. An \emph{axisymmetric $C^{1,1}$-sphere} is given as the image of the parametrization
\begin{equation}
\psi: \mb S^1 \times [0,T] \to \R^3, \quad \psi(\phi, s) = (\gamma_1(s) \cos (\phi), \gamma_1(s) \sin (\phi), \gamma_2(s)).\label{reparametrization}
\end{equation}
We parametrize $\gamma$ such that $|\gamma'| = 1$. Furthermore, there exists a Lipschitz map ${\theta:[0,T]\to \R}$ such that
\begin{equation}
(\gamma_1'(s), \gamma_2'(s)) = (\cos(\theta(s)), \sin(\theta(s))).\label{velocity parametrized by angle}
\end{equation}
We may assume $\theta(0)= 0$.
\end{defi}
Notice that all geometric quantities involving second derivatives can still be defined by approximation. By \cite{Dalphin2016}, the principal curvatures $\kappa_1$, $\kappa_2$ are given by
\begin{equation}
\kappa_1(s) = \frac{\sin(\theta(s))}{\gamma_1(s)}, \quad \kappa_2(s) = \theta'(s).\label{principal curvatures for axisymmetric surface}
\end{equation}
As the area element is $ \gamma_1(s) \dif s\dif \phi$, the total mean curvature is
\begin{equation}
\int _{\mb S^2} H \dif \vol_g = 2\pi \int_0^T\sin(\theta(s)) + \gamma_1(s) \theta'(s)\dif s = 2\pi \int _0^T \sin(\theta(s)) - \cos(\theta(s)) \theta(s)\dif s. \label{total mean curvature for spherical surface}
\end{equation}

In this section, we will provide a counterexample to \eqref{DalphinClaim}. Nonetheless, we will prove \eqref{DalphinClaim} in Theorem \ref{rot sym surface} under the additional assumption that $\theta(s) \in \left [-\frac{\pi}{2}, \frac{3\pi}{2}\right ]$ for all $s\in [0,T]$. 
\begin{thm} \label{rot sym surface}
Suppose that $f:\mb S^2\to \R^3$ is a $C^{1,1}$-axisymmetric sphere, which can be reparametrized by \eqref{reparametrization}. Suppose that $\theta(s) \in \left [-\frac{\pi}{2}, \frac{3\pi}{2}\right ]$ for all $s\in [0,T]$. Then \eqref{DalphinClaim} holds.
\end{thm}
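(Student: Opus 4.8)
The plan is to work directly with the profile curve via the formula \eqref{total mean curvature for spherical surface}, which already gives
\[\int_{\mb S^2} H \dif \vol_g = 2\pi\int_0^T\big(\sin\theta(s)-\theta(s)\cos\theta(s)\big)\dif s,\]
so it suffices to bound the right-hand integrand from below and then integrate. (If one prefers, this identity follows from the first expression in \eqref{total mean curvature for spherical surface} by an integration by parts, which is legitimate since $\theta$ is Lipschitz and $\gamma_1(0)=\gamma_1(T)=0$, i.e. the boundary term $[\gamma_1\theta]_0^T$ vanishes.)

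The key observation is that the admissible interval $[-\tfrac\pi2,\tfrac{3\pi}{2}]$ is symmetric about $\tfrac\pi2$, the point where $\cos$ vanishes and changes sign; consequently
\[\Big(\tfrac\pi2-\theta\Big)\cos\theta\ge 0\qquad\text{for all }\theta\in\big[-\tfrac\pi2,\tfrac{3\pi}{2}\big].\]
One checks this by splitting into the cases $\theta\le\tfrac\pi2$ (both factors nonnegative, since $\cos\ge0$ on $[-\tfrac\pi2,\tfrac\pi2]$) and $\theta\ge\tfrac\pi2$ (both factors nonpositive, since $\cos\le0$ on $[\tfrac\pi2,\tfrac{3\pi}{2}]$). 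Rearranging, this says $\sin\theta-\theta\cos\theta\ge\sin\theta-\tfrac\pi2\cos\theta$ on the admissible range.

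Integrating this pointwise inequality over $[0,T]$ and using $(\gamma_1',\gamma_2')=(\cos\theta,\sin\theta)$ from \eqref{velocity parametrized by angle} together with $\gamma(0)=(0,a_0)$ and $\gamma(T)=(0,a_T)$, I would obtain
\[\int_0^T\big(\sin\theta-\theta\cos\theta\big)\dif s\ge\int_0^T\Big(\sin\theta-\tfrac\pi2\cos\theta\Big)\dif s=\big(\gamma_2(T)-\gamma_2(0)\big)-\tfrac\pi2\big(\gamma_1(T)-\gamma_1(0)\big)=a_T-a_0>0,\]
since $\gamma_1(0)=\gamma_1(T)=0$ and $a_0<a_T$. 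Hence $\int_{\mb S^2}H\dif \vol_g\ge 2\pi(a_T-a_0)>0$, which proves \eqref{DalphinClaim} — in fact with a strict inequality and an explicit lower bound in terms of the height $a_T-a_0$ of the surface.

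There is no genuine obstacle here once the correct multiplier $\tfrac\pi2$ of $\cos\theta$ is spotted, and that multiplier is in fact forced: it is the only constant $\mu$ for which $(\mu-\theta)\cos\theta\ge0$ holds on all of $[-\tfrac\pi2,\tfrac{3\pi}{2}]$, precisely because the interval is symmetric about the zero $\tfrac\pi2$ of $\cos$. The only mild technical points are the integration by parts in the merely $C^{1,1}$ setting (routine, and consistent with the remark in the text that the relevant quantities are defined by approximation) and the two-case verification of the pointwise inequality. It is worth emphasizing that the hypothesis $\theta\in[-\tfrac\pi2,\tfrac{3\pi}{2}]$ is exactly what makes the argument run: outside this range $(\tfrac\pi2-\theta)\cos\theta$ changes sign and the total mean curvature can indeed become negative, which is the mechanism behind the counterexample to \eqref{DalphinClaim} constructed in this section.
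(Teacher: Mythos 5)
Your proof is correct and is essentially the paper's own argument: the paper likewise reduces to the pointwise bound $-\theta\cos\theta\ge-\tfrac{\pi}{2}\cos\theta$ on $[-\tfrac{\pi}{2},\tfrac{3\pi}{2}]$ (phrased as a case split on the sign of $\cos\theta$) and then uses $\int_0^T\cos\theta\,\dif s=0$ and $\int_0^T\sin\theta\,\dif s=a_T-a_0>0$ from \eqref{velocity parametrized by angle}. The only difference is cosmetic: you carry the $\sin\theta$ term to the end and thereby record the explicit lower bound $2\pi(a_T-a_0)$, whereas the paper discards it at the first step.
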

\begin{proof}
Notice that by \eqref{velocity parametrized by angle}, it holds that
\begin{equation}
\int_0^T \sin(\theta(s)) \dif s = a_T - a_0 > 0, \quad \int_0^T \cos(\theta(s)) \dif s = 0.\label{int sin and cos of angle}
\end{equation}
Notice further that if $-\cos(\theta(s)) > 0$, then by the assumption on $\theta(s)$, we have $\theta(s)> \frac{\pi}{2}$. Since furthermore $\theta(s)\leq \frac{\pi}{2}$ if $-\cos(\theta(s))< 0$, it follows together with \eqref{int sin and cos of angle} that
\begin{align*}
&\quad \, \int _0^T \sin(\theta(s)) - \cos(\theta(s)) \theta(s)\dif s \\
&> \int _0^T -\cos(\theta(s)) \theta(s) \dif s \\
&= \int _{-\cos(\theta(s))>0} -\cos(\theta(s)) \theta(s) \dif s + \int _{-\cos(\theta(s))< 0}-\cos(\theta(s))\theta(s) \dif s \\
&\geq \int_{-\cos(\theta(s))>0} -\cos(\theta(s)) \frac{\pi}{2} \dif s + \int _{-\cos(\theta(s))< 0} -\cos(\theta(s)) \frac{\pi}{2} \dif s \\
& = 0.\tag*\qedhere
\end{align*}
\end{proof}
\begin{remark}\label{counterexamples for axisymmetric surfaces}
The statement is false once we allow larger images of $\theta$. Consider the following curves, defining axisymmetric surfaces:\medskip

\enlargethispage{1cm}
\noindent\begin{minipage}[t]{0.6\textwidth}
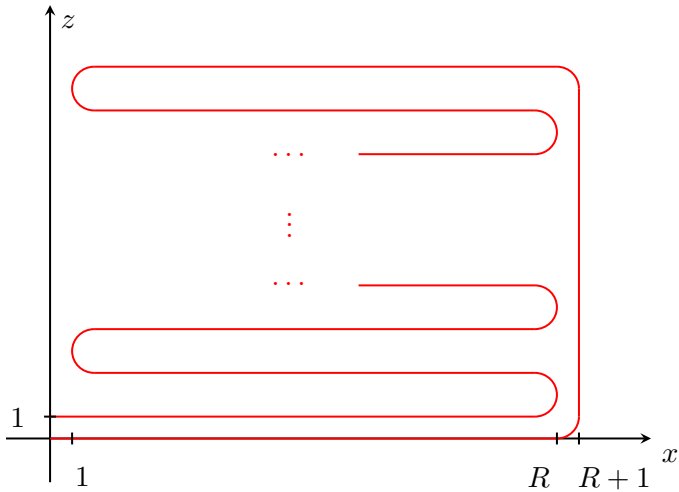
\begin{figure}[H]
\centering
\tikzset{every picture/.style={line width=0.75pt}} 

\begin{tikzpicture}[x=0.75pt,y=0.75pt,yscale=-1,xscale=1]
\draw [-stealth] (110,275) -- (110,35) node[anchor=north west] {$z$};
\draw [-stealth] (88,253) -- (410,253) node[anchor=north west] {$x$};

\draw [color={rgb, 255:red, 255; green, 0; blue, 0 }  ,draw opacity=1 ]   (110,242) -- (352,242) ;
 \draw  [color={rgb, 255:red, 255; green, 0; blue, 0 }  ,draw opacity=1 ] (352,220) .. controls (352,220) and (352,220) .. (352,220) .. controls (352,220) and (352,220) .. (352,220) .. controls (358.08,220) and (363,224.92) .. (363,231) .. controls (363,237.08) and (358.08,242) .. (352,242) ;  
\draw  [color={rgb, 255:red, 255; green, 0; blue, 0 }  ,draw opacity=1 ] (132,220) .. controls (132,220) and (132,220) .. (132,220) .. controls (125.92,220) and (121,215.08) .. (121,209) .. controls (121,202.92) and (125.92,198) .. (132,198) ;  
\draw [color={rgb, 255:red, 255; green, 0; blue, 0 }  ,draw opacity=1 ]   (132,220) -- (352,220) ;
\draw [color={rgb, 255:red, 255; green, 0; blue, 0 }  ,draw opacity=1 ]   (132,198) -- (352,198) ;
\draw  [color={rgb, 255:red, 255; green, 0; blue, 0 }  ,draw opacity=1 ] (352,176) .. controls (352,176) and (352,176) .. (352,176) .. controls (352,176) and (352,176) .. (352,176) .. controls (358.08,176) and (363,180.92) .. (363,187) .. controls (363,193.08) and (358.08,198) .. (352,198) ; 
\draw [color={rgb, 255:red, 255; green, 0; blue, 0 }  ,draw opacity=1 ]   (264,176) -- (352,176) ;
\draw [color={rgb, 255:red, 255; green, 0; blue, 0 }  ,draw opacity=1 ]   (264,110) -- (352,110) ;
 \draw  [color={rgb, 255:red, 255; green, 0; blue, 0 }  ,draw opacity=1 ] (352,88) .. controls (352,88) and (352,88) .. (352,88) .. controls (352,88) and (352,88) .. (352,88) .. controls (358.08,88) and (363,92.92) .. (363,99) .. controls (363,105.08) and (358.08,110) .. (352,110) ;  
\draw [color={rgb, 255:red, 255; green, 0; blue, 0 }  ,draw opacity=1 ]   (352,88) -- (132,88) ;
\draw  [color={rgb, 255:red, 255; green, 0; blue, 0 }  ,draw opacity=1 ] (132,88) .. controls (132,88) and (132,88) .. (132,88) .. controls (125.92,88) and (121,83.08) .. (121,77) .. controls (121,70.92) and (125.92,66) .. (132,66) ; 
\draw [color={rgb, 255:red, 255; green, 0; blue, 0 }  ,draw opacity=1 ]   (132,66) -- (363,66) ;
\draw  [color={rgb, 255:red, 255; green, 0; blue, 0 }  ,draw opacity=1 ] (363,66) .. controls (363,66) and (363,66) .. (363,66) .. controls (363,66) and (363,66) .. (363,66) .. controls (369.08,66) and (374,70.92) .. (374,77) ;  
\draw  [color={rgb, 255:red, 255; green, 0; blue, 0 }  ,draw opacity=1 ] (374,242) .. controls (374,242) and (374,242) .. (374,242) .. controls (374,248.08) and (369.08,253) .. (363,253) ;  
\draw [color={rgb, 255:red, 255; green, 0; blue, 0 }  ,draw opacity=1 ]   (374,77) -- (374,242) ;
\draw [color={rgb, 255:red, 255; green, 0; blue, 0 }  ,draw opacity=1 ]   (110,253) -- (363,253) ;
\draw    (121,250) -- (121,256) ;
\draw    (363,250) -- (363,256) ;
\draw    (374,250) -- (374,256) ;
\draw    (107,242) -- (113,242) ;
\draw (219,173) node [anchor=north west][inner sep=0.75pt]  [color={rgb, 255:red, 255; green, 0; blue, 0 }  ,opacity=1 ] [align=left] {$\ldots$};
\draw (226,130) node [anchor=north west][inner sep=0.75pt]  [color={rgb, 255:red, 255; green, 0; blue, 0 }  ,opacity=1 ] [align=left] {$\vdots$};
\draw (219,108) node [anchor=north west][inner sep=0.75pt]  [color={rgb, 255:red, 255; green, 0; blue, 0 }  ,opacity=1 ] [align=left] {$\ldots$};
\draw (121,266) node [anchor=north west][inner sep=0.75pt]   [align=left] {$1$};
\draw (99,248.5) node [anchor=south east][inner sep=0.75pt]   [align=left] {$1$};
\draw (361,266) node [anchor=north east][inner sep=0.75pt]   [align=left] {$R$};

\draw (372,266) node [anchor=north west][inner sep=0.75pt]   [align=left] {$R+1$};

\end{tikzpicture}
\caption{A curve with $\im \theta = [0,2\pi]$ defining a surface with total mean curvature ratio converging to $-\infty$.}\label{figure counterexample}
\end{figure}
\end{minipage}\hfill
\begin{minipage}[t]{0.35\textwidth}
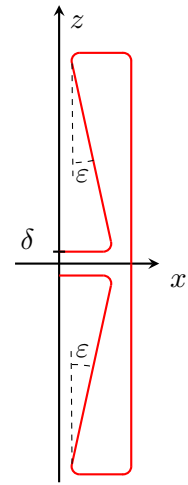
\begin{figure}[H]
\centering
\begin{tikzpicture}[x=0.75pt,y=0.75pt,yscale=-1,xscale=1]

\draw [-stealth, thick] (110,286) -- (110,46) node[anchor=north west] {$z$};
\draw [-stealth, thick] (88,176) -- (160,176) node[anchor=north west] {$x$};

\draw [color={rgb, 255:red, 255; green, 0; blue, 0 }  ,draw opacity=1, thick ]   (110,182) -- (132,182) ;
\draw  [color={rgb, 255:red, 255; green, 0; blue, 0 }  ,draw opacity=1, thick ] (132,182) .. controls (132,182) and (132.01,182) .. (132.01,182) .. controls (134.22,182) and (136.01,183.79) .. (136.01,186) .. controls (136.01,186.35) and (135.97,186.7) .. (135.88,187.03) ;  
\draw [color={rgb, 255:red, 255; green, 0; blue, 0 }  ,draw opacity=1, thick ]   (116.44,75.24) -- (135.88,164.92) ;
\draw  [color={rgb, 255:red, 255; green, 0; blue, 0 }  ,draw opacity=1 , thick] (120,282) .. controls (117.9,281.87) and (116.24,280.13) .. (116.24,278.01) .. controls (116.24,277.57) and (116.31,277.15) .. (116.44,276.76) ;  
\draw [color={rgb, 255:red, 255; green, 0; blue, 0 }  ,draw opacity=1, thick ]   (120,282) -- (142,282) ;
\draw  [color={rgb, 255:red, 255; green, 0; blue, 0 }  ,draw opacity=1 , thick] (146,277.88) .. controls (146,277.92) and (146,277.96) .. (146,278) .. controls (146,280.21) and (144.21,282) .. (142,282) .. controls (141.87,282) and (141.74,281.99) .. (141.61,281.98) ;  
\draw [color={rgb, 255:red, 255; green, 0; blue, 0 }  ,draw opacity=1, thick ]   (146,176) -- (146,278) ;
\draw [color={rgb, 255:red, 255; green, 0; blue, 0 }  ,draw opacity=1, thick ]   (146,74) -- (146,176) ;
\draw [color={rgb, 255:red, 255; green, 0; blue, 0 }  ,draw opacity=1, thick ]   (110,170) -- (132,170) ;
\draw  [color={rgb, 255:red, 255; green, 0; blue, 0 }  ,draw opacity=1, thick ] (132,170) .. controls (132,170) and (132.01,170) .. (132.01,170) .. controls (134.22,170) and (136.01,168.19) .. (136.01,165.95) .. controls (136.01,165.6) and (135.97,165.25) .. (135.88,164.92) ;  
\draw [color={rgb, 255:red, 255; green, 0; blue, 0 }  ,draw opacity=1, thick ]   (120,70) -- (142,70) ;
\draw [color={rgb, 255:red, 255; green, 0; blue, 0 }  ,draw opacity=1, thick ]   (135.88,187.03) -- (116.44,276.76) ;
\draw  [color={rgb, 255:red, 255; green, 0; blue, 0 }  ,draw opacity=1, thick ] (120,70) .. controls (117.9,70.13) and (116.24,71.87) .. (116.24,73.99) .. controls (116.24,74.43) and (116.31,74.85) .. (116.44,75.24) ;  
 \draw  [color={rgb, 255:red, 255; green, 0; blue, 0 }  ,draw opacity=1, thick ] (146,74.12) .. controls (146,74.08) and (146,74.04) .. (146,74) .. controls (146,71.79) and (144.21,70) .. (142,70) .. controls (141.87,70) and (141.74,70.01) .. (141.61,70.02) ;  
\draw [thick](107,170) -- (113,170);
\draw  [dash pattern={on 2.25pt off 2.25pt}]  (116,220) -- (116.44,276.76) ;
\draw  [dash pattern={on 2.25pt off 2.25pt}] (116.44,226.76) .. controls (116.44,226.76) and (116.44,226.76) .. (116.44,226.76) .. controls (120.14,226.76) and (123.74,227.16) .. (127.21,227.92) ;  
\draw  [dash pattern={on 2.25pt off 2.25pt}] (127.21,124.08) .. controls (123.74,124.84) and (120.14,125.24) .. (116.44,125.24) ;  
\draw  [dash pattern={on 2.25pt off 2.25pt}]  (116.44,75.24) -- (116.88,132) ;
\draw (117,215.4) node [anchor=north west][inner sep=0.75pt]    {$\varepsilon $};
\draw (117,127.4) node [anchor=north west][inner sep=0.75pt]    {$\varepsilon $};

\draw (99,170) node [anchor=south east][inner sep=0.75pt]   [align=left] {$\delta$};
\end{tikzpicture}
\caption{A curve with $\im \theta = \left [-\frac{\pi}{2}-\eps, \frac{3\pi}{2}+\eps \right ]$ defining a surface with negative total mean curvature.}\label{figure counterexample2}
\end{figure}
\end{minipage}\medskip

In Figure \ref{figure counterexample} and \ref{figure counterexample2}, the curves are unions of circular segments and straight lines. The curve in Figure \ref{figure counterexample} is such that the straight lines are parallel to the axes and the circular segments have a radius of 1 and we have a total of $n$ humps. Using \eqref{total mean curvature for spherical surface}, we estimate that the total mean curvature of the surface in Figure \ref{figure counterexample} is given by
\[\frac{1}{2\pi}\int_{\mb S^2} H_f \dif \vol_{g_f} = -Rn\pi + \pi R+ n \mc O(1) + \mc O(1) \]
as $n\to \infty$, $R\to \infty$. Furthermore, the area of the resulting surface is estimated by
\[\mc A(f) = \mc O_{R}(n)\]
as $n\to \infty$, where the constant in $\mc O_{R}$ depends on $R$. Hence, choose $R$ sufficiently large such that $ -R\pi + \mc O(1) < 0$ and let $n\to \infty$, we see that
\[\mc T(f) = \frac{-Rn\pi + n \mc O(1) + \mc O_R(1)}{\mc O_{R}(\sqrt{n})} \to -\infty\]
as $n\to \infty$.\medskip

In Figure \ref{figure counterexample2}, the surface consists of circular segments of radius $\delta>0$ and straight lines. All horizontal lines have a length of $1$, while the two sloped lines have a length of $\frac{1}{\eps}$ and a corresponding angle of $\theta = -\frac{\pi}{2}-\eps $ and $\theta = \frac{3\pi}{2}+\eps$ for some $\eps >0$. The length of the vertical line is chosen such that the end points have a distance of $\delta$ to the origin. Then \eqref{total mean curvature for spherical surface} implies that the total mean curvature is given by
\begin{align}
&\quad \,\frac{1}{2\pi}\int_{\mb S^2} H_f \dif \vol_{g_f}\notag\\
 &= -\frac{1}{\eps} \cos \left (-\frac{\pi}{2}-\eps\right )\left (-\frac{\pi}{2}-\eps\right ) + \pi - \frac{1}{\eps}\cos \left (\frac{3\pi}{2}+\eps\right )\left (\frac{3\pi}{2}+\eps\right ) +\pi + \mc O(\delta)\notag \\
&= -2\eps + o(\eps) + \mc O(\delta)\label{total mean curvature of slightly negative surface}
\end{align}
as $\eps\to 0$, $\delta\to 0$. Now first choose $\eps$ and then $\delta$ sufficiently small such that the right-hand side of \eqref{total mean curvature of slightly negative surface} becomes negative.
\end{remark}
Theorem \ref{rot sym surface} also implies the following Corollary.
\begin{cor}\label{cor:Willmore 6pi}
Assume that $f:\mb S^2 \to \R^3$ is a $C^{1,1}$-axisymmetric sphere such that 
\[\int _{\mb S^2} H \dif \vol_g \leq 0.\]
Then, $\mc W(f) \geq 6\pi$.
\end{cor}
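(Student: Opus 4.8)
The idea is to combine the axisymmetric representation from Theorem \ref{rot sym surface} with a case distinction on the range of the angle function $\theta$. Let $f:\mb S^2\to\R^3$ be a $C^{1,1}$-axisymmetric sphere reparametrized by \eqref{reparametrization}, with angle function $\theta:[0,T]\to\R$, $\theta(0)=0$, satisfying $\int_{\mb S^2}H\dif\vol_g\leq 0$. First I would observe that if $\theta(s)\in[-\tfrac{\pi}{2},\tfrac{3\pi}{2}]$ for all $s$, then Theorem \ref{rot sym surface} gives $\int_{\mb S^2}H\dif\vol_g\geq 0$, hence $=0$; but the proof of Theorem \ref{rot sym surface} actually produces a strict inequality unless $\theta$ is identically constant on the set where $-\cos\theta\neq 0$ and the boundary terms vanish, i.e.\ unless $f$ immerses a sphere --- and a round sphere has $\int H\dif\vol_g=8\pi>0$. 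So under our hypothesis $\theta$ must leave the interval $[-\tfrac{\pi}{2},\tfrac{3\pi}{2}]$. Since $\theta(0)=0$ and $\theta$ is continuous, there is a point where $|\theta|$ first reaches $\tfrac{\pi}{2}$ going up or $\tfrac{\pi}{2}$ going down past $-\tfrac{\pi}{2}$, and then a point where it reaches $\tfrac{3\pi}{2}$ (or $-\tfrac{\pi}{2}$). The intermediate value theorem then yields, for each target value $c$ strictly between $0$ and that extreme, at least two distinct parameters $s$ with $\theta(s)=c$ --- in particular, a parameter where $\cos\theta(s)=0$ with $\theta(s)=\tfrac{\pi}{2}$ and a distinct one with $\theta(s)=\tfrac{3\pi}{2}$ (or symmetric configurations on the negative side).

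The key geometric point is that such extra oscillation of $\theta$ forces the Gauss map $\vec n$ to be non-injective: the density argument of Li--Yau \eqref{intro:LiYau}, or more directly the observation that the region $f(\mb S^2)$ then covers some point with multiplicity $\geq 2$ in the sense relevant to the monotonicity formula, should push the Willmore energy past $4\pi$ by an extra $2\pi$. Concretely I would argue: parametrize the horizontal-tangent points; when $\theta$ passes through $\tfrac{\pi}{2}$ twice (once increasing through it, once decreasing through it from above, reflecting that the profile curve ``doubles back''), there are two distinct meridian circles at the same height $z=z_0$ and same radius, so a suitable plane $\{z=z_0\}$ — or a sphere inversion centered just off the surface — produces a configuration in which $f$ is not embedded as an immersion of $\mb S^2$; by the Li--Yau inequality \eqref{intro:LiYau} applied to the image point reached twice, $\mc W(f)\geq 8\pi>6\pi$. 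Alternatively, and more robustly, I would use the Gauss--Bonnet/total-curvature bookkeeping: writing $\int_{\mb S^2}H\dif\vol_g=2\pi\int_0^T(\sin\theta+\gamma_1\theta')\dif s$ and $\mc W(f)=\tfrac14\int H^2\dif\vol_g\geq\tfrac14\cdot\tfrac{1}{\mc A}\bigl(\int H\dif\vol_g\bigr)^2$ is \emph{not} enough here (it gives nothing when $\int H\dif\vol_g\leq 0$), so the sharp input must be a lower bound on $\int_{\{K>0\}}H^2$ type quantity, or on $\int K^+\dif\vol_g$, combined with the extra winding.

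Therefore the cleanest route is: (i) reduce, via Theorem \ref{rot sym surface} and its strict form, to the case $\im\theta\not\subset[-\tfrac{\pi}{2},\tfrac{3\pi}{2}]$; (ii) in that case, exhibit by the intermediate value theorem two distinct parameters $s_1\neq s_2$ with $\gamma(s_1)$ and $\gamma(s_2)$ at the same height and radius but with the profile curve ``overlapping'' in a way that makes the surface pass through a common point or makes the Gauss map cover $\mb S^2$ with multiplicity $\geq 2$ on a set of positive measure; (iii) invoke the Li--Yau inequality \eqref{intro:LiYau} (after a conformal inversion that turns the overlap into an actual self-intersection, using Willmore invariance Proposition \ref{prop: Willmore invariance}) to conclude $\mc W(f)\geq 8\pi\geq 6\pi$; and finally note that the only remaining borderline cases --- where $\theta$ touches but does not cross $\tfrac{3\pi}{2}$ or $-\tfrac{\pi}{2}$ --- are handled by a limiting/approximation argument together with the equality discussion in Theorem \ref{rot sym surface}, giving the non-strict bound $6\pi$ rather than $8\pi$ exactly in those degenerate configurations. \textbf{The main obstacle} I anticipate is step (ii)--(iii): making precise why leaving $[-\tfrac{\pi}{2},\tfrac{3\pi}{2}]$ forces non-embeddedness (or enough extra area density) rather than merely a wiggly but still embedded profile; I expect the honest argument tracks the signed area swept by the profile curve and shows that $\int_0^T\gamma_1\cos\theta\dif s$ (a multiple of the enclosed volume) together with $\int_0^T\cos\theta\dif s=0$ forces the curve to be non-convex \emph{and} to re-enter a region it already bounded, which is exactly the Li--Yau ``multiplicity $\geq 2$'' situation; pinning the constant down to exactly $6\pi$ (and not $8\pi$) in the degenerate touching case is where the approximation argument must be done with care.
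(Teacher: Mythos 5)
Your step (i) — reducing, via Theorem \ref{rot sym surface}, to the case where $\theta$ exits the interval $[-\tfrac{\pi}{2},\tfrac{3\pi}{2}]$ — is exactly the reduction the paper makes. But steps (ii)--(iii) contain a genuine gap: the claim that $\theta$ leaving $[-\tfrac{\pi}{2},\tfrac{3\pi}{2}]$ forces non-embeddedness (or density $\geq 2$ somewhere) is false. Remark \ref{counterexamples for axisymmetric surfaces}, Figure \ref{figure counterexample2}, exhibits an \emph{embedded} axisymmetric sphere with $\im\theta=[-\tfrac{\pi}{2}-\eps,\tfrac{3\pi}{2}+\eps]$ and negative total mean curvature; a profile curve can double back in angle without the surface of revolution self-intersecting. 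Moreover, even where Li--Yau applies it yields $8\pi$, and there is no reason to expect $8\pi$ here — the whole point of the corollary is that only $6\pi$ is being claimed (the paper explicitly states that whether the optimal bound near $\mc T=0$ is $8\pi$ is open, and calls this corollary a first step). So an argument whose only engine is multiplicity cannot produce the constant $6\pi$.

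The mechanism you were groping for in your "alternatively" paragraph (a lower bound on a positive-curvature-type quantity) is the right one, and it is elementary: from $H^2=(\kappa_1+\kappa_2)^2\geq 4\kappa_1\kappa_2$ and $(\kappa_1-\kappa_2)^2\geq 0$ one gets the pointwise inequality $H^2\geq 2K+2|K|$, whence by Gauss--Bonnet $\mc W(f)\geq 2\pi+\tfrac12\int_{\mb S^2}|K|\dif\vol_g$. For the axisymmetric profile, \eqref{principal curvatures for axisymmetric surface} gives $\int_{\mb S^2}|K|\dif\vol_g=2\pi\int_0^T|\theta'(s)||\sin(\theta(s))|\dif s$, which is the total variation of $\cos\circ\theta$ on $[0,T]$ (times $2\pi$). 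Since $\theta(0)=0$, $\theta(T)$ is an odd multiple of $\pi$ (the profile must close up on the axis), and by the reduction $\theta$ attains $\tfrac{3\pi}{2}$ (or $-\tfrac{\pi}{2}$) at some interior time after passing through $\pi$, this total variation is at least $|{1-(-1)}|+|{-1-0}|+|{0-(-1)}|=4$, giving $\mc W(f)\geq 2\pi+\tfrac12\cdot 2\pi\cdot 4=6\pi$. Your proposal as written does not reach this, and its main route (Li--Yau after inversion) would fail on the embedded counterexamples.
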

\begin{proof}
Notice that $H^2 \geq 2K + 2|K|$. Hence $\mc W(f) \geq 2\pi + \frac{1}{2}\int _{\mb S^2} |K|\dif \vol_g$. By \eqref{principal curvatures for axisymmetric surface}, we get the estimate
\begin{equation}
\mc W(f)\geq 2\pi + \pi \int_0^T |\theta'(s)| |\sin(\theta(s))|\dif s.\label{total variation estimate for W}
\end{equation}
By Theorem \ref{rot sym surface}, either $\frac{3\pi}{2}\in \im \theta$ or $-\frac{\pi}{2}\in \im \theta$ holds. Suppose without loss of generality that $\theta(t) = \frac{3\pi}{2}$ for some $t\in (0,T)$. Let $0<t'<t$ be such that $\theta(t') = \pi$. It follows that
\begin{align*}
&\quad \, \int_0^T |\theta'(s)| |\sin(\theta(s))|\dif s \\
&\geq \left |\int_0^{t'} \theta'(s) \sin(\theta(s)) \dif s\right | + \left |\int _{t'} ^t \theta'(s) \sin(\theta(s)) \dif s\right | +\left | \int_t^T \theta'(s) \sin(\theta(s))\dif s\right | \\
&=\cos(0)-\cos(\pi) + \cos \left (\frac{3\pi}{2}\right ) -\cos(\pi) + \cos \left (\frac{3\pi}{2}\right ) -\cos(\pi) \\
&= 4.\tag*\qedhere
\end{align*}
\end{proof}
\setcounter{secnumdepth}{0}
\stepcounter{section}
\section{Appendix}
Here, we state some results that were needed in the main sections. $\Sigma$ denotes a smooth, closed, oriented and connected surface and $f\in \mc E_\Sigma$ is a weak immersion.
\begin{lemma}\label{lem:Divergence theorem}
Suppose that $f\in \mc E_\Sigma$ is injective. Let $a\in \R^3$ and $r>0$. Denote $\Sigma_r \cqq f^{-1}(f(\Sigma) \cap B_r(a))$. Then, for a.e. $r$, there exists a measurable map $\eta:\partial \Sigma_r\to \mb S^2$ such that for $X\in C^1(\R^3, \R^3)$
\begin{equation}
\int _{\Sigma_r} \Div (X\circ f) \dif\vol_g = - \int_{\Sigma_r} \langle X\circ f,H\rangle \dif\vol_g + \int _{\partial \Sigma_r} \langle \eta, X\circ f\rangle \dif\sigma,\label{Divergence Theorem}
\end{equation}
where $f_\# \sigma = \mc H ^1 \mres{f(\Sigma)\cap \partial B_r(a)}$.
\end{lemma}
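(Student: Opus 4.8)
The plan is to prove the divergence theorem \eqref{Divergence Theorem} for weak immersions by reducing it to the classical Gauss--Green/divergence theorem on manifolds with boundary, via an appropriate choice of good radii $r$ and a localization argument. First I would fix $f\in\mc E_\Sigma$ injective. Since $f\in W^{2,2}(\Sigma,\R^3)\cap W^{1,\infty}(\Sigma,\R^3)$ and the nondegeneracy condition \eqref{Lipschitz condition} holds, $f$ is a bi-Lipschitz homeomorphism onto its image $f(\Sigma)$, and the pushforward varifold $\mu=f_\#(\mc H^2\mres\Sigma)$ is an integral varifold with generalized mean curvature $\vec H\in L^2(\mu)$ (here is where finiteness of $\mc W(f)$ and the $W^{2,2}$-regularity enter). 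The function $\rho\colon x\mapsto |x-a|$ is Lipschitz, so by the coarea formula applied to $\rho\circ f$ on $\Sigma$ (with the area element $\dif\vol_g$), the slices $\partial\Sigma_r=(\rho\circ f)^{-1}(r)$ are rectifiable $1$-sets for a.e.\ $r$, and $f_\#(\mc H^1\mres\partial\Sigma_r)=\mc H^1\mres(f(\Sigma)\cap\partial B_r(a))$, which fixes the measure $\sigma$ in the statement.

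Next I would carry out the main computation. For $X\in C^1(\R^3,\R^3)$ write the tangential divergence of $X$ along $f$; the standard first-variation identity for varifolds says that for any vector field $Y\in C^1_c(\R^3,\R^3)$,
\[
\int_\Sigma \Div(Y\circ f)\dif\vol_g = -\int_\Sigma\langle Y\circ f,\vec H\rangle\dif\vol_g,
\]
where $\Div$ denotes the tangential divergence. To produce the boundary term I localize: for a.e.\ $r$ (a \emph{good radius}, chosen so that $\mc H^1\mres\partial B_r(a)$ has no atoms on $f(\Sigma)$, that $\Sigma_r$ has finite perimeter in $\Sigma$, and that the slicing theory applies — these are cofull-measure conditions by Sard-type and coarea arguments), apply the above identity not to $X$ but to $X$ multiplied by a cutoff approximating $\chi_{B_r(a)}$. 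Taking a sequence of smooth cutoffs $\chi_\eps\nearrow\chi_{B_r(a)}$ with $\nabla\chi_\eps$ concentrating on $\partial B_r(a)$, the term $\int_\Sigma\langle\nabla(\chi_\eps\circ f),\ \cdot\ \rangle$ converges, for a good radius $r$, to a boundary integral $\int_{\partial\Sigma_r}\langle\eta,X\circ f\rangle\dif\sigma$ where $\eta$ is the ($\mc H^1$-a.e.\ defined) outward conormal of $\Sigma_r$ along $\partial\Sigma_r$; this $\eta$ is exactly the measurable map claimed. Rearranging gives \eqref{Divergence Theorem}.

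The main obstacle will be handling the limited regularity at the boundary: for a generic radius $r$ the set $\Sigma_r\subset\Sigma$ need only be a set of finite perimeter, so its topological boundary may be wild, and one must invoke De Giorgi's structure theorem to know the reduced boundary carries $\mc H^1$ and admits a measure-theoretic outer unit normal, then transport this to a conormal $\eta$ along $f$. The cleanest route is to quote the BV/sets-of-finite-perimeter slicing machinery (\`a la \cite{GMT}, Theorem 2.8.14 and the coarea formula) together with Allard-type first variation identities for integral varifolds with $L^2$ mean curvature; alternatively, since $f$ is bi-Lipschitz one can work entirely on $\Sigma$ with the pulled-back Lipschitz function $\rho\circ f$ and use the Gauss--Green theorem for BV sets on a Riemannian surface, which avoids pushing forward at intermediate steps. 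Either way the statement ``for a.e.\ $r$'' absorbs all the exceptional cases, and no uniformity in $r$ is needed for the applications (e.g.\ in the proof that $\hat\psi_\infty$ is injective, where one just needs \emph{one} good radius $\rho_\eps$).
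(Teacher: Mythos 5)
Your proposal is correct in outline, but it takes a genuinely different route from the paper. The paper never uses the parametrization to produce the boundary term: it works entirely on the image varifold $V$ and quotes Menne's theory of generalized $V$-weakly differentiable functions to get, for a.e.\ $r$, that the distributional boundary $V\partial B_r(a)$ is representable by integration and supported on $f(\Sigma)\cap\partial B_r(a)$, then Mantegazza's theory of curvature varifolds with boundary for absolute continuity of $|V\partial B_r(a)|$ with respect to $\mc H^1$, and finally a density result of Menne to conclude that the density is $1$, which is what yields the representation $\eta\,\mc H^1\mres(f(\Sigma)\cap\partial B_r(a))$ with $\eta$ valued in $\mb S^2$. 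You instead exploit the injective bi-Lipschitz parametrization to run the classical cutoff-plus-coarea slicing argument on $\Sigma$ for the Lipschitz function $u=|f-a|$, which produces the boundary measure and the unit conormal $\nabla^\Sigma u/|\nabla^\Sigma u|_g$ explicitly; its image under $\dif f$ is a Euclidean unit vector because $g$ is the pullback metric, and the slice measure pushes forward to $\mc H^1\mres(f(\Sigma)\cap\partial B_r(a))$ by the area formula and injectivity. Your route is more elementary and self-contained, at the price of a few routine verifications you leave implicit: the first-variation identity must be extended to the Lipschitz cutoff field $(\phi_\eps\circ u)X$ by mollification; the full-measure set of good radii must be chosen independently of $X$ (take a countable dense family and approximate); and one must note that $\mc H^1(u^{-1}(r)\cap\{\nabla^\Sigma u=0\})=0$ for a.e.\ $r$ (a consequence of the coarea formula), which is exactly what makes $\eta$ defined $\sigma$-a.e.\ and of unit length. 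The reference to De Giorgi's structure theorem is not really needed here --- the coarea formula for Lipschitz functions already does the slicing. The paper's varifold-theoretic route buys robustness (it would apply without a global injective parametrization), which is presumably why the authors chose it even though your argument suffices under the hypotheses of the lemma.
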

\begin{proof}
We view $f(\Sigma)$ as an integer varifold\footnote{See \cite{Allard, Mantegazza, SimonGMT} for a definition and introduction to varifolds.} $V$ in $\R^3$ of density 1 given by
\[V(\phi) \cqq \int _{\R^3} \phi(x, \tau(x))\dif f_\# \vol_g\]
for all $\phi \in C_0( G_{3,2}(\R^3))$, where $G_{3,2}(U) \cqq U \times G_{3,2}$ for any open subset $U$ of $\R^3$, $G_{3,2}$ is the metric space of all 2-dimensional subspaces of $\R^3$ and $\tau(x)$ denotes the orthogonal projection of $\R^3$ onto the tangent plane of $f$ at $x$ which is defined for $f_\# \vol_g$-a.e. $x$. Then, the first variation of $V\mres{G_{3,2}(B_r(a))}$ is given by
\begin{align*}
\delta (V\mres{G_{3,2}(B_r(a))}(X) &= \int _{\Sigma \cap f^{-1}(B_r(a))} \Div (X\circ f) \dif \vol_g\\
&= -\int _{\Sigma \cap f^{-1}(B_r(a))} \langle H,X\circ f\rangle \dif \vol_g - (V \partial B_r(a))(X),
\end{align*} 
where $V\partial B_r(a)$ is the distribution defined in \cite[Definition 5.1]{Menne}. Using the fact that ${h(x)= |x-a|}$ is a Lipschitz map, \cite[Example 8.7]{Menne} implies that $h$ is generalized $V$ weakly differentiable. Then, \cite[Theorem 8.30]{Menne} shows that for a.e. $r>0$, $V\partial B_r(a)$ is representable by integration, see \cite[4.1.5]{GMT} for a definition, and is supported in $f(\Sigma)\cap \partial B_r(a)$. It follows that $V\mres{G_{3,2}(B_r(a))}$ is a curvature varifold with boundary in the sense of \cite[Definition 3.1]{Mantegazza}, see \cite[Lemma 4.1]{Mantegazza}. By \cite[Theorem 7.1]{Mantegazza}, the total variation $|V \partial B_r(a)|$ is absolutely continuous with respect to $\mc H^1$. Finally, by \cite[Corollary 12.2]{Menne}, the 1-dimensional density of $|V \partial  B_r(a)|$ equals 1 $|V \partial B_r(a)|$-a.e. Therefore, it holds that $V\partial B_r(a) = \eta \mc H ^1 \mres{(\partial B_r(a)\cap f(\Sigma))}$ for some Borel map $\eta$ taking values in $\mb S^2$. This implies \eqref{Divergence Theorem}.
\end{proof}
\begin{remark}
If $X$ is merely Lipschitz, an approximation argument by smooth maps shows that Lemma \ref{lem:Divergence theorem} holds for $X$ as well.
\end{remark}
\begin{lemma} \emph{\cite[Lemma 1.4]{Simon}} \label{lem:Simon 8pi - C estimate}
Suppose $f\in \mc E_\Sigma$ is injective, and $\partial B_\rho(0) \cap f(\Sigma)$ contains two disjoint subsets $\Sigma_1,\Sigma_2$ with $\Sigma_j \cap B_{\theta \rho}(0) \neq \emptyset$, $ \partial \Sigma_j\subset \partial B_\rho(0)$ and $\mc H^1(\partial \Sigma_j) \leq \beta \rho$ for $j=1,2$, $\theta \in (0,1/2)$ and $\beta > 0$. Assume further that for $ I_0\circ f \in \mc E_\Sigma$ and $r = \frac{1}{\rho}$, \eqref{Divergence Theorem} holds, where $I_0$ is defined by \eqref{sphere inversion}. Then,
\[\mc W(\Sigma) \geq 8\pi - C\beta \theta,\]
where $C$ does not depend on $\Sigma$, $\beta$ or $\theta$.
\end{lemma}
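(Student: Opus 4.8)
We reduce the estimate to a Li--Yau-type density bound, obtained from Simon's monotonicity formula applied to the image of $f$ under the sphere inversion $I_0$ of \eqref{sphere inversion} centered at the origin. Since $\mc W$, the quantity $\mc H^1(\partial\Sigma_j)/\rho$ and the intersection $\Sigma_j\cap B_{\theta\rho}(0)$ are all scale invariant, we may assume $\rho=1$. The hypothesis $I_0\circ f\in\mc E_\Sigma$ forces $0\notin f(\Sigma)$, so $\widetilde\Sigma\cqq I_0(f(\Sigma))$ is again a closed, embedded weak immersion and, $I_0$ being conformal, Proposition~\ref{prop: Willmore invariance} gives $\mc W(\widetilde\Sigma)=\mc W(\Sigma)$. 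The sole purpose of the assumption that \eqref{Divergence Theorem} holds for $I_0\circ f$ and $r=1/\rho=1$ is to ensure that $\widetilde\Sigma$, cut along $\partial B_1(0)$, is an integral $2$-varifold with $L^2$ generalized mean curvature whose generalized boundary is representable by integration against $\mc H^1$, so that the monotonicity formula with boundary is available for its relevant pieces.

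First I would record how the inversion moves the two given pieces. Put $\widetilde\Sigma_j\cqq I_0(\Sigma_j)$. From $\Sigma_j\subset\overline{B_1(0)}$, $\partial\Sigma_j\subset\partial B_1(0)$, $\mc H^1(\partial\Sigma_j)\le\beta$, and the fact that $DI_0$ restricts to an isometry on $\partial B_1(0)$, one gets
\[\widetilde\Sigma_j\subset\R^3\setminus B_1(0),\qquad\partial\widetilde\Sigma_j\subset\partial B_1(0),\qquad\mc H^1(\partial\widetilde\Sigma_j)\le\beta .\]
Pick $x_j\in\Sigma_j\cap B_\theta(0)$; since $\theta<\tfrac12$ and $\partial\Sigma_j\subset\partial B_1(0)$, the point $x_j$ is an interior point of $\Sigma_j$, so $\widetilde q_j\cqq I_0(x_j)$ is an interior point of the embedded surface $\widetilde\Sigma_j$ (hence of density $1$) with $|\widetilde q_j|=1/|x_j|>1/\theta$. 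The two pieces $\widetilde\Sigma_1,\widetilde\Sigma_2$ are disjoint, and since $0\notin f(\Sigma)$ they are bounded. The crucial gain over working with $\Sigma_j$ directly is that the small boundary now sits far from the base point: $\dist(\widetilde q_j,\partial\widetilde\Sigma_j)\ge|\widetilde q_j|-1\ge 1/\theta-1\ge 1/(2\theta)$.

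Next I would apply Simon's monotonicity identity to the varifold $\widetilde\Sigma_j$ centered at $\widetilde q_j$, in the form in which the cumulative boundary terms are separated out and the bulk part equals $\pi\,\Theta(\widetilde\Sigma_j,\widetilde q_j)=\pi$ plus a manifestly nonnegative remainder of the form $\int_{\widetilde\Sigma_j}|\tfrac14\vec H+(x-\widetilde q_j)^\perp/|x-\widetilde q_j|^2|^2$. Letting the radius $\sigma$ tend to $\infty$, the area ratio $\sigma^{-2}\|\widetilde\Sigma_j\|(B_\sigma(\widetilde q_j))$ and the linear term $\tfrac1{2\sigma^2}\int_{B_\sigma(\widetilde q_j)}\langle\vec H,x-\widetilde q_j\rangle$ both vanish because $\widetilde\Sigma_j$ is bounded and $\vec H\in L^2(\widetilde\Sigma_j)\subset L^1(\widetilde\Sigma_j)$, while the total boundary contribution is controlled, after a routine estimate of the boundary flux by $C\,\sigma^{-2}\mc H^1(\partial\widetilde\Sigma_j)$ for $\sigma\ge\dist(\widetilde q_j,\partial\widetilde\Sigma_j)$ and by $0$ for smaller $\sigma$, by
\[\int_{1/(2\theta)}^{\infty}\frac{C\beta}{\sigma^{2}}\,\dif\sigma = 2C\beta\theta .\]
This yields $\ \pi\le\tfrac1{16}\int_{\widetilde\Sigma_j}|\vec H|^2\,\dif\mc H^2+C\beta\theta\ $ for $j=1,2$.

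Finally I would sum over $j$. By disjointness of $\widetilde\Sigma_1,\widetilde\Sigma_2\subset\widetilde\Sigma$ and $|\vec H|^2=H^2$ in codimension one,
\[\int_{\widetilde\Sigma_1}|\vec H|^2\,\dif\mc H^2+\int_{\widetilde\Sigma_2}|\vec H|^2\,\dif\mc H^2\le\int_{\widetilde\Sigma}H^2\,\dif\mc H^2=4\,\mc W(\widetilde\Sigma)=4\,\mc W(\Sigma),\]
so $2\pi\le\tfrac14\mc W(\Sigma)+2C\beta\theta$, i.e.\ $\mc W(\Sigma)\ge 8\pi-8C\beta\theta$; the normalization $\rho=1$ may now be removed. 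I expect the main obstacle to be the bookkeeping inside the monotonicity formula: one must invoke the version valid for varifolds with rectifiable boundary, check that in the $\sigma\to\infty$ limit the remainder and linear terms behave as claimed for a \emph{bounded} surface, and extract the clean bound $C\beta\theta$ for the accumulated boundary flux — it is exactly the passage to the inverted surface, which places the small boundary at distance $\gtrsim 1/\theta$ from the base point, that upgrades the naive error $C\beta$ to $C\beta\theta$.
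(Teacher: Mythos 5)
Your proposal is correct and takes essentially the same route the paper relies on: it is a faithful reconstruction of the proof of \cite[Lemma 1.4]{Simon} (inversion at the origin, the monotonicity identity with boundary terms centered at the inverted base points $\widetilde q_j$ at distance $\gtrsim 1/\theta$ from $\partial B_1(0)$, and conformal invariance of $\mc W$ for the closed inverted surface), with the hypothesis that \eqref{Divergence Theorem} holds for $I_0\circ f$ supplying exactly the analytic input that must be re-justified for $f\in\mc E_\Sigma$. The paper does not reproduce this argument but merely cites Simon and notes that Lemma \ref{lem:Divergence theorem} replaces \cite[(1.5)]{Simon}, which is precisely the role your write-up assigns to that assumption.
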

The proof given in \cite{Simon} assumes that $f(\Sigma)$ is a smooth surface. However, all that is needed is the divergence theorem \cite[(1.5)]{Simon} applied to $(I_0\circ f)(\Sigma)$, which is replaced by  \eqref{Divergence Theorem}.

\begin{lemma} \emph{\cite[Lemma 1.1]{Simon}} \label{lem:Simon}
Suppose that $f\in \mc E_\Sigma$. Then,
\[\sqrt{ \frac{\mc A(\Sigma)}{\mc W(\Sigma)}} \leq \diam(\Sigma) \leq C \sqrt{\mc A(\Sigma) \mc W(\Sigma)}.\]
\end{lemma}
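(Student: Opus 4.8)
The statement is the pair of inequalities
\[\diam(\Sigma)\ge\sqrt{\mc A(\Sigma)/\mc W(\Sigma)}\quad\text{and}\quad\diam(\Sigma)\le C\sqrt{\mc A(\Sigma)\mc W(\Sigma)},\]
which I would prove separately, following \textsc{Simon} \cite{Simon} but checking that each step survives for a weak immersion $f\in\mc E_\Sigma$. For the lower bound the plan is to use the identity $\Delta_{g_f}f=\vec H$, which in a conformal chart is the equation $\Delta f=e^{2\lambda}\vec H$ already used in Section \ref{subsec:ProofOfKMR} and holds distributionally since $f\in W^{2,2}\cap W^{1,\infty}$. Because $f$ is an isometric immersion of $(\Sigma,g_f)$, the energy density $|\nabla_{g_f}f|^2_{g_f}$ equals $2$, so $\Delta_{g_f}|f-x_0|^2=4+2\langle f-x_0,\vec H\rangle$ for any fixed $x_0\in\R^3$. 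Integrating over the closed surface $\Sigma$, where the integral of a Laplacian vanishes, gives $4\mc A(f)=-2\int_\Sigma\langle f-x_0,\vec H\rangle\dif\vol_{g_f}$; choosing $x_0=f(p_0)$ for some $p_0\in\Sigma$ so that $|f-x_0|\le\diam(\Sigma)$, and using Cauchy--Schwarz together with $\int_\Sigma|\vec H|^2\dif\vol_{g_f}=\int_\Sigma H^2\dif\vol_{g_f}=4\mc W(f)$, yields $\mc A(f)\le\diam(\Sigma)\,\mc W(f)^{1/2}\mc A(f)^{1/2}$, which after dividing by $\mc A(f)^{1/2}$ is exactly the lower bound. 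This direction involves no real difficulty beyond justifying the distributional identities.

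For the upper bound the idea is to combine the monotonicity formula with a chaining argument. First I would recall that Simon's monotonicity formula — valid for weak immersions with second fundamental form in $L^2$ — gives, at every $x_0\in f(\Sigma)$ and every $\rho>0$, the area lower bound $\mc H^2(f(\Sigma)\cap B_\rho(x_0))\ge\tfrac{\pi}{2}\rho^2$ whenever $\int_{f^{-1}(B_\rho(x_0))}|\vec H|^2\dif\vol_{g_f}\le\eps_0$ for a universal $\eps_0>0$; this uses that the area density of $f(\Sigma)$ at a point of the surface is a positive integer. Next, since $f(\Sigma)$ is compact, connected and of diameter $d:=\diam(\Sigma)$, for every $r\le d/4$ a maximal $2r$-separated subset of $f(\Sigma)$ has cardinality $M\ge d/(4r)$ (connectedness forces the covering balls of radius $2r$ to form a long chain), and the balls $B_r(q_i)$, $q_i\in f(\Sigma)$, are pairwise disjoint. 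Splitting the indices into ``good'' ones, on which the curvature threshold holds, and ``bad'' ones, disjointness gives at most $4\mc W(f)/\eps_0$ bad indices (because $\sum_i\int_{f^{-1}(B_r(q_i))}|\vec H|^2\dif\vol_{g_f}\le4\mc W(f)$) and at most $2\mc A(f)/(\pi r^2)$ good ones (each good ball carries area $\ge\tfrac{\pi}{2}r^2$ and $\sum_i\mc H^2(f(\Sigma)\cap B_r(q_i))\le\mc A(f)$). Hence $d/(4r)\le2\mc A(f)/(\pi r^2)+4\mc W(f)/\eps_0$; multiplying by $4r$ and optimizing the free parameter ($r\sim\sqrt{\eps_0\mc A(f)/\mc W(f)}$, handling the borderline case where this exceeds $d/4$ by feeding back the already-proved lower bound $d\ge\sqrt{\mc A(f)/\mc W(f)}$) yields $d\le C\sqrt{\mc A(f)\mc W(f)}$.

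The main obstacle is entirely in this second half. A naive use of monotonicity with $\rho=\diam(\Sigma)$ fails: the remainder term $\int_{f(\Sigma)\cap B_\rho(x_0)}|\vec H|^2$ then equals the whole of $4\mc W(f)\ge16\pi$, which is nowhere near small, so one gets no information. The covering/chaining step is precisely what localizes the curvature — most of the disjoint balls see only a small fraction of the Willmore energy, so the area lower bound is available there — and the quantitative bound comes from playing the number of balls forced by connectedness ($\gtrsim d/r$) against the number permitted by the area and energy budgets and then optimizing $r$. A secondary point, but one that must be made explicit, is to quote the monotonicity formula and the ``density $\ge1$'' statement in the form valid for $f\in\mc E_\Sigma$ rather than for smooth embedded surfaces, exactly as the remark following Lemma \ref{lem:Simon 8pi - C estimate} in the Appendix flags for the companion estimate.
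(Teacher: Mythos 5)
Your proposal is correct and is essentially the paper's argument: the paper simply defers to Simon's Lemma 1.1 and notes that the only ingredient needing adaptation to $f\in\mc E_\Sigma$ is the divergence theorem (Lemma \ref{lem:Divergence theorem}), which is exactly what your lower bound (first variation applied to $X(y)=y-x_0$) and your monotonicity-based upper bound use. Your good/bad ball dichotomy is just a repackaging of Simon's summation of the monotonicity inequality over the disjoint balls of a chain, so no genuinely new route is taken.
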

The proof in \cite{Simon} assumes that $f(\Sigma)$ is a smooth surface, but once more, only the divergence theorem is needed.
\begin{lemma}[Existence and uniqueness of biharmonic equation] \label{lem: biharmonic equation}
Suppose that $\Omega\subset \R^n$ is open and bounded with $C^1$ boundary. For $f\in H^{3/2}(\partial \Omega), g\in H^{1/2}(\partial \Omega)$, the equation
\begin{equation}
\begin{cases}
\Delta^2 u &=0 \text{ in } \Omega\\
 u &= f \text{ on } \partial \Omega\\
 \partial _\nu u &= g \text{ on }\partial \Omega
\end{cases}
\end{equation}
has a unique weak solution $u\in H^2(\Omega)$, i.e.
\[\int _\Omega \Delta u \Delta \phi \dif x = 0 \quad \forall \phi \in H_0^2(\Omega)\] satisfying
\[\|u\|_{H^2(\Omega)} \leq C( \|f\|_{H^{3/2}(\Omega)} + \|g\|_{H^{1/2}(\Omega)}).\]
\end{lemma}
\begin{proof}
Consider $v\cqq Z(f,g) \in H^2(\Omega)$ given by the extension operator from \cite[Theorem 5.8]{Necas} such that $v = f\text{ on } \partial \Omega$ and $ \partial_\nu v = g \text{ on } \partial \Omega$. We consider $w = u - v$ so that $w \in H_0^2$ needs to solve
\[\int _\Omega \Delta w \Delta \phi \dif x = - \int_\Omega \Delta v \Delta \phi \dif x\quad \forall \phi \in H_0^2(\Omega).\]
The right-hand side is a bounded functional in $H_0^2(\Omega)$ and so by Riesz representation theorem, there exists a unique solution $w \in H_0^2(\Omega)$. Then, $u \cqq w +v$ is the unique solution to the initial problem. Riesz representation theorem also implies
\[\|u\|_{H^2(\Omega)} \leq C\|v\|_{H^2(\Omega)} \leq C ( \|f\|_{H^{3/2}(\Omega)} + \|g\|_{H^{1/2}(\Omega)}),\]
where the last step follows by the boundedness of the trace extension operator. 
\end{proof}

\addcontentsline{toc}{section}{References}
\bibliographystyle{abbrv}
\bibliography{literatur}

(A. West) \textsc{Institute for applied mathematics, University of Bonn, Endenicher Allee 60, 53115 Bonn, Germany.}\\
\emph{Email address:} \href{mailto:s6arwest@uni-bonn.de}{s6arwest@uni-bonn.de} 
\medskip\newline
(C. Scharrer) \textsc{Institute for applied mathematics, University of Bonn, Endenicher Allee 60, 53115 Bonn, Germany.}\\
\emph{Email address:} \href{mailto:scharrer@iam.uni-bonn.de}{scharrer@iam.uni-bonn.de} 

\end{document}